\numberwithin{equation}{section}
\theoremstyle{plain}
\newtheorem{thm}{Theorem}[section]
\newtheorem{lem}[thm]{Lemma}
\newtheorem{prop}[thm]{Proposition}
\newtheorem{cor}[thm]{Corollary}
\newtheorem{definition}[thm]{Definition}
\theoremstyle{remark}
\newtheorem{remark}[thm]{Remark}
\newtheorem{rem}[thm]{Remark}
\theoremstyle{plain}
\DeclareMathOperator{\sign}{sgn}
\newcommand{\ess}{\mathrm{ess}}
\newcommand{\RR}{\mathbb{R}}
\newcommand{\CC}{\mathbb{C}}
\newcommand{\NN}{\mathbb{N}}
\newcommand{\ZZ}{\mathbb{Z}}
\newcommand{\TT}{\mathbb{T}}
\newcommand{\one}{\mathds{1}}
\newcommand{\cC}{\mathcal{C}}
\newcommand{\cD}{\mathcal{D}}
\newcommand{\cH}{\mathcal{H}}
\newcommand{\cG}{\mathcal{G}}
\newcommand{\cK}{\mathcal{K}}
\newcommand{\cN}{\mathcal{N}}
\newcommand{\cP}{\mathcal{P}}
\newcommand{\cT}{\mathcal{T}}
\newcommand{\dd}{\,\mathrm{d}}
\newcommand{\rmi}{\mathrm{i}}
\newcommand{\dsum}{\mathop{\dot{+}}}
\newcommand{\dom}{\mathop{\mathrm{dom}}}
\newcommand{\ran}{\mathop{\mathrm{ran}}}
\newcommand{\pv}{\mathop{\mathrm{p.v.}}}
\DeclareMathOperator{\supp}{supp}
\newcommand{\spec}{\mathop{\mathrm{spec}}\nolimits}
\newcommand{\res}{\mathop{\mathrm{res}}\nolimits}
\newcommand{\kp}[1]{{#1}}
\begin{document}

\title{\bf Two-dimensional Dirac operators with~singular interactions supported on~closed curves}

\author{
\sc Jussi Behrndt\\
\small Institut f\"{u}r Angewandte Mathematik, Technische Universit\"{a}t Graz\\[-\smallskipamount]
\small Steyrergasse 30, 8010 Graz, Austria\\[-\smallskipamount]
\small E-mail: \url{behrndt@tugraz.at}\\[-\smallskipamount]
\small Webpage: \url{http://www.math.tugraz.at/~behrndt/}\\[\medskipamount]
\sc Markus Holzmann\\
\small Institut f\"{u}r Angewandte Mathematik, Technische Universit\"{a}t Graz\\[-\smallskipamount]
\small Steyrergasse 30, 8010 Graz, Austria\\[-\smallskipamount]
\small E-mail: \url{holzmann@math.tugraz.at}\\[-\smallskipamount]
\small Webpage: \url{http://www.math.tugraz.at/~holzmann/}\\[\medskipamount]
\sc Thomas Ourmi\`eres-Bonafos\\
\small CNRS \& Universit\'e Paris-Dauphine, PSL  University, CEREMADE,\\[-\smallskipamount]
\small Place de Lattre de Tassigny, 75016 Paris, France\\[-\smallskipamount]
\small E-mail: \url{ourmieres-bonafos@ceremade.dauphine.fr}\\[-\smallskipamount]
\small Webpage: \url{http://www.ceremade.dauphine.fr/~ourmieres/}\\[\medskipamount]
\sc Konstantin Pankrashkin\\
\small Laboratoire de Math\'ematiques d'Orsay, Univ.~Paris-Sud, CNRS,\\[-\smallskipamount]
\small Universit\'e Paris-Saclay, 91405 Orsay, France\\[-\smallskipamount]
\small E-mail: \url{konstantin.pankrashkin@math.u-psud.fr}\\[-\smallskipamount]
\small Webpage: \url{http://www.math.u-psud.fr/~pankrashkin/}
}

\date{}

\maketitle

\begin{abstract}

\kp{
We study the two-dimensional Dirac operator with an arbitrary combination of electrostatic and Lorentz scalar $\delta$-interactions of constant strengths supported on a smooth closed curve. For any combination of the coupling constants a rigorous description of the self-adjoint realizations of the operators is given and the qualitative spectral properties are described. The analysis covers also all so-called critical combinations of coupling constants, for which there is a loss of regularity in the operator domain. In this case, if the mass is non-zero, the resulting operator has an additional point in the essential spectrum, and the position of this point inside the central gap can be made arbitrary by a suitable choice of the coupling constants. The analysis is based on a combination of the extension theory of symmetric operators with a detailed study of boundary integral operators viewed as periodic pseudodifferential operators.
}
\end{abstract}

%
\tableofcontents
%


\section{Introduction}

\subsection{Motivations and state of the art}

Initially introduced to model the effects of special relativity on the behavior of quantum particles of spin $\frac12$ (such as electrons),
the Dirac operator also comes into play as an effective operator when studying low-energy electrons in a single layered material like graphene.
In order to model the interaction of the particles with external forces, the Dirac operator is coupled to a potential,
and the understanding of the spectral features of the resulting Hamiltonian translates into dynamical properties of the quantum system. 

In the last few years a class of singular potentials has been extensively studied in this relativistic setting. These potentials, which are usually called $\delta$-interactions, are supported on sets of Lebesgue measure zero and used as idealized replacements for regular potentials localized in thin neighborhoods of the interaction supports in the ambient Euclidean space.
In nonrelativistic quantum mechanics, these interactions were successfully studied in the case of Schr\"odinger operators with point interactions in \cite{AGHH} or  with $\delta$-interactions supported on hypersurfaces in $\mathbb{R}^d$, e.g., in \cite{BLL13, BEKS94, E08}. In the relativistic setting, the one-dimensional Dirac operators with $\delta$-potentials supported on points
are well studied, see \cite{AGHH, CMP, GS, PR}.
The case of potentials supported on surfaces in $\mathbb{R}^3$ was recently discussed
in \cite{AMV14, AMV15, AMV15bis, BEHL17, BEHL19, BH, DES89, hobp, OP, OV}. We also mention a recent contribution in the two-dimensional case is \cite{PV}
for a class of interactions with a non-smooth support.
In the above works, it was observed that there are critical interaction strengths
for which the standard elliptic regularity fails, and the self-adjoint realization of the operator shows
a loss of regularity in the operator domain. As as a result, the spectral properties of the
operator may be different from what was observed for the non-critical case \cite{BH}, but
no exhaustive study for all critical interaction strengths is available so far.

In this paper we are considering Dirac operators in $\mathbb{R}^2$ with electrostatic and Lorentz scalar $\delta$-potentials supported on smooth closed curves,
and we provide a systematic approach combining the general theory of boundary triples with some elements of the pseudodifferential calculus for matrix-valued singular
integral operators. A similar combination of methods was used successfully in \cite{CPP} to study a class of sign-changing Laplacians.
Our main advance is that we show the self-adjointness of the resulting operators and discuss spectral properties for \emph{all} possible combinations of interaction strengths, which includes
all critical cases. This answers fully the question of \cite[Open Problem 11]{OP} in dimension two.

Let us introduce the problem setting in greater detail. To set the stage, let $\Sigma$ be a smooth planar loop, i.e.
a closed non-self-intersecting $C^\infty$-smooth curve in $\RR^2$. It splits $\mathbb{R}^2$ into a bounded domain $\Omega_+$
and an unbounded domain $\Omega_-$, and we denote by $\nu=(\nu_1,\nu_2)$ the unit normal to $\Sigma$ pointing outwards of $\Omega_+$.
For a function $f$ defined on $\mathbb{R}^2$ we will often use the notation $f_\pm := f \upharpoonright \Omega_\pm$
with $\upharpoonright \Omega_\pm$ meaning the restriction to $\Omega_\pm$. 
If a function $f$ has suitably defined Dirichlet traces on the both sides of $\Sigma$, we define the distribution $\delta_\Sigma f$ by
\begin{equation*}
	\langle\delta_\Sigma f , \varphi\rangle:= \int_\Sigma \frac{1}{2} \,\big( \mathcal{T}_+^D f_+ + \mathcal{T}_-^D f_- \big) \cdot \varphi \dd s, \quad \varphi\in C^\infty_0(\RR^2),
\end{equation*}
where $\mathcal{T}_\pm^D f_\pm$ denotes the Dirichlet trace of $f_\pm$ at $\Sigma$ and $\dd s$
means the integration with respect to the arc-length.
We are going to study Dirac operators $A_{\eta, \tau}$ in $L^2(\mathbb{R}^2; \mathbb{C}^2)$ given by the formal differential expression
\begin{equation*}
  D_{\eta, \tau} := -\rmi \big( \sigma_1 \partial_1 + \sigma_2 \partial_2 \big) + m \sigma_3 + (\eta \sigma_0 + \tau \sigma_3) \delta_\Sigma,
\end{equation*}
where $\sigma_0$ is the identity matrix in $\mathbb{C}^{2 \times 2}$, $\sigma_1, \sigma_2, \sigma_3$ are the $\mathbb{C}^{2 \times 2}$-valued Pauli spin matrices defined in~\eqref{def_Pauli_matrices}, and $m, \eta, \tau \in \mathbb{R}$.
\kp{Following the standard language \cite{T92} one may interpret $\eta$ and $\tau$ as the strengths of the electrostatic and Lorentz scalar interactions on $\Sigma$, respectively, while the parameter $m$ is usually interpreted as the mass. }
Integration by parts shows that if the distribution $D_{\eta, \tau} f$ is generated by an $L^2$-function, then the function $f$ has to fulfil (at least formally)
the transmission condition  
\begin{equation}
   \label{transm00}
-\rmi \,(\sigma_1 \nu_1+\sigma_2\nu_2) \, (\mathcal{T}_+^D f_+ - \mathcal{T}_-^D f_-) = \frac12(\eta \sigma_0 + \tau \sigma_3)(\mathcal{T}_+^D f_+ + \mathcal{T}_-^D f_-).
\end{equation}
Our goal is to make this observation rigorous and to show that there is a unique reasonably defined self-adjoint operator $A_{\eta, \tau}$ in $L^2(\mathbb{R}^2; \mathbb{C}^2)$ for this transmission condition
and then to study its qualitative spectral properties.

\kp{
Our approach is to consider $A_{\eta,\tau}$ as an extension of a suitably chosen symmetric operator. This allows one to make use of the standard machinery of boundary triples
\cite{BHS19, BGP, DM91, DM95} and to reformulate the main questions in terms of operators on $\Sigma$.
While a similar approach was used in in \cite{BEHL17, BH, CMP, PR}, the main new ingredient is provided by an additional detailed study of various
integral operators arising in the construction. Closely related objects already appeared e.g. in \cite{AMV14, AMV15, AMV15bis, BEHL17, BEHL19, BH, OV}
for the three-dimensional case, but for the two-dimensional case we manage to provide a more detailed analysis with the help of the periodic pseudodifferential calculus, which is an important finding in the present paper.
}

\subsection{Main results}
Let us pass to the formulation and discussion of the main results of this paper. To define the operator $A_{\eta, \tau}$ rigorously, we introduce for an open set $\Omega \subset \RR^2$
\begin{gather*}
H(\sigma,\Omega)=\big\{ f\in L^2(\Omega;\CC^2):\, ( \sigma_1 \partial_1 + \sigma_2 \partial_2 ) f\in L^2(\Omega;\CC^2)\big\}.
\end{gather*}
One can show that functions $f_\pm$ in $H(\sigma, \Omega_\pm)$ admit Dirichlet traces $\mathcal{T}^D_\pm f_\pm$ in $H^{-\frac{1}{2}}(\Sigma; \mathbb{C}^2)$. With these notations in hand we define now, following~\eqref{transm00}, for $\eta, \tau \in \mathbb{R}$ the operator $A_{\eta, \tau}$ in $L^2(\mathbb{R}^2; \mathbb{C}^2)$ by
\begin{equation}\label{dirdelta}
\begin{aligned}
  A_{\eta, \tau} f &:= \big(-\rmi (\sigma_1 \partial_1 + \sigma_2 \partial_2) + m \sigma_3\big) f_+ \oplus \big(-\rmi (\sigma_1 \partial_1 + \sigma_2 \partial_2) + m \sigma_3\big) f_-, \\
    \dom A_{\eta, \tau} &:= \Big\{ f = f_+ \oplus f_- \in H(\sigma,\Omega_+)\oplus H(\sigma,\Omega_-):\\ 
 	{}-\rmi\,(&\sigma_1 \nu_1  + \sigma_2 \nu_2)\big( \cT_+^D f_+ - \cT_-^D f_-\big) = \frac12\,(\eta \sigma_0 + \tau \sigma_3)\big(\cT_+^D f_+ + \cT_-^D f_-\big)\Big\}.
 	  \end{aligned}
\end{equation}
It turns out that the value $\eta^2 - \tau^2$ plays a special role. More precisely, if $\eta^2 - \tau^2 =4$ we will say that we are in a \emph{critical} case,
while all the cases with $\eta^2 - \tau^2 \ne 4$ will be referred to as \emph{non-critical} ones.
We also remark that at some combinations of coupling constants the boundary condition in \eqref{dirdelta} appears to be decoupling, i.e.
the operator $A_{\eta, \tau}$ becomes the direct sum of two operators acting in $\Omega_\pm$, see Lemma~\ref{lemma_confinement} below.

It appears that the non-critical case is easier to deal with, and the results for $A_{\eta, \tau}$ are summarized as follows:
\begin{thm}[Non-critical case] \label{theorem_noncritical}
  Let $\eta, \tau \in \mathbb{R}$ with $\eta^2 - \tau^2 \neq 4$. Then $A_{\eta, \tau}$ is self-adjoint in $L^2(\mathbb{R}^2; \mathbb{C}^2)$
	with $\dom A_{\eta, \tau} \subset H^1(\mathbb{R}^2 \setminus \Sigma; \mathbb{C}^2)$, its essential spectrum is
  \begin{equation*}
    \spec_\ess A_{\eta, \tau} = \big(-\infty, -|m|\big] \cup \big[|m|, +\infty\big),
  \end{equation*}
  and the discrete spectrum of $A_{\eta, \tau}$ is finite. 
\end{thm}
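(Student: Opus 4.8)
The plan is to realize $A_{\eta,\tau}$ as an extension of the symmetric operator $S$ obtained by restricting the free Dirac operator $-\rmi(\sigma_1\partial_1+\sigma_2\partial_2)+m\sigma_3$ to functions vanishing near $\Sigma$, and to encode the transmission condition \eqref{transm00} via a suitable boundary triple whose boundary maps involve the Dirichlet traces $\cT_\pm^D$ on $\Sigma$. The associated Weyl function $M(z)$ is (up to elementary bounded factors) built from the boundary integral operator $\cC_z$ mapping a density $\varphi$ on $\Sigma$ to the traces of the single layer potential of the free resolvent; this is the object to be studied as a periodic pseudodifferential operator. Concretely, for $z\in\CC\setminus\big((-\infty,-|m|]\cup[|m|,+\infty)\big)$ one checks that $f\in\dom A_{\eta,\tau}$ with $(A_{\eta,\tau}-z)f=0$ corresponds to a density solving an equation of the form $\big(\tfrac12(\eta\sigma_0+\tau\sigma_3)-\text{(zero order part)}-\cC_z\big)\varphi=0$, and self-adjointness reduces, via the standard boundary-triple criterion (e.g. \cite{BHS19, BGP, DM91, DM95}), to showing that the relevant combination $\alpha+\beta M(z)$ is boundedly invertible in $L^2(\Sigma;\CC^2)$ for at least one (hence all non-real) $z$.

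The key computation is the principal symbol of $\cC_z$ as a pseudodifferential operator of order $0$ on the compact curve $\Sigma$ (parametrized periodically by arc length). Using the explicit fundamental solution of the free two-dimensional Dirac operator — which has a logarithmic singularity in the scalar part and a Cauchy-type $\tfrac{1}{t}$ singularity carried by $\sigma\cdot\nu$ in the off-diagonal part — one finds that the principal symbol of the trace of the single layer operator is a constant $2\times 2$ matrix, independent of $z$ and of the point on $\Sigma$, of the form $\tfrac{\rmi}{2}\,\sign(\xi)\,(\sigma_1\nu_1+\sigma_2\nu_2)$ plus smoothing, so that $\cC_z$ modulo smoothing operators equals $\tfrac{\rmi}{2}\sign(D)\,(\sigma\cdot\nu)$. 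Squaring this principal symbol gives $-\tfrac14\sigma_0$, and therefore the operator $\tfrac12(\eta\sigma_0+\tau\sigma_3) - \cC_z$ has principal symbol whose determinant equals $\tfrac14(\eta^2-\tau^2-4)\cdot(\text{something nonzero})$; hence it is elliptic exactly when $\eta^2-\tau^2\neq 4$. This is precisely where the dichotomy critical/non-critical enters. In the non-critical case, ellipticity of a zero-order pseudodifferential operator on a compact manifold gives a parametrix, so the operator is Fredholm of index zero on $L^2(\Sigma;\CC^2)$ and, for $z$ non-real, injective (by a standard argument: a nontrivial kernel would produce an eigenfunction of a self-adjoint realization at a non-real point, or one invokes the general boundary-triple machinery directly); invertibility of $\alpha+\beta M(z)$ follows, yielding self-adjointness of $A_{\eta,\tau}$.

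Once self-adjointness is established, the inclusion $\dom A_{\eta,\tau}\subset H^1(\RR^2\setminus\Sigma;\CC^2)$ follows from elliptic regularity: the density $\varphi$ solving the boundary equation lies in $H^{1/2}(\Sigma;\CC^2)$ because the elliptic boundary operator maps $H^{1/2}$ onto $H^{1/2}$ (this uses non-criticality again, so there is no loss of a derivative), and then the single layer potential of an $H^{1/2}$ density lies in $H^1$ on each side. For the essential spectrum, one writes the resolvent difference $(A_{\eta,\tau}-z)^{-1}-(A_{0,0}-z)^{-1}$ through Krein's resolvent formula as $\gamma(z)\,(\alpha+\beta M(z))^{-1}\,\widetilde\gamma(z)^*$, a product of operators that factor through $L^2(\Sigma;\CC^2)$; since traces on the compact curve $\Sigma$ are compact as maps from $H^1$ into $L^2(\Sigma)$, this difference is compact, and stability of the essential spectrum gives $\spec_\ess A_{\eta,\tau}=\spec_\ess A_{0,0}=(-\infty,-|m|]\cup[|m|,+\infty)$, the spectrum of the free Dirac operator. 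Finiteness of the discrete spectrum follows because the eigenvalue counting function in the gap $(-|m|,|m|)$ is controlled by the number of negative eigenvalues of a meromorphic, operator-valued function $z\mapsto\alpha+\beta M(z)$ whose "bad" behaviour is confined to the compact set $\overline{(-|m|,|m|)}$ and which is, for $z$ in the gap, a compact perturbation of an invertible operator with only finitely many eigenvalues crossing zero — alternatively one can use a Birman–Schwinger principle together with the fact that $(\alpha+\beta M(z))^{-1}$ is analytic and uniformly bounded away from the endpoints.

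The main obstacle is the symbol computation: isolating the principal symbol of the boundary integral operator $\cC_z$ with the correct matrix structure, checking that it is genuinely $z$-independent and position-independent, and verifying that the ellipticity condition is exactly $\eta^2-\tau^2\neq 4$. Everything else — the boundary triple bookkeeping, elliptic regularity, compactness of the resolvent difference, finiteness of the discrete spectrum — is then fairly routine, but the whole argument hinges on that symbolic analysis being carried out cleanly in the periodic pseudodifferential framework.
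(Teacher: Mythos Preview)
Your overall strategy---boundary triple, principal-symbol analysis of $\cC_z$, ellipticity criterion, Krein formula---is the paper's, but several points need correction or strengthening.

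\textbf{Symbol computation.} The principal part of $\cC_z$ is \emph{not} $\tfrac{\rmi}{2}\sign(\xi)(\sigma\cdot\nu)$; it is $\tfrac12\sign(\xi)(\sigma\cdot\mathbf{t})$, with the tangent rather than the normal (the paper extracts this as the matrix with off-diagonal entries $C_\Sigma\overline T$ and $TC_\Sigma'$ in Proposition~\ref{proposition_C_z}). Its square is $+\tfrac14\sigma_0$, not $-\tfrac14\sigma_0$; with your stated symbol the determinant would vanish at $\eta^2-\tau^2=-4$, not $+4$. The correct computation gives the principal symbol of $(\eta\sigma_0+\tau\sigma_3)^{-1}+\cC_z$ determinant $\tfrac{1}{\eta^2-\tau^2}-\tfrac14$, vanishing exactly when $\eta^2-\tau^2=4$. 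The symbol is also not independent of the base point. Finally, you do not address the degenerate case $|\eta|=|\tau|$, where $\eta\sigma_0+\tau\sigma_3$ is singular and the boundary condition has to be rewritten via a projection; the paper treats this separately (Proposition~\ref{prop-bgamma}(ii)--(iii), Lemma~\ref{lem3132}(ii)).

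\textbf{Essential spectrum.} Your compact-resolvent-difference route is valid and in fact cleaner than what the paper does: once the Krein formula~\eqref{krein_noncritical} holds with the middle inverse bounded on $H^{1/2}(\Sigma;\CC^2)$, the difference factors through the compact embedding $H^{1/2}(\Sigma)\hookrightarrow H^{-1/2}(\Sigma)$ and Weyl's theorem applies. The paper instead uses a Weyl-sequence construction for one inclusion (Proposition~\ref{prop-ess}) and the finiteness result below for the other.

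\textbf{Finiteness of the discrete spectrum.} Here there is a genuine gap. Compactness of the resolvent difference only shows that eigenvalues in the gap can accumulate at most at $\pm|m|$; it does not exclude accumulation. Your sketch (``finitely many eigenvalues crossing zero'', ``uniformly bounded away from the endpoints'') does not face the actual obstacle: $\cC_z$ diverges logarithmically as $z\to\pm|m|$ through the term $\log\sqrt{m^2-z^2}$ in the Green function (Lemma~\ref{lemma_phi_z}), so the Birman--Schwinger operator is \emph{not} uniformly bounded near the gap edges and a naive crossing count breaks down. The paper sidesteps this entirely (Proposition~\ref{prop-discr}): it passes to the quadratic form of $A_{\eta,\tau}^2$, performs an IMS localization into a bounded region containing $\Sigma$ and an exterior region, and bounds the eigenvalue count below $m^2$ by that of a compact-resolvent operator plus a Schr\"odinger operator with compactly supported potential. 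This uses only $\dom A_{\eta,\tau}\subset H^s(\RR^2\setminus\Sigma;\CC^2)$ for some $s>0$ and requires no endpoint analysis of $\cC_z$.
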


The proof of Theorem~\ref{theorem_noncritical} is given in Section~\ref{sec-noncrit}. There, also some additional properties of $A_{\eta, \tau}$ like a Krein-type resolvent formula, an abstract version of the Birman-Schwinger principle, and some symmetry relations in the point spectrum of $A_{\eta, \tau}$ are shown. Similar results are known in the three-dimensional case, see \cite{BEHL19}. 

Our main results in the critical case $\eta^2 - \tau^2 = 4$ are stated in the following theorem. In particular, this shows that there is a loss of regularity in the domain of $A_{\eta, \tau}$ and that there is an additional point in the essential spectrum if $m\ne 0$, which is in contrast to  the non-critical case.

\begin{thm}[Critical case] \label{theorem_critical}
  Let $\eta, \tau \in \mathbb{R}$ with $\eta^2 - \tau^2 = 4$. Then $A_{\eta, \tau}$ is self-adjoint in $L^2(\mathbb{R}^2; \mathbb{C}^2)$ 
	and the restriction of $A_{\eta, \tau}$ onto $\dom A_{\eta, \tau} \cap H^1(\mathbb{R}^2 \setminus \Sigma; \mathbb{C}^2)$
	is essentially self-adjoint, while
	$\dom A_{\eta, \tau} \not\subset H^s(\mathbb{R}^2 \setminus \Sigma; \mathbb{C}^2)$ for any $s > 0$. The essential spectrum of $A_{\eta, \tau}$ is
  \begin{equation*}
    \spec_\ess A_{\eta, \tau} = \big(-\infty, -|m|\big] \cup \left\{ -\frac{\tau}{\eta} m \right\} \cup \big[|m|, +\infty\big).
  \end{equation*}
\end{thm}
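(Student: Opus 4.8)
The plan is to realize $A_{\eta,\tau}$ as an extension of the symmetric operator $S$ obtained by restricting the free Dirac operators on $\Omega_\pm$ to functions with vanishing traces, and to analyze it through the associated boundary triple. First I would recall the construction from Section~\ref{sec-noncrit}: the map $f\mapsto (\cT^D_+f_+-\cT^D_-f_-,\ \tfrac12(\cT^D_+f_++\cT^D_-f_-))$ furnishes boundary data, and the relevant Weyl function is built from the Cauchy-type boundary integral operator $\mathcal{C}_z$ associated with the fundamental solution of $-\rmi(\sigma_1\partial_1+\sigma_2\partial_2)+m\sigma_3-z$. The transmission condition in \eqref{dirdelta} reads $-\rmi(\sigma_1\nu_1+\sigma_2\nu_2)[\cT^D f]=\tfrac12(\eta\sigma_0+\tau\sigma_3)\{\cT^D f\}$, and the key algebraic fact driving the critical case is that when $\eta^2-\tau^2=4$ the matrix $\tfrac12(\eta\sigma_0+\tau\sigma_3)$ combined with $-\rmi(\sigma_1\nu_1+\sigma_2\nu_2)$ produces a degenerate (non-elliptic) boundary symbol: the principal symbol of the resulting operator on $\Sigma$ has a nontrivial kernel at every point. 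I would make this precise by writing the boundary integral operator as a periodic pseudodifferential operator of order $0$ (parametrizing $\Sigma$ by arc length, so $\Sigma\cong\RR/\ell\ZZ$) and computing its principal symbol explicitly using the known symbol of the single-layer operator for the Dirac/Helmholtz problem; the critical relation is exactly the condition under which this $2\times2$ matrix symbol fails to be invertible on a one-dimensional subspace.

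Next, for self-adjointness I would argue as follows. On the subspace where the trace difference lies in $H^{1/2}$ one has genuine elliptic regularity, so $A_{\eta,\tau}$ restricted to $\dom A_{\eta,\tau}\cap H^1(\RR^2\setminus\Sigma;\CC^2)$ is symmetric with the same deficiency structure as in the non-critical case away from the degenerate direction; I would show it is essentially self-adjoint by verifying that its closure coincides with $A_{\eta,\tau}$, equivalently that $A_{\eta,\tau}$ is symmetric and that the relevant boundary operator — although only a bounded, non-Fredholm operator of order $0$ on $\Sigma$ — still has trivial kernel and dense range after the degenerate direction is accounted for, because along that direction the Weyl-function contribution is a strictly lower-order (smoothing, or at least order $-1$) self-adjoint perturbation with a definite sign, which closes the quadratic form. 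The loss of regularity $\dom A_{\eta,\tau}\not\subset H^s$ for any $s>0$ then follows from the degeneracy: there exist boundary data in $H^{-1/2}$ but in no $H^{-1/2+s}$ that are mapped into $\dom A_{\eta,\tau}$, because inverting the order-$0$ pseudodifferential operator on the degenerate subspace costs no derivatives but admits rough data, while the Dirichlet-to-solution map gives back functions of exactly $H^{1/2}$-minus-$\epsilon$ regularity on $\Sigma$, hence $H^1$-minus-$\epsilon$ in the bulk at best but actually no better than $L^2$ once the singular direction is excited. I would make this rigorous by exhibiting an explicit family of boundary symbols concentrated on the degenerate eigendirection.

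For the essential spectrum I would use the standard decomposition $\spec_\ess A_{\eta,\tau}=\spec_\ess(\text{free part})\cup\{z:\ \text{the boundary operator }M(z)\text{ is not Fredholm}\}$, coming from a Krein-type resolvent formula: the resolvent difference between $A_{\eta,\tau}$ and the free Dirac operator is a product involving the (generalized) inverse of the boundary operator, so extra essential spectrum appears exactly at those $z\in(-|m|,|m|)$ where this boundary operator loses the Fredholm property. Computing the principal symbol of the Weyl function $M(z)$ as a periodic $\Psi$DO — it is of order $0$, and its leading symbol is the $z$-dependent $2\times2$ matrix coming from the Fourier multiplier of the half-space Dirac resolvent restricted to the degenerate eigendirection — I would show that for $z$ in the gap this symbol is invertible at infinity except when $z$ equals a specific value determined by the coupling constants; a direct computation (using that on the degenerate direction the symbol reduces to a scalar proportional to $\sqrt{m^2-z^2}$ times one factor and to $(\eta z+\tau m)$, say, times another) pins this value down to $z=-\tfrac{\tau}{\eta}m$, which lies in $(-|m|,|m|)$ precisely because $\eta^2-\tau^2=4$ forces $|\tau/\eta|<1$. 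Off this point the boundary operator is Fredholm of index zero with discrete spectrum, giving $\spec_\ess=(-\infty,-|m|]\cup\{-\tfrac{\tau}{\eta}m\}\cup[|m|,\infty)$.

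The main obstacle, I expect, is the self-adjointness together with the essential-self-adjointness of the $H^1$-restriction: one must control a boundary pseudodifferential operator that is bounded but \emph{not} elliptic, so the usual "invert modulo smoothing" argument fails on a codimension-one symbol subspace, and one has to extract the exact self-adjoint realization by hand — showing that the degenerate direction contributes a symmetric operator (rather than a symmetric relation with nontrivial multivalued part) and that no further boundary conditions are hidden. The periodic $\Psi$DO calculus is what makes this tractable: it lets one diagonalize the symbol pointwise, reduce to a scalar model in the bad direction, and verify directly that the model operator is self-adjoint with the claimed domain and spectrum.
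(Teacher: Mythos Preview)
Your overall framework is right and aligned with the paper: reduce $A_{\eta,\tau}$ to a boundary parameter $\Theta$ via the boundary triple of Proposition~\ref{prop-bt3}, observe that in the critical case the leading symbol of $\theta$ degenerates, and locate the extra essential spectrum by finding the $z$ at which the boundary operator $\Theta-M_z$ fails to be Fredholm. However, two of your mechanisms differ substantially from the paper's, and one of them is a genuine gap.

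\textbf{Self-adjointness.} Your sketch invokes a ``definite sign'' of a lower-order perturbation and a quadratic-form closure. This does not work here: $\Theta$ is not semibounded, and there is no useful sign on the degenerate direction. The paper instead uses a Schur-complement/block-operator argument. Writing $\theta=-\tfrac12\upsilon+\Psi$ with $\Psi\in\Psi^0_\Sigma$ bounded and
\[
\upsilon=\begin{pmatrix}\tfrac{2}{\eta+\tau}\Lambda^2 & \Lambda C_\Sigma\overline T\Lambda\\ \Lambda TC_\Sigma'\Lambda & \tfrac{2}{\eta-\tau}\Lambda^2\end{pmatrix},
\]
one factorizes $\upsilon$ as in~\eqref{schur-fact}. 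The Schur complement is $\tfrac{2}{\eta-\tau}\Lambda^2-\tfrac{\eta+\tau}{2}\Lambda TC_\Sigma'C_\Sigma\overline T\Lambda$; since $C_\Sigma'C_\Sigma-\one\in\Psi^{-\infty}_\Sigma$ and $\eta^2-\tau^2=4$, the two $\Lambda^2$ terms cancel exactly and the Schur complement lies in $\Psi^{-\infty}_\Sigma$, hence is bounded. Then the abstract block-closure result (Proposition~\ref{prop-block}, from Tretter) gives $\dom\overline{\Theta_1}=\dom\Theta$ explicitly, proving both self-adjointness of $\Theta$ and essential self-adjointness of $\Theta_1:=\Theta\upharpoonright H^1$. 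Your ``diagonalize pointwise and reduce to a scalar model'' is morally this, but the concrete tool you are missing is Proposition~\ref{prop-block}; without it the essential self-adjointness of the $H^1$-restriction is not established.

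\textbf{Essential spectrum and loss of regularity.} For the extra point $-\tfrac{\tau}{\eta}m$ the paper again uses the Schur complement: decompose $M_z-\Theta=\Xi_1+\Xi_2$ with $\Xi_2$ compact, compute the Schur complement of $\Xi_1$, and after repeated use of $C_\Sigma'C_\Sigma-\one\in\Psi^{-\infty}$ obtain $\mathcal S=c(z)\one+(\text{compact})$ with $c(z)$ a scalar linear function of $z$ that vanishes precisely at $z=-\tfrac{\tau}{\eta}m$. This matches your Fredholm picture but is more elementary than a full symbol computation. For $\dom A_{\eta,\tau}\not\subset H^s$, the paper does \emph{not} construct explicit rough elements as you propose; instead it argues indirectly: Proposition~\ref{prop-discr} shows that any self-adjoint extension with $\dom\subset H^s$ for some $s>0$ has purely discrete and finite spectrum in the gap, which contradicts the point $-\tfrac{\tau}{\eta}m\in\spec_\ess$ (the domain being independent of $m$, one may assume $m\neq0$). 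Your direct construction would also work, but the paper's route is shorter once the essential-spectrum statement is in hand.
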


Theorem~\ref{theorem_critical} is the main result of this paper, and it is proved in Section~\ref{sec-crit}. There, also a Krein type resolvent formula, a Birman Schwinger principle, and several symmetry relations in the point spectrum of $A_{\eta, \tau}$ are shown. We would like to point out that some analogs in three dimensions are only known in the case of purely electrostatic interactions,
i.e. when $\eta = \pm 2$ and $\tau = 0$, see \cite{BH, OV}. We remark that the additional point $-\frac{\tau}{\eta} m$ can take any value in the gap $\big(-|m|, |m|\big)$
under a suitable choice of $\eta$ and $\tau$, and this effect was not observed in previous works. Several papers addressed the question of presence of a non-empty essential
spectrum for Dirac operators in bounded domains with various boundary conditions, see e.g. \cite{BFSV,freitas,kms}, and our results can also be regarded as a contribution in this direction.

By a minor modification of the argument, one can also deal with an interaction supported on several loops.
Let $N\geq1$ and consider a family of  non-intersecting smooth loops $\Sigma_1, \dots, \Sigma_N$  with normals $\nu_j$, $j\in\{1,\dots,N\}$.
We set $\Sigma:=\bigcup_{j=1}^N \Sigma_j$, and for $f \in H(\sigma,\RR^2\setminus\Sigma)$ we denote its Dirichlet traces
on the two sides of $\Sigma_j$ as $\mathcal{T}_{\pm, j}^Df$,
where $-$ corresponds to the side to which  $\nu_j$ is directed.
In addition, consider a family of pairs of real parameters
\[
\mathcal{P} := \big((\eta_j,\tau_j)\big)_{j\in\{1,\dots,N\}}, \quad
\eta_j,\,\tau_j\in\RR,
\] and define the associated operator $A_{\Sigma,\mathcal{P}}$ by
\begin{equation}\label{dirdeltamult}
\begin{aligned}
  A_{\Sigma,\mathcal{P}} f &:= \big({}-\rmi (\sigma_1 \partial_1 + \sigma_2 \partial_2) + m \sigma_3\big) f \qquad \text{ in } \RR^2\setminus\Sigma, \\
    \dom A_{\Sigma,\mathcal{P}} &:= \Big\{ f\in H(\sigma,\RR^2\setminus\Sigma): \\
 	 {}-\rmi\,&(\sigma_1 \nu_{j,1} + \sigma_2 \nu_{j,2})\big( \cT_{+,j}^D f - \cT_{-,j}^D f\big) = \frac12\,(\eta \sigma_0 + \tau \sigma_3)\big(\cT_{+,j}^D f + \cT_{-,j}^D f\big)\\
	&\qquad \text{ for each } j\in\{1,\dots,N\}\Big\}.
 	  \end{aligned}
\end{equation}
Then the preceding results can be extended as follows:
\begin{thm}[Interaction supported on several loops]\label{sevloops} Denote
\[
\mathcal{I}_{\rm crit} := \big\{ j \in \{1,\dots,N\} : \eta_j^2 - \tau_j^2 = 4\big\}.
\]
Then the following is true:
\begin{enumerate}
	\item[$\textup{(i)}$] If $\mathcal{I}_{\rm crit} = \emptyset$, then $A_{\Sigma,\mathcal{P}}$ is self-adjoint with $\dom A_{\Sigma,\mathcal{P}} \subset H^1(\mathbb{R}^2 \setminus \Sigma; \mathbb{C}^2)$, the essential spectrum of $A_{\Sigma,\mathcal{P}}$ is
  \begin{equation*}
    \spec_\ess A_{\Sigma,\mathcal{P}} = \big(-\infty, -|m|\big] \cup \big[|m|, \infty\big),
  \end{equation*}
  and the discrete spectrum of $A_{\Sigma,\mathcal{P}}$ in $(-|m|, |m|)$ is finite.
	\item[$\textup{(ii)}$] If $\mathcal{I}_{\rm crit} \neq \emptyset$, then $A_{\Sigma,\mathcal{P}}$ is self-adjoint and the restriction of $A_{\Sigma,\mathcal{P}}$ onto $\dom A_{\Sigma,\mathcal{P}} \cap H^1(\mathbb{R}^2 \setminus \Sigma; \mathbb{C}^2)$ is essentially self-adjoint in $L^2(\mathbb{R}^2; \mathbb{C}^2)$, but
	 $\dom A_{\Sigma,\mathcal{P}} \not\subset H^s(\mathbb{R}^2 \setminus \Sigma; \mathbb{C}^2)$ for any $s > 0$.
	The essential spectrum of $A_{\Sigma,\mathcal{P}}$ is
	\[
	\spec_\ess A_{\Sigma,\mathcal{P}}=\big(-\infty,-|m|\big] \bigcup_{j \in \mathcal{I}_{\rm crit}} \Big\{-\frac{\tau_j}{\eta_j}m\Big\} \cup \big[|m|,+\infty\big).
	\]
\end{enumerate}
\end{thm}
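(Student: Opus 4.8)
The plan is to reduce Theorem~\ref{sevloops} to the single-loop results, Theorems~\ref{theorem_noncritical} and~\ref{theorem_critical}, by a localization argument that exploits the fact that the loops $\Sigma_1,\dots,\Sigma_N$ are pairwise disjoint and compact. First I would observe that since the $\Sigma_j$ are non-intersecting closed curves, there exists $\delta>0$ such that the tubular neighborhoods $U_j:=\{x:\dist(x,\Sigma_j)<\delta\}$ are mutually disjoint; each transmission condition in \eqref{dirdeltamult} is then purely local to $\Sigma_j$. The construction of $A_{\Sigma,\mathcal{P}}$ as an extension of the minimal Dirac operator on $\RR^2\setminus\Sigma$ via boundary triples goes through verbatim, with the boundary space $L^2(\Sigma;\CC^2)=\bigoplus_{j=1}^N L^2(\Sigma_j;\CC^2)$ and the Weyl function, $\gamma$-field and the relevant boundary integral operators all splitting, up to smoothing (i.e. $\Psi^{-\infty}$) remainders, as orthogonal sums of the corresponding single-loop objects: the off-diagonal blocks of the single-layer-type operators have smooth integral kernels because $\dist(\Sigma_i,\Sigma_j)>0$ for $i\ne j$, hence are bounded from $H^{s}$ to $H^{t}$ for all $s,t$. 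Consequently the self-adjointness criterion, the ellipticity/Fredholm analysis of the boundary operator, and the characterization of the operator domain can be performed blockwise.

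The key steps, in order, are: \emph{(1)} set up the boundary triple for the decoupled minimal operator on $\RR^2\setminus\Sigma$ exactly as in the single-loop case, noting that the free resolvent and the associated $\gamma$-field and Weyl function $M(z)$ decompose as $\bigoplus_j M_j(z)$ modulo a smoothing operator coming from the interaction between distinct loops; \emph{(2)} show that the boundary condition in \eqref{dirdeltamult} corresponds to a block-diagonal parameter, so that the relevant operator $\Lambda(z)$ (the ``Birman--Schwinger'' operator whose invertibility governs the resolvent) is, modulo smoothing, $\bigoplus_j \Lambda_j(z)$ with $\Lambda_j(z)$ the single-loop operator for the pair $(\eta_j,\tau_j)$; \emph{(3)} invoke Theorems~\ref{theorem_noncritical} and~\ref{theorem_critical} to conclude: $\Lambda_j(z)$ is boundedly invertible on $L^2(\Sigma_j;\CC^2)$ for $z$ in the relevant resolvent set when $j\notin\mathcal{I}_{\rm crit}$, and is Fredholm with a finite-dimensional kernel when $j\in\mathcal{I}_{\rm crit}$ — the loss-of-regularity block contributing an extra essential-spectrum threshold at $-\tfrac{\tau_j}{\eta_j}m$; \emph{(4)} assemble self-adjointness of $A_{\Sigma,\mathcal{P}}$ from invertibility of $\Lambda(z)$ at $z=\rmi$, and essential self-adjointness of the $H^1$-restriction in case (ii) from the corresponding single-loop statements combined with the smoothing nature of the inter-loop coupling; \emph{(5)} compute $\spec_\ess A_{\Sigma,\mathcal{P}}$ by a Krein-type resolvent formula: away from $(-\infty,-|m|]\cup[|m|,\infty)$ the resolvent difference with the free operator is a finite-rank-modulo-compact perturbation governed by $\Lambda(z)^{-1}$, which exists and depends analytically on $z$ except where one of the blocks $\Lambda_j(z)$ fails to be invertible; by Theorem~\ref{theorem_critical} (and its proof) this happens exactly at $z=-\tfrac{\tau_j}{\eta_j}m$ for $j\in\mathcal{I}_{\rm crit}$, and at those points the non-invertibility is ``of essential type'' (the zero is not isolated in a way detectable by a compact correction), yielding the union formula; finiteness of the discrete spectrum in case (i) follows as in Theorem~\ref{theorem_noncritical}.

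The main obstacle is step~\emph{(5)}, specifically making precise that the extra threshold $-\tfrac{\tau_j}{\eta_j}m$ genuinely lands in the \emph{essential} spectrum of the multi-loop operator and is not somehow regularized by the presence of the other loops. For this I would argue that near $z_0:=-\tfrac{\tau_j}{\eta_j}m$ the block $\Lambda_j(z)$ has the same singular structure as in the single-loop analysis of Section~\ref{sec-crit} — a pseudodifferential operator of order $0$ whose principal symbol degenerates, so that $\Lambda_j(z)-\Lambda_j(z_0)$ is not relatively compact and $z_0\in\spec_\ess$ of the single-loop operator — and that adding the block-diagonal invertible (or Fredholm) contributions from the other loops together with the globally smoothing off-diagonal coupling cannot change this: a bounded-invertible-plus-smoothing perturbation of a non-invertible-modulo-compact operator is still non-invertible-modulo-compact, since smoothing operators are compact on $L^2(\Sigma)$. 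The converse inclusion $\spec_\ess A_{\Sigma,\mathcal{P}}\subset(-\infty,-|m|]\cup\bigcup_j\{-\tfrac{\tau_j}{\eta_j}m\}\cup[|m|,\infty)$ follows because for $z$ outside this set all $\Lambda_j(z)$ are invertible, hence $\Lambda(z)$ is invertible (Neumann series against the smoothing remainder), and the Krein formula exhibits $(A_{\Sigma,\mathcal{P}}-z)^{-1}-(\text{free resolvent})^{-1}$ as compact, so $\spec_\ess$ agrees there with that of the free Dirac operator, which is $(-\infty,-|m|]\cup[|m|,\infty)$. All remaining points — traces, the precise form of the boundary triple, the periodic pseudodifferential bookkeeping — are identical to the single-loop case and need only be quoted.
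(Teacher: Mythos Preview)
Your proposal is correct and follows essentially the same route as the paper: build the boundary triple for $S^*$ on $\RR^2\setminus\Sigma$, observe that the integral operator $\mathcal{C}_z$ decomposes into single-loop diagonal blocks plus off-diagonal blocks with smooth kernels (hence in $\Psi^{-\infty}$), deduce that the parameter $\Theta$ is $\bigoplus_j\Theta_j$ plus a bounded self-adjoint smoothing remainder, and then read off self-adjointness, domain regularity, and the essential spectrum from the single-loop results via Theorem~\ref{theorem_boundary_triple_abstract}.

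One small correction: in your step~(5), the phrase ``Neumann series against the smoothing remainder'' is not the right justification, since the smoothing off-diagonal coupling need not be small in operator norm. What you actually need (and what the paper uses) is Weyl's theorem: the smoothing remainder is compact in $L^2(\Sigma;\CC^2)$, so $0\in\spec_\ess(\Theta-M_z)$ if and only if $0\in\spec_\ess\bigl(\bigoplus_j(\Theta_j-M_z^j)\bigr)$, and the latter is handled blockwise by Lemma~\ref{lem3132} and Proposition~\ref{prop54}. Then Theorem~\ref{theorem_boundary_triple_abstract}(iii) translates this directly into the claimed description of $\spec_\ess A_{\Sigma,\mathcal{P}}$, without ever needing actual invertibility of the full boundary operator at those points.
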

\kp{
In particular, one easily observes that if $\Sigma$ has $N$ connected components, then for any finite set $\Xi\subset \big(-|m|,|m|\big)$ with $\#\Xi\le N$
it is possible to find a combination of parameters $\cP$ such that the essential spectrum of $A_{\Sigma,\cP}$ in $\big(-|m|,|m|\big)$ coincides with~$\Xi$.
Necessary modifications for the proof of Theorem~\ref{sevloops} are sketched in Subsection~\ref{secsevloops}.
}

%
%

\subsection{Structure of the paper} Let us shortly describe the structure of the paper. First, in Section~\ref{section_preliminaries} we recall some well-known facts on periodic pseudodifferential operators on curves, boundary triples, and Schur complements of block operator matrices. With that we study then in Section~\ref{section_boundary_triple} integral operators, which are associated to the Green function corresponding to the free Dirac operator in $\mathbb{R}^2$, and construct a boundary triple which is suitable to study the properties of $A_{\eta, \tau}$.
\kp{The two sections~\ref{section_preliminaries} and~\ref{section_boundary_triple} occupy an important portion of the text, which is due to the big number of tools from various
domains which are put together and which are rarely (if at all) used simultaneously. We believe that the construction can be of use for other two-dimensional boundary value problems
with the help of the boundary triple machinery. } 
Finally, Section~\ref{section_delta_op} is devoted to the proofs of the main results of this paper, Theorems~\ref{theorem_noncritical}--\ref{dirdeltamult}.

\subsection{Notations}

\kp{We use the convention $0\notin \NN$ and set $\NN_0:=\NN\,\cup\,\{0\}$.}

We denote the $2\times 2$ identity matrix by $\sigma_0$ and the $2\times 2$-Pauli spin matrices by
\begin{equation} \label{def_Pauli_matrices}
  \sigma_1 := \begin{pmatrix} 0 & 1 \\ 1 & 0 \end{pmatrix}, \quad
  \sigma_2 := \begin{pmatrix} 0 & -\rmi \\ \rmi & 0 \end{pmatrix}, \quad
  \sigma_3 := \begin{pmatrix} 1 & 0 \\ 0 & -1 \end{pmatrix}.
\end{equation}
They fulfil
\begin{equation} \label{anti_commutation}
  \sigma_j \sigma_k + \sigma_k \sigma_j = 2 \delta_{j k} \sigma_0, \quad j, k \in \{ 1, 2, 3 \}.
\end{equation}
For $x=(x_1,x_2) \in \mathbb{R}^2$ we write $\sigma \cdot x: = \sigma_1 x_1 + \sigma_2 x_2$ and, in the same spirit, $\sigma \cdot \nabla := \sigma_1 \partial_1 + \sigma_2 \partial_2$.

Next, $\Sigma \subset \mathbb{R}^2$ is always a $C^\infty$-loop of length $\ell > 0$, which splits $\mathbb{R}^2$ into a bounded domain $\Omega_+$ and an unbounded domain $\Omega_-$ with common boundary $\Sigma$. By $\nu$ we denote the unit normal vector field at $\Sigma$ which points outwards of $\Omega_+$, and $\bf t$ denotes the unit tangent vector at $\Sigma$. If $\gamma: [0, \ell] \rightarrow \mathbb{R}^2$ is an arc length parametrization of $\Sigma$ with positive orientation, then we have ${\bf t} = \gamma'$ and $\nu = (\gamma_2', - \gamma_1')$. We sometimes identify the vector ${\bf t} \in \mathbb{R}^2$ with the complex number $T = {\bf t}_1 + \rmi {\bf t}_2$.

If $\Omega$ is a measurable set, we write, as usual, $L^2(\Omega)$ for the classical $L^2$-spaces and $L^2(\Omega; \mathbb{C}^2) := L^2(\Omega) \otimes \mathbb{C}^2$. If $\Omega = \Sigma$, then $L^2(\Sigma)$ is based on the inner product, where the integrals are taken with respect to the arc-length. By $H^s(\Omega)$ we denote the Sobolev spaces of order $s \in \mathbb{R}$ on $\Omega$, and the Sobolev spaces on the curve $\Sigma$ are reviewed in Section~\ref{section_PPDO}.

Next, we denote
\begin{equation*}
  \mathbb{T} := \mathbb{R} / \mathbb{Z}.
\end{equation*}
Then $C^\infty(\mathbb{T})$ can be identified with the space of all $1$-periodic $C^\infty(\mathbb{R})$-functions.
For $\alpha \in \mathbb{R}$ we denote the set of periodic pseudodifferential operators of order $\alpha$ on $\mathbb{T}$ by $\Psi^\alpha$ and the set of periodic pseudodifferential operators
of order $\alpha$ on $\Sigma$ by $\Psi^\alpha_\Sigma$ (see Definitions~\ref{definition_PPDO} and~\ref{definition_PPDO_Sigma} below).

For a linear operator $A$ in a Hilbert space $\mathcal{H}$ we write $\dom A$, $\ran A$, and $\ker A$ for its domain, range, and kernel, respectively. The identity operator is often denoted by $\one$. If $A$ is self-adjoint, then we denote by $\res A, \spec A, \spec_\textup{p} A$, and $\spec_\ess A$ its resolvent set, spectrum, point, and essential spectrum, respectively. If $A$ is self-adjoint and bounded from below, then $\mathcal{N}(A, z)$ is the number of eigenvalues smaller than $z$ taking multiplicities into account. For $z > \inf \spec_\ess A$ this is understood as $\mathcal{N}(A, z)=\infty$.

Finally, $K_j$ stands for the modified Bessel function of the second kind and order~$j$.

\subsection*{Acknowledgments}
Thomas Ourmi\`eres-Bonafos and Konstantin Pankrashkin were supported in part by the PHC Amadeus 37853TB funded by the French Ministry of Foreign Affairs and the French Ministry of Higher Education, Research and Innovation.
Jussi Behrndt and Markus Holzmann were supported by the Austrian Agency for International Cooperation in Education and Research (OeAD) within the project FR 01/2017.
Thomas Ourmi\`eres-Bonafos was also supported by the ANR "D\'efi des autres savoirs (DS10) 2017" programm, reference ANR-17-CE29-0004, project molQED.

\section{Preliminaries} \label{section_preliminaries}

In this section we provide some preliminary material from functional analysis and operator theory. First, in Section~\ref{section_PPDO} we 
recall the definition and some properties of periodic pseudodifferential operators on smooth curves and some 
special integral operators of this form. Afterwards, in Section~\ref{ssec-schur} a theorem on the Schur complement of block operator matrices is 
recalled and finally, in Section~\ref{ssec-bt} boundary triples and their $\gamma$-fields and Weyl functions are briefly discussed.

\subsection{Sobolev spaces and periodic pseudodifferential operators on closed curves} \label{section_PPDO}

In this section some properties of periodic pseudodifferential operators on closed curves are discussed. 
Special realizations of such operators will play an important role in the analysis of 
Dirac operators with singular interactions later. The presentation in this section follows closely the one in \cite[Chapters~5 and~7]{SV}.

\kp{Throughout this section $\Sigma \subset \mathbb{R}^2$ is always a $C^\infty$-smooth loop of length $\ell$. 
Recall that $\mathbb{T} := \mathbb{R} / \mathbb{Z}$.  By $\gamma: \ell\mathbb{T} \rightarrow \Sigma$ we denote
a fixed arc-length parametrization of $\Sigma$, i.e. a $C^\infty$-function with $|\gamma'(\cdot)|\equiv 1$ and $\gamma(\ell\mathbb{T})=\Sigma$.
}
First, we recall the construction of the Sobolev spaces on $\Sigma$. For that we recall some constructions for Sobolev spaces of periodic functions 
on the unit interval. 
For a distribution\footnote{In \cite{SV} the notation $\mathcal{D}_1'(\mathbb{R})$ is used instead of $\cD'(\mathbb{T})$. The subindex $1$ means the $1$-periodicity.}
$f \in \cD'(\mathbb{T}) := C^\infty(\mathbb{T})'$ we write, as usual,
\begin{equation*}
\widehat f (n):=\langle f,e_{-n}\rangle_{\mathcal{D}'(\mathbb{T}),\mathcal{D}(\mathbb{T})}\in\CC, \quad e_n(t)=e^{2\pi n \rmi  t}, \qquad n\in\ZZ,
\end{equation*}
for its Fourier coefficients. Recall that a distribution $f \in \mathcal{D}'(\mathbb{T})$ can be reconstructed from its Fourier coefficients by
\begin{equation} \label{equation_Fourier}
  f = \sum_{n \in \mathbb{Z}} \widehat{f}(n) e_n,
\end{equation}
where the series converges in $\mathcal{D}'(\mathbb{T})$, see \cite[Theorem~5.2.1]{SV}. For two distributions $f,g\in\cD'(\TT)$ we denote by $f\star g$ their
convolution which is defined (via its Fourier coefficients) by
\begin{equation*}
\widehat{f\star g} (n)=\widehat f(n)\, \widehat g(n), \quad n\in\NN.
\end{equation*}
In particular, for $f,g\in L^1(\TT)$ one simply has
\begin{equation*}
f\star g = \int_{\TT} f(s)g(\cdot-s)\dd s.
\end{equation*}
For convenience we set
\begin{equation*}
  \underline{n} := \begin{cases} 1, & n=0, \\ |n|, &n \neq 0, \end{cases} \qquad n \in \mathbb{Z}.
\end{equation*}
Then for $s \in \mathbb{R}$, the Sobolev space $H^s(\TT)$ consists of the distributions $f\in \cD'(\TT)$ with
\begin{equation*}
  \| f \|_{H^s(\TT)}^2 := \sum_{n \in \mathbb{Z}} \underline{n}^{2 s} \big|\widehat{f}(n)\big|^2 < \infty.
\end{equation*}
The set $H^s(\mathbb{T})$ endowed with the above norm becomes a Hilbert space.
If $s < t$, then $H^t(\mathbb{T})$ is compactly embedded into $H^s(\mathbb{T})$.

Having the definition of Sobolev spaces on $\mathbb{T}$, we can translate this to Sobolev spaces of 
order $s \in \mathbb{R}$ on $\Sigma$. For that we define on $\mathcal{D}'(\Sigma) := C^\infty(\Sigma)'$ the linear map
\begin{equation} \label{def_U}
U:\mathcal{D}'(\Sigma)\to \mathcal{D}'(\TT),
\quad
(U f)(\varphi)=f\left(\ell^{-1}\varphi(\ell^{-1} \gamma^{-1}(\cdot))\right), \quad \varphi \in C^\infty(\mathbb{T}).
\end{equation}
It is not difficult to verify that
\begin{equation}\label{ul}
U f(t) = f(\gamma(\ell t)),\quad f \in L^1(\Sigma),\,\,t\in\TT;
\end{equation}
this property will often be used. 
For $s \in \mathbb{R}$ we define the space
\begin{equation*}
  H^s(\Sigma) := \big\{ f \in \mathcal{D}'(\Sigma): U f \in H^s(\mathbb{T}) \big\},
\end{equation*}
which, endowed with the norm
\begin{equation*}
  \| f \|_{H^s(\Sigma)} := \| U f \|_{H^s(\mathbb{T})},\quad f\in H^s(\Sigma),
\end{equation*}
is a Hilbert space. By construction, the induced map 
\begin{equation}\label{uunitary}
U:H^s(\Sigma)\rightarrow H^s(\mathbb{T}),\quad s \in \mathbb{R},
\end{equation}
is unitary. 
For $f\in H^0(\Sigma)$ it is useful to observe that
\begin{equation*}
\Vert f\Vert_{H^0(\Sigma)}^2 =\| U f \|^2_{H^0(\mathbb{T})}= \sum_{n \in \mathbb{Z}}  \big|(U f,e_n)_{L^2(\TT)}\big|^2 
=\| U f \|^2_{L^2(\mathbb{T})} = \ell^{-1} \Vert f\Vert^2_{L^2(\Sigma)}.
\end{equation*}
Note also that the definition of $H^s(\Sigma)$ implies that $C^\infty(\Sigma)$ is dense in $H^s(\Sigma)$ for all $s \in \mathbb{R}$.

Next, we recall the definition of periodic pseudodifferential operators on $\mathbb{T}$ and translate this concept to 
periodic pseudodifferential operators on $\Sigma$.
Define first the linear operator $\omega$ acting on mappings $F:\ZZ\to \CC$ by
\begin{equation} \label{difference_op}
  (\omega F)(n) := F(n+1)-F(n), \quad n\in\ZZ.
\end{equation}

\begin{definition} \label{definition_PPDO}
  A linear operator $H$ acting on $C^\infty(\mathbb{T})$ is called a \emph{periodic pseudodifferential operator of order $\alpha \in \RR$}, 
  if there exists a function $h: \TT \times \ZZ\rightarrow\CC$
  with $h(\cdot, n) \in C^\infty(\TT)$ for each $n\in\ZZ$
  and 
  \begin{equation}
	       \label{eqopa}
    H u(t) = \sum_{n \in \ZZ}\, h(t, n) \,\widehat{u}(n) \,\,e_n(t) \text{ for all }
		u \in C^\infty(\TT),
  \end{equation}
  and for all $k, l \in \NN_0$ there exist constants $c_{k, l}>0$ such that
  \begin{equation*}
    \left| \frac{\partial^k}{\partial t^k} \,\omega_n^l h(t, n) \right| \leq c_{k,l}
		\,\underline{n}^{\alpha - l} \quad \text{for all } n \in \mathbb{Z};
  \end{equation*}
	where $\omega_n$ means the application of $\omega$ to the second argument of $h$.
	The class of all periodic pseudodifferential operators of order $\alpha$ is denoted by $\Psi^\alpha$, and we set
	\begin{equation*}
	\Psi^{-\infty}:=\bigcap_{\alpha\in\RR}\Psi^\alpha.
	\end{equation*}
\end{definition}
	
We note that one has the obvious inclusions $\Psi^\alpha\subset\Psi^\beta$ for $\alpha<\beta$.
Moreover, in the spirit of~\eqref{equation_Fourier} the periodic pseudodifferential operator $H$ is determined by its Fourier coefficients
\begin{equation*}
  \widehat{H u}(m) = \sum_{n \in \mathbb{Z} } \widehat{u}(n) \big\langle h(\cdot,n) e_n, e_{-m} \big\rangle_{\mathcal{D}'(\mathbb{T}),\mathcal{D}(\mathbb{T})}.
\end{equation*}
In particular, if $h$ is independent of $t$, then we simply have $\widehat{H u}(n) = h(n) \widehat{u}(n)$.
The following properties of periodic pseudodifferential operators can be found in \cite[Theorem~7.3.1 and Theorem~7.8.1]{SV}.

\begin{prop} \label{proposition_properties_PPDO}
 
  \begin{itemize}
    \item[\textup{(i)}] 
		 Let  $H \in \Psi^\alpha$. Then for any $s\in\RR$ the operator $H$
		uniquely extends by continuity to a bounded operator $H^s(\TT) \rightarrow H^{s-\alpha}(\TT)$; this extension will be denoted by the same symbol $H$.
    \item[\textup{(ii)}] For any  $H\in\Psi^\alpha$ and $G\in\Psi^\beta$ one has
		\[
		H + G \in \Psi^{\max\{ \alpha, \beta\} }, \quad H G\in\Psi^{\alpha+\beta}, \quad HG-GH\in\Psi^{\alpha+\beta-1}.
		\]
  \end{itemize}
\end{prop}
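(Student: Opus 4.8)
The plan is to follow the standard route for pseudodifferential calculus, adapted to the periodic ("toroidal") setting of \cite[Chapter~7]{SV}, and to verify in each case that the symbol estimates of Definition~\ref{definition_PPDO} are preserved. For part (i), I would first observe that the action \eqref{eqopa} makes sense on $C^\infty(\TT)$ and that, for $u\in C^\infty(\TT)$,
\[
\widehat{Hu}(m)=\sum_{n\in\ZZ}\widehat u(n)\,\big\langle h(\cdot,n)e_n,e_{-m}\big\rangle=\sum_{n\in\ZZ}\widehat{h(\cdot,n)}(m-n)\,\widehat u(n),
\]
so $H$ is the operator with matrix kernel $K(m,n)=\widehat{h(\cdot,n)}(m-n)$ with respect to the orthonormal basis $(e_n)$. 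The symbol estimate $|\partial_t^k\omega_n^l h(t,n)|\le c_{k,l}\underline n^{\alpha-l}$ gives, after $k$-fold integration by parts in $t$ in the Fourier coefficient $\widehat{h(\cdot,n)}(m-n)$, the decay
\[
\big|\widehat{h(\cdot,n)}(m-n)\big|\le C_k\,\underline{m-n}^{-k}\,\underline n^{\alpha}
\]
for every $k\in\NN_0$. To bound $H:H^s(\TT)\to H^{s-\alpha}(\TT)$ one estimates $\sum_m\underline m^{2(s-\alpha)}|\widehat{Hu}(m)|^2$ by a Schur-type test: using $\underline m^{s-\alpha}\le C\,\underline{m-n}^{|s-\alpha|}\underline n^{s-\alpha}$ (Peetre's inequality on $\ZZ$) and the rapid decay above with $k$ chosen larger than $|s-\alpha|+1$, the kernel $\underline m^{s-\alpha}K(m,n)\underline n^{-s}$ is bounded by a summable function of $m-n$ times $\underline n^{-s}\underline n^{s}=1$... more precisely one gets $|\underline m^{s-\alpha}K(m,n)\underline n^{-s}|\le C_k\,\underline{m-n}^{-k+|s-\alpha|}$, which is the kernel of a bounded convolution operator on $\ell^2(\ZZ)$ for $k$ large. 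Hence $\|Hu\|_{H^{s-\alpha}}\le C\|u\|_{H^s}$, and the extension is unique by density of $C^\infty(\TT)$ in $H^s(\TT)$.

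For part (ii), the sum statement $H+G\in\Psi^{\max\{\alpha,\beta\}}$ is immediate: the symbol of $H+G$ is $h+g$, and $\underline n^{\alpha-l}+\underline n^{\beta-l}\le 2\,\underline n^{\max\{\alpha,\beta\}-l}$, so the symbol estimates hold with the sum of constants. The substance is the composition: I would compute the symbol of $HG$. Writing $G$ via \eqref{eqopa} and then applying $H$, one finds that $HG\in\Psi^{\alpha+\beta}$ with symbol admitting an asymptotic expansion whose leading term is $h(t,n)g(t,n)$; concretely, following \cite[Theorem~7.8.1]{SV}, the symbol of $HG$ is
\[
(h\circ g)(t,n)\sim\sum_{l\ge0}\frac{1}{l!}\big(\partial_\xi^l h\big)(t,n)\,\big(D_t^l g\big)(t,n),
\]
where $\partial_\xi$ on the discrete side is the difference operator $\omega_n$ and $D_t=-\rmi\partial_t/(2\pi)$ (up to the normalization used in \cite{SV}); the key point is that each term in the expansion lies in $\Psi^{\alpha+\beta-l}$ by the symbol estimates, using Leibniz for $\partial_t$ and a discrete Leibniz rule for $\omega_n$ together with $\underline{n}^{(\alpha-l_1)}\underline n^{(\beta-l_2)}=\underline n^{\alpha+\beta-l_1-l_2}$. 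Truncating the expansion after finitely many terms leaves a remainder in $\Psi^{-\infty}$, so $HG\in\Psi^{\alpha+\beta}$. Finally, for the commutator $HG-GH$: the leading terms $hg$ and $gh$ of the two composition symbols cancel since scalar-valued symbols commute pointwise, so the symbol of $HG-GH$ starts at order $l=1$, i.e.\ $HG-GH\in\Psi^{\alpha+\beta-1}$; its principal symbol is $\omega_n h\cdot D_t g-\omega_n g\cdot D_t h$ (a discrete Poisson bracket), which lies in $\Psi^{\alpha+\beta-1}$ by the estimates.

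The main obstacle I expect is the bookkeeping in the composition formula: one must control the convergence of the double series defining the symbol of $HG$, justify the asymptotic expansion with remainder estimates, and in the periodic setting handle the discrete Taylor/Leibniz calculus for the difference operator $\omega_n$ (including that $\omega_n$ applied to products obeys $\omega_n(FG)(n)=(\omega_n F)(n)G(n+1)+F(n)(\omega_n G)(n)$, so shifts $n\mapsto n+1$ appear and one needs $\underline{n+1}\le C\underline n$). All of this is done carefully in \cite[Chapter~7]{SV}, so I would cite \cite[Theorem~7.3.1 and Theorem~7.8.1]{SV} for the technical core and only indicate the above structure, as the proposition statement itself explicitly attributes these facts to that source.
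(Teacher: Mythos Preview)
Your proposal is correct and aligns with the paper's approach: the paper gives no proof at all for this proposition, simply citing \cite[Theorem~7.3.1 and Theorem~7.8.1]{SV} in the sentence preceding the statement. Your sketch of the Schur/Peetre argument for (i) and the composition/commutator symbol expansion for (ii) is accurate and more detailed than what the paper provides, and your concluding remark that one should ultimately cite \cite{SV} for the technical core is exactly what the paper does.
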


Having the definition of periodic pseudodifferential operators on $\mathbb{T}$ and the bijective map $U$ in~\eqref{def_U}  at hand,
it is now straightforward to define periodic pseudodifferential operators on the loop $\Sigma$.

\begin{definition} \label{definition_PPDO_Sigma}
  A linear map $H:C^\infty(\Sigma)\to \cD'(\Sigma)$ is called
  a periodic pseudodifferential operator of order $\alpha\in\RR$ on $\Sigma$, if there exists
  a periodic pseudodifferential operator $H_0$ of order $\alpha$ on $\TT$ such that
  \begin{equation*}
    H=U^{-1}H_0 U.
  \end{equation*}
  We denote by $\Psi_\Sigma^\alpha$ the linear space of all
  periodic pseudodifferential operators of order $\alpha\in\RR$ on $\Sigma$ and set
  \begin{equation*}
    \Psi_\Sigma^{-\infty}:=\bigcap_{\alpha\in\RR} \Psi^\alpha_\Sigma.
  \end{equation*}
\end{definition}

In view of Proposition~\ref{proposition_properties_PPDO} and the fact that $U$ in \eqref{uunitary} is unitary 
it is clear that each $H \in \Psi_\Sigma^\alpha$ induces a unique bounded operator $H: H^s(\Sigma) \rightarrow H^{s-\alpha}(\Sigma)$.

In what follows we discuss several special periodic pseudodifferential operators and their mapping properties which will play an important 
role in the analysis in the main part of this paper. First, let $c_0 > 0$ be a constant and consider the operator
\begin{equation}\label{def_L}
L^\alpha u(t) = \sum_{n \in \ZZ}\, \bigl(c_0^2 + |n|\bigr)^{\frac{\alpha}{2}} \,\widehat{u}(n) \,\,e_n(t),\quad
		u \in C^\infty(\TT),\quad\alpha\in\RR,
\end{equation}
on $C^\infty(\TT)$. Note that the Fourier coefficients of $L^\alpha u$ are $\widehat{L^\alpha u}(n) = (c_0^2 + |n|)^{\frac{\alpha}{2}} \,\widehat{u}(n)$ for $n\in\ZZ$.
One can show that $L^\alpha \in \Psi^{\frac{\alpha}{2}}$ and hence $L^\alpha$ induces an isomorphism from $H^{s}(\TT)$ to $H^{s-\frac{\alpha}{2}}(\TT)$ for any $s\in\RR$. 
The operator $L=L^1$ will be of particular importance in the following.

Using the operator $U$ from \eqref{def_U} we introduce 
\begin{equation} \label{def_Lambda}
 \Lambda^\alpha:= U^{-1} L^\alpha U \in \Psi_\Sigma^{\frac{\alpha}{2}},\quad\alpha\in\RR,
\end{equation}
and conclude that $\Lambda^\alpha:H^s(\Sigma)\rightarrow H^{s-\frac{\alpha}{2}}(\Sigma)$ is an isomorphism for any 
$\alpha,s \in \mathbb{R}$. Moreover, the above definition of $\Lambda$ implies 
that $\Lambda^\alpha \Lambda^\beta = \Lambda^{\alpha + \beta}$ for all $\alpha, \beta \in \mathbb{R}$. 
We note that the realization of $\Lambda=\Lambda^1$ for $s=\frac{1}{2}$ is viewed as an unbounded 
self-adjoint operator in $L^2(\Sigma)$ satisfying $\Lambda \geq c_0$. 
In particular, by varying $c_0$ we get that $\Lambda$ is a uniformly positive operator and that its lower bound can be arbitrarily large.

With the aid of $\Lambda$ we can prove now the following lemma.

\begin{lem}\label{sympsi}
For $H\in \Psi^\alpha_\Sigma$ consider the induced linear operator in $L^2(\Sigma)$ defined by
\begin{gather*}
H_\infty u=H u, \quad \dom H_\infty=C^\infty(\Sigma),
\end{gather*}
and assume that $H_\infty$ is symmetric. Then the adjoint $H_\infty^*$ is given by 
\begin{equation*}
  H_\infty^*f=Hf, \quad \dom H_\infty^*=\big\{f\in L^2(\Sigma): \, Hf\in L^2(\Sigma)\big\}.
\end{equation*}
\end{lem}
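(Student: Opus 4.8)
The plan is to identify $H_\infty^*$ with the \emph{maximal} realization $f\mapsto Hf$ by a duality argument on the curve. As a harmless preliminary reduction, since $\Psi^\alpha_\Sigma\subseteq\Psi^{\alpha'}_\Sigma$ for $\alpha\le\alpha'$ and neither $\dom H_\infty$ nor the condition ``$Hf\in L^2(\Sigma)$'' depends on the order we assign to $H$, I would replace $\alpha$ by $\max\{\alpha,0\}$ and thus assume $\alpha\ge 0$. Then there are continuous dense inclusions $H^\alpha(\Sigma)\hookrightarrow L^2(\Sigma)\hookrightarrow H^{-\alpha}(\Sigma)$, and, using the Fourier description of $\|\cdot\|_{H^s(\Sigma)}$, the unitarity of $U$ from \eqref{uunitary}, and the Cauchy--Schwarz inequality, the inner product of $L^2(\Sigma)$ extends to a continuous sesquilinear pairing $\langle\,\cdot\,,\,\cdot\,\rangle$ on $H^{-\alpha}(\Sigma)\times H^\alpha(\Sigma)$ which realizes $H^{-\alpha}(\Sigma)$ as the anti-dual of $H^\alpha(\Sigma)$. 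In particular $\langle w,v\rangle=(w,v)_{L^2(\Sigma)}$ as soon as $w\in L^2(\Sigma)$, and $\langle\,\cdot\,,\,\cdot\,\rangle$ is non-degenerate in its first variable. I also recall that $H$ extends to a bounded operator $H^s(\Sigma)\to H^{s-\alpha}(\Sigma)$ for every $s\in\RR$ and that $C^\infty(\Sigma)$ is dense in every $H^s(\Sigma)$.

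The heart of the argument is the identity
\[
  \langle Hu,v\rangle=(u,Hv)_{L^2(\Sigma)},\qquad u\in L^2(\Sigma),\ v\in H^\alpha(\Sigma).
\]
Both sides are sesquilinear forms on $L^2(\Sigma)\times H^\alpha(\Sigma)$ which are separately continuous --- the left one because $H\colon L^2(\Sigma)\to H^{-\alpha}(\Sigma)$ is bounded and the pairing is continuous, the right one because $H\colon H^\alpha(\Sigma)\to L^2(\Sigma)$ is bounded. On $C^\infty(\Sigma)\times C^\infty(\Sigma)$, which is dense in $L^2(\Sigma)\times H^\alpha(\Sigma)$, the two forms agree: there $\langle Hu,v\rangle=(Hu,v)_{L^2(\Sigma)}$, and $(Hu,v)_{L^2(\Sigma)}=(u,Hv)_{L^2(\Sigma)}$ by the assumed symmetry of $H_\infty$. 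Hence they agree on all of $L^2(\Sigma)\times H^\alpha(\Sigma)$.

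From here both inclusions follow quickly. If $f\in L^2(\Sigma)$ with $Hf\in L^2(\Sigma)$, then the identity with $u=f$ and $v=\varphi\in C^\infty(\Sigma)=\dom H_\infty$, together with $\langle Hf,\varphi\rangle=(Hf,\varphi)_{L^2(\Sigma)}$, gives $(Hf,\varphi)_{L^2(\Sigma)}=(f,H_\infty\varphi)_{L^2(\Sigma)}$, equivalently $(H_\infty\varphi,f)_{L^2(\Sigma)}=(\varphi,Hf)_{L^2(\Sigma)}$, for every $\varphi\in\dom H_\infty$; thus $f\in\dom H_\infty^*$ with $H_\infty^*f=Hf$. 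Conversely, let $f\in\dom H_\infty^*$ and $g:=H_\infty^*f\in L^2(\Sigma)$, so that $(f,H_\infty\varphi)_{L^2(\Sigma)}=(g,\varphi)_{L^2(\Sigma)}$ for all $\varphi\in C^\infty(\Sigma)$. Combining this with the identity and with $\langle g,\varphi\rangle=(g,\varphi)_{L^2(\Sigma)}$ yields $\langle Hf-g,\varphi\rangle=0$ for all $\varphi\in C^\infty(\Sigma)$; by density of $C^\infty(\Sigma)$ in $H^\alpha(\Sigma)$ and non-degeneracy of the pairing, $Hf=g$ in $H^{-\alpha}(\Sigma)$. In particular $Hf=g\in L^2(\Sigma)$, so $f$ lies in the claimed domain and $H_\infty^*f=Hf$. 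Together with the previous inclusion this proves the lemma.

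I expect the only delicate point to be that one cannot argue by approximating $f$ on the ``rough'' side: $H$ need not be elliptic, so $f\in L^2(\Sigma)$ with $Hf\in L^2(\Sigma)$ need not belong to $H^\alpha(\Sigma)$, and smooth approximants of $f$ in the $H^\alpha(\Sigma)$-norm are unavailable. This forces the density to be used on the test-function side, via the $H^{-\alpha}(\Sigma)$--$H^\alpha(\Sigma)$ duality, and the small amount of care needed is precisely in setting up that duality --- its compatibility with the $L^2(\Sigma)$-inner product and its non-degeneracy.
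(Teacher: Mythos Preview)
Your proof is correct and takes a genuinely different route from the paper's.

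The paper argues both inclusions by explicit Fourier computations. For the inclusion ``maximal $\subset$ adjoint'' it approximates $f$ by smooth $f_n$ in $L^2(\Sigma)$ and shuffles regularity using the concrete operators $L^{\pm 2\alpha}$ (the Fourier multipliers by $(c_0^2+|n|)^{\pm\alpha}$), computing $(H_\infty u,f)_{L^2}=\ell(L^{2\alpha}Uu,L^{-2\alpha}UHf)_{L^2(\TT)}$ and then collapsing this to $(u,Hf)_{L^2}$ when $Hf\in L^2$. For the reverse inclusion it plugs in a specific sequence of test functions $u_k$ built from the Fourier truncation of $UHf$, so that $(H_\infty u_k,f)_{L^2}$ becomes the partial sum $\ell\sum_{|n|\le k}|\widehat{UHf}(n)|^2$, and boundedness of the adjoint functional forces the full series to converge. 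You instead set up the $H^{-\alpha}(\Sigma)$--$H^\alpha(\Sigma)$ duality once and derive the single identity $\langle Hu,v\rangle=(u,Hv)_{L^2}$ on $L^2\times H^\alpha$ by density from $C^\infty\times C^\infty$; both inclusions are then immediate consequences. Your argument is cleaner and does not rely on the explicit Fourier basis or the operators $L^{\pm 2\alpha}$, so it would transfer verbatim to any setting where a Sobolev scale with the usual duality is available. The paper's approach, by contrast, is more hands-on and makes visible exactly where the Fourier coefficients of $Hf$ are being tested; the clever choice of $u_k$ in the second half is essentially a concrete incarnation of the non-degeneracy of your pairing. Your closing remark about why one cannot approximate on the ``rough'' side is also to the point and explains why some form of duality---whether packaged abstractly as you do or unpacked via $L^{\pm 2\alpha}$ as the paper does---is unavoidable.
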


\begin{proof} 
The result is trivial for $\alpha\le 0$ due to the boundedness of $H_\infty$; cf. Proposition~\ref{proposition_properties_PPDO}. 
Hence, we may assume that $\alpha>0$.
Recall that $f \in \dom H_\infty^*$ if and only if the mapping
\begin{equation} \label{def_adjoint}
  C^\infty(\Sigma) \ni u \mapsto (H_\infty u, f)_{L^2(\Sigma)}
\end{equation}
can be extended to a bounded functional on $L^2(\Sigma)$.

Let $f\in L^2(\Sigma)$ and $f_n\in C^\infty(\Sigma)$ such that $f_n\to f$ in $L^2(\Sigma)$.
For $u\in C^\infty(\Sigma)$ and the map $U$ from~\eqref{def_U}-\eqref{uunitary} one has
\begin{equation*}
\begin{split}
 ( H_\infty u, f)_{L^2(\Sigma)}&=\lim_{n\to\infty} (H_\infty u, f_n )_{L^2(\Sigma)}=\lim_{n\to\infty} (u,  H f_n)_{L^2(\Sigma)} =\lim_{n\to\infty} \ell (U u, U H f_n)_{L^2(\mathbb{T})}\\
&=\lim_{n\to\infty} \ell ( L^{2\alpha} U u,  L^{-2\alpha} U H f_n)_{L^2(\mathbb{T})}=\ell ( L^{2\alpha} U u, L^{-2\alpha} U H f)_{L^2(\TT)},
\end{split}
\end{equation*}
where we have used in the last step that $L^{-2\alpha} U H=L^{-2\alpha} U HU^{-1} U$ gives rise to a bounded operator 
from $L^2(\Sigma)\rightarrow L^2(\mathbb{T})$ due to $L^{-2\alpha}\in\Psi^{-\alpha}$, $U HU^{-1} \in\Psi^{\alpha}$, and 
Proposition~\ref{proposition_properties_PPDO}. Therefore, if $f \in L^2(\Sigma)$ is such that $H f \in L^2(\Sigma)$, then 
$$\ell ( L^{2\alpha} U u, L^{-2\alpha} U H f)_{L^2(\TT)}=\ell ( U u, U H f)_{L^2(\TT)}=(u, H f)_{L^2(\Sigma)}$$
and
the functional 
in~\eqref{def_adjoint} is bounded,
\begin{equation*}
\vert (H_\infty u, f)_{L^2(\Sigma)}\vert =  \vert  (  u,  H f)_{L^2(\Sigma)}\vert
\leq \Vert u\Vert_{L^2(\Sigma)}\Vert H f\Vert_{L^2(\Sigma)},
\end{equation*}
and hence, $f \in \dom H_\infty^*$ and $H_\infty^* f = H f$.

On the other hand, for $f\in\dom H_\infty^*$ and every $u\in C^\infty(\Sigma)$ the functional in~\eqref{def_adjoint} is bounded. For the special choice
\begin{equation*}
 u_k = \sum_{|n| \leq k} \widehat{U H f}(n) U^{-1} e_n \in C^\infty(\Sigma),\quad k\in\NN,
\end{equation*}
one has $\widehat{U u_k}(n) = \widehat{U H f}(n)$ for $ |n| \leq k$ and $\widehat{U u_k}(n)=0$ for $|n|>k$, and hence
\begin{equation*}
\begin{split}
 ( H_\infty u_k, f)_{L^2(\Sigma)}&=\ell ( L^{2\alpha} U u_k, L^{-2\alpha} U H f)_{L^2(\TT)}\\
 &=\ell\sum_{n\in\ZZ} \widehat{L^{2\alpha} U u_k}(n) \overline{\widehat{L^{-2\alpha} U H f}(n)} \\
&=\ell\sum_{n\in\ZZ} (c_0^2+|n|)^\alpha\widehat{U u_k}(n) \overline{(c_0^2+|n|)^{-\alpha} \widehat{U H f}(n)}\\
&=\ell\sum_{|n| \leq k} \bigl\vert \widehat{U H f}(n)\bigr\vert^2.
\end{split}
\end{equation*}
Sending $k \rightarrow \infty$ we see that a necessary condition for the functional in~\eqref{def_adjoint} to be bounded on $L^2(\Sigma)$ is given by
\begin{equation*}
  \sum_{n\in\ZZ} \big|\widehat{U H f}(n) \big|^2 < \infty,
\end{equation*}
i.e. $U H f\in L^2(\TT)$, and hence
$H f \in L^2(\Sigma)$. We have shown that $f \in \dom H_\infty^*$ if and only if $H f \in L^2(\Sigma)$, which finishes the proof.
\end{proof}

Next, we discuss that several types of integral operators on $\mathbb{T}$ are in fact periodic pseudodifferential operators, which allows us to deduce their mapping properties from the general theory. Note that via the isomorphism $U$ from \eqref{def_U} the results can be translated to integral operators on $\Sigma$. To formulate the following first result, recall the definition of the map $\omega$ from~\eqref{difference_op}; the proof of this proposition can be found in \cite[Theorem~7.6.1]{SV}.

\begin{prop} \label{proposition_integral_op_PPDO}
  Let $\alpha \in \mathbb{R}$ and 
	$\kappa \in \cD'(\mathbb{T})$ such that for any $j \in \NN_0$
	there exists $c_j > 0$ with $\big|\omega^j \widehat{\kappa}(n)\big| \leq c_j \underline{n}^{\alpha-j}$ for all $n \in \ZZ$. Let $h \in C^\infty(\TT^2)$ and 
	the operator $H$ be defined on $C^\infty(\TT)$ by
  \begin{equation}
	     \label{eq-AA}
    (H u)(t) := \Big(\kappa \star \big(h(t,\cdot) u\big)\Big) (t), \quad u \in C^\infty(\mathbb{T}).
  \end{equation}
	Then $H\in \Psi^\alpha$.
	%
\end{prop}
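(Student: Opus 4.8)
The plan is to rewrite the operator $H$ in the standard symbol form~\eqref{eqopa} and then verify the symbol estimates of Definition~\ref{definition_PPDO}. First I would expand the convolution and the function $h(t,\cdot)$ into Fourier series. Writing $h(t,\theta) = \sum_{k\in\ZZ} \widehat{h}_k(t)\, e_k(\theta)$ with $\widehat{h}_k(t) := \int_\TT h(t,\theta)\,e_{-k}(\theta)\dd\theta$, the product $h(t,\cdot)u$ has Fourier coefficients $\sum_{k} \widehat{h}_k(t)\,\widehat u(n-k)$, and hence the convolution with $\kappa$ gives
\begin{equation*}
(Hu)(t) = \sum_{n\in\ZZ}\Big(\sum_{k\in\ZZ} \widehat\kappa(n)\,\widehat h_k(t)\,\widehat u(n-k)\Big) e_n(t).
\end{equation*}
After re-indexing $m = n-k$, i.e. replacing the summation so that $\widehat u(m)$ appears with $e_m(t)$, one reads off that $H$ has the form~\eqref{eqopa} with symbol
\begin{equation*}
h_H(t,m) = \sum_{k\in\ZZ} \widehat h_k(t)\, e_k(t)\,\widehat\kappa(m+k).
\end{equation*}
Since $h\in C^\infty(\TT^2)$, the Fourier coefficients $\widehat h_k(t)$ decay faster than any power of $\underline k$, uniformly with all $t$-derivatives; this is what will make the series converge and be smooth in $t$.

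Next I would verify the estimates $|\partial_t^j \omega_m^l h_H(t,m)| \le c_{j,l}\,\underline m^{\alpha-l}$. The $t$-derivatives fall only on $\widehat h_k(t) e_k(t)$, producing at worst polynomial factors in $k$ against the super-polynomial decay of $\widehat h_k$, so they are harmless. The difference operator $\omega_m$ acts only on $\widehat\kappa(m+k)$, and the key point is that $\omega_m^l\big[\widehat\kappa(m+k)\big] = (\omega^l\widehat\kappa)(m+k)$, which by hypothesis is bounded by $c_l\,\underline{m+k}^{\,\alpha-l}$. Thus the estimate reduces to controlling
\begin{equation*}
\sum_{k\in\ZZ} \big|\partial_t^j\big(\widehat h_k(t) e_k(t)\big)\big|\;\underline{m+k}^{\,\alpha-l}
\;\lesssim\; \sum_{k\in\ZZ} c_N\,\underline k^{-N}\,\underline{m+k}^{\,\alpha-l}
\end{equation*}
for every $N$. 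The main (and only genuine) obstacle is therefore the elementary but slightly delicate lemma that for $N$ large enough one has $\sum_{k\in\ZZ}\underline k^{-N}\,\underline{m+k}^{\,\beta} \le C_{N,\beta}\,\underline m^{\,\beta}$ for any fixed $\beta\in\RR$, uniformly in $m\in\ZZ$. This is a Peetre-type inequality: split the sum into $|k|\le \tfrac12\underline m$, where $\underline{m+k}\asymp\underline m$, and $|k|>\tfrac12\underline m$, where $\underline k^{-N}$ already beats any polynomial in $m$ once $N>|\beta|+1$. Applying this with $\beta = \alpha - l$ (for positive $\beta$ one uses $\underline{m+k}^{\beta}\le C(\underline m^\beta + \underline k^\beta)$ first) yields exactly the bound $c_{j,l}\,\underline m^{\alpha-l}$ required, and closes the proof that $H\in\Psi^\alpha$. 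I would also remark that the same re-indexing shows $h_H(\cdot,m)\in C^\infty(\TT)$ for each $m$, so all hypotheses of Definition~\ref{definition_PPDO} are met.
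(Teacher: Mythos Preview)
Your proof is correct and follows the standard approach for this result. The paper does not give its own proof of this proposition; it cites \cite[Theorem~7.6.1]{SV}, where exactly this argument (expanding $h(t,\cdot)$ in a Fourier series, re-indexing to read off the symbol, and invoking the Peetre-type bound $\sum_k\underline k^{-N}\underline{m+k}^{\,\beta}\le C\underline m^{\,\beta}$) is carried out in detail. So there is nothing to contrast --- your write-up is precisely the textbook proof the paper is pointing to.
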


We remark that for $\kappa\in L ^1(\TT)$ the operator $H$ in \eqref{eq-AA} is an integral operator,
\begin{equation*}
 (H u)(t) := \int_\TT \kappa(t - s) h(t,s) u(s)\dd s, \quad u \in C^\infty(\mathbb{T}).
\end{equation*}
As a corollary we obtain:

\begin{cor} \label{corollary_smooth_PPDO}
  Let $h \in C^\infty(\TT^2)$. Then the integral operator acting as
  \begin{equation*}
    H u(t) := \int_\TT h(t, s) u(s) \dd s, \quad u \in C^\infty(\TT),
  \end{equation*}
  belongs to $\Psi^{-\infty}$.
\end{cor}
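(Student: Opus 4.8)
The plan is to realize $H$ in the form \eqref{eq-AA} covered by Proposition~\ref{proposition_integral_op_PPDO}, with a suitable choice of the convolution kernel $\kappa$, and then to argue that the resulting order can be taken arbitrarily negative. Concretely, I would take $\kappa := \mathds{1}_\TT$, the constant function equal to $1$ on $\TT$ (which is identified with the element of $L^1(\TT)\subset\cD'(\TT)$ whose Fourier coefficients are $\widehat{\kappa}(0)=1$ and $\widehat{\kappa}(n)=0$ for $n\neq 0$). Then for $u\in C^\infty(\TT)$ one has, by the integral form of convolution recorded just after Proposition~\ref{proposition_integral_op_PPDO},
\[
\bigl(\kappa\star(h(t,\cdot)u)\bigr)(t)=\int_\TT \kappa(t-s)\,h(t,s)\,u(s)\dd s=\int_\TT h(t,s)\,u(s)\dd s=(Hu)(t),
\]
so $H$ is exactly of the form \eqref{eq-AA} with this $\kappa$ and the given smooth $h\in C^\infty(\TT^2)$.

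Next I would check the decay hypothesis on $\kappa$ in Proposition~\ref{proposition_integral_op_PPDO}. Since $\widehat{\kappa}$ is supported at $n=0$, for every $j\in\NN_0$ the finite differences $\omega^j\widehat{\kappa}(n)$ vanish for all $|n|$ large (in fact $\omega^j\widehat{\kappa}(n)=0$ whenever $|n|>j$), so trivially for \emph{every} $\alpha\in\RR$ there is a constant $c_j>0$ with $|\omega^j\widehat{\kappa}(n)|\le c_j\,\underline{n}^{\alpha-j}$ for all $n\in\ZZ$ — indeed one may take $c_j:=\max_{|n|\le j}|\omega^j\widehat{\kappa}(n)|$ times a harmless constant absorbing the finitely many values of $\underline{n}^{\,j-\alpha}$ on $|n|\le j$. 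Hence Proposition~\ref{proposition_integral_op_PPDO} applies with \emph{any} $\alpha\in\RR$ and yields $H\in\Psi^\alpha$. Since $\alpha$ was arbitrary, $H\in\bigcap_{\alpha\in\RR}\Psi^\alpha=\Psi^{-\infty}$, which is the claim.

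There is no real obstacle here: the only point requiring a line of care is the verification that the single-Fourier-mode kernel $\kappa=\mathds{1}_\TT$ satisfies the difference estimate for arbitrarily negative $\alpha$, which reduces to the elementary observation that a finitely supported sequence beats any polynomial bound. Everything else is a direct specialization of Proposition~\ref{proposition_integral_op_PPDO}. If one prefers to avoid even this minor bookkeeping, an equivalent route is to note that $Hu=\langle u,\overline{h(t,\cdot)}\rangle$ has Fourier coefficients $\widehat{Hu}(m)=\sum_n\widehat u(n)\,\widehat{h}_m(-n)$ where $\widehat{h}_m$ denotes the Fourier coefficients (in the second variable) of the function $t\mapsto h(t,\cdot)e_{-m}(t)\in C^\infty(\TT^2)$; smoothness of $h$ gives rapid decay of all these coefficients jointly in $m$ and $n$, which again places $H$ in $\Psi^\alpha$ for every $\alpha$. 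Either way the corollary follows immediately.
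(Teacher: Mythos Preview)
Your proof is correct and is exactly the argument the paper has in mind: the corollary is stated without a separate proof, directly after Proposition~\ref{proposition_integral_op_PPDO}, because one simply takes $\kappa=\mathds{1}_\TT$ in~\eqref{eq-AA} and observes that the finitely supported sequence $\widehat{\kappa}$ satisfies the difference bounds for every $\alpha\in\RR$.
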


In the following proposition we discuss a class of integral operators that appear quite frequently in our applications. 

\begin{prop}\label{prop15}
Let $m\in\NN_0$, let
\begin{equation*}
  a:\TT^2\to \CC \quad \text{and} \quad \rho:\TT\to  \CC
\end{equation*}
be $C^\infty$-functions, where $\rho$ is injective with $\rho'(t) \neq 0$ for all $t \in \mathbb{T}$.
Set
\[
\kappa_m(z) := z^m \log |z| \text{  for $z \in \CC\setminus\{0\}$}
\]
and define the integral operator
  \begin{equation*}
    H_m u(t) := \int_\mathbb{T} \kappa_m\big(\rho(t)-\rho(s)\big) \,a(t,s)\, u(s) \,\dd s,
		\quad u \in C^\infty(\TT).
  \end{equation*}
	Then $H_m \in \Psi^{-m-1}$. Furthermore, in the special case $a\equiv 1$ and $m = 0$ one has
	\begin{equation}
	    \label{eql}
	\one + 2 L H_0 L \in\Psi^{-1},
	\end{equation} 
	where the operator $L$ is defined by~\eqref{def_L}.
\end{prop}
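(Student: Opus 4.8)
The plan is to reduce the statement about $H_m$ to Proposition~\ref{proposition_integral_op_PPDO} by extracting from the kernel $\kappa_m(\rho(t)-\rho(s))\,a(t,s)$ a convolution factor with the right Fourier decay. First I would write $\rho(t)-\rho(s) = \rho'(s)(t-s)\,g(t,s)$ where, by a Taylor expansion of $\rho$ around $s$ and the hypothesis $\rho'(s)\ne0$, the function $g$ is $C^\infty$ on $\mathbb{T}^2$, $1$-periodic in both arguments after the obvious identification, and satisfies $g(s,s)=1$, hence $g$ is nonvanishing in a neighbourhood of the diagonal; away from the diagonal $t-s$ is bounded away from $0$ so the whole kernel is smooth there. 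Since $\kappa_m(z)=z^m\log|z|$, we get
\begin{equation*}
  \kappa_m\big(\rho(t)-\rho(s)\big) = (t-s)^m \big(\rho'(s)g(t,s)\big)^m \Big( \log|t-s| + \log\big|\rho'(s)g(t,s)\big| \Big).
\end{equation*}
The term with $\log|\rho'(s)g(t,s)|$ contributes, after multiplication by $(t-s)^m(\rho'(s)g)^m a(t,s)$, a kernel that is $C^\infty$ near the diagonal (for $m\ge1$ because of the vanishing factor $(t-s)^m$; for $m=0$ because $\log|\rho'(s)g(t,s)|$ is itself smooth, $g$ being nonzero there) and smooth away from it, so by Corollary~\ref{corollary_smooth_PPDO} it gives an operator in $\Psi^{-\infty}\subset\Psi^{-m-1}$. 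It remains to treat the principal part, whose kernel is $(t-s)^m\log|t-s|$ times a $C^\infty$-function $b(t,s):=(\rho'(s)g(t,s))^m a(t,s)$ near the diagonal, modified off-diagonal by a smooth term.

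For the principal part I would invoke Proposition~\ref{proposition_integral_op_PPDO} with $\kappa$ the $1$-periodization of $z\mapsto z^m\log|z|$ (more precisely, fix a cutoff $\chi\in C_0^\infty$ equal to $1$ near $0$ and periodize $\chi(z)z^m\log|z|$; the discarded piece is smooth and handled by Corollary~\ref{corollary_smooth_PPDO}) and with $h(t,s)=b(t,s)$ extended smoothly and periodically. The hypothesis of Proposition~\ref{proposition_integral_op_PPDO} then requires the Fourier coefficient bound $|\omega^j\widehat\kappa(n)|\le c_j\underline n^{\,m+1}$ — wait, with the sign convention of the statement ($\alpha=-m-1$) we need $|\omega^j\widehat\kappa(n)|\le c_j\underline n^{\,-m-1-j}$: the classical computation $\widehat{z^m\log|z|}(n)\sim c\,|n|^{-m-1}$ with each application of the finite difference $\omega$ in $n$ gaining an extra power, which is the standard Fourier transform asymptotics of homogeneous-times-log distributions; this is exactly the computation recorded in \cite[Chapter 7]{SV} for these kernels. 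Granting that bound, Proposition~\ref{proposition_integral_op_PPDO} yields $H_m\in\Psi^{-m-1}$.

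For the refined statement \eqref{eql}, take $a\equiv1$, $m=0$, $\rho=\gamma$-parametrization style (here $\rho$ with $|\rho'|$ not necessarily $1$, but that only changes $b$); the point is that $H_0\in\Psi^{-1}$ already, and $L\in\Psi^{1/2}$, so $LH_0L\in\Psi^0$ is bounded on every $H^s(\mathbb{T})$. To get that $\one+2LH_0L$ gains a full order, I would compute the leading symbol of $H_0$ explicitly: from the Fourier asymptotics above, $\widehat\kappa(n)=-\tfrac1{2|n|}+O(|n|^{-2})$ (with the appropriate normalization of $\log|z|$ as a periodic distribution), so $H_0$ acts to leading order as $u\mapsto -\tfrac12\sum_n |n|^{-1}\widehat u(n)e_n$ up to $\Psi^{-2}$ and up to the smooth multiplier $b(t,t)$ on the diagonal, which here equals $1$ after the normalization built into $\rho$ and the constant in $\kappa_0$. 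Meanwhile $L$ has symbol $(c_0^2+|n|)^{1/2}$, so $LH_0L$ has leading symbol $(c_0^2+|n|)\cdot(-\tfrac1{2|n|}) = -\tfrac12 + O(|n|^{-1})$, whence $\one+2LH_0L$ has vanishing principal symbol, i.e. lies in $\Psi^{-1}$. The one subtlety is that $L$ and the diagonal-value multiplier do not commute, but the commutator $[L,\,\cdot\,]$ drops the order by one by Proposition~\ref{proposition_properties_PPDO}(ii), so it is absorbed into $\Psi^{-1}$; one must also check that the off-diagonal smooth corrections, being in $\Psi^{-\infty}$, survive multiplication by $L$ on both sides and stay in $\Psi^{-1}$, which is immediate.

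The main obstacle I expect is the bookkeeping for the Fourier coefficient estimates of the periodized kernel $z^m\log|z|$ — i.e. verifying $|\omega^j\widehat\kappa(n)|\le c_j\underline n^{-m-1-j}$ uniformly in $n$ with all the finite differences — and, for \eqref{eql}, pinning down the precise constant $-\tfrac12$ in the leading symbol of $H_0$ so that the factor $2$ in $\one+2LH_0L$ is exactly right; both are concrete but delicate computations that rely on the machinery of \cite[Chapter 7]{SV}, and everything else (the Taylor expansion of $\rho$, the smoothness of $g$, the splitting off of $\Psi^{-\infty}$ remainders, the symbolic calculus steps) is routine.
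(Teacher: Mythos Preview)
Your plan is correct and would work, but the paper takes a cleaner route that sidesteps precisely the two obstacles you flag at the end.

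Instead of periodizing a cutoff of $z^m\log|z|$ and appealing to asymptotics of its Fourier coefficients, the paper uses the \emph{already periodic} reference kernel $\chi_0(t)=\log|\sin(\pi t)|$, whose Fourier coefficients are known exactly: $\widehat{\chi_0}(n)=-\tfrac{1}{2|n|}$ for $n\ne0$ (and $-\log 2$ at $n=0$). One writes $\log|\rho(t)-\rho(s)|=\log|\sin(\pi(t-s))|+a_0(t,s)$ with $a_0\in C^\infty(\TT^2)$, which handles $m=0$ directly via Proposition~\ref{proposition_integral_op_PPDO} and Corollary~\ref{corollary_smooth_PPDO}. For $m\ge 1$ the paper factors $\rho(t)-\rho(s)=(e^{-2\pi\rmi(t-s)}-1)\,a_1(t,s)$ with $a_1$ smooth, so the singular part has convolution kernel $\chi_m(t)=(e^{-2\pi\rmi t}-1)^m\log|\sin(\pi t)|$, and then $\widehat{\chi_m}(n)=\omega^m\widehat{\chi_0}(n)$ follows \emph{immediately} from the definition of $\omega$, giving the required bound $|\omega^j\widehat{\chi_m}(n)|=|\omega^{m+j}\widehat{\chi_0}(n)|\le c_j\underline{n}^{-m-1-j}$ for free from the $m=0$ case. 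This avoids your separate computation of the Fourier asymptotics of $z^m\log|z|$ entirely.

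For \eqref{eql} the paper's choice pays off again: with $a\equiv 1$ the operator $C_0$ is a \emph{pure} Fourier multiplier with symbol exactly $\widehat{\chi_0}(n)$ (no $t$-dependence, no cutoff remainder), so $\widehat{LC_0Lu}(n)=(c_0^2+|n|)\widehat{\chi_0}(n)\,\widehat u(n)$, and $\one+2LC_0L$ has symbol $-c_0^2/|n|$ for $n\ne0$ by direct arithmetic---no principal-symbol argument, no commutator estimates, no asymptotics to check. Your symbolic approach would work too, but you would still need the leading coefficient $-\tfrac12$ of the periodized $\chi(z)\log|z|$, whereas the paper's choice makes it exact.

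In short: both approaches localize the singularity and feed it into Proposition~\ref{proposition_integral_op_PPDO}; the paper's trick of replacing $t-s$ by the genuinely periodic quantities $\sin(\pi(t-s))$ and $e^{-2\pi\rmi(t-s)}-1$ trades your cutoff-and-periodize step and the attendant Fourier asymptotics for an exact computation already recorded in \cite[Example~5.6.1]{SV}.
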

\begin{proof}
First, we treat the case $m = 0$. For that we introduce the auxiliary function 
$\chi_0: \mathbb{T} \rightarrow \mathbb{R}$ by $\chi_0(t) := \log \big|\sin(\pi t)\big|$. Then the Fourier coefficients of $\chi_0$ are
\begin{equation}
   \label{klog}
\widehat{\chi_0}(n) = \begin{cases} -\log 2, & n=0,\\ -\dfrac{1}{2 |n|}, & n \neq 0, \end{cases}
\end{equation}
see \cite[Example~5.6.1]{SV}.
Next, one has
  \begin{equation}
	   \label{eq-a0}
							\log\Big(\big|\rho(t)-\rho(s)\big|\Big)
					= \log \Big|\sin \big(\pi (t-s)\big) \Big|
					+ a_0(t,s)
  \end{equation}
  with 
  \begin{equation*}
	a_0(t,s)=\log\left(\left| \frac{\rho(t)-\rho(s)}{\sin (\pi (t-s))} \right|\right),\,\,\,t\not=s,\quad\text{and}\quad 
	a_0(t,t)=\log\left(\frac{\vert \rho'(t)\vert }{\pi} \right).
  \end{equation*}
  Using Taylor series expansions one sees that there exist smooth functions $f_1$ and $f_2$ such that
  \begin{equation*}
   \frac{1}{\sin (\pi (t-s))}= \frac{1}{\pi (t-s)}f_1(t,s)\quad\text{and}\quad \rho(t)-\rho(s)=(t-s) f_2(t,s),
  \end{equation*}
   and since $\rho$ is injective, we have $\big(\rho(t)-\rho(s)\big)/\sin \big(\pi (t-s)\big) \neq 0$.
  One concludes that $a_0:\TT^2\to \CC$ is a $C^\infty$-function.
	Now we decompose $H_0= C_0 + D_0$, where
	\begin{align*}
	C_0 u(t)&=\int_\TT \chi_0(t-s)\, a(t,s)\, u(s)\dd s = (\chi_0 \star (a(t, \cdot) u))(t),\\
	D_0 u(t)&=\int_\TT a_0(t,s) \,a(t,s)\,u(s)\, \dd s.
	\end{align*}
	It follows from~\eqref{klog} and Proposition~\ref{proposition_integral_op_PPDO} that 
	$C_0\in \Psi^{-1}$ and by Corollary~\ref{corollary_smooth_PPDO} we have $D_0\in \Psi^{-\infty}$. Hence $H_0\in \Psi^{-1}$ 
	by Proposition~\ref{proposition_properties_PPDO}.
	
	To show \eqref{eql} consider $L H_0 L = L C_0 L + L D_0 L$ and note that the second term in the sum belongs to $\Psi^{-\infty}$.
	Furthermore, for $a \equiv 1$ the Fourier coefficients of $C_0Lu$ are given by
	$$
	\widehat{C_0 L u}(n)=\widehat{\chi_0}(n)\widehat{L u}(n)=\widehat{\chi_0}(n)(c_0^2 + |n|)^{\frac{1}{2}} \widehat{u}(n),
	$$
	and hence
	one finds with the aid of~\eqref{klog}
	\begin{equation*}
	\widehat{L C_0 L u}(n)=\big(c_0^2 + |n|\big)^{\frac{1}{2}} \widehat{\chi_0}(n) (c_0^2 + |n|)^{\frac{1}{2}} \widehat{u}(n) = b(n) \widehat{u}(n)
	\end{equation*}
	with
	\begin{equation*}
	  b(n)= \big(c_0^2 + |n|\big)\widehat{\chi_0}(n)=\begin{cases} -c_0^2 \log 2, & n=0,\\[\smallskipamount] -\dfrac{1}{2} - \dfrac{c_0^2}{2 |n|}, & n \neq 0, \end{cases}
	\end{equation*}
	which shows that the action of the operator $K:=\one+2L C_0 L$ is determined by
	\begin{equation*}
	  \widehat{K u}(n) = k(n) \widehat{u}(n) \quad \text{with} \quad k(n) = \begin{cases} 1 - 2 c_0^2 \log 2, & n=0,\\ - \dfrac{c_0^2}{|n|}, & n \neq 0. \end{cases}
	\end{equation*}
	Therefore, one can show with the help of Proposition~\ref{proposition_integral_op_PPDO} that $K \in \Psi^{-1}$.
	
To study the case $m\ge 1$ we consider 
\[
\rho(t)-\rho(s)=\big(e^{-2\pi\rmi(t-s)}-1\big)\, a_1(t,s) 
\]
with 
$$
a_1(t,s)=\dfrac{\rho(t)-\rho(s)}{e^{-2\pi\rmi(t-s)}-1},\,\,\,t\not=s,\quad\text{and}\quad 
	a_1(t,t)=\frac{\rho'(t) }{-2\pi \rmi}
$$
and note, as for $a_0$, that $a_1\in C^\infty(\TT^2)$. Then using the decomposition \eqref{eq-a0} we write
\begin{multline*}
(\rho(t)-\rho(s))^m \log (|\rho(t)-\rho(s)|)\\
= \big(e^{-2\pi\rmi(t-s)}-1\big)^m\, \log (|\sin (\pi (t-s)) |) a_1(t,s)^m\\
+\big(e^{-2\pi\rmi(t-s)}-1\big)^m a_0(t,s)a_1(t,s)^m.
\end{multline*}
This shows that $H_m=C_m+D_m$, where $C_m$ and $D_m$ are integral operators
\begin{align*}
C_m u(t)&=\int_\TT \big(e^{-2\pi\rmi(t-s)}-1\big)^m\, \log (|\sin (\pi (t-s)) |) a_1(t,s)^m\, a(t,s) u(s)\dd s,\\
D_m u(t)&=\int_\TT \big(e^{-2\pi\rmi(t-s)}-1\big)^m a_0(t,s)a_1(t,s)^ma(t,s)\, u(s)\dd s.
\end{align*}
The integral kernel of $D_m$ is smooth, which implies by Corollary~\ref{corollary_smooth_PPDO} that $D_m\in\Psi^{-\infty}$. It is remains to show
that $C_m\in \Psi^{-(m+1)}$. For that consider the function
\[
\chi_m: \mathbb{T} \rightarrow \mathbb{C}, \quad \chi_m(t) := \big(e^{-2\pi\rmi t}-1\big)^m\log \Big(\big|\sin(\pi t)\big|\Big).
\]
Using  the map $\omega$ from \eqref{difference_op} and $\chi_0$
one obtains that $\widehat{\chi_m}(n)=\big(\omega^m \widehat{\chi_0}\big)(n)$.
With the help of \eqref{klog} it follows that 
\begin{equation*}
  |\omega^j \widehat{\chi_m}(n)| = |\omega^{m+j} \widehat{\chi_0}(n)| \leq c_j \underline{n}^{-m-1-j}.
\end{equation*}
By Proposition~\ref{proposition_integral_op_PPDO} this yields $C_m\in \Psi^{-(m+1)}$, which completes the proof of this proposition.
\end{proof}

Next, recall that the \emph{Hilbert transform} $T_0$ on $\TT$ is defined by
\begin{equation} \label{def_Hilbert_transform}
  T_0 u(t) := \rmi\, \pv \int_\TT \cot \big(\pi (t-s) \big) u(s) \textup{d} s = (\kappa \star u )(t),
	\quad \kappa = \rmi \pv \cot(\pi \cdot),
\end{equation}
where $\pv$ means the principal value of the integral.
By \cite[Section~5.7]{SV} the distribution $\kappa$ satisfies
\begin{equation*}
  \widehat{\kappa}(n) = \sign n = \begin{cases} -1, & n < 0, \\ 0, &n=0, \\ 1, & n>0. \end{cases}
\end{equation*}
It follows that $\widehat {T_0^2 u}(n) = (1-\delta_{0,n})\widehat u(n)$, and
\begin{equation}
  \label{ht0}
	T_0\in \Psi^0, \quad T_0^2-\one\in \Psi^{-\infty}.
\end{equation}
In the following assume that $a\in C^\infty(\TT^2)$. Then the operator
\begin{equation*}
  (T_1 u)(t)=\rmi\, \pv \int_\TT \cot \big(\pi (t-s) \big) \,a(s,t)\,u(s) \dd s
\end{equation*}
satisfies for $a_0(t):=a(t,t)$ the relation
\begin{equation}
  \label{tt0}
T_1-a_0 T_0\in\Psi^{-\infty},
\end{equation}
see Section 7.6.2 in \cite{SV}.
Since the commutator $T_2:=a_0 T_0-T_0 a_0$, which acts as
\begin{equation*}
T_2 u(t)=\rmi\, \pv \int_\TT \cot \big(\pi (t-s) \big) \,\big(a(t,t)-a(s,s)\big)\,u(s) \dd s,
\end{equation*}
has a $C^\infty$-smooth integral kernel, the principal value can be dropped, as the integral is convergent, 
and Corollary~\ref{corollary_smooth_PPDO} implies that $T_2\in \Psi^{-\infty}$. Hence,
we also have
\begin{equation}
  \label{tt1}
T- T_0a_0\in\Psi^{-\infty}.
\end{equation}

\begin{cor} \label{corollary_Hilbert_transform}
Let $\rho:\TT\to\CC$ be $C^\infty$-smooth and injective with $\rho'(t)\ne 0$ for all $t\in\TT$.
Then the operator $C$ given by
  \begin{equation*}
    C u(t) = \dfrac{\rmi}{\pi}\,\pv \int_\mathbb{T} \frac{u(s)}{\rho(t)-\rho(s)} \dd s, \quad u \in C^\infty(\mathbb{T}),
  \end{equation*}
	satisfies
	\begin{equation}
	  \label{th1}
	C -\dfrac{1}{\rho'} \,T_0 \in \Psi^{-\infty} \qquad \text{and} \qquad
	  	C-T_0 \,\dfrac{1}{\rho'}\in \Psi^{-\infty}.
  \end{equation}
\end{cor}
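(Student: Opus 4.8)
The plan is to reduce Corollary~\ref{corollary_Hilbert_transform} to the mapping properties of the Hilbert transform $T_0$ recorded in~\eqref{tt0} and~\eqref{tt1}, by peeling off from the Cauchy-type kernel $\tfrac{\rmi}{\pi}\big(\rho(t)-\rho(s)\big)^{-1}$ a term $\cot\big(\pi(t-s)\big)$ with a smooth coefficient, up to a kernel that is $C^\infty$ on $\TT^2$.

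\emph{Step 1 (an elementary identity).} First I would record that for $z\in\CC\setminus\ZZ$,
\[
\frac{1}{e^{-2\pi\rmi z}-1} = \frac{\rmi}{2}\,\cot(\pi z) - \frac12,
\]
which is immediate after writing $\cot(\pi z)=\rmi\,(1+e^{-2\pi\rmi z})/(1-e^{-2\pi\rmi z})$.

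\emph{Step 2 (factorization of the kernel).} As in the proof of Proposition~\ref{prop15}, I would use $\rho(t)-\rho(s)=\big(e^{-2\pi\rmi(t-s)}-1\big)\,a_1(t,s)$, where $a_1\in C^\infty(\TT^2)$ equals $\big(\rho(t)-\rho(s)\big)/\big(e^{-2\pi\rmi(t-s)}-1\big)$ off the diagonal and $a_1(t,t)=\rho'(t)/(-2\pi\rmi)$. Since $\rho$ is injective and $\rho'$ is nowhere zero on $\TT$, the function $a_1$ has no zeros on $\TT^2$, so $1/a_1\in C^\infty(\TT^2)$. Combining Steps~1 and~2 yields, for $t\ne s$,
\[
\frac{\rmi}{\pi}\,\frac{1}{\rho(t)-\rho(s)} = -\frac{1}{2\pi a_1(t,s)}\,\cot\big(\pi(t-s)\big)\;-\;\frac{\rmi}{2\pi a_1(t,s)}.
\]

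\emph{Step 3 (splitting $C$ and applying the known relations).} Accordingly I would decompose $C=-\rmi\,T_1+R$, where $R$ is the integral operator with the $C^\infty(\TT^2)$-kernel $-\tfrac{\rmi}{2\pi a_1(t,s)}$, so that $R\in\Psi^{-\infty}$ by Corollary~\ref{corollary_smooth_PPDO} (in particular the principal value is superfluous on this part), and $T_1$ is the operator $(T_1u)(t)=\rmi\,\pv\int_\TT\cot\big(\pi(t-s)\big)\,a(s,t)\,u(s)\dd s$ with $a\in C^\infty(\TT^2)$ defined by $a(x,y):=-\tfrac{1}{2\pi a_1(y,x)}$, so that $a_0(t):=a(t,t)=\rmi/\rho'(t)$. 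The relations~\eqref{tt0} and~\eqref{tt1} give $T_1-a_0T_0\in\Psi^{-\infty}$ and $T_1-T_0a_0\in\Psi^{-\infty}$, whence, using $-\rmi\,a_0=1/\rho'$,
\[
C-\frac1{\rho'}\,T_0=-\rmi\,(T_1-a_0T_0)+R\in\Psi^{-\infty}\quad\text{and}\quad C-T_0\,\frac1{\rho'}=-\rmi\,(T_1-T_0a_0)+R\in\Psi^{-\infty},
\]
which is exactly~\eqref{th1}.

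I do not anticipate a real obstacle; the only points that need a little care are the bookkeeping forced by working on $\TT$ rather than on $\RR$ (this is why $e^{-2\pi\rmi(t-s)}-1$, not $t-s$, is the correct ``linear'' factor), and verifying that $a_1$ is genuinely nonvanishing so that $1/a_1$ — and hence the coefficient $a$ — is smooth; both are settled by injectivity of $\rho$ together with $\rho'\neq0$.
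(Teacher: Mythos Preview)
Your argument is correct and follows essentially the same route as the paper: rewrite the Cauchy kernel as $\cot\big(\pi(t-s)\big)$ times a smooth coefficient and then invoke~\eqref{tt0} and~\eqref{tt1}. The paper does this in one stroke by taking $a(t,s)=\tfrac{1}{\pi}\tan\big(\pi(t-s)\big)/(\rho(t)-\rho(s))$, whereas your detour through $e^{-2\pi\rmi(t-s)}-1$ yields a coefficient that is manifestly smooth on all of $\TT^2$ (at the harmless price of an additional $\Psi^{-\infty}$ remainder $R$).
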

\begin{proof}
  We write
	\[
	\dfrac{1}{\pi}\,\dfrac{1}{\rho(t)-\rho(s)}=\cot \big(\pi(t-s)\big) a(t,s) \quad \text{with} \quad
	a(t,s)=\dfrac{1}{\pi}\,\dfrac{\tan \big(\pi(t-s)\big)}{\rho(t)-\rho(s)},\quad t\not=s,
	\]
	and $a(t,t)=1/\rho'(t)$.
Then $a\in C^\infty(\TT^2)$ and $a_0(t)=a(t,t)=1/\rho'(t)$.
	Thus \eqref{th1} follows from \eqref{tt0} and \eqref{tt1}.
\end{proof}

Finally we introduce the \emph{Cauchy transform $C_\Sigma$ on $\Sigma$}. For that we identify $\mathbb{R}^2$ 
with $\mathbb{C}$ and use the notation
\begin{equation*}
 \begin{split}
  \mathbb{R}^2 \ni x&=(x_1, x_2) \sim x_1 + \rmi x_2 =: \xi \in \mathbb{C},\\
  \mathbb{R}^2 \ni y&=(y_1, y_2) \sim y_1 + \rmi y_2 =: \zeta \in \mathbb{C}.
 \end{split}
\end{equation*} 
Then
\begin{equation} \label{def_Cauchy_transform}
  C_\Sigma u(\xi) := \frac{\rmi}{\pi} \pv \int_\Sigma \frac{u(\zeta)}{\xi - \zeta} \text{d} \zeta, 
  \quad u \in C^\infty(\Sigma),\, \xi \in \Sigma,
\end{equation}
where the complex line integral is understood as its principal value. With 
an arc-length parametrization $\gamma$ of $\Sigma$ and $x = \gamma(t), y=\gamma(s)$  it follows that $C_\Sigma$ acts as
\begin{equation*}
  C_\Sigma u\big(\gamma(t)\big) = \frac{\rmi}{\pi} \pv \int_0^\ell \frac{\big(\gamma_1'(s) + \rmi \gamma_2'(s)\big) u\big(\gamma(s)\big)}{\big(\gamma_1(t) + \rmi \gamma_2(t)\big) - \big(\gamma_1(s) + \rmi \gamma_2(s)\big)} \dd s. 
\end{equation*}
Recall that for the tangent vector field $\bf t$ at $\Sigma$ and 
$y = \gamma(s) \in \Sigma$ we use the notation $T(y) := {\bf t}_1(y) + \rmi {\bf t}_2(y) = \gamma_1'(s) + \rmi \gamma_2'(s)$. We shall also view
$y\mapsto T(y)$ as a function on $\Sigma$ or $s\mapsto T(\gamma(s))$ as a function on $[0,\ell]$. The same holds for the function $\overline T(y) := {\bf t}_1(y) - \rmi {\bf t}_2(y) = 
\gamma_1'(s) - \rmi \gamma_2'(s)$, and we will also denote the corresponding multiplication operators by $T$ and $\overline T$.
With this we see for $u \in C^\infty(\Sigma)$ and $x = \gamma(t) \in \Sigma$ that
\begin{equation} \label{relation_Cauchy_transform}
  \begin{split}
    (C_\Sigma \overline{T} u)(x) &= \frac{\rmi}{\pi} \pv \int_0^\ell \frac{\big(\gamma_1'(s) + \rmi \gamma_2'(s)\big) \big(\gamma_1'(s) - \rmi \gamma_2'(s)\big) u\big(\gamma(s)\big)}{\big(\gamma_1(t) + \rmi \gamma_2(t)\big) - \big(\gamma_1(s) + \rmi \gamma_2(s)\big)} \dd s \\
    &= \frac{\rmi}{\pi} \pv \int_\Sigma \frac{u(y)}{(x_1 + \rmi x_2) - (y_1 + \rmi y_2)} \dd s(y).
  \end{split}
\end{equation}

In our considerations also the formal dual $C_\Sigma'$ of $C_\Sigma$ in $L^2(\Sigma)$, which acts as
\begin{equation} \label{def_Cauchy_transform_dual}
  \begin{split}
    C_\Sigma' u(\gamma(t)) 
    =  \frac{\rmi}{\pi} \pv \int_0^\ell \frac{\big(\gamma_1'(t) - \rmi \gamma_2'(t)\big) u\big(\gamma(s)\big)}{\big(\gamma_1(t) - \rmi \gamma_2(t)\big) - \big(\gamma_1(s) - \rmi \gamma_2(s)\big)} \dd s 
  \end{split}
\end{equation}
for $u \in C^\infty(\Sigma)$ and $x = \gamma(t) \in \Sigma$ will play an important role. 
Note that $C_\Sigma'$ is the operator which satisfies $ (C_\Sigma u, v)_{L^2(\Sigma)} = (u, C_\Sigma'  v)_{L^2(\Sigma)}$ for all $u, v \in C^\infty(\Sigma)$. 
Similarly as in~\eqref{relation_Cauchy_transform} we have
\begin{equation} \label{relation_dual_Cauchy_transform}
  \begin{split}
    (T C_\Sigma' u)(x) &= \frac{\rmi}{\pi} \pv \int_0^\ell \frac{\big(\gamma_1'(t) + \rmi \gamma_2'(t)\big) \big(\gamma_1'(t) - \rmi \gamma_2'(t)\big) u\big(\gamma(s)\big)}{\big(\gamma_1(t) - \rmi \gamma_2(t)\big) - \big(\gamma_1(s) - \rmi \gamma_2(s)\big)} \dd s \\
    &= \frac{`rmi}{\pi} \pv \int_\Sigma \frac{u(y)}{(x_1 - \rmi x_2) - (y_1 - \rmi y_2)} \dd s(y).
  \end{split}
\end{equation}

In the following proposition we summarize the basic properties of $C_\Sigma$ and $C_\Sigma'$ which are needed for our further considerations. They basically follow directly from~\eqref{relation_Cauchy_transform}, \eqref{relation_dual_Cauchy_transform}, Corollary~\ref{corollary_Hilbert_transform}, and~\eqref{ht0}.

\begin{prop} \label{proposition_Cauchy_transform}
  Let $C_\Sigma$ and $C_\Sigma'$ be defined by~\eqref{def_Cauchy_transform} and~\eqref{def_Cauchy_transform_dual}, let $U$ be given by~\eqref{def_U}, and let the Hilbert transform $T_0$ be defined by~\eqref{def_Hilbert_transform}. Then the following is true:
  \begin{itemize}
    \item[\textup{(i)}] $C_\Sigma - U^{-1} T_0 U \in \Psi_\Sigma^{-\infty}$. In particular, $C_\Sigma \in \Psi_\Sigma^0$ and for all $s \in \mathbb{R}$ the operator $C_\Sigma$ gives rise to a bounded operator in $H^s(\Sigma)$.
    \item[\textup{(ii)}] $C_\Sigma' - U^{-1} T_0 U \in \Psi_\Sigma^{-\infty}$. In particular, $C_\Sigma' \in \Psi_\Sigma^0$ and for all $s \in \mathbb{R}$ the operator $C_\Sigma'$ gives rise to a bounded operator in $H^s(\Sigma)$.
  \end{itemize} 
  Furthermore, one has $C_\Sigma' C_\Sigma - \one\in \Psi_\Sigma^{-\infty}$ and $C_\Sigma C_\Sigma' - \one \in \Psi_\Sigma^{-\infty}$.
\end{prop}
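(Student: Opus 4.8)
The plan is to transport everything to $\TT$ via the conjugation $U$ and to reduce all three assertions to the properties of the Hilbert transform $T_0$ established above, with Corollary~\ref{corollary_Hilbert_transform} as the main tool. Set $\rho(t):=\gamma_1(\ell t)+\rmi\gamma_2(\ell t)$, the complex coordinate of the point $\gamma(\ell t)\in\Sigma$. Because $\Sigma$ is a non-self-intersecting $C^\infty$-loop and $\gamma$ is an arc-length parametrization, $\rho\in C^\infty(\TT;\CC)$ is injective and $\rho'(t)=\ell\bigl(\gamma_1'(\ell t)+\rmi\gamma_2'(\ell t)\bigr)\ne 0$ for all $t$, so the hypotheses of Corollary~\ref{corollary_Hilbert_transform} hold, and equally with $\rho$ replaced by $\overline{\rho}$; I denote by $C$ and $C^\flat$ the corresponding operators of that corollary. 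As in the rest of the paper, a smooth function will also stand for the associated multiplication operator, which is a periodic pseudodifferential operator of order $0$.

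For (i), I would compute $U C_\Sigma U^{-1}$ directly from \eqref{def_Cauchy_transform}, using that the complex line element is $\dd\zeta=(\gamma_1'+\rmi\gamma_2')\,\dd s$ and substituting $s\mapsto\ell s$; the factors of $\ell$ coming from $\dd s$ and from $\rho'$ cancel, and one obtains for $v\in C^\infty(\TT)$
\[
\bigl(U C_\Sigma U^{-1}v\bigr)(t)=\frac{\rmi}{\pi}\,\pv\int_\TT\frac{\rho'(s)\,v(s)}{\rho(t)-\rho(s)}\,\dd s=\bigl(C\,(\rho'\,v)\bigr)(t),
\]
i.e.\ $U C_\Sigma U^{-1}=C\,\rho'$. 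The second relation in \eqref{th1} gives $C=T_0\,\tfrac{1}{\rho'}+R$ with $R\in\Psi^{-\infty}$, hence $U C_\Sigma U^{-1}=T_0+R\,\rho'$ and $R\,\rho'\in\Psi^{-\infty}$ by Proposition~\ref{proposition_properties_PPDO}; conjugating back yields $C_\Sigma-U^{-1}T_0U\in\Psi_\Sigma^{-\infty}$. Since $T_0\in\Psi^0$, this gives $C_\Sigma\in\Psi_\Sigma^0$ and the $H^s(\Sigma)$-boundedness from the general mapping properties recalled after Definition~\ref{definition_PPDO_Sigma}. Part (ii) is the same with \eqref{def_Cauchy_transform_dual} in place of \eqref{def_Cauchy_transform}: the numerator now carries the $s$-independent factor $\overline{\rho'(t)}$ (up to $\ell$) and the denominator involves $\overline{\rho(t)}-\overline{\rho(s)}$, so the same change of variables gives $U C_\Sigma' U^{-1}=\overline{\rho'}\,C^\flat$; the first relation in \eqref{th1} applied to $C^\flat$, namely $C^\flat=\tfrac{1}{\overline{\rho'}}\,T_0+R'$ with $R'\in\Psi^{-\infty}$, then gives $U C_\Sigma' U^{-1}=T_0+\overline{\rho'}\,R'$ and hence $C_\Sigma'-U^{-1}T_0U\in\Psi_\Sigma^{-\infty}$ together with the remaining claims.

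For the last two identities, write $C_\Sigma=U^{-1}T_0U+R_1$ and $C_\Sigma'=U^{-1}T_0U+R_2$ with $R_1,R_2\in\Psi_\Sigma^{-\infty}$ and expand $C_\Sigma'C_\Sigma$: three of the four terms are products of an element of $\Psi_\Sigma^0$ with an element of $\Psi_\Sigma^{-\infty}$, hence lie in $\Psi_\Sigma^{-\infty}$ by Proposition~\ref{proposition_properties_PPDO}, while the fourth equals $U^{-1}T_0^2U=\one+U^{-1}(T_0^2-\one)U$ with $T_0^2-\one\in\Psi^{-\infty}$ by \eqref{ht0}; thus $C_\Sigma'C_\Sigma-\one\in\Psi_\Sigma^{-\infty}$, and $C_\Sigma C_\Sigma'-\one\in\Psi_\Sigma^{-\infty}$ is obtained in exactly the same way. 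There is no real obstacle here; the only steps that require attention are the bookkeeping of the $\ell$-factors in the arc-length rescaling and checking that $\rho$ and $\overline{\rho}$ truly satisfy the smoothness, injectivity and non-degeneracy hypotheses of Corollary~\ref{corollary_Hilbert_transform} — injectivity being exactly where the assumption that $\Sigma$ is non-self-intersecting enters.
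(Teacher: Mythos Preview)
Your proof is correct and follows essentially the same route as the paper: both reduce the question to Corollary~\ref{corollary_Hilbert_transform} via the complex parametrization $\rho(t)=\gamma_1(\ell t)+\rmi\gamma_2(\ell t)$, and both deduce the final assertions from $T_0^2-\one\in\Psi^{-\infty}$. The only cosmetic difference is that the paper first multiplies by $\overline{T}$ (using~\eqref{relation_Cauchy_transform}) to obtain $U C_\Sigma\overline{T}U^{-1}=\ell\,C$ and then strips off $\overline{T}$, whereas you compute $U C_\Sigma U^{-1}=C\,\rho'$ directly from the complex line integral and then cancel $\rho'$ against $T_0\,\tfrac{1}{\rho'}$; these are the same computation organized in a different order.
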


\begin{proof} Let us prove \textup{(i)}. Note first that the multiplication operators $T$ and $\overline T$ that multiply with the 
functions $s\mapsto T(\gamma(s)) = \gamma_1'(s) + \rmi \gamma_2'(s)$ and 
$s\mapsto \overline{T}(\gamma(s))= \gamma_1'(s) - \rmi \gamma_2'(s)$  belong to $\Psi_\Sigma^0$, see \cite[Section~7.2]{SV}. Hence \textup{(i)} is equivalent to
\[
C_\Sigma\overline{T} - U^{-1}T_0 U \overline{T} = C_\Sigma\overline{T} - U^{-1}T_0 \overline{T}\big(\gamma( \ell\cdot)\big)U \in \Psi_\Sigma^{-\infty},
\]
which in turn is equivalent, by definition, to
\[
	U C_\Sigma \overline{T} U^{-1} - T_0 \overline{T}(\gamma(\ell \cdot)) \in \Psi^{-\infty}.
\]
For $v\in C^\infty(\mathbb{T})$ and $t\in \mathbb{T}$, we compute $\big(U C_\Sigma \overline{T} U^{-1}v \big) (t)$. Remark that for $x =(x_1,x_2) \in \Sigma$ and $w(x) := (U^{-1} v) (x)$, \eqref{ul} and~\eqref{relation_Cauchy_transform} give
\begin{align*}
	(C_\Sigma\overline{T}w)(x) & =  \frac{\rmi}{\pi} \pv \int_0^\ell \frac{w(\gamma(s))}{(x_1 + \rmi x_2) - \big(\gamma_1(s) + \rmi\gamma_2(s)\big)}\dd s\\
	& = \frac{\rmi}{\pi} \pv \int_0^\ell \frac{v(\ell^{-1}s)}{(x_1 + \rmi x_2) - \big(\gamma_1(s) + \rmi\gamma_2(s)\big)}\dd s.
\end{align*}
Hence, a change of variable yields
\[
	(U C_\Sigma \overline{T} U^{-1}v)(t) = \ell \frac{\rmi}{\pi}\pv\int_{\mathbb{T}} \frac{v(s)}{\rho(t) - \rho(s)}\dd s
\]
with $\rho(t) := \gamma_1 (\ell t) + \rmi \gamma_2 (\ell t)$. Remark that for all $t\in\mathbb{T}$ we have $\rho'(t) = \ell T\big(\gamma(\ell t)\big) \neq 0$ and $1/\rho'(t) = \ell^{-1}\overline{T}\big(\gamma(\ell t)\big)$. Corollary \ref{corollary_Hilbert_transform} gives
\[
	\ell^{-1} U C_\Sigma \overline{T} U^{-1} -\ell^{-1} T_0 \overline{T}(\ell \cdot) \in \Psi^{-\infty}
\]
which completes the proof of \textup{(i)}. Item \textup{(ii)} is proved in a similar fashion and the last statement is a consequence of \textup{(i)}, \textup{(ii)}, 
and \eqref{ht0}. This can be seen by the equivalences
\[
T_0^2 - \one \in \Psi^{-\infty} \quad \text{iff} \quad U C_\Sigma'U^{-1} U C_\Sigma U^{-1} - \one \in \Psi^{-\infty} 
\quad \text{iff}\quad C_\Sigma'C_\Sigma - \one \in \Psi_\Sigma^{-\infty},
\]
and a similar argument shows $C_\Sigma C_\Sigma' - \one \in \Psi_\Sigma^{-\infty}$.
This completes the proof.
\end{proof}

\subsection{Schur complement of block operators}\label{ssec-schur}

Let $W_{jk}$, $j,k\in\{1,2\}$, be  closable densely defined operators in a Hilbert space $\cH$.
Define a linear operator $W$ in $\cH\oplus\cH$ by
\begin{gather*}
      W := \begin{pmatrix} W_{11} & W_{12} \\ W_{21} & W_{22}\end{pmatrix}, \quad
    \dom W = (\dom W_{11} \cap \dom W_{21}) \oplus (\dom W_{12} \cap \dom W_{22}).
  \end{gather*}
Assume that $\dom W_{11} \subset \dom W_{21}$ and that $W_{11}$ is invertible. Then
one can define
the Schur complement $\mathcal{S}(W)$ of  $W$ as an operator in $\cH$ by
\begin{equation} \label{def_Schur_complement}
  \mathcal{S}(W) := W_{22} - W_{21} W_{11}^{-1} W_{12},
\end{equation}
and one has the factorization
  \begin{equation} \label{schur-fact}
    W = \begin{pmatrix} \one & 0 \\ W_{21} W_{11}^{-1} & \one \end{pmatrix}
		\begin{pmatrix} W_{11} & 0 \\ 0 & \mathcal{S}(W) \end{pmatrix}
		\begin{pmatrix} \one & W_{11}^{-1} W_{12} \\ 0 & \one \end{pmatrix}.
  \end{equation}
We will use the following facts, which follow from Theorem 2.2.14 and Theorem~2.4.6 in the monograph~\cite{Tre}.
\begin{prop}\label{prop-block}
Assume that $0\in \res W_{11}$, that $\dom W_{11} \subset \dom W_{21}$,
and that $W_{11}^{-1} W_{12}$ is  bounded on $\dom W_{12}$.
Then $W$ is closable/closed if and only if its Schur complement $\mathcal{S}(W)$ is closable/closed,
with
  \begin{equation*} 
    \overline{W} = \begin{pmatrix} \one & 0 \\ W_{21} W_{11}^{-1} & \one \end{pmatrix} \begin{pmatrix} W_{11} & 0 \\ 0 & \overline{\mathcal{S}(W)} \end{pmatrix} \begin{pmatrix} \one & \overline{W_{11}^{-1} W_{12}}\, \\ 0 & \one \end{pmatrix},
  \end{equation*}
	and
	\[
	\dom \overline{W}=\left\{
	(x_1, x_2)\in\cH\times\cH:
	x_1+ \overline{W_{11}^{-1} W_{12}} \,x_2\in \dom W_{11},
	\ x_2\in \dom \overline{\mathcal{S}(W)}
	\right\}.
	\]
	Moreover, if $\overline{W}$ is self-adjoint, then $0\in\spec_\ess \overline{W}$ if and only if $0\in \spec_{\textup{ess}} \overline{\mathcal{S}(W)}$.
\end{prop}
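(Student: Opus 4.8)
The plan is to derive the statement from the factorization~\eqref{schur-fact} together with the two general results of Tretter quoted in the proposition: \cite[Theorem~2.2.14]{Tre}, on closability and the form of the closure of a block operator matrix admitting a bounded Schur factorization, and \cite[Theorem~2.4.6]{Tre}, on the resulting invariance of the essential spectrum. The first step is to check that the two outer triangular factors in~\eqref{schur-fact},
\[
L := \begin{pmatrix} \one & 0 \\ W_{21}W_{11}^{-1} & \one \end{pmatrix}, \qquad
R := \begin{pmatrix} \one & W_{11}^{-1}W_{12} \\ 0 & \one \end{pmatrix},
\]
extend to bounded, boundedly invertible operators on $\cH\oplus\cH$. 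For $R$ this is immediate: by hypothesis $W_{11}^{-1}W_{12}$ is bounded on the dense set $\dom W_{12}$ and therefore extends uniquely to some $\overline{W_{11}^{-1}W_{12}}\in\cB(\cH)$. For $L$ one notes that $\ran W_{11}^{-1}=\dom W_{11}\subseteq\dom W_{21}$ forces $W_{21}W_{11}^{-1}$ to be everywhere defined; it is moreover closable, being contained in the closed operator $\overline{W_{21}}\,W_{11}^{-1}$ (a closed operator composed on the right with a bounded, boundedly invertible one is closed), and an everywhere defined closable operator has everywhere defined, hence bounded, closure and thus coincides with it. The inverses of $L$ and $R$ arise by flipping the sign of the off-diagonal entry, so $L$ and $R$ are homeomorphisms of $\cH\oplus\cH$.

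The second step exploits the factorization $W=LDR$, where $D$ is the block-diagonal operator with diagonal entries $W_{11}$ and $\mathcal{S}(W)$ from~\eqref{def_Schur_complement}. Since $L,R$ are boundedly invertible, the assignment $S\mapsto LSR$ sends the graph of an operator $S$ to its image under the homeomorphism $(x,y)\mapsto(R^{-1}x,Ly)$ of $(\cH\oplus\cH)^2$; hence $S$ is closable (resp.\ closed) exactly when $LSR$ is, and $\overline{LSR}=L\,\overline S\,R$. Applied to $S=D$, and using that the $W_{11}$-block is bounded so that $\overline D$ is block-diagonal with entries $W_{11}$ and $\overline{\mathcal{S}(W)}$, this gives at once that $W$ is closable/closed iff $\mathcal{S}(W)$ is, and that
\[
\overline{W}\;=\;L\,\overline D\,R\;=\;
\begin{pmatrix} \one & 0 \\ W_{21}W_{11}^{-1} & \one \end{pmatrix}
\begin{pmatrix} W_{11} & 0 \\ 0 & \overline{\mathcal{S}(W)} \end{pmatrix}
\begin{pmatrix} \one & \overline{W_{11}^{-1}W_{12}} \\ 0 & \one \end{pmatrix}.
\]
The domain formula follows by reading off $\dom\overline W=\{(x_1,x_2):R(x_1,x_2)\in\dom\overline D\}$, i.e.\ $x_1+\overline{W_{11}^{-1}W_{12}}\,x_2\in\dom W_{11}$ and $x_2\in\dom\overline{\mathcal{S}(W)}$.

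For the last assertion, suppose $\overline W$ is self-adjoint. As $L,R$ are bounded and boundedly invertible and $W_{11}$ is boundedly invertible, $\overline W=L\overline D R$ is a Fredholm operator iff $\overline D$ is iff $\overline{\mathcal{S}(W)}$ is (the $W_{11}$-block being Fredholm of index $0$, so that the indices add to $0$ as well). Since $W\subseteq\overline W=\overline W^{\,*}$ is symmetric, $W_{11}$ is symmetric, hence -- being boundedly invertible -- self-adjoint, and one verifies that the bounded extension of $R$ equals $L^*$ (equivalently $\overline{W_{11}^{-1}W_{12}}=(W_{21}W_{11}^{-1})^*$); consequently $\overline D=L^{-1}\overline W(L^{-1})^*$ is self-adjoint, and therefore so is its $(2,2)$-block $\overline{\mathcal{S}(W)}$. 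For the self-adjoint operators $\overline W$ and $\overline{\mathcal{S}(W)}$ one has $0\in\spec_\ess$ precisely when the operator fails to be Fredholm, so combining the equivalences above yields $0\in\spec_\ess\overline W$ iff $0\in\spec_\ess\overline{\mathcal{S}(W)}$.

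The step I expect to be the main obstacle is the boundedness of the outer factor $L$, i.e.\ showing that the a priori only densely defined operator $W_{21}W_{11}^{-1}$ is in fact bounded -- precisely where the hypotheses $0\in\res W_{11}$ and the closability of $W_{21}$ are genuinely needed -- closely followed by the self-adjointness transfer in the last paragraph, where one must make sure that self-adjointness of $\overline W$ really passes to $\overline{\mathcal{S}(W)}$, so that the Fredholm criterion can be read back as a statement about $\spec_\ess\overline{\mathcal{S}(W)}$. The remaining manipulations -- essentially bookkeeping with the graph homeomorphism $S\mapsto LSR$ -- are routine.
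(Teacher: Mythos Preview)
Your argument is essentially correct and reconstructs precisely what the paper merely cites from Tretter's book (Theorems~2.2.14 and~2.4.6); the paper itself gives no proof beyond that reference. One small slip: in the second step you write ``using that the $W_{11}$-block is bounded so that $\overline D$ is block-diagonal,'' but $W_{11}$ need not be bounded --- what you need (and have, since $0\in\res W_{11}$) is that $W_{11}$ is \emph{closed}, so that $\overline{W_{11}\oplus\mathcal{S}(W)}=W_{11}\oplus\overline{\mathcal{S}(W)}$. With that correction everything goes through: the boundedness of $W_{21}W_{11}^{-1}$ via the closed-graph argument, the graph-homeomorphism transfer of closability, and the Fredholm equivalence for the essential spectrum are all sound. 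Your verification that $R=L^*$ under self-adjointness of $\overline W$ (via $W_{12}\subset W_{21}^*$ from symmetry of $W$) is the right way to pass self-adjointness down to $\overline{\mathcal{S}(W)}$.
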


\subsection{Boundary triples and their Weyl functions}\label{ssec-bt}

We recall some basic facts about boundary triples following the first chapter of the paper \cite{BGP}, 
in which the proofs for all statements can be found. We also refer the reader to \cite{DM91,DM95} and the monographs \cite{BHS19,DM19} for more details
and applications. 
Throughout this abstract section $\mathcal{H}$ is always a separable Hilbert space.

\begin{definition}
  Let $S$ be a closed densely defined symmetric operator in $\cH$.
  A \emph{boundary triple} for $S^*$ is a triple $\{\cG,\Gamma_0, \Gamma_1\}$
  consisting of a Hilbert space $\cG$ and two linear maps $\Gamma_0,\Gamma_1: \dom S^* \rightarrow \cG$
  satisfying the following two conditions:
  \begin{itemize}
    \item[\textup{(i)}] For all $f, g \in \dom S^*$ there holds
		\begin{equation*}
		( S^* f, g )_\cH - ( f, S^* g )_\cH = (\Gamma_1 f, \Gamma_0 g)_\cG - (\Gamma_0 f, \Gamma_1 g)_\cG.
        \end{equation*}
 		\item[\textup{(ii)}] The map $\dom S^*\ni f\mapsto (\Gamma_0 f, \Gamma_1 f )\in \cG\times\cG$ is surjective. 
  \end{itemize}
\end{definition}
A boundary triple for $S^*$ exists if and only if  $S$ admits self-adjoint extensions in $\cH$. From now on we assume that this is satisfied and pick
a boundary triple $\{\cG,\Gamma_0,\Gamma_1\}$. This induces a number of additional objects. First, the operator
\[
B_0 := S^* \upharpoonright \ker \Gamma_0
\]
is self-adjoint, and for any $z\in \res B_0$ one has the direct sum decomposition
\begin{equation}\label{deco33}
  \dom S^* = \dom B_0 \dsum \ker (S^* - z) = \ker \Gamma_0 \dsum \ker (S^* - z),
\end{equation}
showing that $\Gamma_0 \upharpoonright \ker (S^*-z)$ is bijective. This allows to define the $\gamma$-field $G$ and the Weyl function $M$
associated to $\{\cG,\Gamma_0,\Gamma_1\}$ by
\begin{align*}
  \res B_0 \ni z &\mapsto G_z := \big(\Gamma_0 \upharpoonright \ker (S^*-z)\big)^{-1}:\cG \to \cH,\\
  \res B_0 \ni z &\mapsto M_z :=\Gamma_1 \,G_z:\cG\to \cG.
\end{align*}
For $z\in\res B_0$ the operators $G_z$ and $M_z$ are bounded, and $z\mapsto G_z$ and $z\mapsto M_z$ are holomorphic in $z \in \res B_0$.
The adjoints of $G_z$ and $M_z$ are given by
\begin{equation*} 
  G_z^* = \Gamma_1 (B_0 - \overline z)^{-1} \quad \text{and} \quad M_z^* = M_{\bar{z}}.
\end{equation*}

\kp{
Let $\cG_\Pi$ be a closed subspace of $\cG$ viewed as a Hilbert space when endowed with the induced inner product.
Let $\Pi:\cG\to \cG_\Pi$ be the orthogonal projection, then $\Pi^*:\cG_\Pi\rightarrow\cG$ is the canonical embedding.
Let $\Theta$ be a linear operator in $\cG_\Pi$.
In the following we are interested in the operator $B_{\Pi,\Theta}$ defined as the restriction
of $S^*$ onto the set
}
\begin{equation*} 
  \label{def_A_theta_abstract}
  \dom B_{\Pi,\Theta} = \big\{ f \in \dom S^*: \Pi \Gamma_1 f = \Theta \Pi\Gamma_0 f,\,(\one-\Pi^* \Pi)\Gamma_0 f=0  \big\},
\end{equation*}
where the boundary condition $\Pi \Gamma_1 f = \Theta \Pi\Gamma_0 f$ in $\dom B_{\Pi,\Theta}$ also contains the condition $\Pi \Gamma_0 f \in \dom \Theta$.
A number of properties of $B_{\Pi,\Theta}$ appear to be encoded in $\Theta$. The most important of them for our purposes are summarized in the following theorem:

\begin{thm} \label{theorem_boundary_triple_abstract}
	The operator $B_{\Pi, \Theta}$ is (essentially) self-adjoint in $\mathcal{H}$
  if and only if $\Theta$ is  (essentially) self-adjoint in $\cG_\Pi$. 
  Furthermore, if $\Theta$ is self-adjoint and $z \in \res B_0$, then the following assertions hold:
  \begin{itemize}
    \item[\textup{(i)}] $z\in \spec B_{\Pi,\Theta}$ if and only if $0 \in \spec( \Theta-\Pi M_z\Pi^*)$.
    \item[\textup{(ii)}] $z\in \spec_\textup{p} B_{\Pi,\Theta}$ if and only if $0 \in \spec_\textup{p}( \Theta-\Pi M_z\Pi^*)$, and in that case
		the eigenspaces are related by $\ker (B_{\Pi,\Theta}-z) = G_z \Pi^* \ker  ( \Theta-\Pi M_z\Pi^*)$.
    \item[\textup{(iii)}] $z\in \spec_\ess B_{\Pi,\Theta}$ if and only if $0 \in \spec_\ess(\Theta-\Pi M_z\Pi^*)$.
    \item[\textup{(iii)}] For all $z \in \res B_{\Pi, \Theta} \cap \res B_0$ one has
    \begin{equation*} 
        (B_{\Pi,\Theta}-z)^{-1}=(B_0-z)^{-1}+G_z\Pi^*( \Theta-\Pi M_z\Pi^*)^{-1} \Pi G_{\Bar{z}}^*.
    \end{equation*}
  \end{itemize}
\end{thm}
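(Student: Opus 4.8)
The plan is to deduce every assertion from the classical version of the theorem for an \emph{ordinary} boundary triple (the case $\cG_\Pi=\cG$, $\Pi=\one$, for which we refer to \cite{BGP,DM91,DM95}) by reinterpreting $\{\cG_\Pi,\Pi\Gamma_0,\Pi\Gamma_1\}$ as a bona fide boundary triple for a suitable intermediate extension of $S^*$. Concretely, I would introduce the operator
\[
S_\Pi^*:=S^*\upharpoonright\big\{f\in\dom S^*:\ (\one-\Pi^*\Pi)\Gamma_0 f=0\big\},
\]
which is closed because $\Gamma_0$ is bounded from $\dom S^*$, equipped with the graph norm, into $\cG$, together with the maps $\widetilde\Gamma_j:=\Pi\Gamma_j\upharpoonright\dom S_\Pi^*$, $j\in\{0,1\}$.

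The first step is to check that $\{\cG_\Pi,\widetilde\Gamma_0,\widetilde\Gamma_1\}$ is a boundary triple for $S_\Pi^*$. The abstract Green identity follows from the one for $\{\cG,\Gamma_0,\Gamma_1\}$ by inserting $\Gamma_0 f=\Pi^*\Pi\Gamma_0 f$ and $\Gamma_0 g=\Pi^*\Pi\Gamma_0 g$ and using $\Pi\Pi^*=\one_{\cG_\Pi}$; surjectivity of $f\mapsto(\widetilde\Gamma_0 f,\widetilde\Gamma_1 f)$ follows by lifting a given pair $(a,b)\in\cG_\Pi\times\cG_\Pi$ to some $f\in\dom S^*$ with $\Gamma_0 f=\Pi^*a$ and $\Gamma_1 f=\Pi^*b$, which then automatically satisfies $(\one-\Pi^*\Pi)\Gamma_0 f=0$, so $f\in\dom S_\Pi^*$ and $\widetilde\Gamma_0 f=a$, $\widetilde\Gamma_1 f=b$. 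By the standard correspondence between boundary triples and symmetric operators (cf. \cite{BGP}), the operator $S_\Pi:=S^*\upharpoonright(\ker\widetilde\Gamma_0\cap\ker\widetilde\Gamma_1)$ is then closed, densely defined (it contains $\dom S$), and symmetric with $(S_\Pi)^*=S_\Pi^*$, and $\{\cG_\Pi,\widetilde\Gamma_0,\widetilde\Gamma_1\}$ is a boundary triple for it.

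The second step is to read off from the definitions that $B_{\Pi,\Theta}=S_\Pi^*\upharpoonright\{f:\ \widetilde\Gamma_1 f=\Theta\widetilde\Gamma_0 f\}$, i.e. $B_{\Pi,\Theta}$ is precisely the extension of $S_\Pi$ parametrized by $\Theta$ in this new triple, so the (essential) self-adjointness equivalence is immediate. It then remains to identify the data of the new triple: one checks that $\widetilde B_0:=S_\Pi^*\upharpoonright\ker\widetilde\Gamma_0=B_0$ (since $\widetilde\Gamma_0 f=0$ together with $(\one-\Pi^*\Pi)\Gamma_0 f=0$ forces $\Gamma_0 f=0$), hence $\res\widetilde B_0=\res B_0$; that for $z\in\res B_0$ the new $\gamma$-field is $\widetilde G_z=G_z\Pi^*$ (indeed $G_z\Pi^*c\in\ker(S^*-z)\cap\dom S_\Pi^*$ with $\widetilde\Gamma_0(G_z\Pi^*c)=\Pi\Pi^*c=c$); that the new Weyl function is $\widetilde M_z=\widetilde\Gamma_1\widetilde G_z=\Pi M_z\Pi^*$; and, taking adjoints, that $\widetilde G_{\bar z}^*=\Pi G_{\bar z}^*=\Pi\Gamma_1(B_0-z)^{-1}$. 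Substituting these into the spectral and resolvent statements of the classical theorem, applied to the ordinary triple $\{\cG_\Pi,\widetilde\Gamma_0,\widetilde\Gamma_1\}$ with self-adjoint $\Theta$, yields all the listed assertions, including the eigenspace relation $\ker(B_{\Pi,\Theta}-z)=G_z\Pi^*\ker(\Theta-\Pi M_z\Pi^*)$ and the Krein-type resolvent formula.

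I expect the only genuine work to lie in the first two steps — verifying that $S_\Pi^*$ really is the adjoint of the symmetric operator $S_\Pi$, that the new triple satisfies the two boundary-triple axioms, and carefully keeping apart the roles of the orthogonal projection $\Pi$ and the isometric embedding $\Pi^*$; once this reduction is established, the remaining content is a mechanical transcription of the facts recalled in Section~\ref{ssec-bt}. Alternatively, one can bypass the explicit construction entirely, since boundary conditions of exactly the form $\Pi\Gamma_1 f=\Theta\Pi\Gamma_0 f$, $(\one-\Pi^*\Pi)\Gamma_0 f=0$ are treated directly in \cite{BGP}.
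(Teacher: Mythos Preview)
The paper does not prove this theorem at all: it is stated in Section~\ref{ssec-bt} as a background fact from the general theory of boundary triples, with the proofs deferred to the references \cite{BGP,DM91,DM95} (the opening sentence of that subsection says explicitly that ``the proofs for all statements can be found'' in \cite{BGP}). Your final sentence --- that one can bypass the construction and cite \cite{BGP} directly, where boundary conditions of exactly this form are handled --- is therefore precisely what the paper does.

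Your actual reduction argument is nonetheless correct and is the standard way to derive the $\Pi$-version from the ordinary one: restrict $S^*$ to the subspace where $(\one-\Pi^*\Pi)\Gamma_0 f=0$, observe that $\{\cG_\Pi,\Pi\Gamma_0,\Pi\Gamma_1\}$ becomes an ordinary boundary triple for this intermediate extension with unchanged reference operator $B_0$, $\gamma$-field $G_z\Pi^*$, and Weyl function $\Pi M_z\Pi^*$, and then invoke the classical statements. The only point requiring care, which you flag yourself, is checking that the restricted operator $S_\Pi^*$ is genuinely the adjoint of a closed densely defined symmetric operator; this follows from the abstract characterization of boundary triples (any pair of maps on a closed operator satisfying the Green identity and the surjectivity condition automatically arises from such a symmetric operator, cf.\ \cite[Section~1]{BGP} or \cite{DM91}). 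So your proposal supplies a proof where the paper supplies only a reference, and the two are entirely compatible.
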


Finally we recall a special approach for the construction
of boundary triples using abstract trace maps developed in \cite{Pos04} and \cite{Pos08}, see also \cite[Section 1.4.2]{BGP}.
Let $B$ be a self-adjoint operator in the Hilbert space $\cH$, 
let $\cG$ be another Hilbert space, and assume that
  \begin{equation*}
    \cT: \dom B \rightarrow \cG
  \end{equation*}
is a \emph{surjective} linear operator which is bounded with respect to the graph norm of $B$
and such that $\ker \cT$ is a dense subspace of the initial Hilbert space $\cH$. Then
\begin{equation*}
S := B\upharpoonright \ker \cT
\end{equation*}
is a densely defined closed symmetric operator.
Next, define for any  $z\in\res B$ the injective operator 
\begin{equation}
  \label{eq-gz1}
	G_z:= \big(\cT(B - \Bar{z})^{-1}\big)^*,
\end{equation}
which is bounded from $\mathcal{G}$ to $\mathcal{H}$.
Then one has $\ran G_z=\ker(S^*-z)$ for  $z\in\res B$ and \eqref{deco33} leads to the direct sum decomposition
\begin{equation}\label{deco234}
\dom S^* = \dom B \dot{+} \ran G_z,\quad z\in\res B,
\end{equation}
which shows that for all $f \in \dom S^*$ there exist unique $f_z \in \dom B$ and $\xi \in \mathcal{G}$ 
such that $f = f_z + G_z \xi$;
one can show that the component~$\xi$ is independent of the choice of $z$.
Having these notations in hand we can formulate now the following proposition:

\begin{prop} \label{proposition_boundary_triple_singular_perturbation}
  Let $\zeta \in \res B$ be fixed and define the mappings $\Gamma_0, \Gamma_1: \dom S^* \rightarrow \mathcal{G}$ 
  for $f = f_\zeta + G_\zeta \xi = f_{\bar{\zeta}} + G_{\bar{\zeta}} \xi \in \dom S^*$ by
  \begin{equation*}
    \Gamma_0 f := \xi \quad \text{and} \quad \Gamma_1 f := \frac{1}{2} \,\mathcal{T} (f_\zeta + f_{\bar{\zeta}}).
  \end{equation*}
  Then $\{\cG,\Gamma_0,\Gamma_1\}$ is a boundary triple for $S^*$ with $S^* \upharpoonright \ker \Gamma_0 = B$. Moreover, the $\gamma$-field and the 
  Weyl function are given by~\eqref{eq-gz1} and
	\[
	M_z = \mathcal{T}\Big(G_z - \frac{1}{2} (G_\zeta + G_{\bar{\zeta}}) \Big),\quad z\in\res B.
	\]
\end{prop}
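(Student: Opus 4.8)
The plan is to verify directly that the triple $\{\cG,\Gamma_0,\Gamma_1\}$ satisfies the two defining properties of a boundary triple for $S^*$, and then to identify $S^*\upharpoonright\ker\Gamma_0$, the $\gamma$-field, and the Weyl function. First I would check that $\Gamma_0$ and $\Gamma_1$ are well defined: by \eqref{deco234} every $f\in\dom S^*$ has a unique decomposition $f = f_\zeta + G_\zeta\xi$ with $f_\zeta\in\dom B$ and $\xi\in\cG$, and one must note that the component $\xi$ does not depend on the base point $z\in\res B$ (this is asserted in the text just before the proposition); granting that, $f = f_{\bar\zeta} + G_{\bar\zeta}\xi$ with the \emph{same} $\xi$, so $\Gamma_0 f := \xi$ is unambiguous, and $\Gamma_1 f := \tfrac12\cT(f_\zeta + f_{\bar\zeta})$ makes sense because $f_\zeta,f_{\bar\zeta}\in\dom B = \dom\cT$.

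The heart of the proof is the abstract Green identity, property (i). I would fix $f = f_\zeta + G_\zeta\xi$ and $g = g_\zeta + G_\zeta\eta$ in $\dom S^*$. Using that $S^* G_z\xi = z\,G_z\xi$ (which follows from $\ran G_z = \ker(S^*-z)$) and $S^* f_\zeta = B f_\zeta$, one expands $(S^*f,g)_\cH - (f,S^*g)_\cH$ into four groups of terms: the $(f_\zeta,g_\zeta)$ term vanishes by self-adjointness of $B$; the remaining terms involve inner products of the form $(B f_\zeta, G_\zeta\eta)$, $(G_\zeta\xi, B g_\zeta)$, $(\zeta G_\zeta\xi, G_\zeta\eta)$ etc. The key computational input is the identity $(\cT h,\theta)_\cG = (h,(B-\bar z)G_z\theta)_\cH$ for $h\in\dom B$, $\theta\in\cG$, valid for $z\in\res B$, which is just the defining relation \eqref{eq-gz1} rewritten as $G_z^* = \cT(B-\bar z)^{-1}$, i.e. $\cT = G_{\bar z}^{*}(B-z)$ on $\dom B$. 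Feeding this in and rearranging, all $z$-dependent pieces collapse and one is left precisely with $(\Gamma_1 f,\Gamma_0 g)_\cG - (\Gamma_0 f,\Gamma_1 g)_\cG = \tfrac12\big(\cT(f_\zeta+f_{\bar\zeta}),\eta\big)_\cG - \tfrac12\big(\xi,\cT(g_\zeta+g_{\bar\zeta})\big)_\cG$. This bookkeeping — making sure the $\zeta$ and $\bar\zeta$ terms combine symmetrically so that no stray imaginary parts survive — is the main obstacle; it is exactly the reason the symmetrized definition $\tfrac12\cT(f_\zeta+f_{\bar\zeta})$ is used rather than $\cT f_\zeta$.

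For property (ii), surjectivity of $f\mapsto(\Gamma_0 f,\Gamma_1 f)$, I would argue as follows: given $(\xi_0,\xi_1)\in\cG\times\cG$, first pick $f = G_\zeta\xi_0 + (\text{something in }\dom B)$ to get $\Gamma_0 f = \xi_0$; then adjust by an element $h\in\dom B = \ker\cT\oplus(\dots)$ — more precisely, since $\cT:\dom B\to\cG$ is surjective, one can modify $f_\zeta$ (without changing $\xi_0$) to hit any prescribed value of $\tfrac12\cT(f_\zeta+f_{\bar\zeta})$, using that changing $f_\zeta$ by $h\in\dom B$ changes $f_{\bar\zeta}$ correspondingly but the map remains onto $\cG$. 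Then $S^*\upharpoonright\ker\Gamma_0 = B$ is immediate: $\Gamma_0 f = 0$ means $\xi = 0$, i.e. $f = f_\zeta\in\dom B$, and $S^* = B$ there.

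Finally, the $\gamma$-field and Weyl function are read off from the decomposition \eqref{deco33}. For $z\in\res B$ and $\xi\in\cG$, the element $G_z\xi$ satisfies $\Gamma_0(G_z\xi) = \xi$ (since $G_z\xi\in\ker(S^*-z)$ and its $\xi$-component in the $\zeta$-decomposition is the same $\xi$), so the boundary-triple $\gamma$-field coincides with $z\mapsto G_z$ from \eqref{eq-gz1}. For the Weyl function, $M_z\xi = \Gamma_1(G_z\xi) = \tfrac12\cT\big((G_z\xi)_\zeta + (G_z\xi)_{\bar\zeta}\big)$, and from $G_z\xi = (G_z\xi)_\zeta + G_\zeta\xi$ one gets $(G_z\xi)_\zeta = G_z\xi - G_\zeta\xi$, and similarly $(G_z\xi)_{\bar\zeta} = G_z\xi - G_{\bar\zeta}\xi$; averaging gives $\tfrac12\cT\big(2G_z\xi - G_\zeta\xi - G_{\bar\zeta}\xi\big) = \cT\big(G_z - \tfrac12(G_\zeta+G_{\bar\zeta})\big)\xi$, as claimed. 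One should double-check that $\cT G_z$ makes sense here: $G_z\xi\notin\dom B$ in general, but $G_z - \tfrac12(G_\zeta+G_{\bar\zeta})$ maps into $\dom B$ because the $G_\zeta\xi$ singular parts cancel, so $\cT$ applies legitimately to the combination. With all four pieces in place the proposition follows; I would cite \cite{Pos04, Pos08} or \cite[Section~1.4.2]{BGP} for the parts of this computation that are standard.
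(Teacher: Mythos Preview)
Your proposal is correct. The paper itself does not give a proof of this proposition: it is stated as a known result, with the sentence preceding the proposition pointing to \cite{Pos04}, \cite{Pos08} and \cite[Section~1.4.2]{BGP} for the construction. So there is nothing to compare against in the paper proper; what you have written is essentially the direct verification one finds in those references.

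Two small comments on the execution. First, in the Green identity step the ``bookkeeping'' you allude to is most cleanly handled via the resolvent-type identity $G_{\bar\zeta}-G_\zeta = (\bar\zeta-\zeta)(B-\bar\zeta)^{-1}G_\zeta$, which combined with $G_\zeta^* = \cT(B-\bar\zeta)^{-1}$ gives $\cT(G_{\bar\zeta}-G_\zeta) = (\bar\zeta-\zeta)G_\zeta^* G_\zeta$; this makes the cancellation of the $(\zeta-\bar\zeta)(G_\zeta\xi,G_\zeta\eta)$ term transparent. Second, your remark at the end that $G_z - \tfrac12(G_\zeta+G_{\bar\zeta})$ maps into $\dom B$ (so that $\cT$ applies) is exactly right and is the point the paper later exploits in the proof of Proposition~\ref{prop-bt3}; it follows from the same resolvent identity.
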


\section{The free Dirac operator and a boundary triple for its singular perturbations} \label{section_boundary_triple}

In this section we first recall the definition of the free Dirac operator in~$\mathbb{R}^2$, a minimal and a maximal realization of the Dirac 
operator in $\mathbb{R}^2 \setminus \Sigma$, and we introduce and study some families of integral operators which will play an important role in 
our analysis in Section~\ref{section_delta_op}. Afterwards, we define a boundary triple which is useful in the
treatment of Dirac operators with singular $\delta$-interactions.

\subsection{The free, the minimal, and the maximal Dirac operators and associated integral operators} \label{section_int_op}

For $m \in \mathbb{R}$ the free Dirac operator in $\mathbb{R}^2$ is defined by 
\begin{equation} \label{def_free_op}
  A_0 f =-\rmi \sum_{j=1}^2 \sigma_j \partial_j f + m\sigma_3 f = -\rmi\,\sigma\cdot\nabla f + m\sigma_3 f, \quad \dom A_0 = H^1(\mathbb{R}^2; \mathbb{C}^2),
\end{equation}
where $\sigma := (\sigma_1, \sigma_2)$ and $\sigma_3$ are the $\mathbb{C}^{2 \times 2}$-valued Pauli spin matrices
in~\eqref{def_Pauli_matrices}.
First, we recall some basic properties of $A_0$.
Using the Fourier transform and~\eqref{anti_commutation} one verifies that
$A_0$ is self-adjoint in $L^2(\mathbb{R}^2; \mathbb{C}^2)$
and that its spectrum is purely essential,
\begin{equation*}
  \spec A_0 = \spec_\ess A_0 = \big(-\infty,-|m|\big]\,\cup\big[|m|,+\infty\big).
\end{equation*}
In particular, $\spec A_0=\RR$ for $m=0$.
Due to the identity 
\begin{equation*}
  (A_0-z)(A_0+z)  = (-\Delta + m^2-z^2)\sigma_0
\end{equation*}
one can express the resolvent of $A_0$ through the resolvent of the free Laplacian.
Recall that for $z\notin \spec(-\Delta) = [0,+\infty)$ the resolvent $(-\Delta-z)^{-1}$ is the integral operator
\begin{equation*}
  (-\Delta-z)^{-1} f(x)=\dfrac{1}{2\pi}\int_{\RR^2} K_0\big(\sqrt{-z}|x-y|\big)\, f(y)\, dy,
\end{equation*}
where $K_j$ stands for the modified Bessel function of second kind of order~$j$, and
 we take the principal square root function, i.e. for $z \in \mathbb{C} \setminus [0, \infty)$ the number $\sqrt{z}$ is determined by 
 $\textup{Re}\, \sqrt{z} > 0$.
For $z\in\res A_0$ one gets 
\begin{equation*}
  (A_0 - z)^{-1}=(A_0 + z) \big(-\Delta - (z^2-m^2)\big)^{-1}\sigma_0,
\end{equation*}
which leads to 
\begin{equation*}
	(A_0 - z)^{-1} f(x) = \int_{\RR^2}\phi_z(x-y) f(y) \dd y, \quad f \in L^2(\RR^2;\CC^2),
\end{equation*}
where 
\begin{equation} \label{def_Green_function}
\begin{split}
 	\phi_z(x) = \rmi\frac{\sqrt{m^2 - z^2}}{2\pi} K_1\big(\sqrt{m^2 - z^2}|x|\big)\bigg(\sigma\cdot \frac{x}{|x|}\bigg)\qquad \qquad \qquad&\\
	 + \frac1{2\pi}K_0\big(\sqrt{m^2 -z^2}|x|\big)\big(m\sigma_3 + z\sigma_0\big)&.
  \end{split}
\end{equation}

\kp{
Next we introduce a symmetric operator which is suitable for our purposes. More precisely, denote by
$S$ be the restriction of $A_0$ to the functions vanishing at $\Sigma$, i.e.
\begin{equation} \label{def_S}
S f = (-\rmi \sigma \cdot \nabla + m \sigma_3) f,\quad \dom S=H^1_0(\RR^2\setminus\Sigma; \mathbb{C}^2).
\end{equation}
Then the operator $A_{\eta, \tau}$ defined in~\eqref{dirdelta} is an extension of $S$.
The standard theory implies that the adjoint $S^*$ is the maximal realization of the same differential expression
in $\RR^2\setminus\Sigma$, i.e.
\begin{equation} \label{def_S_star}
  \begin{split}
    S^* f &= (-\rmi \sigma \cdot \nabla + m \sigma_3) f_+ \oplus (-\rmi \sigma \cdot \nabla + m \sigma_3) f_-, \\
    \dom S^* &= \big\{ f = f_+ \oplus f_- \in L^2(\Omega_+; \mathbb{C}^2) \oplus L^2(\Omega_-; \mathbb{C}^2):\, f_\pm\in H(\sigma,\Omega_\pm)\big\},
  \end{split}
\end{equation}
and we recall that
\begin{equation} \label{def_H_sigma}
  H(\sigma,\Omega_\pm)=\big\{ f_\pm \in L^2(\Omega_\pm;\CC^2):  (-\rmi \sigma \cdot \nabla + m \sigma_3) f_\pm\in L^2(\Omega_\pm;\CC^2)\big\}.
\end{equation}
The set $H(\sigma, \Omega_\pm)$ endowed with the norm
\begin{equation*}
  \| f_\pm \|_{H(\sigma, \Omega_\pm)}^2 := \| f \|_{L^2(\Omega_\pm; \mathbb{C}^4)}^2 +  \big\| (-\rmi \sigma \cdot \nabla + m \sigma_3) f_\pm \big\|_{L^2(\Omega_\pm; \mathbb{C}^2)}^2
\end{equation*}
is a Hilbert space, which is obviously independent of $m$. 
For our further considerations, it is useful to extend the Dirichlet trace operator onto $H(\sigma; \Omega_\pm)$. 
In the following lemma we summarize several known results; we refer to \cite[Lemma~2.3 and Lemma~2.4]{BFSV}
for compact proofs:

\begin{lem} \label{lemma_trace}
  The trace map
	\[
	\mathcal{T}_{\pm, 0}^D: H^1(\Omega_\pm; \mathbb{C}^2) \rightarrow H^{\frac{1}{2}}(\Sigma; \mathbb{C}^2),
	\quad
	\mathcal{T}_{\pm, 0}^D f = f|_\Sigma, 
	\]
  extends uniquely to a bounded linear operator
  \begin{equation*} 
    \mathcal{T}_\pm^D: H(\sigma,\Omega_\pm) \rightarrow H^{-\frac{1}{2}}(\Sigma;\CC^2).
  \end{equation*}
  Moreover, if $\mathcal{T}_\pm^D f \in H^{\frac{1}{2}}(\Sigma; \mathbb{C}^2)$ for $f \in H(\sigma, \Omega_\pm)$, then $f \in H^1(\Omega_\pm; \mathbb{C}^2)$.
\end{lem}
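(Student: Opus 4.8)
The plan is to construct the extended trace $\mathcal{T}_\pm^D$ by duality via the first Green identity for the Dirac operator, and then to deduce the regularity statement by extending functions with vanishing trace by zero and invoking the elementary equality $H(\sigma,\RR^2)=H^1(\RR^2;\CC^2)$.

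First I would record the Green identity: for $f,g\in H^1(\Omega_\pm;\CC^2)$, integration by parts (using that $\sigma_1,\sigma_2$ are Hermitian, that the outward unit normal of $\Omega_\pm$ equals $\pm\nu$, and that $m\sigma_3$ is symmetric) gives
\[
\big((-\rmi\sigma\cdot\nabla+m\sigma_3)f,g\big)_{L^2(\Omega_\pm)}-\big(f,(-\rmi\sigma\cdot\nabla+m\sigma_3)g\big)_{L^2(\Omega_\pm)}=\mp\rmi\big((\sigma\cdot\nu)\,\mathcal{T}_{\pm,0}^D f,\mathcal{T}_{\pm,0}^D g\big)_{L^2(\Sigma)}.
\]
By~\eqref{anti_commutation} one has $(\sigma\cdot\nu)^2=|\nu|^2\sigma_0=\sigma_0$, so multiplication by $\sigma\cdot\nu$ is a bounded, self-inverse operator on $H^s(\Sigma;\CC^2)$ for every $s$ (a $C^\infty$-matrix multiplication, i.e. an operator in $\Psi_\Sigma^0$). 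Now fix $f\in H(\sigma,\Omega_\pm)$ together with a bounded right inverse $\mathcal{E}_\pm$ of the classical surjective trace $\mathcal{T}_{\pm,0}^D:H^1(\Omega_\pm;\CC^2)\to H^{1/2}(\Sigma;\CC^2)$ (for the unbounded $\Omega_-$ one takes $\mathcal{E}_-$ with range supported in a fixed bounded neighbourhood of $\Sigma$). The functional $\varphi\mapsto\rmi\big[\big((-\rmi\sigma\cdot\nabla+m\sigma_3)f,\mathcal{E}_\pm\varphi\big)_{L^2(\Omega_\pm)}-\big(f,(-\rmi\sigma\cdot\nabla+m\sigma_3)\mathcal{E}_\pm\varphi\big)_{L^2(\Omega_\pm)}\big]$ on $H^{1/2}(\Sigma;\CC^2)$ is independent of the chosen extension of $\varphi$ — if two extensions differ by $v\in H^1_0(\Omega_\pm;\CC^2)$, the bracket vanishes on $v$ because $-\rmi\sigma\cdot\nabla f\in L^2$ and $v$ is approximated by $C_0^\infty(\Omega_\pm;\CC^2)$ — and it is bounded by $C\|f\|_{H(\sigma,\Omega_\pm)}\|\varphi\|_{H^{1/2}(\Sigma)}$. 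Hence it defines $\Phi_\pm f\in H^{-1/2}(\Sigma;\CC^2)$ depending boundedly on $f$, and I would set $\mathcal{T}_\pm^D f:=\pm(\sigma\cdot\nu)\Phi_\pm f$; the Green identity shows that $\mathcal{T}_\pm^D$ agrees with $\mathcal{T}_{\pm,0}^D$ on $H^1(\Omega_\pm;\CC^2)$, so it is a bounded extension, and uniqueness follows once one knows that $H^1(\Omega_\pm;\CC^2)$ is dense in $H(\sigma,\Omega_\pm)$ (a Friedrichs-mollification argument near the smooth boundary, plus a cut-off at infinity for $\Omega_-$). By construction one obtains a generalized Green identity valid for all $f\in H(\sigma,\Omega_\pm)$ and $g\in H^1(\Omega_\pm;\CC^2)$, with $\mathcal{T}_{\pm,0}^D f$ replaced by $\mathcal{T}_\pm^D f$ in the left boundary factor and the $L^2(\Sigma)$-product replaced by the $H^{-1/2}$–$H^{1/2}$ pairing.

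For the regularity statement, suppose $f\in H(\sigma,\Omega_\pm)$ with $\mathcal{T}_\pm^D f\in H^{1/2}(\Sigma;\CC^2)$. Using surjectivity of the classical trace pick $g\in H^1(\Omega_\pm;\CC^2)$ with $\mathcal{T}_{\pm,0}^D g=\mathcal{T}_\pm^D f$ and put $h:=f-g\in H(\sigma,\Omega_\pm)$, so $\mathcal{T}_\pm^D h=0$. Let $\widetilde h$ be the extension of $h$ by zero to $\RR^2$ and $\widetilde w$ the zero-extension of $-\rmi\sigma\cdot\nabla h\in L^2(\Omega_\pm;\CC^2)$. For $\varphi\in C_0^\infty(\RR^2;\CC^2)$ the generalized Green identity on $\Omega_\pm$ (whose boundary term vanishes since $\mathcal{T}_\pm^D h=0$) gives $\int_{\RR^2}\widetilde h\cdot\overline{(-\rmi\sigma\cdot\nabla\varphi)}\dd x=\int_{\Omega_\pm}(-\rmi\sigma\cdot\nabla h)\cdot\overline{\varphi}\dd x=\int_{\RR^2}\widetilde w\cdot\overline{\varphi}\dd x$, so $-\rmi\sigma\cdot\nabla\widetilde h=\widetilde w\in L^2(\RR^2;\CC^2)$ and $\widetilde h\in H(\sigma,\RR^2)$. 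Since $(\sigma\cdot\xi)^2=|\xi|^2\sigma_0$, Plancherel gives $\|u\|_{H(\sigma,\RR^2)}^2=\int_{\RR^2}(1+|\xi|^2)|\widehat{u}(\xi)|^2\dd\xi$, i.e. $H(\sigma,\RR^2)=H^1(\RR^2;\CC^2)$ with equal norms; therefore $\widetilde h\in H^1(\RR^2;\CC^2)$, whence $h=\widetilde h|_{\Omega_\pm}\in H^1(\Omega_\pm;\CC^2)$ and $f=g+h\in H^1(\Omega_\pm;\CC^2)$.

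The main obstacle is not in the deep structure but in two bookkeeping points: the density of $H^1(\Omega_\pm;\CC^2)$ in $H(\sigma,\Omega_\pm)$ needed to make the extension unique, and the careful tracking of signs, the outward normal, and the sesquilinear pairings in the two Green identities; once these are set up correctly the argument is routine. A fully detailed version of this computation can be found in \cite[Lemmas~2.3 and~2.4]{BFSV}.
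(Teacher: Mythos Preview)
The paper does not actually prove this lemma: it merely states that these are ``several known results'' and refers to \cite[Lemmas~2.3 and~2.4]{BFSV} for compact proofs. Your proposal reproduces exactly the standard argument found there --- the Green-identity/duality construction of the extended trace and the zero-extension plus $H(\sigma,\RR^2)=H^1(\RR^2;\CC^2)$ argument for the regularity --- and you cite the same source, so there is nothing to compare.
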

}

Now we introduce some families of integral operators corresponding to the Green function $\phi_z$ associated to $A_0$ given by~\eqref{def_Green_function}. 
Let us denote the Dirichlet trace operator on $H^1(\mathbb{R}^2; \mathbb{C}^2)$ 
by $\mathcal{T}^D: H^1(\mathbb{R}^2; \mathbb{C}^2) \rightarrow H^{\frac{1}{2}}(\Sigma; \mathbb{C}^2)$. 
It is well-known that $\mathcal{T}^D$ is bounded, surjective, and $\ker \mathcal{T}^D = H^1_0(\mathbb{R}^2 \setminus \Sigma; \mathbb{C}^2)$; 
  cf. \cite[Theorems~3.37 and~3.40]{M00}.
For $z \in \res A_0$ we first define the bounded operator
\begin{equation} \label{def_Phi_z_prime}
  \Phi_z' := \mathcal{T}^D (A_0 - \Bar{z})^{-1}: L^2(\mathbb{R}^2; \mathbb{C}^2) \rightarrow H^{\frac{1}{2}}(\Sigma; \mathbb{C}^2)
\end{equation}
and its anti-dual
\begin{equation} \label{def_Phi_z}
  \Phi_z := \big( \mathcal{T}^D (A_0 - \overline{z})^{-1} \big)': H^{- \frac{1}{2}}(\Sigma; \mathbb{C}^2) \rightarrow L^2(\mathbb{R}^2; \mathbb{C}^2).
\end{equation}
The basic properties of $\Phi_z$ are stated in the following proposition:

\begin{prop} \label{proposition_Phi_z}
  Let $z \in \res A_0$ and consider the operator $\Phi_z$ in \eqref{def_Phi_z}. Then  for $\varphi \in L^2(\Sigma; \mathbb{C}^2)$ one has
  \begin{equation*}
    \Phi_z \varphi (x) = \int_\Sigma \phi_z(x-y) \varphi(y) \dd s(y) \quad \text{for a.e. }x \in \mathbb{R}^2\setminus\Sigma.
  \end{equation*}
  Moreover, $\Phi_z$ is a bounded bijective operator from $ H^{- \frac{1}{2}}(\Sigma; \mathbb{C}^2)$ onto  $\ker (S^* - z)$.
\end{prop}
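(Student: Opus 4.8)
\textbf{Proof strategy for Proposition~\ref{proposition_Phi_z}.}
The plan is to establish the two assertions separately: first the explicit integral representation of $\Phi_z$ on $L^2(\Sigma;\CC^2)$, and then the mapping property $\Phi_z:H^{-1/2}(\Sigma;\CC^2)\to\ker(S^*-z)$ together with bijectivity. For the integral representation, I would start from the definition $\Phi_z = \big(\mathcal{T}^D(A_0-\bar z)^{-1}\big)'$ and unwind the anti-duality: for $\varphi\in L^2(\Sigma;\CC^2)$ and any test function $g\in L^2(\RR^2;\CC^2)$ one has $(\Phi_z\varphi,g)_{L^2(\RR^2;\CC^2)} = (\varphi,\mathcal{T}^D(A_0-\bar z)^{-1}g)_{L^2(\Sigma;\CC^2)}$. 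Now $(A_0-\bar z)^{-1}g(y)=\int_{\RR^2}\phi_{\bar z}(y-w)g(w)\dd w$, so restricting to $\Sigma$ and using Fubini one rewrites the right-hand side as $\int_{\RR^2}\Big(\int_\Sigma \overline{\phi_{\bar z}(y-w)}^{\,\top}\varphi(y)\dd s(y)\Big)\cdot\overline{g(w)}\dd w$. The key algebraic identity is $\phi_{\bar z}(x)^* = \phi_z(-x)$ (equivalently $\overline{\phi_{\bar z}(x)}^{\,\top}=\phi_z(-x)$), which follows directly from the explicit formula~\eqref{def_Green_function} together with $K_j$ being real-valued on $(0,\infty)$, $\bar z$ appearing only through $m^2-z^2$ and the linear term $z\sigma_0$, and the fact that $\sigma_1,\sigma_3$ are real symmetric while $\sigma_2$ is purely imaginary antisymmetric so that $(\sigma\cdot x)^*=\sigma\cdot x$. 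Since $g$ is arbitrary, this yields $\Phi_z\varphi(w)=\int_\Sigma\phi_z(w-y)\varphi(y)\dd s(y)$ for a.e.\ $w$, as claimed; one should also note the integral converges since $\phi_z$ has only a weak (logarithmic / $|x|^{-1}$-type, hence locally integrable in $\RR^2$) singularity at the origin and $\Sigma$ is a compact smooth curve.

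For the mapping property, I would argue as follows. For $z\in\res A_0$ the operator $(A_0-z)^{-1}$ maps $L^2(\RR^2;\CC^2)$ into $\dom A_0=H^1(\RR^2;\CC^2)$, hence $(A_0-\bar z)^{-1}$ is bounded $L^2\to H^1$; dualizing, its adjoint extends $\mathcal{T}^D$ appropriately and $\Phi_z$ is bounded $H^{-1/2}(\Sigma;\CC^2)\to L^2(\RR^2;\CC^2)$ by the very definitions~\eqref{def_Phi_z_prime}--\eqref{def_Phi_z}. To see $\ran\Phi_z\subset\ker(S^*-z)$: for $\varphi\in L^2(\Sigma;\CC^2)$ and $\psi\in C_0^\infty(\RR^2\setminus\Sigma;\CC^2)$ one computes, using the integral representation and that $\phi_z(\cdot)$ is a fundamental solution of $(A_0-z)$ away from the origin,
\begin{equation*}
\big(\Phi_z\varphi,(-\rmi\sigma\cdot\nabla+m\sigma_3-\bar z)\psi\big)_{L^2(\RR^2\setminus\Sigma;\CC^2)}
= \int_\Sigma\varphi(y)\cdot\overline{\big((A_0-z)^{-1}(A_0-\bar z)\psi\big)(y)}\dd s(y)=0,
\end{equation*}
since $\psi\in\dom A_0$ and $(A_0-z)^{-1}(A_0-\bar z)\psi=\psi$ vanishes on $\Sigma$; this shows $(S^*-z)\Phi_z\varphi=0$ in the distributional sense, i.e.\ $\Phi_z\varphi\in H(\sigma,\Omega_+)\oplus H(\sigma,\Omega_-)$ with $(S^*-z)\Phi_z\varphi=0$, and the same conclusion then extends to $\varphi\in H^{-1/2}(\Sigma;\CC^2)$ by density of $L^2(\Sigma;\CC^2)$ in $H^{-1/2}(\Sigma;\CC^2)$ and continuity.

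It remains to prove that $\Phi_z:H^{-1/2}(\Sigma;\CC^2)\to\ker(S^*-z)$ is bijective, and this I expect to be the main obstacle. For surjectivity, given $f\in\ker(S^*-z)$, one considers the jump of its Dirichlet traces across $\Sigma$; the natural candidate is $\varphi := -\rmi(\sigma\cdot\nu)\big(\mathcal{T}_+^D f_+ - \mathcal{T}_-^D f_-\big)\in H^{-1/2}(\Sigma;\CC^2)$, and one checks via the jump relations for the single-layer-type potential $\Phi_z$ (which follow from the known jump behavior of the Cauchy/single-layer operators built from $\phi_z$, together with the trace lemma, Lemma~\ref{lemma_trace}) that $\Phi_z\varphi$ and $f$ have the same Dirichlet traces on both sides of $\Sigma$; then $f-\Phi_z\varphi\in\ker(S^*-z)$ has zero Dirichlet traces, hence lies in $H^1_0(\RR^2\setminus\Sigma;\CC^2)=\dom S$, so $(S-z)(f-\Phi_z\varphi)=0$; since $z\in\res A_0$ and $S\subset A_0$ this forces $f=\Phi_z\varphi$. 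For injectivity, if $\Phi_z\varphi=0$ in $L^2(\RR^2;\CC^2)$ then in particular both one-sided Dirichlet traces vanish, and the jump of the conormal-type trace of the single layer potential recovers $\varphi$ up to a multiplicative invertible matrix factor $-\rmi\sigma\cdot\nu$, so $\varphi=0$. The delicate points here are making the jump relations rigorous at the level of $H^{-1/2}$ traces rather than classical ones, and checking that the relevant trace jumps genuinely identify $\varphi$; these will likely be handled by combining the pseudodifferential identifications of the Cauchy transform from Proposition~\ref{proposition_Cauchy_transform} with the explicit structure of $\phi_z$ and the trace results of Lemma~\ref{lemma_trace}.
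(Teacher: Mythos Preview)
Your integral representation argument is essentially the same as the paper's. For the bijectivity, however, the paper takes a much shorter route: instead of invoking jump relations, it works directly with duality. Since $\Phi_z' = \mathcal{T}^D(A_0-\bar z)^{-1}$ and $\ker \mathcal{T}^D = H^1_0(\RR^2\setminus\Sigma;\CC^2) = \dom S$, one reads off $\ker \Phi_z' = \ran(S-\bar z)$, while surjectivity of $\mathcal{T}^D$ gives surjectivity of $\Phi_z'$. The closed range theorem then yields that $\Phi_z$ is injective with closed range and $(\ran \Phi_z)^\perp = \ker \Phi_z' = \ran(S-\bar z)$, hence $\ran \Phi_z = \ker(S^*-z)$. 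This uses nothing beyond basic functional analysis and avoids the layer-potential machinery altogether.

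Your hands-on approach via jump relations can be made to work, but there is a genuine gap in the surjectivity step as written: choosing $\varphi$ to match the jump $\mathcal{T}_+^D f_+ - \mathcal{T}_-^D f_-$ only guarantees that $g := f - \Phi_z\varphi$ has \emph{zero jump}, not that both one-sided traces vanish; the sum $\mathcal{T}_+^D g_+ + \mathcal{T}_-^D g_-$ involves $\mathcal{C}_z\varphi$ and need not match. The repair is to argue that zero jump implies $g \in H(\sigma,\RR^2)$ globally (integrate by parts against $\psi\in C_0^\infty(\RR^2;\CC^2)$; the boundary terms cancel), and since $(\sigma\cdot\xi)^*(\sigma\cdot\xi)=|\xi|^2\sigma_0$ gives $H(\sigma,\RR^2)=H^1(\RR^2;\CC^2)=\dom A_0$, one concludes $(A_0-z)g=0$ and thus $g=0$. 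Note also that your route relies on the Plemelj--Sokhotskii formula (Proposition~\ref{proposition_Plemelj_Sokhotskii}), which in the paper is proved \emph{after} the present proposition; there is no circularity, since that proof uses only the integral representation, but you would have to reorder. Finally, a small slip in your inclusion argument: it should be $(A_0-\bar z)^{-1}(A_0-\bar z)\psi=\psi$, not $(A_0-z)^{-1}(A_0-\bar z)\psi$.
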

\begin{proof}
  First, due to the properties of the trace map it is clear that $\Phi_z'$ defined by~\eqref{def_Phi_z_prime} is surjective and
  \begin{equation*}
    \ker \Phi_z' = \big\{ f \in L^2(\mathbb{R}^2; \mathbb{C}^2): (A_0 - \Bar{z})^{-1} f \in H^1_0(\mathbb{R}^2 \setminus \Sigma; \mathbb{C}^2) \big\} = 
    \ran (S- \Bar{z}),
  \end{equation*}
  as $S = A_0 \upharpoonright H^1_0(\mathbb{R}^2 \setminus \Sigma; \mathbb{C}^2)$. Using the closed range theorem,
  $(\ran\Phi_z)^\bot=\ker\Phi_z'$, and the fact that $\ker(S^*-z)=(\ran (S- \Bar{z}))^\bot$
  is closed we conclude that
  \begin{equation*}
    \Phi_z: H^{-\frac{1}{2}}(\Sigma; \mathbb{C}^2) \rightarrow \ker (S^* - z)
  \end{equation*}
  is a bounded bijective operator. To prove the integral representation 
  consider $\varphi \in L^2(\Sigma; \mathbb{C}^2)$ and $f \in L^2(\mathbb{R}^2; \mathbb{C}^2)$. A direct computation using Fubini's theorem shows  
  \begin{align*}
    (f, \Phi_z \varphi )_{L^2(\RR^2; \mathbb{C}^2)} &= ( \Phi_z' f, \varphi )_{L^2(\Sigma;\CC^2)}\\ 
    &=
    \big(\cT^D (A_0- \Bar{z})^{-1} f, \varphi \big)_{L^2(\Sigma;\CC^2)}\\
    &=\int_\Sigma \bigg( \int_{\RR^2} \phi_{\Bar{z}}(x-y) f(y)\dd y, \varphi(x) \bigg)_{\CC^2}\dd s(x)\\
    &= \int_{\RR^2} \bigg( f(y),\int_\Sigma \phi_{\Bar{z}}(x-y)^* \varphi(x)\dd s(x) \bigg)_{\CC^2}\dd y\\
    &=\int_{\RR^2} \bigg( f(y),\int_\Sigma \phi_z(y-x)\varphi(x)\dd s(x)\bigg)_{\CC^2}\dd y,
  \end{align*}
  where the symmetry property $\phi_{\Bar{z}}(x-y)^* = \phi_z(y-x)$ was used in the last equality.
  This implies the representation for $\Phi_z \varphi$, $\varphi \in L^2(\Sigma; \mathbb{C}^2)$, and completes the proof of this proposition.
\end{proof}

We will also need a family of boundary integral operators with integral kernel~$\phi_z$. To introduce these operators, we study first the structure of the Green function~$\phi_z$ in more detail:

\begin{lem} \label{lemma_phi_z}
  Let $z \in \res A_0$ and consider the function $\phi_z$ in \eqref{def_Green_function}. Then there exist scalar analytic functions $g_1, g_2, g_3$, 
  and $g_4$ and a constant $c_1 < 0$ such that
  \begin{equation} \label{decomposition_phi_z1}
    \begin{aligned}
 	   	\phi_z(x)	=\,& \dfrac{\rmi}{2\pi }\, \sigma\cdot \dfrac{x}{|x|^2	}
	  -\dfrac{1}{2\pi}\Big(\log |x| +\log \sqrt{m^2 - z^2}+c_1\Big)(m\sigma_3 + z\sigma_0)\\
	  & + \dfrac{\rmi}{2\pi} (m^2-z^2) \Big[
	  g_1\big((m^2-z^2)|x|^2\big)\big(\log \sqrt{m^2 - z^2} + \log |x|\big)\\
	  &\hskip 6.8cm + g_2\big((m^2-z^2)|x|^2\big)\Big](\sigma\cdot x)\\
	  +\dfrac{1}{2\pi}(m^2&-z^2)|x|^2\Big[
	  g_3\big((m^2-z^2)|x|^2\big)\big(\log \sqrt{m^2 - z^2} +\log |x|\big)\\
	  &\hskip 5.8cm +g_4\big((m^2-z^2)|x|^2\big)\Big](m\sigma_3 + z\sigma_0).
    \end{aligned}
  \end{equation}
  In particular, there exist $C^\infty$-smooth matrix valued functions $f_1$ and $f_2$ such that
  \begin{equation} \label{decomposition_phi_z2}
    \phi_z(x) = \dfrac{\rmi}{2\pi} \begin{pmatrix}
      0 & \dfrac{1}{x_1+\rmi x_2}\\
      \dfrac{1}{x_1-\rmi x_2} & 0
    \end{pmatrix} + f_1(x) \log|x| +f_2(x).
  \end{equation}
\end{lem}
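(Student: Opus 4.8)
The plan is to substitute the classical power-series expansions of the modified Bessel functions $K_0$ and $K_1$ near the origin into the explicit formula~\eqref{def_Green_function} for $\phi_z$ and to reorganise the result. Recall that, denoting by $\gamma_{\mathrm{E}}$ the Euler--Mascheroni constant and by $I_0$ the modified Bessel function of the \emph{first} kind of order $0$, there are entire functions $E_1,E_2,E_3,E_4\colon\CC\to\CC$ with $E_3(0)=\tfrac12$ such that, for $w$ in a neighbourhood of $0$ cut along $(-\infty,0]$,
\begin{equation*}
  K_0(w)=-\Bigl(\log\tfrac{w}{2}+\gamma_{\mathrm{E}}\Bigr)I_0(w)+w^{2}E_1\bigl(w^{2}\bigr),\qquad
  I_0(w)=1+w^{2}E_2\bigl(w^{2}\bigr),
\end{equation*}
\begin{equation*}
  K_1(w)=\frac{1}{w}+w\Bigl(\log\tfrac{w}{2}\Bigr)E_3\bigl(w^{2}\bigr)+w\,E_4\bigl(w^{2}\bigr).
\end{equation*}
Only the analyticity of $E_1,\dots,E_4$ and the normalisation $E_3(0)=\tfrac12$ will be used below.

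Next I would substitute $w=\sqrt{m^{2}-z^{2}}\,|x|$. Since $z\in\res A_0=\CC\setminus\bigl((-\infty,-|m|]\cup[|m|,\infty)\bigr)$ we have $m^{2}-z^{2}\notin(-\infty,0]$, so $\sqrt{m^{2}-z^{2}}$ (principal branch) is well defined with positive real part; as $|x|>0$ is real, $\log\bigl(\sqrt{m^{2}-z^{2}}\,|x|\bigr)=\log\sqrt{m^{2}-z^{2}}+\log|x|$ with no branch ambiguity, and $\log\tfrac{w}{2}=\log\sqrt{m^{2}-z^{2}}+\log|x|-\log 2$. Inserting the expansions above into~\eqref{def_Green_function} and using $\tfrac{1}{|x|}\,\sigma\cdot\tfrac{x}{|x|}=\sigma\cdot\tfrac{x}{|x|^{2}}$, the term $\tfrac1w$ of $K_1$ produces exactly the leading contribution $\tfrac{\rmi}{2\pi}\,\sigma\cdot\tfrac{x}{|x|^{2}}$; the terms carrying $\log\tfrac{w}{2}$ produce, after peeling off the constant $-\log 2$ and collecting the powers of $w$, precisely the summands carrying the factor $\log\sqrt{m^{2}-z^{2}}+\log|x|$, and one reads off $g_1=E_3$ and $g_3=-E_2$ (evaluated at $(m^{2}-z^{2})|x|^{2}$); the remaining analytic-in-$w^{2}$ terms, together with the leftover constants $-\log 2$ and $-\gamma_{\mathrm{E}}$, give the summands with $g_2$ and $g_4$ (explicit $\CC$-linear combinations of $E_1,\dots,E_4$) and the constant $c_1$. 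Reading off the constant term of the $K_0$-part yields $c_1=\gamma_{\mathrm{E}}-\log 2$, which is negative since $\gamma_{\mathrm{E}}\approx 0.5772<0.6931\approx\log 2$; and $g_1,\dots,g_4$ are analytic because $E_1,\dots,E_4$ are entire. This proves~\eqref{decomposition_phi_z1}. The only real work here is the bookkeeping of the many terms generated by the two Bessel expansions; the only subtle points are the branch-cut remark above and the numerical inequality $\gamma_{\mathrm{E}}<\log 2$ guaranteeing $c_1<0$.

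Finally, to obtain~\eqref{decomposition_phi_z2} from~\eqref{decomposition_phi_z1}, I would use the elementary identity
\begin{equation*}
  \sigma\cdot x=\begin{pmatrix}0&x_1-\rmi x_2\\ x_1+\rmi x_2&0\end{pmatrix},
  \qquad\text{hence}\qquad
  \frac{\sigma\cdot x}{|x|^{2}}=\begin{pmatrix}0&\dfrac{1}{x_1+\rmi x_2}\\ \dfrac{1}{x_1-\rmi x_2}&0\end{pmatrix},
\end{equation*}
which accounts for the first term in~\eqref{decomposition_phi_z2}. In~\eqref{decomposition_phi_z1} every remaining term is either multiplied by $\log|x|$ or not; collecting the $\log|x|$-coefficients into
\begin{equation*}
  f_1(x):=-\frac{1}{2\pi}(m\sigma_3+z\sigma_0)+\frac{\rmi}{2\pi}(m^{2}-z^{2})\,g_1\bigl((m^{2}-z^{2})|x|^{2}\bigr)(\sigma\cdot x)+\frac{1}{2\pi}(m^{2}-z^{2})|x|^{2}\,g_3\bigl((m^{2}-z^{2})|x|^{2}\bigr)(m\sigma_3+z\sigma_0)
\end{equation*}
and the remaining (log-free) terms into $f_2(x)$, one obtains~\eqref{decomposition_phi_z2}. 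Since $|x|^{2}=x_1^{2}+x_2^{2}$ is a polynomial in $x$, $\sigma\cdot x$ is linear, and $g_1,\dots,g_4$ are analytic (indeed entire), both $f_1$ and $f_2$ are $C^\infty$ (in fact real-analytic) $\CC^{2\times2}$-valued functions on $\RR^{2}$, as required.
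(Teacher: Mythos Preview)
Your proposal is correct and follows essentially the same approach as the paper: expand $K_0$ and $K_1$ via their standard series representations (yielding analytic functions and the constant $c_1=\gamma_{\mathrm{E}}-\log 2<0$), substitute $w=\sqrt{m^2-z^2}\,|x|$ into~\eqref{def_Green_function}, and collect terms. The paper's proof is organised slightly differently (it first rewrites $K_0,K_1$ in the form $K_1(t)=\tfrac{1}{t}+t g_1(t^2)\log t+t g_2(t^2)$ etc.\ before substituting), but the content is the same; your remark on the branch of the logarithm and the explicit identification of $f_1$ are a nice addition.
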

\begin{proof}
  In order to prove the claimed results, let us recall the series representations of $K_j$ from, e.g., \S 10.25.2, 10.31.1, and 10.31.2 in \cite{NIST}, which read
  \begin{align*}
    I_\mu(t)&=\dfrac{t^\mu}{2^\mu} \sum_{k=0}^\infty \dfrac{t^{2k}}{4^k k!\Gamma(\mu+k+1)},
    \quad \mu\in\{0,1\},\\
    K_1(t)&= \dfrac{1}{t} +(\log t - \log 2) I_1(t) -\dfrac{t}{4} \sum_{k=0}^\infty
    \big( \psi(k+1)+\psi(k+2)\big) \dfrac{t^{2k}}{4^k k! (k+1)!},\\
    K_0(t)&=-(\log t - \log 2 + \gamma) I_0(t) +\sum_{k=1}^\infty \sum_{j=1}^k \dfrac{1}{j} \dfrac{t^{2k}}{4^k (k!)^2},
  \end{align*}
  with $\psi(t)=\dfrac{\Gamma'(t)}{\Gamma(t)}$ and $\gamma=-\psi(1)<\log 2$. This implies first that 
  \begin{equation*}
    I_0(t)=1+t^2h_0(t^2) \quad \text{and} \quad I_1(t)=t h_1(t^2)
  \end{equation*}
  with some analytic functions $h_0$ and $h_1$. Furthermore, with some analytic functions $k_0$ and $k_1$ we have
  \begin{align*}
    K_1(t)&= \dfrac{1}{t} +(\log t- \log 2) I_1(t) +t k_1(t^2)\\
    &= \dfrac{1}{t} + t h_1(t^2) \log t +t \big(k_1(t^2)- h_1(t^2)\log 2 \big),\\
    K_0(t)&=-(\log t - \log 2 + \gamma) I_0(t) +t^2 k_0(t^2)\\
    &=-\log t-c_1 -t^2 h_0(t^2) \log t -c_1 t^2 h_0(t^2)+t^2k_0(t^2)
  \end{align*}
  with $c_1:=\gamma-\log 2<0$. This can be rewritten in a simplified form as
  \begin{align*}
    K_1(t)&=\dfrac{1}{t}+t g_1(t^2)\log t + t g_2(t^2), \\ 
    K_0(t)&=-\log t -c_1+t^2 g_3(t^2) \log t + t^2 g_4(t^2), 
  \end{align*}
  where $g_1, g_2, g_3,$ and $g_4$ are analytic functions and $c_1<0$.
  Using now the explicit expression for $\phi_z$ we decompose
\begin{align*}
 	&\phi_z(x) = \rmi\frac{\sqrt{m^2 - z^2}}{2\pi} K_1\big(\sqrt{m^2 - z^2}|x|\big)
	\bigg(\sigma\cdot \frac{x}{|x|}\bigg)+ \frac1{2\pi}K_0\big(\sqrt{m^2 -z^2}|x|\big)
	\big(m\sigma_3 + z\sigma_0\big)\\
	&\,\,\, = \rmi\frac{\sqrt{m^2 - z^2}}{2\pi} \bigg\{\dfrac{1}{\sqrt{m^2 - z^2}|x|}\\
  &\hskip 2.9cm+\sqrt{m^2 - z^2}|x| g_1\big((m^2-z^2)|x|^2\big)\log \big( \sqrt{m^2 - z^2} |x|\big)\\
	&\hskip 6.1cm+ \sqrt{m^2 - z^2}|x| g_2\big((m^2-z^2)|x|^2\big)   \bigg\}\bigg(\sigma\cdot \frac{x}{|x|}\bigg)\\
	& \qquad +\frac1{2\pi}\Big\{
	-\log \big( \sqrt{m^2 - z^2} |x|\big)-c_1\\
	&\hskip 2cm +(m^2-z^2)|x|^2 g_3\big((m^2-z^2)|x|^2\big)\log \big( \sqrt{m^2 - z^2} |x|\big)\\
	&\hskip 5.2cm +(m^2-z^2)|x|^2 g_4\big((m^2-z^2)|x|^2\big)	\Big\} \big(m\sigma_3 + z\sigma_0\big),
	\end{align*}
  which leads to the decomposition \eqref{decomposition_phi_z1}. The representation \eqref{decomposition_phi_z2} follows from \eqref{decomposition_phi_z1}
  after noting that
  \begin{equation*}
   \dfrac{\rmi}{2\pi }\, \sigma\cdot \dfrac{x}{|x|^2}=\dfrac{\rmi}{2\pi} \begin{pmatrix}
      0 & \dfrac{1}{x_1+\rmi x_2}\\
      \dfrac{1}{x_1-\rmi x_2} & 0
    \end{pmatrix}.
  \end{equation*}

\end{proof}

For $z \in \res A_0$ we introduce the operator
\begin{equation} \label{def_C_z}
  \mathcal{C}_z \varphi(x) := \pv \int_\Sigma \phi_z(x-y) \varphi(y) \text{d} s(y), \quad \varphi \in C^\infty(\Sigma; \mathbb{C}^2), ~x \in \Sigma.
\end{equation}
The basic properties of $\mathcal{C}_z$ are stated in the following proposition. For the formulation of the result, recall the definition 
of the operator $\Lambda$ from~\eqref{def_Lambda} and of the Cauchy transform $C_\Sigma$ and its dual $C_\Sigma'$ from~\eqref{def_Cauchy_transform} 
and~\eqref{def_Cauchy_transform_dual}, respectively.

\begin{prop} \label{proposition_C_z}
  Let $z \in \res A_0$ and consider the operator $\mathcal{C}_z$ in \eqref{def_C_z}. Then $\mathcal{C}_z \in \Psi_\Sigma^0$ and, 
  in particular, $\mathcal{C}_z$ gives rise to a bounded operator in $H^s(\Sigma; \mathbb{C}^2)$ for any $s \in \mathbb{R}$. 
  The realization in $L^2(\Sigma; \mathbb{C}^2)$ satisfies $\mathcal{C}_z^* = \mathcal{C}_{\Bar{z}}$. Moreover, 
  if $\bf t = ({\bf t}_1, {\bf t}_2)$ is the tangent vector field at $\Sigma$ and $T = {\bf t}_1 + \rmi {\bf t}_2$, $\overline T= {\bf t}_1 - \rmi {\bf t}_2$, then one has
  \begin{equation} \label{Lambda_C_z_Lambda}
    \Lambda \mathcal{C}_z \Lambda=
    \dfrac{1}{2}\,\begin{pmatrix}
     0 & \Lambda C_\Sigma \overline{T} \Lambda\\
     \Lambda T C_\Sigma' \Lambda & 0
    \end{pmatrix}
    + \dfrac{\ell}{4\pi} \begin{pmatrix} (z+m)\one & 0 \\ 0 & (z-m)\one \end{pmatrix} +\Psi
  \end{equation}
  with $\Psi\in\Psi^{-1}_\Sigma$.
\end{prop}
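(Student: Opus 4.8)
The plan is to base everything on the decomposition of the Green function $\phi_z$ from Lemma~\ref{lemma_phi_z}. Writing $\phi_z$ as in \eqref{decomposition_phi_z2}, I split $\mathcal{C}_z = \mathcal{C}_z^{(1)} + \mathcal{C}_z^{(2)} + \mathcal{C}_z^{(3)}$, where $\mathcal{C}_z^{(j)}$ denotes the boundary integral operator whose kernel is the $j$-th summand of \eqref{decomposition_phi_z2} evaluated at $x-y$ (the principal value being needed only for $\mathcal{C}_z^{(1)}$). The smooth-kernel part $\mathcal{C}_z^{(3)}$ gives, after the pullback by $U$ from \eqref{def_U} and Corollary~\ref{corollary_smooth_PPDO}, an operator in $\Psi^{-\infty}_\Sigma$. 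For $\mathcal{C}_z^{(1)}$ a direct comparison of its kernel with \eqref{relation_Cauchy_transform} and \eqref{relation_dual_Cauchy_transform} shows $\mathcal{C}_z^{(1)} = \tfrac12 \begin{pmatrix} 0 & C_\Sigma \overline T \\ T C_\Sigma' & 0 \end{pmatrix}$; since $T$, $\overline T$ are multiplication operators in $\Psi^0_\Sigma$ and $C_\Sigma, C_\Sigma' \in \Psi^0_\Sigma$ by Proposition~\ref{proposition_Cauchy_transform}, this is in $\Psi^0_\Sigma$. For $\mathcal{C}_z^{(2)}$, with kernel $f_1(x-y)\log|x-y|$ and $f_1$ smooth, the pullback by $U$ turns it into an integral operator on $\mathbb{T}$ with kernel $\ell\, f_1\big(\gamma(\ell t)-\gamma(\ell s)\big) \log|\rho(t)-\rho(s)|$, where $\rho(t) := \gamma_1(\ell t) + \rmi \gamma_2(\ell t)$ is $C^\infty$-smooth, injective, with $\rho' \neq 0$, and $|x-y| = |\rho(t)-\rho(s)|$; this falls under Proposition~\ref{prop15} with $m=0$, so $\mathcal{C}_z^{(2)} \in \Psi^{-1}_\Sigma$. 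Together these three facts give $\mathcal{C}_z \in \Psi^0_\Sigma$ and hence the claimed boundedness on every $H^s(\Sigma;\mathbb{C}^2)$.

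For $\mathcal{C}_z^* = \mathcal{C}_{\bar z}$ I would use the symmetry $\phi_{\bar z}(x-y)^* = \phi_z(y-x)$ already exploited in the proof of Proposition~\ref{proposition_Phi_z}. For $u, v \in C^\infty(\Sigma;\mathbb{C}^2)$, removing $\varepsilon$-neighbourhoods of the diagonal, interchanging the order of integration, and letting $\varepsilon \to 0$ yields $(\mathcal{C}_z u, v)_{L^2(\Sigma;\mathbb{C}^2)} = (u, \mathcal{C}_{\bar z} v)_{L^2(\Sigma;\mathbb{C}^2)}$; since $C^\infty(\Sigma;\mathbb{C}^2)$ is dense in $L^2(\Sigma;\mathbb{C}^2)$ and both operators are bounded on $L^2(\Sigma;\mathbb{C}^2)$, this identifies the adjoint.

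The identity \eqref{Lambda_C_z_Lambda} needs a finer analysis, because conjugating by $\Lambda = \Lambda^1 \in \Psi^{1/2}_\Sigma$ raises the order by one, so $\Lambda \mathcal{C}_z^{(2)} \Lambda$ is a priori only in $\Psi^0_\Sigma$. The easy contributions are $\Lambda \mathcal{C}_z^{(3)} \Lambda \in \Psi^{-\infty}_\Sigma$ and, since $\Lambda$ acts diagonally in the $\mathbb{C}^2$-structure, $\Lambda \mathcal{C}_z^{(1)} \Lambda = \tfrac12 \begin{pmatrix} 0 & \Lambda C_\Sigma \overline T \Lambda \\ \Lambda T C_\Sigma' \Lambda & 0 \end{pmatrix}$. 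For $\mathcal{C}_z^{(2)}$ I would split $f_1(x-y) = f_1(0) + \big(f_1(x-y) - f_1(0)\big)$. Using $\gamma(\ell t) - \gamma(\ell s) = \big(\rho(t)-\rho(s)\big) g(t,s)$ with $g \in C^\infty(\mathbb{T}^2)$ nonvanishing, any smooth function of $x-y$ vanishing on the diagonal equals $\big(\rho(t)-\rho(s)\big)$ times a smooth function, so the remainder operator has kernel of the form $\kappa_1\big(\rho(t)-\rho(s)\big) b(t,s)$ with $b \in C^\infty(\mathbb{T}^2)$; by Proposition~\ref{prop15} with $m=1$ it is in $\Psi^{-2}_\Sigma$, and its $\Lambda$-conjugate is in $\Psi^{-1}_\Sigma$. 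The remaining piece has kernel $f_1(0)\log|x-y|$, and from \eqref{decomposition_phi_z1} one reads off $f_1(0) = -\tfrac{1}{2\pi}(m\sigma_3 + z\sigma_0) = -\tfrac{1}{2\pi}\begin{pmatrix} z+m & 0 \\ 0 & z-m \end{pmatrix}$, so this operator equals $-\tfrac{1}{2\pi}\begin{pmatrix} (z+m)\one & 0 \\ 0 & (z-m)\one \end{pmatrix}\mathcal{L}$ with the scalar operator $\mathcal{L}u(x) := \int_\Sigma \log|x-y|\, u(y)\, \dd s(y)$. Pulling back gives $U \mathcal{L} U^{-1} = \ell\, H_0$, where $H_0$ is the operator of Proposition~\ref{prop15} with $a \equiv 1$ and $m = 0$; hence $\Lambda \mathcal{L} \Lambda = \ell\, U^{-1}(L H_0 L) U$, and \eqref{eql} gives $L H_0 L = -\tfrac12 \one$ modulo $\Psi^{-1}$, so $\Lambda \mathcal{L} \Lambda = -\tfrac{\ell}{2} \one$ modulo $\Psi^{-1}_\Sigma$. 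Substituting back produces the constant term $\tfrac{\ell}{4\pi}\begin{pmatrix} (z+m)\one & 0 \\ 0 & (z-m)\one \end{pmatrix}$, and adding up the contributions gives \eqref{Lambda_C_z_Lambda}. I expect the main obstacle to be exactly this last step: one must not merely invoke $\mathcal{C}_z^{(2)} \in \Psi^{-1}_\Sigma$, but isolate the value of $f_1$ on the diagonal — which is the source of the explicit identity term, via \eqref{eql} — while carefully tracking the Jacobian factors $\ell$ introduced by the pullback $U$.
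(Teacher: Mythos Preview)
Your proposal is correct and follows essentially the same route as the paper: split $\phi_z$ into the Cauchy-type singular piece, a logarithmic piece, and a smoother remainder; identify the first with $\tfrac12\begin{pmatrix}0 & C_\Sigma\overline T\\ T C_\Sigma' & 0\end{pmatrix}$ via \eqref{relation_Cauchy_transform}--\eqref{relation_dual_Cauchy_transform}; handle the log piece with Proposition~\ref{prop15} and extract the constant from \eqref{eql}; and push the rest into $\Psi^{-2}_\Sigma$. The only organizational difference is that the paper works directly with the finer expansion \eqref{decomposition_phi_z1}, which already separates the constant-coefficient term $-\tfrac{1}{2\pi}(m\sigma_3+z\sigma_0)\log|x|$ from the higher-order remainder, whereas you start from \eqref{decomposition_phi_z2} and recover that separation by Taylor-expanding $f_1$ at $0$.

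One small imprecision: a smooth matrix function of $x-y\in\RR^2$ vanishing at the origin is in general a combination $\big(\rho(t)-\rho(s)\big)b_1(t,s)+\overline{\big(\rho(t)-\rho(s)\big)}\,b_2(t,s)$, not just $(\rho(t)-\rho(s))$ times smooth. This does not affect your conclusion, since Proposition~\ref{prop15} with $m=1$ applies equally well with $\overline\rho$ in place of $\rho$ (and $|\overline\rho(t)-\overline\rho(s)|=|\rho(t)-\rho(s)|$), so both pieces land in $\Psi^{-2}$; the paper handles the analogous step for its $P_3$ in exactly this way.
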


\begin{proof}
We make use of \eqref{decomposition_phi_z1} to decompose $\phi_z$ in the form 
  \begin{equation*}
    \phi_z(x) = \chi_1(x) + \chi_2(x) + \chi_3(x),
  \end{equation*}
  where
  \begin{align*}
    \chi_1(x)&= \dfrac{\rmi}{2\pi} \begin{pmatrix}
    0 & \dfrac{1}{x_1+\rmi x_2}\\
    \dfrac{1}{x_1-\rmi x_2} & 0
     \end{pmatrix},\\
    \chi_2(x)&=-
    \dfrac{1}{2\pi} \begin{pmatrix}
    z+m & 0\\
    0 & z-m
    \end{pmatrix}
    \log |x|\\
    \chi_3(x)&=	  \big[h_1\big(|x|^2\big) \log |x|+ h_2\big(|x|^2\big)\big]\,(\sigma\cdot x)\\
&\qquad\quad + \big[|x|^2 h_3\big(|x|^2\big)\log|x| + h_4\big(|x|^2\big)\big]\,(m\sigma_3 +z\sigma_0),
  \end{align*}
  and  $h_1, h_2, h_3$, and $h_4$ are analytic functions. In the following we will use the corresponding decomposition 
  $\mathcal{C}_z = P_1 + P_2 + P_3$, where
  \begin{equation*}
  \begin{split}  
    (P_1 \varphi)(x)&=\pv \int_\Sigma \chi_1(x-y) \varphi(y)\dd s(y), \\
    (P_2 \varphi)(x)&=\int_\Sigma \chi_2(x-y) \varphi(y)\dd s(y), \\
    (P_3 \varphi)(x)&=\int_\Sigma \chi_3(x-y) \varphi(y)\dd s(y).
  \end{split}
  \end{equation*}
 Here we have removed the principal value from the integral operators $P_2$ and $P_3$, since these integrals converge almost everywhere by \cite[Proposition~3.10]{F95}.
 
 Let us discuss the operator $P_1$ first. With the help of \eqref{relation_Cauchy_transform} and~\eqref{relation_dual_Cauchy_transform} we obtain
  \begin{equation} \label{equation_decomposition1}
    P_1  = \dfrac{1}{2}\,\begin{pmatrix}
     0 & C_\Sigma \overline{T} \\
     T C_\Sigma'  & 0
    \end{pmatrix}
 \end{equation}
 and since $T,\overline T\in \Psi_\Sigma^0$ we conclude $P_1 \in\Psi_\Sigma^0$ from Proposition~\ref{proposition_Cauchy_transform}. 
  
  Next, we claim that the integral operator $P_2$ admits the representation
 \begin{equation} \label{equation_decomposition2}
    P_2 = \dfrac{\ell}{4\pi} \begin{pmatrix} (z+m)\Lambda^{-2} & 0 \\ 0 & (z-m)\Lambda^{-2} \end{pmatrix} + \Psi_1 
  \end{equation}
  with some $\Psi_1 \in \Psi^{-2}_\Sigma$ and $\Lambda^{-2}=U^{-1}L^{-2}U\in\Psi_\Sigma^{-1}$, so that $P_2\in\Psi_\Sigma^{-1}$.
  In fact, using a parametrization $\gamma: [0, \ell] \rightarrow \mathbb{R}^2$ of $\Sigma$ we find 
  \begin{equation*}
    (U P_2 f)(t)=-\dfrac{\ell}{2\pi} \begin{pmatrix}z+m & 0\\0 & z-m\end{pmatrix} \int_\TT \log \big|\gamma(\ell t)-\gamma(\ell s)\big|\,
    f\big(\gamma(\ell s)\big)\dd s
  \end{equation*}
  for $f\in C^\infty(\Sigma)$. Therefore, with $f=U^{-1}u$ and $\rho(\cdot)=\gamma_1(\ell\cdot)+\rmi\gamma_2(\ell\cdot)\equiv\gamma(\ell\cdot)$ we conclude
  \begin{equation*}
  \begin{split}
    (U P_2 U^{-1}u)(t)&=-\dfrac{\ell}{2\pi} \begin{pmatrix}z+m & 0\\0 & z-m\end{pmatrix} \int_\TT \log \big|\rho(t)-\rho(s)\big|\,
    u(s)\dd s\\
    &=-\dfrac{\ell}{2\pi} \begin{pmatrix}z+m & 0\\0 & z-m\end{pmatrix} H_0u(t)
   \end{split}
  \end{equation*}
  with $H_0$ as in Proposition~\ref{prop15}.
  Now it follows from Proposition~\ref{prop15} (with $m=0$, $a\equiv 1$, and $\rho$ as above) that 
  $H_0\in\Psi^{-1}$ and $\one + 2 L H_0 L \in\Psi^{-1}$. Furthermore, 
  Proposition~\ref{proposition_properties_PPDO}~(ii) and $L^{-1}\in\Psi^{-\frac{1}{2}}$ yield $\frac{1}{2}L^{-2} +   H_0 \in\Psi^{-2}$ and
   hence
 \begin{equation*}
  -\dfrac{\ell}{4\pi} \begin{pmatrix}(z+m)L^{-2} & 0\\0 & (z-m)L^{-2}\end{pmatrix}
   + U P_2 U^{-1}  \in\Psi^{-2}.
  \end{equation*}
  We then conclude
  \begin{equation*}
  -\dfrac{\ell}{4\pi} \begin{pmatrix}(z+m)\Lambda^{-2} & 0\\0 & (z-m)\Lambda^{-2}\end{pmatrix}
   +  P_2   \in\Psi_\Sigma^{-2},
 \end{equation*}
  which leads to \eqref{equation_decomposition2}. 
  
  It will be shown now that $P_3 \in\Psi_\Sigma^{-2}$. Indeed, setting again $\rho(\cdot)=\gamma_1(\ell\cdot)+\rmi\gamma_2(\ell\cdot)\equiv\gamma(\ell\cdot)$ we see that $\chi_3$ can be written in the form
  \begin{equation*}
    \begin{split}
      \chi_3&(\rho(t) - \rho(s)) = \log |\rho(t)-\rho(s)| a_1(t, s) \begin{pmatrix} 0 & \overline{\rho(t)-\rho(s)} \\ \rho(t)-\rho(s) & 0 \end{pmatrix}
      + a_2(t, s)
    \end{split}
  \end{equation*}
  with the $C^\infty$-smooth matrix-valued functions 
  \begin{align*}
      a_1(t, s) &:= h_1\big(|\rho(t)-\rho(s)|^2\big) \sigma_0 \\
      &\quad +  h_3\big(|\rho(t) - \rho(s)|^2\big)(m\sigma_3 +z\sigma_0) \begin{pmatrix} 0 & \overline{\rho(t)-\rho(s)} \\ \rho(t)-\rho(s) & 0 \end{pmatrix},\\ 
      a_2(t, s) &:= h_2\big(|\rho(t)-\rho(s)|^2\big)\big]\,\begin{pmatrix} 0 & \overline{\rho(t)-\rho(s)} \\ \rho(t)-\rho(s) & 0 \end{pmatrix}\qquad& \\
       &\quad+ h_4\big(|\rho(t) - \rho(s)|^2\big)\,(m\sigma_3 +z\sigma_0).&
  \end{align*}
  Hence, it follows as above in the proof of~\eqref{equation_decomposition2} with Proposition~\ref{prop15} applied in the case $m=1$ that 
  $U P_3 U^{-1} = H_1 \in \Psi^{-2}$, so that $P_3 \in \Psi^{-2}_\Sigma$.
  Together with~\eqref{equation_decomposition1} and~\eqref{equation_decomposition2} this implies first 
  $\mathcal{C}_z \in \Psi_\Sigma^0$ and in a second step, together with Proposition~\ref{proposition_properties_PPDO}~(i) 
  and $\Lambda \in \Psi_\Sigma^{\frac{1}{2}}$, that also~\eqref{Lambda_C_z_Lambda} is true.

  Finally, since $\phi_z(y-x)^*=\phi_{\overline{z}}(x-y)$, we find that the 
  realization of $\mathcal{C}_z$ in $L^2(\Sigma; \mathbb{C}^2)$ satisfies $\mathcal{C}_z^*=\mathcal{C}_{\Bar{z}}$. Hence, all claims have been shown.
\end{proof}
  
%
%

Finally, we prove a result on how $\Phi_z$ and $\mathcal{C}_z$ are related to each other by taking traces. 
Recall that $\mathcal{T}_\pm^D$ is the Dirichlet trace operator on $H(\sigma, \Omega_\pm)$, see Lemma~\ref{lemma_trace}.

\begin{prop} \label{proposition_Plemelj_Sokhotskii}
  For $\varphi \in H^{-\frac{1}{2}}(\Sigma; \mathbb{C}^2)$ one has
  \begin{equation}
   \label{ps00}
    \cT_\pm^D \Phi_z \varphi = \mp\,\dfrac{\rmi}{2}\, (\sigma\cdot \nu)\,\varphi + \mathcal{C}_z \varphi.
  \end{equation}
\end{prop}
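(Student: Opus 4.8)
The plan is to establish \eqref{ps00} first for smooth densities and then to extend it by continuity and density. Both sides of \eqref{ps00} are bounded operators from $H^{-\frac12}(\Sigma;\CC^2)$ to $H^{-\frac12}(\Sigma;\CC^2)$: the left-hand side because $\Phi_z$ maps $H^{-\frac12}(\Sigma;\CC^2)$ boundedly into $\ker(S^*-z)\subset H(\sigma,\Omega_+)\oplus H(\sigma,\Omega_-)$ by Proposition~\ref{proposition_Phi_z} and $\mathcal{T}_\pm^D$ is bounded from $H(\sigma,\Omega_\pm)$ to $H^{-\frac12}(\Sigma;\CC^2)$ by Lemma~\ref{lemma_trace}; the right-hand side because $\sigma\cdot\nu$ is multiplication by a smooth $\CC^{2\times2}$-valued function, hence bounded in every $H^s(\Sigma;\CC^2)$, and $\mathcal{C}_z\in\Psi_\Sigma^0$ by Proposition~\ref{proposition_C_z}. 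Since $C^\infty(\Sigma;\CC^2)$ is dense in $H^{-\frac12}(\Sigma;\CC^2)$, it suffices to prove \eqref{ps00} for $\varphi\in C^\infty(\Sigma;\CC^2)$.

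For such $\varphi$ I would split the Green function via \eqref{decomposition_phi_z2} as $\phi_z=\phi_z^{\mathrm c}+\phi_z^{\mathrm r}$ with
\[
  \phi_z^{\mathrm c}(x)=\frac{\rmi}{2\pi}\begin{pmatrix}0 & \dfrac{1}{x_1+\rmi x_2}\\[2mm] \dfrac{1}{x_1-\rmi x_2} & 0\end{pmatrix},\qquad \phi_z^{\mathrm r}(x)=f_1(x)\log|x|+f_2(x),
\]
$f_1,f_2$ smooth, and decompose $\Phi_z\varphi=\Phi_z^{\mathrm c}\varphi+\Phi_z^{\mathrm r}\varphi$ accordingly. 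The potential $\Phi_z^{\mathrm r}\varphi$ has a locally integrable kernel that is continuous up to $\Sigma$ from both sides, so it carries no jump and its trace from either side is the absolutely convergent integral $\int_\Sigma\phi_z^{\mathrm r}(x-y)\varphi(y)\,\dd s(y)$, $x\in\Sigma$; this continuity is the same fact already used (via \cite[Proposition~3.10]{F95}) in the proof of Proposition~\ref{proposition_C_z}. For $\Phi_z^{\mathrm c}\varphi$ I would invoke the classical Plemelj--Sokhotski jump relations for the Cauchy integral on the $C^\infty$ loop $\Sigma$: writing $\varphi=(\varphi_1,\varphi_2)^{\top}$ and passing from arc length to the complex contour via $\dd\zeta=T(y)\,\dd s(y)$, i.e.\ $\dd s(y)=\overline{T(y)}\,\dd\zeta$ since $|T|=1$, cf.\ \eqref{relation_Cauchy_transform}--\eqref{relation_dual_Cauchy_transform}, the two components of $\int_\Sigma\phi_z^{\mathrm c}(x-y)\varphi(y)\,\dd s(y)$ are $\frac{\rmi}{2\pi}\int_\Sigma\frac{\varphi_2(y)}{\xi-\zeta}\,\dd s(y)$ and $\frac{\rmi}{2\pi}\int_\Sigma\frac{\varphi_1(y)}{\overline\xi-\overline\zeta}\,\dd s(y)$, Cauchy-type integrals whose boundary values from $\Omega_\pm$ are $\pm\frac12\overline T\varphi_2$ and $\mp\frac12 T\varphi_1$ (the sign reflecting that $\Omega_+$ is the domain enclosed by the positively oriented $\gamma$), plus the respective principal-value integrals. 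Since $\nu=(\gamma_2',-\gamma_1')$ gives $\sigma\cdot\nu=\begin{pmatrix}0 & \rmi\overline T\\ -\rmi T & 0\end{pmatrix}$, the jump terms combine into $\mp\frac{\rmi}{2}(\sigma\cdot\nu)\varphi$, while the principal-value contributions of $\Phi_z^{\mathrm c}\varphi$ together with the trace of $\Phi_z^{\mathrm r}\varphi$ reassemble into $\mathcal{C}_z\varphi$ as defined in \eqref{def_C_z}. This proves \eqref{ps00} for smooth $\varphi$, and the density argument from the first step completes the proof.

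I expect the main obstacle to be the sign and orientation bookkeeping in the second step, i.e.\ keeping the orientation of $\gamma$, the direction of $\nu$, and the change of variables between $\dd s$ and $\dd\zeta$ mutually consistent so that the jump comes out exactly as $\mp\frac{\rmi}{2}(\sigma\cdot\nu)\varphi$ and the leftover principal values glue back precisely into $\mathcal{C}_z$. A secondary technical point that should be addressed is the identification, for smooth $\varphi$, of the distributional trace $\mathcal{T}_\pm^D$ from Lemma~\ref{lemma_trace} with the pointwise (nontangential) boundary value used in the Plemelj relations; this follows from the consistency of the $H^{-\frac12}$-trace with the classical trace on functions that are continuous up to $\Sigma$, together with the facts that the Cauchy integral of a smooth density over a $C^\infty$ curve extends smoothly up to the boundary from each side and that $\Phi_z^{\mathrm r}\varphi$ is continuous up to $\Sigma$.
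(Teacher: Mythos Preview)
Your proposal is correct and follows essentially the same route as the paper's proof: both reduce to smooth $\varphi$ by continuity, split $\phi_z$ via \eqref{decomposition_phi_z2} into the Cauchy-type singular part and a continuous remainder, handle the remainder by observing its potential is continuous across $\Sigma$, and obtain the jump from the classical Plemelj--Sokhotskii formula applied componentwise to the Cauchy kernel. The only cosmetic difference is that the paper parametrizes the jump computation with the complex normal $N=\nu_1+\rmi\nu_2$ (using $\dd(y_1+\rmi y_2)=\rmi N\,\dd s$) rather than the complex tangent $T$, but since $N=-\rmi T$ this is the same bookkeeping you flag as the main point to get right.
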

\begin{proof}
First we note that it suffices to prove~\eqref{ps00} for $\varphi \in C^\infty(\Sigma; \mathbb{C}^2)$; by continuity 
this implies the claim for any $\varphi \in H^{-\frac{1}{2}}(\Sigma;\CC^2)$.
The assertion essentially follows from the classical Plemelj-Sokhotskii formula, see, e.g., \cite[Theorem~4.1.1]{SV},
which states that the holomorphic function
\[
\CC\setminus\Sigma\ni \xi\mapsto \Phi(\xi)=\dfrac{1}{2\pi \rmi}\int_\Sigma \dfrac{\varphi(\zeta)}{\zeta - \xi}\, \dd \zeta
\]
satisfies
\begin{equation}
    \label{eq-tpm}
\cT_\pm^D \Phi(\xi)=\dfrac{1}{2\pi \rmi}\pv \int_\Sigma \dfrac{\varphi(\zeta)}{\zeta - \xi}\, \dd\zeta\pm \dfrac{1}{2}\,\varphi(\xi), \quad \xi\in\Sigma.
\end{equation}
In order to use it, recall that by~\eqref{decomposition_phi_z2} we can write 
$\phi_z(x)=\chi_1(x)+\widetilde{\chi}_2(x)$ with
\[
\chi_1(x)=-\dfrac{1}{2\pi \rmi} \begin{pmatrix}
0 & \dfrac{1}{x_1+\rmi x_2}\\
\dfrac{1}{x_1-\rmi x_2} & 0
\end{pmatrix}\quad\text{and}
\quad \widetilde{\chi}_2(x)= f_1(x) \log|x| +f_2(x),
\]
where $f_1$ and $f_2$ are $C^\infty$-smooth matrix functions. In a corresponding way we decompose $\Phi_z = \Psi_1 + \Psi_2$ with
\begin{equation*}
  \Psi_1 \varphi(x) = \int_\Sigma \chi_1(x-y)\, \varphi(y)\dd  s(y) \quad \text{and} \quad
  \Psi_2 \varphi(x) = \int_\Sigma \widetilde{\chi}_2(x-y)\, \varphi(y)\dd  s(y),
\end{equation*}
and $\mathcal{C}_z = P_1 + P_2$ with
\begin{equation*}
  P_1 \varphi(x) = \pv \int_\Sigma \chi_1(x-y)\, \varphi(y)\dd s(y) \quad \text{and} \quad
  P_2 \varphi(x) = \int_\Sigma \widetilde{\chi}_2(x-y)\, \varphi(y)\dd s(y).
\end{equation*}
As in the proof of Proposition~\ref{proposition_C_z} we have removed the principal value from the integral operator $P_2$, since the integral exists almost everywhere.
One sees easily that $\Psi_2 \varphi$ is continuous on $\mathbb{R}^2$, and its value on $\Sigma$
coincides with $P_2 \varphi$, i.e.
\begin{equation}
   \label{ps1}
	\cT_\pm^D \Psi_2 \varphi = P_2 \varphi.
\end{equation}
In order to find the relation between $\Psi_1 \varphi$ and $P_1 \varphi$, we write the normal vector field as a complex number $N=\nu_1+\rmi \nu_2=\gamma_2'-\rmi \gamma_1'$ 
and use the relation
$\dd (y_1+\rmi y_2)= \rmi N(y) \dd s(y)$ of the complex and the classical line element on $\Sigma$.
With $\varphi=(\varphi_1, \varphi_2)$ we get then
\begin{align*}
\Psi_1 \varphi(x)&=\dfrac{1}{2\pi\rmi} \!\!\!\bigintss_\Sigma \! \begin{pmatrix}
0 & \dfrac{1}{(y_1+\rmi y_2)-(x_1+\rmi x_2)}\\
\dfrac{1}{(y_1-\rmi y_2)-(x_1-\rmi x_2)} & 0
\end{pmatrix}\!\!\begin{pmatrix}\varphi_1(y)\\ \varphi_2(y)\end{pmatrix}\!\dd s(y)\\
&=
\begin{pmatrix}
\dfrac{1}{2\pi\rmi} \displaystyle\int_\Sigma \dfrac{\varphi_2(y)}{(y_1+\rmi y_2)-(x_1+\rmi x_2)}\dd s(y)\\[\bigskipamount]
\dfrac{1}{2\pi\rmi} \displaystyle \overline{\int_\Sigma \dfrac{\overline{\varphi_1(y)}}{(y_1+\rmi y_2)-(x_1+\rmi x_2)} \dd s(y) }
\end{pmatrix}\\
&=
\begin{pmatrix}
\dfrac{1}{2\pi\rmi} \displaystyle\int_\Sigma \dfrac{-\rmi \overline{N(y)} \varphi_2(y)}{(y_1+\rmi y_2)-(x_1+\rmi x_2)}\dd(y_1+\rmi y_2)\\[\bigskipamount]
- \overline{\dfrac{1}{2\pi\rmi} \displaystyle\int_\Sigma\dfrac{-\rmi \overline{N(y)} \overline{\varphi_1(y)}}{(y_1+\rmi y_2)-(x_1+\rmi x_2)} \dd(y_1+\rmi y_2)}
\end{pmatrix}.
\end{align*}
Applying now~\eqref{eq-tpm} to each component of this vector we find that
\begin{align*}
\cT_\pm^D \Psi_1 \varphi(x)&=\!
\begin{pmatrix}
\dfrac{1}{2\pi\rmi} \displaystyle\pv\int_\Sigma \dfrac{-\rmi \overline{N(y)} \varphi_2(y)}{(y_1+\rmi y_2)-(x_1+\rmi x_2)}\dd(y_1+\rmi y_2)\\[\bigskipamount]
- \overline{\dfrac{1}{2\pi\rmi} \displaystyle\pv \int_\Sigma \dfrac{-\rmi \overline{N(y)\varphi_1(y)}}{(y_1+\rmi y_2) - (x_1+\rmi x_2)} d(y_1+\rmi y_2)}
\end{pmatrix}\!\pm\!\frac{1}{2}\,\!
\begin{pmatrix}
-\rmi \overline{N(x)} \varphi_2(x)\\
-\rmi N(x) \varphi_1(x)
\end{pmatrix}\\
&=\begin{pmatrix}
-\dfrac{1}{2\pi\rmi} \pv \displaystyle\int_\Sigma \dfrac{ \varphi_2(y)}{(x_1+\rmi x_2)-(y_1+\rmi y_2)}\dd s(y)\\[\bigskipamount]
-\dfrac{1}{2\pi\rmi} \pv \displaystyle\int_\Sigma\dfrac{\varphi_1(y)}{(x_1-\rmi x_2)-(y_1-\rmi y_2)} \dd s(y)
\end{pmatrix}\mp\dfrac{\rmi }{2}\begin{pmatrix}
 \overline{N(x)} \varphi_2(x)\\
N(x) \varphi_1(x)
\end{pmatrix}\\
&=P_1 \varphi(x)\mp\dfrac{\rmi }{2}\,(\sigma\cdot \nu(x))\, \varphi(x).
\end{align*}
A combination of this and~\eqref{ps1} leads to the claim of this proposition.
\end{proof}

\subsection{A boundary triple for Dirac operators with singular interactions supported on a loop} \label{section_boundary_triple_sing_interaction}

In this section we follow the strategy from Section~\ref{ssec-bt} to introduce a boundary triple 
which is suitable to study Dirac operators in $L^2(\mathbb{R}^2; \mathbb{C}^2)$ with singular interactions supported on 
the loop $\Sigma$. To get an explicit representation of the boundary mappings the results from Section~\ref{section_int_op} play 
an important role. We remark that the obtained boundary triple is closely related to the one used in~\cite{BH} to study Dirac operators in the three dimensional case.

Recall the definitions of the free Dirac operator $A_0$, the symmetric operator $S$, and its adjoint $S^*$ from~\eqref{def_free_op}, \eqref{def_S}, 
and~\eqref{def_S_star}, respectively. Moreover, $\mathcal{T}_\pm^D$ is the Dirichlet trace operator defined on $\dom S^*$ from Lemma~\ref{lemma_trace}, 
the integral operators $\Phi_z$ and $\mathcal{C}_z$ are introduced for $z \in \res A_0$ in~\eqref{def_Phi_z} 
and~\eqref{def_C_z}, respectively. The operator $\Lambda \in \Psi_\Sigma^{\frac{1}{2}}$ is given by~\eqref{def_Lambda} and will sometimes be viewed as
an isomorphism from $L^2(\Sigma;\mathbb{C}^2)$ to $H^{-\frac{1}{2}}(\Sigma;\mathbb{C}^2)$ or from $H^{\frac{1}{2}}(\Sigma;\mathbb{C}^2)$ to $L^2(\Sigma;\mathbb{C}^2)$, 
and is also regarded as an unbounded strictly positive
self-adjoint operator in $L^2(\Sigma;\mathbb{C}^2)$.

\begin{prop}\label{prop23}\label{prop-bt3}
Let $\zeta\in\res A_0$ be fixed. Define $\Gamma_0, \Gamma_1: \dom S^* \rightarrow L^2(\Sigma; \mathbb{C}^2)$ by
\begin{equation} \label{def_Gamma}
  \begin{split}
    \Gamma_0 f&=\rmi\Lambda^{-1} (\sigma\cdot \nu)\big(\cT_+^D f_+ -\cT_-^D f_-),\\
    \Gamma_1 f&=\dfrac{1}{2}\,\Lambda \Big((\cT_+^D f_+ + \cT_-^D f_-) -(\mathcal{C}_\zeta+\mathcal{C}_{\Bar\zeta}) \Lambda \Gamma_0 f \Big), 
    \qquad f = f_+ \oplus f_- \in \dom S^*.
  \end{split}
\end{equation}
Then $\{ L^2(\Sigma; \mathbb{C}^2), \Gamma_0, \Gamma_1 \}$ is a boundary triple for $S^*$ such that $A_0=S^*\upharpoonright\ker\Gamma_0$. Moreover, the 
corresponding $\gamma$-field is
  \begin{equation*}
   \res A_0\ni z \mapsto G_z = \Phi_z \Lambda
  \end{equation*}
  and the Weyl function is
  \begin{equation*}
    \res A_0\ni z\mapsto M_z = \Lambda \Big(\mathcal{C}_z -\dfrac{1}{2} \big(\mathcal{C}_\zeta+\mathcal{C}_{\Bar\zeta} \big) \Big)\Lambda.
  \end{equation*}
\end{prop}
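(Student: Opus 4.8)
The plan is to read the boundary triple off the abstract singular-perturbation construction of Proposition~\ref{proposition_boundary_triple_singular_perturbation}, applied with $B:=A_0$ (self-adjoint on $\dom A_0=H^1(\RR^2;\CC^2)$) and with abstract trace map
\[
	\mathcal{T}:=\Lambda\,\mathcal{T}^D:\dom A_0\longrightarrow L^2(\Sigma;\CC^2),
\]
where $\mathcal{T}^D$ is the Dirichlet trace from Section~\ref{section_int_op} and $\Lambda=\Lambda^1\in\Psi_\Sigma^{1/2}$ is viewed as the isomorphism $H^{1/2}(\Sigma;\CC^2)\to L^2(\Sigma;\CC^2)$. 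First I would check the hypotheses of that proposition: $\mathcal{T}$ is surjective (composition of the surjective $\mathcal{T}^D$ with an isomorphism), bounded for the graph norm of $A_0$ (equivalent on $H^1(\RR^2;\CC^2)$ to the $H^1$-norm, since $A_0^2=(-\Delta+m^2)\sigma_0$), and $\ker\mathcal{T}=\ker\mathcal{T}^D=H^1_0(\RR^2\setminus\Sigma;\CC^2)$, which is dense in $L^2(\RR^2;\CC^2)$; in particular $A_0\upharpoonright\ker\mathcal{T}$ is exactly the operator $S$ of~\eqref{def_S}. Proposition~\ref{proposition_boundary_triple_singular_perturbation} then hands us a boundary triple $\{L^2(\Sigma;\CC^2),\Gamma_0,\Gamma_1\}$ for $S^*$ with $S^*\upharpoonright\ker\Gamma_0=A_0$, together with the abstract expressions $\Gamma_0 f=\xi$, $\Gamma_1 f=\tfrac12\mathcal{T}(f_\zeta+f_{\bar\zeta})$ in the decomposition $f=f_\zeta+G_\zeta\xi=f_{\bar\zeta}+G_{\bar\zeta}\xi$ of~\eqref{deco234}, $G_z=(\mathcal{T}(A_0-\bar z)^{-1})^*$, and $M_z=\mathcal{T}\big(G_z-\tfrac12(G_\zeta+G_{\bar\zeta})\big)$. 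The remaining work is to translate these abstract objects into the explicit formulas in the statement.

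Next I would identify the $\gamma$-field as $G_z=\Phi_z\Lambda$. Since $\Lambda$ is $U^{-1}$ composed with the real positive Fourier multiplier $L$ composed with $U$, it is symmetric along the Sobolev scale on $\Sigma$, so that $\langle\Lambda\xi,\psi\rangle_{H^{-1/2}\times H^{1/2}}=(\xi,\Lambda\psi)_{L^2(\Sigma;\CC^2)}$ for $\xi\in L^2(\Sigma;\CC^2)$ and $\psi\in H^{1/2}(\Sigma;\CC^2)$. Feeding this into the defining property of the anti-dual $\Phi_z=(\mathcal{T}^D(A_0-\bar z)^{-1})'$ from~\eqref{def_Phi_z}, one obtains for all $\xi\in L^2(\Sigma;\CC^2)$ and $g\in L^2(\RR^2;\CC^2)$ the identity $(\Phi_z\Lambda\xi,g)_{L^2(\RR^2;\CC^2)}=(\xi,\Lambda\mathcal{T}^D(A_0-\bar z)^{-1}g)_{L^2(\Sigma;\CC^2)}=(\xi,\mathcal{T}(A_0-\bar z)^{-1}g)_{L^2(\Sigma;\CC^2)}$, whence $(\Phi_z\Lambda)^*=\mathcal{T}(A_0-\bar z)^{-1}$ and $G_z=\Phi_z\Lambda$. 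I expect this identification to be the main obstacle: it hinges on carefully matching the $L^2(\Sigma;\CC^2)$-based dual pairing with its pullback to $\TT$ (including the length factor $\ell$) so that no spurious constant appears.

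With $G_z=\Phi_z\Lambda$ in hand, the boundary maps follow from the Plemelj-Sokhotskii jump relation of Proposition~\ref{proposition_Plemelj_Sokhotskii}. For $f=f_\zeta+G_\zeta\xi\in\dom S^*$, the part $f_\zeta\in H^1(\RR^2;\CC^2)$ is continuous across $\Sigma$, while $G_\zeta\xi=\Phi_\zeta(\Lambda\xi)$ satisfies $\mathcal{T}_\pm^D(\Phi_\zeta\Lambda\xi)_\pm=\mp\tfrac{\rmi}{2}(\sigma\cdot\nu)\Lambda\xi+\mathcal{C}_\zeta\Lambda\xi$. Subtracting the two traces of $f$ gives $\mathcal{T}_+^D f_+-\mathcal{T}_-^D f_-=-\rmi(\sigma\cdot\nu)\Lambda\xi$; multiplying by $\rmi\Lambda^{-1}(\sigma\cdot\nu)$ and using $(\sigma\cdot\nu)^2=\sigma_0$ (from~\eqref{anti_commutation} and $|\nu|=1$) recovers $\Gamma_0 f=\rmi\Lambda^{-1}(\sigma\cdot\nu)(\mathcal{T}_+^D f_+-\mathcal{T}_-^D f_-)$, the first line of~\eqref{def_Gamma}. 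Averaging the two traces instead yields $\mathcal{T}^D f_\zeta=\tfrac12(\mathcal{T}_+^D f_++\mathcal{T}_-^D f_-)-\mathcal{C}_\zeta\Lambda\xi$ and the analogous identity with $\bar\zeta$; summing these and applying $\Lambda$ gives $\Gamma_1 f=\tfrac12\Lambda\big((\mathcal{T}_+^D f_++\mathcal{T}_-^D f_-)-(\mathcal{C}_\zeta+\mathcal{C}_{\bar\zeta})\Lambda\Gamma_0 f\big)$, the second line of~\eqref{def_Gamma}. As a byproduct the bracket equals $\mathcal{T}^D f_\zeta+\mathcal{T}^D f_{\bar\zeta}\in H^{1/2}(\Sigma;\CC^2)$, so $\Gamma_1$ is genuinely $L^2(\Sigma;\CC^2)$-valued.

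Finally, for the Weyl function I would evaluate $\mathcal{T}$ on $\big(G_z-\tfrac12(G_\zeta+G_{\bar\zeta})\big)\xi=\Phi_z\Lambda\xi-\tfrac12(\Phi_\zeta+\Phi_{\bar\zeta})\Lambda\xi$. By Proposition~\ref{proposition_Plemelj_Sokhotskii} each $\Phi_w\Lambda\xi$ carries the same jump $-\rmi(\sigma\cdot\nu)\Lambda\xi$, so the jumps cancel, the combination belongs to $H^1(\RR^2;\CC^2)$ (Lemma~\ref{lemma_trace}), and its Dirichlet trace equals $\big(\mathcal{C}_z-\tfrac12(\mathcal{C}_\zeta+\mathcal{C}_{\bar\zeta})\big)\Lambda\xi$; composing with the outer $\Lambda$ gives $M_z=\Lambda\big(\mathcal{C}_z-\tfrac12(\mathcal{C}_\zeta+\mathcal{C}_{\bar\zeta})\big)\Lambda$, as asserted. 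Beyond the bookkeeping flagged above, all the remaining steps are direct computations with the jump formula.
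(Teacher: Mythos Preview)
Your proposal is correct and follows essentially the same route as the paper: apply Proposition~\ref{proposition_boundary_triple_singular_perturbation} with $B=A_0$ and $\mathcal{T}=\Lambda\mathcal{T}^D$, identify $G_z=\Phi_z\Lambda$, and then convert the abstract maps into the explicit trace formulas via Proposition~\ref{proposition_Plemelj_Sokhotskii}. The only cosmetic difference is that you justify the $H^1(\RR^2;\CC^2)$-regularity of $\big(G_z-\tfrac12(G_\zeta+G_{\bar\zeta})\big)\xi$ by cancelling jumps and invoking Lemma~\ref{lemma_trace}, whereas the paper simply notes that this regularity is automatic from the abstract construction (the Weyl function is defined through $\mathcal{T}$, which acts on $\dom A_0$); your worry about the length factor $\ell$ is unnecessary, since the identification $G_z=(\Lambda\Phi_z')^*=\Phi_z\Lambda$ follows directly from the definitions without any spurious constant.
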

\begin{proof}
  Recall that the Dirichlet trace operator $\mathcal{T}^D: H^1(\mathbb{R}^2; \mathbb{C}^2) \rightarrow H^{\frac{1}{2}}(\Sigma; \mathbb{C}^2)$
  is bounded, surjective, and one has $\ker \mathcal{T}^D = H^1_0(\mathbb{R}^2 \setminus \Sigma; \mathbb{C}^2)$. Hence, 
  \begin{equation*}
    \mathcal{T} := \Lambda \mathcal{T}^D: H^1(\mathbb{R}^2; \mathbb{C}^2) = \dom A_0 \rightarrow L^2(\Sigma; \mathbb{C}^2)
  \end{equation*}
  is bounded and surjective with $\ker \mathcal{T} = \dom S$. Following the constructions in Section~\ref{ssec-bt} for $B = A_0$ we consider for $z \in \res A_0$ 
  \begin{equation*}
    \mathcal{T} (A_0 - \bar{z})^{-1} = \Lambda \mathcal{T}^D (A_0 - \bar{z})^{-1} = \Lambda \Phi_z'
  \end{equation*}
  with $\Phi_z'$ given by~\eqref{def_Phi_z_prime}, so that the operator $G_z$ from~\eqref{eq-gz1} in the present context is given by 
  \begin{equation} \label{gamma_our_triple}
    G_z = \Phi_z \Lambda.
  \end{equation}
  Let $\zeta \in \res A_0$ be fixed. Then, by \eqref{deco234} any $f \in \dom S^*$ can be written as
  \begin{equation*}
    f = f_\zeta + G_\zeta \xi = f_{\bar{\zeta}} + G_{\bar{\zeta}} \xi
  \end{equation*}
  for some $\xi \in L^2(\Sigma; \mathbb{C}^2)$ and $f_\zeta, f_{\bar{\zeta}} \in H^1(\mathbb{R}^2; \mathbb{C}^2)$, and according to 
  Proposition~\ref{proposition_boundary_triple_singular_perturbation}
  \begin{equation*}
    \Gamma_0 f = \xi \quad \text{and} \quad \Gamma_1 f = \frac{1}{2} \big( \mathcal{T} f_\zeta + \mathcal{T} f_{\bar{\zeta}} \big)
  \end{equation*}
  defines a boundary triple for $S^*$ such that  $A_0=S^*\upharpoonright\ker\Gamma_0$.
  
  Next we show that the above boundary maps coincide with the more explicit representations of $\Gamma_0$ and $\Gamma_1$ stated in the proposition. Let $f = f_\zeta + G_\zeta \xi = f_\zeta + \Phi_\zeta \Lambda \xi$ with $\xi \in L^2(\Sigma; \mathbb{C}^2)$ and $f_\zeta \in H^1(\mathbb{R}^2; \mathbb{C}^2)$ be fixed.
  Using that the jump of the trace of $f_\zeta \in H^1(\mathbb{R}^2; \mathbb{C}^2)$ at $\Sigma$ is zero 
  and the trace formula from 
  Proposition~\ref{proposition_Plemelj_Sokhotskii}
  we find 
  \begin{equation*}
  \begin{split}
   \mathcal{T}^D_+ f_+ - \mathcal{T}^D_- f_-&=\mathcal{T}^D_+\bigl(f_\zeta + \Phi_\zeta \Lambda \xi\bigr)_+-\mathcal{T}^D_-\bigl(f_\zeta + \Phi_\zeta \Lambda \xi\bigr)_-\\
   &=\mathcal{T}^D_+\bigl(\Phi_\zeta \Lambda \xi\bigr)_+-\mathcal{T}^D_-\bigl(\Phi_\zeta \Lambda \xi\bigr)_-\\
   &=-\frac{\rmi}{2}(\sigma \cdot \nu) \Lambda \xi + \mathcal{C}_\zeta \Lambda \xi -\frac{\rmi}{2}(\sigma \cdot \nu) \Lambda \xi - \mathcal{C}_\zeta \Lambda \xi \\
   &= -\rmi (\sigma \cdot \nu) \Lambda \xi.
  \end{split}
  \end{equation*}
  Hence,
  \begin{equation*}
   \Gamma_0 f = \xi =\rmi \Lambda^{-1} (\sigma \cdot \nu) \big( \mathcal{T}^D_+ f_+ - \mathcal{T}^D_- f_- \big),
  \end{equation*}
  which is the claimed formula for $\Gamma_0 f$. Employing again Proposition~\ref{proposition_Plemelj_Sokhotskii} we find
  \begin{equation} \label{equation_Gamma_11}
    \begin{split}
      \mathcal{T}^D f_\zeta &= \frac{1}{2} \big( \mathcal{T}_+^D f_{\zeta, +} + \mathcal{T}^D_- f_{\zeta, -} \big)\\
          &= \frac{1}{2} \big( \mathcal{T}_+^D (f - \Phi_\zeta \Lambda \xi)_+ + \mathcal{T}^D_- (f - \Phi_\zeta \Lambda \xi)_- \big) \\
      &= \frac{1}{2} \left( \mathcal{T}^D_+ f_+- \mathcal{C}_\zeta \Lambda \xi + \frac{\rmi}{2} (\sigma \cdot \nu) \Lambda \xi + \mathcal{T}^D_- f_- - \mathcal{C}_\zeta \Lambda \xi - \frac{\rmi}{2} (\sigma \cdot \nu) \Lambda \xi \right) \\
      &= \frac{1}{2} \left( \mathcal{T}^D_+ f_+ + \mathcal{T}^D_- f_-\right) - \mathcal{C}_\zeta \Lambda \xi \\
      &= \frac{1}{2} \left( \mathcal{T}^D_+ f_+ + \mathcal{T}^D_- f_-\right) - \mathcal{C}_\zeta \Lambda \Gamma_0 f
    \end{split}
  \end{equation}
  and analogously
  \begin{equation} \label{equation_Gamma_12}
    \mathcal{T}^D f_{\Bar{\zeta}} = \frac{1}{2} \left( \mathcal{T}^D_+ f_+ + \mathcal{T}^D_- f_-\right) - \mathcal{C}_{\Bar{\zeta}} \Lambda \Gamma_0 f.
  \end{equation}
  By summing up the last two formulae~\eqref{equation_Gamma_11} and~\eqref{equation_Gamma_12} we find
  \begin{equation*}
    \Gamma_1 f = \frac{1}{2} \big( \mathcal{T} f_\zeta + \mathcal{T} f_{\bar{\zeta}} \big) = \frac{1}{2} \Lambda \big( \mathcal{T}^D f_\zeta + \mathcal{T}^D f_{\bar{\zeta}} \big)
        = \frac{1}{2} \Lambda \Big((\cT_+^D f_+ + \cT_-^D f_-) -(\mathcal{C}_\zeta+\mathcal{C}_{\Bar\zeta}) \Lambda \Gamma_0 f \Big),
  \end{equation*}
  which is the claimed formula for $\Gamma_1$ in~\eqref{def_Gamma}.

  Finally, the claimed representation of the $\gamma$-field follows from Proposition~\ref{proposition_boundary_triple_singular_perturbation} and~\eqref{gamma_our_triple}. Using again Proposition~\ref{proposition_Plemelj_Sokhotskii}, we can simplify the formula for the Weyl function $M_z$ from Proposition~\ref{proposition_boundary_triple_singular_perturbation} and get for $\varphi \in L^2(\Sigma; \mathbb{C}^2)$
  \begin{equation*}
    \begin{split}
      M_z \varphi &= \mathcal{T} \left( G_z - \frac{1}{2} (G_\zeta + G_{\bar{\zeta}}) \right) \varphi\\
       &= \Lambda \mathcal{T}^D_+ \left( \Phi_z - \frac{1}{2} (\Phi_\zeta + \Phi_{\bar{\zeta}}) \right) \Lambda \varphi \\
      &= \Lambda \left( \mathcal{C}_z - \frac{\rmi}{2} (\sigma \cdot \nu) - \frac{1}{2} \left(\mathcal{C}_\zeta - \frac{\rmi}{2} (\sigma \cdot \nu) + \mathcal{C}_{\bar{\zeta}} - \frac{\rmi}{2} (\sigma \cdot \nu) \right) \right) \Lambda \varphi \\
      &= \Lambda \left( \mathcal{C}_z - \frac{1}{2} \left(\mathcal{C}_\zeta + \mathcal{C}_{\bar{\zeta}} \right) \right) \Lambda \varphi.
    \end{split}
  \end{equation*}
  Remark that in the above computation we used the well-known regularization property $(G_z - \frac{1}{2} (G_\zeta + G_{\bar{\zeta}})) \varphi\in\dom A_0=H^1(\mathbb{R}^2; \mathbb{C}^2) $, which holds automatically 
  by the abstract theory (see the formula for the Weyl function in 
  Proposition~\ref{proposition_boundary_triple_singular_perturbation}), and hence $\mathcal{T}^D$ and $\mathcal{T}^D_+$ lead to the same trace in the second equality above.
  Therefore, all claimed statements have been shown.
\end{proof}

Finally, we state an auxiliary regularity result that will be used later.

\begin{lem}\label{lem23}
Let $f \in \dom S^*$. Then $f \in H^1(\mathbb{R}^2 \setminus \Sigma; \mathbb{C}^2)$ if and only if $\Gamma_0 f \in H^1(\Sigma; \mathbb{C}^2)$.
\end{lem}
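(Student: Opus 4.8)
The plan is to use the direct-sum decomposition $\dom S^* = \dom A_0 \dsum \ran G_\zeta$ from \eqref{deco234} (with $B=A_0$) together with the trace characterization of $H^1$-regularity contained in Lemma~\ref{lemma_trace}. First I would fix $\zeta\in\res A_0$ and write an arbitrary $f\in\dom S^*$ as $f = f_\zeta + G_\zeta \xi = f_\zeta + \Phi_\zeta \Lambda \xi$ with $f_\zeta\in H^1(\RR^2;\CC^2)$ and $\xi = \Gamma_0 f \in L^2(\Sigma;\CC^2)$. Since $f_\zeta\in H^1(\RR^2;\CC^2)\subset H^1(\RR^2\setminus\Sigma;\CC^2)$ automatically, the regularity of $f$ on $\RR^2\setminus\Sigma$ is governed entirely by the regularity of the single term $\Phi_\zeta\Lambda\xi$ on each side $\Omega_\pm$. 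So the lemma reduces to the statement: $\Phi_\zeta\Lambda\xi \in H^1(\Omega_+;\CC^2)\oplus H^1(\Omega_-;\CC^2)$ if and only if $\xi\in H^1(\Sigma;\CC^2)$.

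For this reduced statement I would invoke the second part of Lemma~\ref{lemma_trace}: a function $g\in H(\sigma,\Omega_\pm)$ lies in $H^1(\Omega_\pm;\CC^2)$ precisely when its Dirichlet trace $\cT_\pm^D g$ lies in $H^{1/2}(\Sigma;\CC^2)$. Applying this to $g = (\Phi_\zeta\Lambda\xi)_\pm \in H(\sigma,\Omega_\pm)$ (it is in $\ker(S^*-\zeta)$ by Proposition~\ref{proposition_Phi_z}, hence in $H(\sigma,\Omega_\pm)$), and using the jump/trace formula \eqref{ps00} from Proposition~\ref{proposition_Plemelj_Sokhotskii},
\begin{equation*}
\cT_\pm^D \Phi_\zeta\Lambda\xi = \mp\,\dfrac{\rmi}{2}\,(\sigma\cdot\nu)\,\Lambda\xi + \mathcal{C}_\zeta\Lambda\xi,
\end{equation*}
we see that $(\Phi_\zeta\Lambda\xi)_\pm\in H^1(\Omega_\pm;\CC^2)$ iff $\mp\frac{\rmi}{2}(\sigma\cdot\nu)\Lambda\xi + \mathcal{C}_\zeta\Lambda\xi \in H^{1/2}(\Sigma;\CC^2)$. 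Adding and subtracting the two conditions (for $+$ and $-$), this pair is equivalent to $(\sigma\cdot\nu)\Lambda\xi\in H^{1/2}(\Sigma;\CC^2)$ together with $\mathcal{C}_\zeta\Lambda\xi\in H^{1/2}(\Sigma;\CC^2)$. Now $\sigma\cdot\nu$ is a smooth, pointwise invertible matrix-valued multiplication operator (since $(\sigma\cdot\nu)^2=\sigma_0$), hence an isomorphism of $H^s(\Sigma;\CC^2)$ for every $s$; and $\Lambda=\Lambda^1\in\Psi_\Sigma^{1/2}$ is an isomorphism $H^{1/2}(\Sigma;\CC^2)\to L^2(\Sigma;\CC^2)$, equivalently $H^1(\Sigma;\CC^2)\to H^{1/2}(\Sigma;\CC^2)$. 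Therefore $(\sigma\cdot\nu)\Lambda\xi\in H^{1/2}(\Sigma;\CC^2)$ iff $\Lambda\xi\in H^{1/2}(\Sigma;\CC^2)$ iff $\xi\in H^1(\Sigma;\CC^2)$. Finally, once $\Lambda\xi\in H^{1/2}(\Sigma;\CC^2)$ holds, the remaining condition $\mathcal{C}_\zeta\Lambda\xi\in H^{1/2}(\Sigma;\CC^2)$ is automatic because $\mathcal{C}_\zeta\in\Psi_\Sigma^0$ maps $H^{1/2}(\Sigma;\CC^2)$ into itself by Proposition~\ref{proposition_C_z}; so it imposes no extra restriction. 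This closes the equivalence.

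The only genuinely delicate point is the bookkeeping of which trace conditions are independent: one must be careful that the two conditions coming from $\cT_+^D$ and $\cT_-^D$ really decouple into "the jump term is $H^{1/2}$" and "the average term is $H^{1/2}$", and that the $\mathcal{C}_\zeta$-term then drops out as a consequence rather than as an additional hypothesis. Everything else is a routine chain of isomorphism statements already provided by Lemma~\ref{lemma_trace}, Proposition~\ref{proposition_C_z}, Proposition~\ref{proposition_Plemelj_Sokhotskii}, and the mapping properties of $\Lambda$ and $\sigma\cdot\nu$. I would also note at the start that $\Gamma_0 f = \xi$ by the very definition of the boundary triple in Proposition~\ref{prop-bt3}, so "$\Gamma_0 f\in H^1(\Sigma;\CC^2)$" is literally the condition "$\xi\in H^1(\Sigma;\CC^2)$" appearing above.
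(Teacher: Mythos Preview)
Your argument is correct. The paper's own proof is organized slightly differently: rather than decomposing $f=f_\zeta+\Phi_\zeta\Lambda\xi$ and analyzing the single-layer piece via Proposition~\ref{proposition_Plemelj_Sokhotskii}, the paper works directly with the explicit formulas for $\Gamma_0 f$ and $\Gamma_1 f$ in terms of $\cT_\pm^D f_\pm$. For the harder (converse) implication, the paper observes that $\Gamma_1 f\in L^2(\Sigma;\CC^2)$ is automatic, hence $\Lambda^{-1}\Gamma_1 f\in H^{1/2}$, and combines this with the definition of $\Gamma_1$ to conclude that the average $\frac12(\cT_+^D f_+ + \cT_-^D f_-)$ lies in $H^{1/2}$; together with the jump this gives both traces in $H^{1/2}$ and Lemma~\ref{lemma_trace} finishes. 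Your route via the decomposition is equivalent (the fact $\Gamma_1 f\in L^2$ is precisely what encodes $f_\zeta\in H^1(\RR^2)$), and arguably a bit cleaner conceptually since it isolates the one term whose regularity is in question; the paper's route is marginally more self-contained in that it avoids invoking Proposition~\ref{proposition_Plemelj_Sokhotskii} again. Both rely on the same ingredients: Lemma~\ref{lemma_trace}, the mapping properties of $\Lambda$ and $\sigma\cdot\nu$, and the fact that $\mathcal{C}_\zeta\in\Psi_\Sigma^0$ preserves $H^{1/2}$.
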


\begin{proof}
First, if $f = f_+ \oplus f_- \in H^1(\mathbb{R}^2 \setminus \Sigma; \CC^2)$, then one has $\cT_\pm^D f_\pm \in H^{\frac{1}{2}}(\Sigma; \CC^2)$
implying $\cT_+^D f_+ - \cT_-^D f_- \in H^{\frac{1}{2}}(\Sigma; \CC^2)$.
As $\sigma\cdot \nu$ is a  $C^\infty$-matrix function it follows that
$\rmi (\sigma\cdot \nu)\big( \cT_+^D f_+ - \cT_-^D f_- \big)\in H^{\frac{1}{2}}(\Sigma; \CC^2)$. Using that $\Lambda$ is a bijection from $H^s(\Sigma)$ to $H^{s-\frac{1}{2}}(\Sigma)$ for all $s \in \mathbb{R}$, this yields
\begin{equation*}
  \Gamma_0 f = \rmi \Lambda^{-1} (\sigma\cdot \nu)\big( \cT_+^D f_+ - \cT_-^D f_- \big)\in H^1(\Sigma; \CC^2).
\end{equation*}

Conversely, let $f = f_+ \oplus f_- \in\dom S^*$ with $\Gamma_0 f\in H^{1}(\Sigma;\CC^2)$. Since $\Lambda: H^1(\Sigma) \rightarrow H^{\frac{1}{2}}(\Sigma)$ 
is bijective and the $C^\infty$-matrix function $\sigma \cdot \nu$ is invertible we conclude from 
the definition of $\Gamma_0$ that 
\begin{equation}\label{formel11}
\cT_+^D f_+ - \cT_-^D f_- \in H^{\frac{1}{2}}(\Sigma; \CC^2).
\end{equation}
By Proposition~\ref{proposition_C_z} the operators $\mathcal{C}_\zeta$ and $\mathcal{C}_{\Bar{\zeta}}$ are bounded in $H^{\frac{1}{2}}(\Sigma; \mathbb{C}^2)$,
which gives $(\mathcal{C}_\zeta + \mathcal{C}_{\bar\zeta})\Lambda \Gamma_0 f\in H^{\frac{1}{2}}(\Sigma; \CC^2)$. In addition, $\Gamma_1 f\in L^2(\Sigma;\CC^2)$
implies $\Lambda^{-1}\Gamma_1\in H^{\frac{1}{2}}(\Sigma; \CC^2)$. With the definition of $\Gamma_1$ this yields
\begin{equation*}
  \frac{1}{2} \big( \cT_+^D f_+ + \cT_-^D f_-\big)=\Lambda^{-1}\Gamma_1 f
+\frac{1}{2} (\mathcal{C}_\zeta+\mathcal{C}_{\Bar\zeta}) \Lambda \Gamma_0 f \in H^{\frac{1}{2}}(\Sigma; \mathbb{C}^2).
\end{equation*}
Hence, together with \eqref{formel11} this implies $\cT_\pm^D f_\pm \in H^{\frac{1}{2}}(\Sigma;\CC^2)$.
Finally, Lemma~\ref{lemma_trace} shows $f_\pm\in H^1(\Omega_\pm;\CC^2)$. 
\end{proof}

\subsection{\kp{Some basic properties of self-adjoint extensions}}

\kp{
In this subsection we prove two results which are valid for the essential and discrete spectra of a large class of self-adjoint extensions of $S$ defined in~\eqref{def_S}
and which are independent of the preceding construction of a boundary triple. These properties will be used
later for a more detailed spectral analysis of $A_{\eta, \tau}$.
}

For the essential spectrum of we have the following result:
\begin{prop}\label{prop-ess}
For any self-adjoint extension $A$ of $S$ one has the inclusion
\[
\big(-\infty,-|m|\big]\,\cup\,\big[|m|,+\infty\big)\subset \spec_\ess A.
\]
\end{prop}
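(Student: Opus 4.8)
The key point is that $S$ is a restriction of the free Dirac operator $A_0$, which is a finite-rank (in the resolvent sense, relative to the symmetric operator) perturbation distance away from $A$; more precisely, both $A$ and $A_0$ are self-adjoint extensions of the same symmetric operator $S$, whose deficiency spaces are infinite-dimensional but which nonetheless share a common ``core at infinity''. The cleanest route is a singular sequence (Weyl sequence) argument: I would fix $\lambda$ with $|\lambda| \ge |m|$ in the essential spectrum of $A_0$ and construct, for any $\varepsilon > 0$, a unit vector $u \in \dom A$ with $\|(A - \lambda) u\| < \varepsilon$, and moreover do this in a way that produces an orthonormal singular sequence, so that $\lambda \in \spec_\ess A$.

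First I would recall that $\spec_\ess A_0 = (-\infty,-|m|]\cup[|m|,+\infty)$, and that for such $\lambda$ there is a Weyl sequence $(f_n)$ for $A_0$ at $\lambda$; using the Fourier-transform description of $A_0$ one may take each $f_n$ to be a smooth function whose Fourier transform is supported in a thin shell $\{|\xi| \approx r_\lambda\}$ and which can be translated far out in space. The decisive observation is that $f_n$ can be chosen supported in $\RR^2 \setminus \Sigma$ — indeed, translate the supports of the $f_n$ by a sequence of vectors $a_n$ with $|a_n| \to \infty$ (pushing the support into $\Omega_-$, away from the compact curve $\Sigma$), and for $n$ large $\supp f_n \cap \Sigma = \emptyset$. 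Then $f_n \in H^1_0(\RR^2 \setminus \Sigma;\CC^2) = \dom S \subset \dom A$, and on these functions $A f_n = S f_n = A_0 f_n$, so $\|(A - \lambda) f_n\| = \|(A_0 - \lambda) f_n\| \to 0$ while $\|f_n\| = 1$. Pairwise orthogonality of the $f_n$ can be arranged by passing to a subsequence with disjoint (or rapidly separating) spatial supports; this yields that $\lambda \in \spec_\ess A$, and since $\lambda$ with $|\lambda| \ge |m|$ was arbitrary, the claimed inclusion follows.

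The one point that needs a little care — and which I would regard as the main (though modest) obstacle — is the simultaneous control of three things: that the translated $f_n$ remain a Weyl sequence for $A_0$ (immediate, since $A_0$ commutes with translations), that they eventually avoid $\Sigma$ (immediate, since $\Sigma$ is compact and $|a_n|\to\infty$), and that they form an orthonormal, or at least non-compact, family (achieved by choosing the translation vectors $a_n$ so that the supports are mutually disjoint, which is possible because each $f_n$ can be taken compactly supported up to an exponentially small tail, or genuinely compactly supported after a further mollification that costs only $o(1)$ in the graph norm). Once these are in place the argument is complete; no properties of $A$ beyond self-adjointness and $A \supset S$ are used, which is exactly the generality claimed in the statement.
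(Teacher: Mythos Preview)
Your proposal is correct and follows essentially the same approach as the paper: the paper constructs an explicit Weyl sequence $f_n(x) = \frac{1}{n}\chi(\frac{1}{n}|x-y_n|) e^{\rmi\sqrt{z^2-m^2}\,x_1}(\sqrt{z^2-m^2}\,\sigma_1 + m\sigma_3 + z\sigma_0)\zeta$ with $y_n = (R+n^2,0)$ translated into $\Omega_-$, notes that $f_n \in \dom S \subset \dom A$, and concludes exactly as you do. Your description is simply a more abstract rendering of the same construction.
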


\begin{proof}
The proof is in the standard way by constructing for each $z \in (-\infty, -|m|) \cup (|m|, \infty)$
	a sequence of functions $f_n\in \dom A_{\eta, \tau}$ satisfying $\big\|(A_{\eta, \tau}-z)f_n\big\|/\|f_n\| \rightarrow 0$ as $n\to \infty$.
	For example, following \cite[Theorem~5.7~(i)]{BH} for the three-dimensional analog we define
  \begin{equation*} 
    f_n(x_1, x_2) := \frac{1}{n} \chi\left(\frac{1}{n} |x-y_n| \right) e^{\rmi \sqrt{z^2 - m^2} x_1} \big(\sqrt{z^2 - m^2} \sigma_1 + m \sigma_3 + z \sigma_0\big) \zeta,
  \end{equation*}
  where $\chi:\RR\to [0, 1]$ is a $C^\infty$-function such that $\chi(t)=1$ for $|t|\le \frac{1}{2}$ and 
$\chi(t)=0$ for $|t|\ge 1$, the vector $\zeta \in \mathbb{C}^2$ is chosen such that $(\sqrt{z^2 - m^2} \sigma_1 + m \sigma_3 + z \sigma_0) \zeta \neq 0$, 
  the number $R > 0$ is sufficiently large to have $\RR^2 \setminus \overline{B(0, R)} \subset \Omega_-$, and we denote $y_n := (R + n^2, 0)$, $n \in \NN$. 
  Then $f_n \in \dom S \subset \dom A$ and one can show as in \cite[Theorem~5.7~(i)]{BH} that $\big\|(A_{\eta, \tau}-z)f_n\big\|/\|f_n\| \rightarrow 0$ for $n\to \infty$. Since $z \in (-\infty, -|m|) \cup (|m|, \infty)$ was arbitrary, the claimed result follows.
  \end{proof}

Some information about the discrete spectrum can be obtained under an additional regularity assumption:
\begin{prop} \label{prop-discr}  
  Let $A$ be a self-adjoint extension of the 
  symmetric operator $S$ in $L^2(\mathbb{R}^2; \mathbb{C}^2)$ satisfying the inclusion $\dom A \subset H^s(\mathbb{R}^2 \setminus \Sigma; \mathbb{C}^2)$ for some $s > 0$.
	Then the spectrum of $A$ in $\big(-|m|, |m|\big)$ is purely discrete and finite.
\end{prop}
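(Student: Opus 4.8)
If $m=0$ the interval $(-|m|,|m|)$ is empty and nothing is to be proved, so assume $m\neq 0$. The plan has two parts. First I would show that $A$ has no essential spectrum inside the gap, so that the spectrum there is automatically discrete; then I would rule out that the eigenvalues accumulate at the endpoints $\pm|m|$. The second part is where the real work lies.

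\emph{Discreteness.} Fix $z_0\in\mathbb{C}\setminus\mathbb{R}$, so that $z_0\in\res A\cap\res A_0$, and show that $K:=(A-z_0)^{-1}-(A_0-z_0)^{-1}$ is compact on $L^2(\mathbb{R}^2;\mathbb{C}^2)$. Given $g\in L^2(\mathbb{R}^2;\mathbb{C}^2)$, put $u:=(A-z_0)^{-1}g\in\dom A$ and $v:=(A_0-z_0)^{-1}g\in H^1(\mathbb{R}^2;\mathbb{C}^2)$; since $A,A_0\subset S^*$ one gets $(S^*-z_0)(u-v)=0$, hence $w:=u-v\in\ker(S^*-z_0)$, and by Proposition~\ref{proposition_Phi_z} we may write $w=\Phi_{z_0}\varphi$ with a unique $\varphi\in H^{-\frac12}(\Sigma;\mathbb{C}^2)$. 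As $v\in H^1(\mathbb{R}^2;\mathbb{C}^2)$ has no jump across $\Sigma$, the Plemelj--Sokhotskii formula of Proposition~\ref{proposition_Plemelj_Sokhotskii} together with $(\sigma\cdot\nu)^2=\sigma_0$ yields $\varphi=\rmi(\sigma\cdot\nu)\big(\mathcal{T}_{+}^D u-\mathcal{T}_{-}^D u\big)$. By the closed graph theorem the embedding $\dom A\hookrightarrow H^{s}(\mathbb{R}^2\setminus\Sigma;\mathbb{C}^2)$ is bounded, so $g\mapsto u$ is bounded from $L^2(\mathbb{R}^2;\mathbb{C}^2)$ into $H^s(\mathbb{R}^2\setminus\Sigma;\mathbb{C}^2)$; for $s>\frac12$ the trace theorem gives $\mathcal{T}_{\pm}^D u\in H^{s-\frac12}(\Sigma;\mathbb{C}^2)$ depending boundedly on $g$, and for $0<s\le\frac12$ the same follows from a refined trace estimate for the space $\{f\in H^s(\Omega_\pm;\mathbb{C}^2):\sigma\cdot\nabla f\in L^2(\Omega_\pm;\mathbb{C}^2)\}$, obtained by interpolating the trace map between the orders $s=1$ (where this space is $H^1(\Omega_\pm;\mathbb{C}^2)$, trace in $H^{\frac12}$) and $s=0$ (where it is $H(\sigma,\Omega_\pm)$, trace in $H^{-\frac12}$, see Lemma~\ref{lemma_trace}). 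Hence $g\mapsto\varphi$ is bounded from $L^2(\mathbb{R}^2;\mathbb{C}^2)$ into $H^{s-\frac12}(\Sigma;\mathbb{C}^2)$ with $s-\frac12>-\frac12$; composing with the compact embedding $H^{s-\frac12}(\Sigma;\mathbb{C}^2)\hookrightarrow H^{-\frac12}(\Sigma;\mathbb{C}^2)$ and with the bounded map $\Phi_{z_0}\colon H^{-\frac12}(\Sigma;\mathbb{C}^2)\to L^2(\mathbb{R}^2;\mathbb{C}^2)$ shows that $K\colon g\mapsto w$ is compact. Combined with $\spec_\ess A_0=(-\infty,-|m|]\cup[|m|,+\infty)$, Weyl's theorem, and Proposition~\ref{prop-ess}, this gives $\spec_\ess A=(-\infty,-|m|]\cup[|m|,+\infty)$, so every point of $\spec A\cap(-|m|,|m|)$ is an isolated eigenvalue of finite multiplicity.

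\emph{Finiteness.} It remains to show that the eigenvalues do not accumulate at $\pm|m|$; this is the main difficulty. Every eigenfunction $\psi$ of $A$ with eigenvalue $\lambda\in(-|m|,|m|)$ satisfies $(S^*-\lambda)\psi=0$, hence $\psi=\Phi_\lambda\varphi$ with $0\neq\varphi\in H^{-\frac12}(\Sigma;\mathbb{C}^2)$, and the condition $\psi\in\dom A$ together with Proposition~\ref{proposition_Plemelj_Sokhotskii} turns the eigenvalue equation into a Birman--Schwinger-type relation $0\in\spec_\textup{p}\big(T(\lambda)\big)$ for a self-adjoint operator family $T(\lambda)$ on $\Sigma$ that is real-analytic and operator-monotone on $(-|m|,|m|)$ — the derivative being, up to sign, the non-negative operator $G_\lambda^*G_\lambda$ with $G_\lambda=\Phi_\lambda\Lambda$ the $\gamma$-field of Proposition~\ref{prop23} — and whose essential spectrum is independent of $\lambda$ and, by the previous part together with Theorem~\ref{theorem_boundary_triple_abstract}, does not contain $0$. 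On any compact subinterval of $(-|m|,|m|)$ the monotonicity and a bound on $\|\Phi_\lambda\|$ show that only finitely many eigenvalue branches of $T(\lambda)$ can cross $0$, hence finitely many eigenvalues of $A$ lie there. To push this up to the thresholds one uses Lemma~\ref{lemma_phi_z}: the only source of blow-up of $\Phi_\lambda$, and hence of $T(\lambda)$, as $\lambda\to\pm|m|$ is the term carrying $\log\sqrt{m^2-\lambda^2}$, which contributes a \emph{finite-rank} operator. Thus $T(\lambda)=T^{\mathrm{reg}}(\lambda)-\big(\log\sqrt{m^2-\lambda^2}\big)F_\lambda$ with $T^{\mathrm{reg}}(\lambda)$ converging in norm and $F_\lambda$ finite-rank and convergent as $\lambda\to\pm|m|$; consequently the eigenvalue branches of $T(\lambda)$ that stay near $0$ close to a threshold have uniformly bounded derivative there (the divergent contribution drives their eigenvectors into the kernel of the limiting finite-rank term), so only finitely many of them cross $0$ in a one-sided neighbourhood of each threshold. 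Together with the bound on compact subintervals, this shows that $\spec A\cap(-|m|,|m|)$ is finite.

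The main obstacle is this last step. In two dimensions the free Green's function genuinely degenerates at the thresholds $z=\pm|m|$ — logarithmically, in contrast with the three-dimensional situation — so that $T(\lambda)$ does not extend continuously to the endpoints; the point to exploit is that the degeneration is of finite rank, which is precisely what forbids eigenvalues from piling up at $\pm|m|$.
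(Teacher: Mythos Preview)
Your approach is quite different from the paper's, and the two parts of your argument have very different status.

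\textbf{Discreteness.} Your compact--resolvent--difference argument is sound in outline: write $(A-z_0)^{-1}g-(A_0-z_0)^{-1}g=\Phi_{z_0}\varphi$ with $\varphi$ the jump of the trace, use $\dom A\subset H^s$ together with the closed graph theorem to get $\varphi$ in a Sobolev space strictly better than $H^{-1/2}(\Sigma;\CC^2)$, and then use compactness of the Sobolev embedding on the closed curve. The interpolation step for $0<s\le\tfrac12$ is sketchy as written, but this can be fixed. This is a perfectly reasonable alternative to what the paper does.

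\textbf{Finiteness.} Here there is a genuine gap. You never define $T(\lambda)$; for a \emph{general} self-adjoint extension $A$ of $S$ the boundary parameter may be a self-adjoint relation, and you would first have to translate the hypothesis $\dom A\subset H^s$ into a condition on that parameter before you can even formulate a Birman--Schwinger operator. More seriously, the threshold analysis is only a heuristic. You correctly identify from Lemma~\ref{lemma_phi_z} that the divergent contribution to $\mathcal C_\lambda$ at $\lambda\to\pm|m|$ is $-\tfrac{1}{2\pi}\log\sqrt{m^2-\lambda^2}\,(m\sigma_3+\lambda\sigma_0)$ times the rank-one integral kernel $1$, hence finite rank. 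But the sentence ``the divergent contribution drives their eigenvectors into the kernel of the limiting finite-rank term'' is not a proof: you would still need to show that $0\notin\spec_\ess T^{\mathrm{reg}}(\pm|m|)$ and then combine rank--interlacing with the monotonicity of $M_\lambda$ to bound the total number of zero--crossings. None of this is carried out, and for a general extension it is not clear it goes through without further structural information on the boundary parameter.

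\textbf{How the paper avoids all of this.} The paper does not touch the thresholds at all. It passes to $A^2$ via the closed form $a[f,f]=\|Af\|^2$, performs an IMS localization with a smooth partition $\varphi_1^2+\varphi_2^2=1$ separating a bounded neighbourhood of $\Sigma$ from the exterior, and bounds $\cN(A^2,m^2)$ from above by $\cN(A_1,m^2)+\cN(A_2,m^2)$, where $A_1$ lives on a bounded domain (compact resolvent since $\dom a_1\subset H^s$ on a bounded set) and $A_2=-\Delta^D+m^2-V$ on an exterior domain with compactly supported $V$ (finitely many eigenvalues below $m^2$ by a further Dirichlet--Neumann bracketing). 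This uses only the hypothesis $\dom A\subset H^s$, needs no boundary triple, no Birman--Schwinger, and no analysis of $\lambda\to\pm|m|$; finiteness and discreteness come out together from the min--max principle. Your route, if completed, would give more: a counting formula via eigenvalue crossings. But as written the finiteness part is not a proof.
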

\begin{proof}
  It is sufficient to show that $A^2$ has at most finitely many eigenvalues in $(-\infty, m^2)$. For that, consider the quadratic form
\[
a[f,f]=\int_{\RR^2} |A f|^2 \dd x, \quad \dom a= \dom A.
\]
Since $A$ is self-adjoint and hence closed, also the densely defined nonnegative form $a$ is closed. The self-adjoint operator associated to  $a$ via the first representation theorem  is $A^2$.
Next, take $0<r<R$ with $r$ chosen sufficiently large, such that the open ball $B_r=\{x\in\RR^2:\vert x\vert<r\}$ 
contains $\overline{\Omega_+}$ in its interior, 
and choose $\varphi_1, \varphi_2\in C^\infty(\RR^2)$ which satisfy
\[
0\le \varphi_1, \varphi_2\le 1, \quad
\varphi_1^2+\varphi_2^2=1,
\quad
\varphi_1=1 \text{ in $B_r$},
\quad
\varphi_2=1 \text{ in $\RR^2\setminus B_R$.}
\]
Let $f\in \dom A$ be fixed. Then by construction one has $\varphi_j f \in \dom A$
and
\[
A (\varphi_j f)=\varphi_j A f -\rmi \sigma\cdot (\nabla\varphi_j) f.
\]
In particular, we note that $\varphi_2 f \in H(\sigma, \Omega_-)$ with $\mathcal{T}_-^D f = 0 \in H^{\frac{1}{2}}(\Sigma; \mathbb{C}^2)$. Thus, 
it follows from Lemma~\ref{lemma_trace} that $\varphi_2 f \in H^1(\Omega_-; \mathbb{C}^2)$.

Next, we remark that $\nabla\varphi_j$ is supported in $\overline{B_R}\setminus B_r$. Hence, we have for $j \in \{ 1, 2\}$
\[
a[\varphi_j f,\varphi_j f]=\int_{\RR^2} \big(\varphi_j^2|A f|^2+ |\rmi \sigma\cdot (\nabla\varphi_j) f|^2\big)\dd x + \mathcal{I}_j,
\]
where
\begin{equation*}
\begin{split}
\mathcal{I}_j&=\int_{B_R\setminus B_r} 2 \, \textup{Re}\,  \big(\varphi_j 
(-\rmi \sigma \cdot \nabla + m \sigma_3) f, -\rmi \sigma\cdot (\nabla\varphi_j) f\big)_{\CC^2}\dd x\\
&=\int_{B_R\setminus B_r} 2 \, \textup{Re}\, \big( (-\rmi \sigma \cdot \nabla + m \sigma_3) f, -\rmi \sigma\cdot (\varphi_j \nabla\varphi_j) f\big)_{\CC^2} \dd x\\
&=\int_{B_R\setminus B_r} \textup{Re}\, \big( (-\rmi \sigma \cdot \nabla + m \sigma_3) f, -\rmi \sigma\cdot  \nabla(\varphi_j^2) f\big)_{\CC^2} \dd x.
\end{split}
\end{equation*}
From $\varphi_1^2+\varphi_2^2=1$ we obtain $\nabla(\varphi_1^2)=-\nabla(\varphi_2^2)$ and hence $\mathcal{I}_1=-\mathcal{I}_2$. Moreover, using 
\eqref{anti_commutation} one verifies $|\rmi \sigma\cdot (\nabla\varphi_j) f|^2= |\nabla\varphi_j\vert^2\vert f|^2$ for $j \in \{ 1, 2\}$. 
Therefore, it follows that
\begin{equation*}
\begin{split}
&a[\varphi_1 f,\varphi_1 f]+a[\varphi_2 f,\varphi_2 f]\\
&\qquad =\int_{\RR^2} ( \varphi_1^2+\varphi_2^2) |A f|^2\dd x
 +\int_{\RR^2} \big(|\nabla\varphi_1|^2+|\nabla\varphi_2|^2\big)|f|^2\big)\dd x\\
&\qquad = \int_{\RR^2} |A f|^2\dd x +\int_{\RR^2} V|f|^2\dd x,
\end{split}
\end{equation*}
where we have used the abbreviation $V:=|\nabla\varphi_1|^2+|\nabla\varphi_2|^2$ in the last step; note that $V$ is supported in $\overline{B_R}\setminus B_r$.
This leads to
\begin{equation}
   \label{buu}
a[f,f]=a[\varphi_1 f,\varphi_1 f]-\int_{\RR^2} V |\varphi_1 f|^2\dd x
+a[\varphi_2 f,\varphi_2 f]-\int_{\RR^2} V |\varphi_2 f|^2\dd x.
\end{equation}

In the following we will often restrict functions in $\dom a$ to $B_R$ or $\RR^2\setminus \overline{B_r}$ and view them as elements in $L^2(B_R;\CC^2)$ or 
$L^2(\RR^2\setminus \overline{B_r};\mathbb{C}^2)$, or we will extend $L^2$-functions on 
$B_R$ or $\RR^2\setminus \overline{B_r}$ by zero onto $\RR^2$ and view them as elements in $L^2(\RR^2;\CC^2)$. We find it convenient to use 
the same letter for the original and the restricted or extended function.

Let $a_1$ be the quadratic form in $L^2(B_R;\CC^2)$ defined by
\[
\dom a_1=\big\{g \in \dom a : \supp g \subset \overline{B_R}\big\},
\quad
a_1[g,g]=a[g,g]-\int_{B_R} V |g|^2\dd x.
\]
As $V$ is bounded and $a$ is nonnegative it follows that $a_1$ is semibounded from below. It is also clear that $a_1$ is densely defined in $L^2(B_R;\CC^2)$. To see that $a_1$ is closed consider 
$g_n\in \dom a_1$ such that $g_n \rightarrow g$ in $L^2(B_R;\CC^2)$ for $n\rightarrow\infty$ and $a_1(g_n - g_m, g_n-g_m) \rightarrow 0$
for $n,m\rightarrow\infty$. Since $V$ is bounded it follows that the zero extensions $g_n$ and $g$ satisfy 
$g_n \rightarrow g$ in $L^2(\RR^2;\CC^2)$ for $n\rightarrow\infty$ and $a(g_n - g_m, g_n-g_m) \rightarrow 0$
for $n,m\rightarrow\infty$. As $a$ is closed we conclude $g\in\dom a$ and $a(g_n - g, g_n-g)\rightarrow 0$ for $n\rightarrow\infty$.
Furthermore, as $\supp g \subset \overline{B_R}$ we have $g\in\dom a_1$ and $a_1(g_n - g, g_n-g)\rightarrow 0$ for $n\rightarrow\infty$,
thus $a_1$ is closed. Let $A_1$ be the self-adjoint operator in $L^2(B_R; \mathbb{C}^2)$ corresponding to $a_1$.
Then $A_1$ has a compact resolvent since the form domain $\dom a_1\subset H^s(B_R\setminus\Sigma;\CC^2)$
is compactly embedded in $L^2(B_R; \mathbb{C}^2)$ for $s>0$. Hence, the number of eigenvalues $\mathcal{N}(A_1, m^2)$ of $A_1$ below $m^2$ is finite, that is, 
$\mathcal{N}(A_1, m^2)<\infty$.

Next, let $a_2$ be the quadratic form in $L^2(\RR^2\setminus \overline{B_r};\CC^2)$ defined by
\begin{gather*}
\dom a_2=H^1_0\big(\RR^2\setminus \overline{B_r};\CC^2\big), \quad
a_2[g,g]=a[g,g]-\int_{\RR^2\setminus \overline{B_r}} V |g|^2\dd x.
\end{gather*}
As above it is clear that $a_2$ is densely defined and semibounded from below.
Using integration by parts and~\eqref{anti_commutation} one sees for $g\in C^\infty_0(\RR^2\setminus \overline{B_r};\CC^2)$ that
\begin{equation*}
\begin{split}
a[g,g]&=\int_{\RR^2\setminus \overline{B_r}} |(-\rmi \sigma \cdot \nabla + m \sigma_3) g|^2 \dd x\\
&=\int_{\RR^2\setminus \overline{B_r}} \big( g, (-\rmi \sigma \cdot \nabla + m \sigma_3)^2 g \big)_{\CC^2}\dd x\\
&= \int_{\RR^2\setminus \overline{B_r}} \big( g, (-\Delta+m^2) g\big)_{\CC^2}\dd x\\
&=\int_{\RR^2\setminus \overline{B_r}} \big( |\nabla g|^2 +m^2 |g|^2\big)\dd x,
\end{split}
\end{equation*}
which then extends by density to all $g\in H^1_0\big(\RR^2\setminus \overline{B_r};\CC^2\big)$. Therefore,
the form $a_2$ is closed and the self-adjoint operator associated to $a_2$ is $A_2=-\Delta^D +m ^2-V$, where $-\Delta^D$ denotes 
the Dirichlet Laplacian in $\RR^2\setminus \overline{B_r}$.

\kp{
Let us prove that $\cN(A_2, m^2)<\infty$. Recall that $V$ is bounded and that its support
is contained in $\overline{B_R}$. Consider the following
closed sesquilinear forms $a_3$ in $L^2(B_R\setminus \overline{B_r})$ and $a_4$ in $L^2(\RR^2\setminus \overline{B_R})$,
\begin{align*}
a_3[g,g]&= \int_{B_R\setminus \overline{B_r}} \big( |\nabla g|^2 +(m^2-V) |g|^2\big)\dd x,\\
& \quad \dom a_3=\big\{g\in H^1(B_R\setminus \overline{B_r};\CC^2):\, g=0 \text{ on } \partial B_r \big\},\\
a_4[g,g]&= \int_{\RR^2\setminus \overline{B_R}} \big( |\nabla g|^2 +m^2 |g|^2\big)\dd x, \quad \dom a_4=H^1(\RR^2\setminus \overline{B_R}).
\end{align*}
For any $g\in \dom a_2$ one has
\[
f_3:=g\upharpoonright B_R\setminus \overline{B_r}\in \dom a_3,
\quad
f_4:=g\upharpoonright \RR^2\setminus \overline{B_R} \in \dom a_4,
\]
with $a_2(g,g)=a_3(f_3,f_3)+a_4(f_4,f_4)$.
Therefore, if $A_3$ is the self-adjoint operator in $L^2(B_R\setminus \overline{B_r})$ generated by $a_3$
and $A_4$ is the self-adjoint operator in $L^2(\RR^2\setminus \overline{B_R})$ generated by $a_4$, then it follows
by the min-max principle that the eigenvalues of $a_2$ are bounded from below by the respective eigenvalues of $A_3\oplus A_4$.
In particular, $\cN(A_2, m^2)\le \cN(A_3, m^2)+\cN(A_4, m^2)$. One clearly has $\cN(A_4, m^2)=0$. On the other hand, the operator
$A_3$ is semibounded from below and has a compact resolvent, hence, $\cN(A_3, m^2)<\infty$. This implies $\cN(A_2, m^2)<\infty$.
}

Now, we can conclude that $A^2$ has only finitely many eigenvalues below $m^2$. For this consider
\begin{equation*}
  J:L^2(\RR^2;\CC^2)\to L^2(B_R;\CC^2)\oplus L^2(\RR^2\setminus \overline{B_r};\CC^2), \qquad 
  J f= \varphi_1 f \oplus \varphi_2 f.
\end{equation*}
Due to the properties of $\varphi_1$ and $\varphi_2$ we get that $J$ is an isometry. Moreover, with the above considerations we see $J(\dom a) \subset \dom a_1\oplus \dom a_2$, and with the equality \eqref{buu}
we obtain
\begin{equation*}
  \frac{a[f,f]}{\| f \|_{L^2(\mathbb{R}^2; \mathbb{C}^2)}^2} = \frac{(a_1\oplus a_2) [J f, J f]}{\| J f \|_{L^2(B_R;\CC^2)\oplus L^2(\RR^2\setminus \overline{B_r};\CC^2)}^2}.
\end{equation*} 
It follows from the min-max principle that
\begin{equation*}
  \cN(A^2, m^2)\le \cN(A_1\oplus A_2, m^2) = \cN(A_1, m^2)+\cN(A_2, m^2).
\end{equation*}
As we have seen above, the quantity on the right hand side is finite and hence $\cN(A^2, m^2)< \infty$. This completes the proof.
\end{proof}

\section{Dirac operators with singular interactions} \label{section_delta_op}

In this section we study the Dirac operator $A_{\eta, \tau}$ introduced in~\eqref{dirdelta} and we prove
the main results of this paper. First, in Section~\ref{section_def_op} we show how $A_{\eta, \tau}$ is related to the boundary triple 
$\{ L^2(\Sigma; \mathbb{C}^2), \Gamma_0, \Gamma_1 \}$ from Proposition~\ref{prop-bt3}. Then, in Section~\ref{sec-noncrit}, we show the 
self-adjointness of $A_{\eta, \tau}$ for non-critical interaction strengths, i.e. when $\eta^2 - \tau^2 \neq 4$, and investigate the spectral properties 
of $A_{\eta, \tau}$ in this setting. In 
Section~\ref{sec-crit} we the study the self-adjointness and the spectral properties of $A_{\eta, \tau}$ in the case of critical interaction strengths.
Finally, in Section~\ref{secsevloops} we provide a sketch of the proof of Theorem~\ref{sevloops}.

\subsection{Definition of $A_{\eta, \tau}$ via the boundary triple} \label{section_def_op}

Recall the definition of the space $H(\sigma, \Omega_\pm)$ from~\eqref{def_H_sigma}, the trace maps $\mathcal{T}_\pm^D$ on $H(\sigma, \Omega_\pm)$ in 
Lemma~\ref{lemma_trace}, and that the operator $A_{\eta, \tau}$ in \eqref{dirdelta} is defined by
\begin{equation}\label{eqn:defdiracdelta}
  \begin{split}
    A_{\eta, \tau} f &= (-\rmi \sigma \cdot \nabla + m \sigma_3) f_+ \oplus (-\rmi \sigma \cdot \nabla + m \sigma_3) f_-, \\
    \dom A_{\eta, \tau} &= \Big\{ f = f_+ \oplus f_- \in H(\sigma,\Omega_+) \oplus H(\sigma,\Omega_-):\\ 
	& \qquad \quad {}-\rmi (\sigma\cdot \nu)\big( \cT_+^D f_+ - \cT_-^D f_-\big) = \frac12(\eta \sigma_0 + \tau \sigma_3)\big(\cT_+^D f_+ + \cT_-^D f_-\big)\Big\}.
	  \end{split}
\end{equation}

Before analyzing the properties of $A_{\eta, \tau}$ we would like to mention that for special values of the interaction strengths $A_{\eta, \tau}$ decouples 
in Dirac operators in $L^2(\Omega_+;\CC^2)$ and $L^2(\Omega_-;\CC^2)$ subject to certain 
boundary conditions. Similar effects are known from dimension three, see \cite[Section~V]{DES89}, \cite[Section~5]{AMV15}, and \cite[Lemma~3.1]{BEHL19}. 
The result reads as follows:

\begin{lem} \label{lemma_confinement}
  Let $\eta, \tau \in \mathbb{R}$. Then the following holds:
  \begin{itemize}
    \item[\textup{(i)}] If $\eta^2 - \tau^2 \neq -4$, then there is an invertible matrix $M$, which is explicitly given below in~\eqref{def_M}, such that $f = f_+ \oplus f_- \in \dom A_{\eta, \tau}$ if and only if
    \begin{equation*}
      \mathcal{T}_+^D f_+ = M \mathcal{T}_-^D f_-.
    \end{equation*}
    \item[\textup{(ii)}] If $\eta^2 - \tau^2 = -4$, then $A_{\eta, \tau} = A_+ \oplus A_-$, where $A_\pm$ is a Dirac operator in $L^2(\Omega_\pm; \mathbb{C}^2)$ and $f_\pm \in \dom A_\pm$ if and only if
    \begin{equation} \label{boundary_condition}
      \mathcal{T}_\pm^D f_\pm = \pm \frac{\rmi}{2} (\sigma \cdot \nu) \left( \eta \sigma_0 + \tau \sigma_3 \right) \mathcal{T}_\pm^D f_\pm.
    \end{equation}
  \end{itemize}
\end{lem}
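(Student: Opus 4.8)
The plan is to reduce the transmission condition in \eqref{eqn:defdiracdelta} to a pointwise identity in $\CC^{2\times 2}$ along $\Sigma$ and then to do elementary linear algebra. Abbreviating $g_\pm := \cT_\pm^D f_\pm$, the boundary condition reads $-\rmi(\sigma\cdot\nu)(g_+ - g_-) = \tfrac12(\eta\sigma_0+\tau\sigma_3)(g_+ + g_-)$, and I would first collect $g_+$ and $g_-$ on opposite sides to rewrite it as $P g_+ = Q g_-$, where
\begin{equation*}
  P := -\rmi(\sigma\cdot\nu) - \tfrac12(\eta\sigma_0 + \tau\sigma_3), \qquad
  Q := -\rmi(\sigma\cdot\nu) + \tfrac12(\eta\sigma_0 + \tau\sigma_3)
\end{equation*}
are $C^\infty$ matrix-valued functions on $\Sigma$; note $P,Q$ give bounded maps on $H^s(\Sigma;\CC^2)$ for all $s$, so the rearrangement is harmless. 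Using $(\sigma\cdot\nu)^2 = \sigma_0$ (from \eqref{anti_commutation} and $|\nu|\equiv 1$), a short computation with the explicit $2\times 2$ matrices gives $\det P = \det Q = \tfrac14(\eta^2-\tau^2+4)$ at every point of $\Sigma$, which is precisely what makes $\eta^2-\tau^2=-4$ exceptional.

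For part~(i), suppose $\eta^2-\tau^2\neq -4$. Then $P$ and $Q$ are pointwise invertible, so $P g_+ = Q g_-$ is equivalent to $g_+ = M g_-$ with $M := P^{-1}Q$, and since $\det P = \det Q$ we get $\det M = 1$, so $M$ is invertible. I would then carry out the product $P^{-1}Q$ explicitly — it is convenient to isolate the off-diagonal matrix $(\sigma\cdot\nu)(\eta\sigma_0+\tau\sigma_3)$ — and record the outcome as
\begin{equation} \label{def_M}
  M = \frac{1}{\eta^2-\tau^2+4}\Big[(4-\eta^2+\tau^2)\,\sigma_0 + 4\rmi\,(\sigma\cdot\nu)(\eta\sigma_0+\tau\sigma_3)\Big]
\end{equation}
(a sanity check: $M=\sigma_0$ when $\eta=\tau=0$, consistent with $A_{0,0}=A_0$). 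This proves~(i).

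For part~(ii), suppose $\eta^2-\tau^2=-4$, so $\det P = \det Q = 0$. Here $\tau^2 = \eta^2+4>0$ forces $\tau\neq 0$ and $\eta\pm\tau\neq 0$, and the off-diagonal entry of $P$ is $-\rmi(\nu_1+\rmi\nu_2)$ with modulus $1$, so $P\neq 0$; hence $P$ (and likewise $Q$) has rank exactly $1$ at every point. The key point is that $\ran P$ and $\ran Q$ are transverse lines: computing first columns one finds that $\ran P$ is spanned by $(\eta+\tau,\,2\rmi(\nu_1+\rmi\nu_2))^{\mathsf T}$ and $\ran Q$ by $(\eta+\tau,\,-2\rmi(\nu_1+\rmi\nu_2))^{\mathsf T}$, and these coincide only if $\nu_1+\rmi\nu_2=0$, impossible since $|\nu|=1$. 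Thus $\ran P\cap\ran Q=\{0\}$, so $P g_+ = Q g_-$ forces the common value to vanish, i.e. $P g_+ = 0$ and $Q g_- = 0$ separately. Finally, multiplying $P g_+=0$ (resp. $Q g_-=0$) on the left by $\rmi(\sigma\cdot\nu)$ and using $(\sigma\cdot\nu)^2=\sigma_0$ turns it into $g_+ = \tfrac{\rmi}{2}(\sigma\cdot\nu)(\eta\sigma_0+\tau\sigma_3)g_+$ (resp. $g_- = -\tfrac{\rmi}{2}(\sigma\cdot\nu)(\eta\sigma_0+\tau\sigma_3)g_-$); all these steps are reversible, so the one-sided conditions are exactly \eqref{boundary_condition}. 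Since $\dom A_{\eta,\tau}$ therefore decouples into two independent conditions on $\Omega_+$ and $\Omega_-$, we obtain $A_{\eta,\tau}=A_+\oplus A_-$ with $A_\pm=(-\rmi\sigma\cdot\nabla+m\sigma_3)$ and $\dom A_\pm = \{f_\pm\in H(\sigma,\Omega_\pm): \cT_\pm^D f_\pm = \pm\tfrac{\rmi}{2}(\sigma\cdot\nu)(\eta\sigma_0+\tau\sigma_3)\cT_\pm^D f_\pm\}$.

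I expect the only point that needs genuine care to be part~(ii): one must verify both that the two rank-one ranges are transverse (so that $P g_+ = Q g_-$ really splits) and that the resulting one-sided algebraic conditions are equivalent to \eqref{boundary_condition} rather than merely implied by the split condition. The rest — the determinant computation and the explicit form \eqref{def_M} — is routine $2\times 2$ matrix arithmetic.
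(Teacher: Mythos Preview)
Your proof is correct and follows essentially the same route as the paper: rewrite the transmission condition as $Pg_+ = Qg_-$ (the paper multiplies through by $-\rmi(\sigma\cdot\nu)$ first to get the equivalent form $(\sigma_0 - R)g_+ = (\sigma_0 + R)g_-$ with $R := \tfrac{\rmi}{2}(\sigma\cdot\nu)(\eta\sigma_0+\tau\sigma_3)$, but this is the same equation), then invert one factor in case~(i) and decouple in case~(ii). Your explicit formula for $M$ agrees with the paper's $M=(\sigma_0-R)^{-1}(\sigma_0+R)$.

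The one place where the presentations genuinely differ is the decoupling in~(ii). You argue via explicit rank-one ranges and transversality; the paper instead observes that $\eta^2-\tau^2=-4$ gives $R^2=\sigma_0$, so $\tfrac12(\sigma_0\pm R)$ are complementary idempotents, and multiplying $(\sigma_0-R)g_+=(\sigma_0+R)g_-$ by $\sigma_0\mp R$ immediately yields $(\sigma_0-R)g_+=0$ and $(\sigma_0+R)g_-=0$. This is the same linear-algebraic content packaged more cleanly (it avoids computing columns and checking $\eta+\tau\neq 0$, $\nu_1+\rmi\nu_2\neq 0$ by hand), but your explicit transversality check is perfectly valid and arguably makes the geometry more visible.
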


\begin{remark}
  Assume that $\eta^2 - \tau^2 = -4$, which is equivalent to $\frac{\eta^2}{\tau^2} + \frac{4}{\tau^2} = 1$.  Thus, there exists $\vartheta \in [0, 2 \pi] \setminus \{ \frac{\pi}{2}, \frac{3 \pi}{2} \}$ such that
\begin{equation*}
  \frac{\eta}{\tau} = -\sin \vartheta \quad \text{and} \quad \frac{2}{\tau} = \cos \vartheta.
\end{equation*}  
Using~\eqref{anti_commutation} we see that~\eqref{boundary_condition} for $f_+$ is equivalent to
\begin{equation*}
  \begin{split}
    0&=\frac{2i}{\tau} \sigma_3 (\sigma \cdot \nu) \left( \sigma_0 - \frac{\rmi}{2} (\sigma \cdot \nu) \left( \eta \sigma_0 + \tau \sigma_3 \right) \right) \mathcal{T}^D_+ f_+ \\
    &=  \big( \sigma_0 + \rmi \sigma_3 (\sigma \cdot \nu) \cos \vartheta - \sin\vartheta \sigma_3 \big) \mathcal{T}_+^D f_+,
  \end{split}
\end{equation*}
i.e. the operators $A_+$ in the bounded domain $\Omega_+$ are exactly those investigated in~\cite{BFSV}. 
The case $\vartheta=0$ corresponds to the well-known infinite mass boundary condition, which is the two dimensional analog of the MIT bag boundary condition,
studied in~\cite{ALTMR,MOBP,SW}.
We would like to point out that our results on $A_{\eta, \tau}$ obtained later in Section~\ref{sec-noncrit} can be used for a deeper understanding for $A_\pm$.
\end{remark}

\begin{proof}[Proof of Lemma~\ref{lemma_confinement}]
The transmission condition in the definition of $A_{\eta, \tau}$ can be written in the form
\[
  \Big(\rmi  (\sigma\cdot \nu) +  \frac12(\eta \sigma_0 + \tau \sigma_3)\Big)\, \mathcal{T}^D_+ f_+
	=
	\Big(\rmi  (\sigma\cdot \nu) -  \frac12(\eta \sigma_0 + \tau \sigma_3)\Big)\, \mathcal{T}_-^Df_-.
\]
Multiplying this equation with $-\rmi\,(\sigma\cdot\nu)$ we obtain the equivalent form
\begin{equation} \label{tran4}
  (\sigma_0 -  R)\, \mathcal{T}_+^D f_+ 	= 	(\sigma_0 +R)\, \mathcal{T}_-^D f_-
\end{equation}
with
\begin{equation*}
	R:=\dfrac{\rmi}{2}\,(\sigma\cdot\nu)(\eta \sigma_0 + \tau \sigma_3) = \dfrac{\rmi}{2}\,(\eta \sigma_0 - \tau \sigma_3)(\sigma\cdot\nu), \nonumber
\end{equation*}
where \eqref{anti_commutation} was used. One computes
\[
R^2=\dfrac{\rmi}{2}\,(\eta \sigma_0 - \tau \sigma_3)(\sigma\cdot\nu)\,\dfrac{\rmi}{2}\,(\sigma\cdot\nu)(\eta \sigma_0 + \tau \sigma_3)=-\dfrac{\eta^2-\tau^2}{4}\,\sigma_0,
\]
which implies
$$(\sigma_0 -  R)(\sigma_0 +R)=\sigma_0 - R^2= \sigma_0 + \dfrac{\eta^2-\tau^2}{4}\,\sigma_0.$$ 
Assume now $\eta^2-\tau^2\ne -4$. Then both $\sigma_0\pm R$ are invertible with 
\[
(\sigma_0 \pm R)^{-1} = \frac{4}{4 + \eta^2 - \tau^2} \,(\sigma_0 \mp R).
\]
Therefore, the transmission condition can be equivalently rewritten as
\begin{equation} \label{def_M}
\mathcal{T}_+^D f_+=(\sigma_0 -R)^{-1}(\sigma_0+R)\, \mathcal{T}^D_- f_-\quad \text{or}\quad \mathcal{T}^D_- f_-=(\sigma_0 +R)^{-1}(\sigma_0-R)\, \mathcal{T}^D_+ f_+,
\end{equation}
which shows assertion~(i).
On the other hand, for $\eta^2-\tau^2 = -4$ one has $R^2=\sigma_0$ and multiplying \eqref{tran4} by $\sigma_0 -  R$ or $\sigma_0 +  R$ leads to the two conditions
\[
\mathcal{T}_\pm^D f_\pm=\pm R \mathcal{T}_\pm^D f_\pm.
\]
It follows that the operator $A_{\eta, \tau}$ decouples in an orthogonal sum of operators $A_\pm$ acting in $\Omega_\pm$ and hence, also statement~(ii) has been shown. 
\end{proof}

Let us represent $A_{\eta, \tau}$ using the boundary triple $\{ L^2(\Sigma; \CC^2),\Gamma_0,\Gamma_1\}$
constructed in Proposition~\ref{prop-bt3}. Note that the definition of $\Gamma_0$ and $\Gamma_1$ can be rewritten as
\begin{align}\label{eqn:transm1}
\rmi (\sigma\cdot \nu)\,\big( \cT_+^D f_+ - \cT_-^D f_-\big)&=\Lambda \Gamma_0 f,\\
\label{eqn:transm2}
\dfrac{1}{2}\, \big( \cT_+^D f_+ + \cT_-^D f_-\big)&=\Lambda^{-1}\Gamma_1 f
+\dfrac{1}{2}\,(\mathcal{C}_\zeta+\mathcal{C}_{\Bar\zeta}) \Lambda \Gamma_0 f.
\end{align}

\begin{prop}\label{prop-bgamma}
 Let $\eta, \tau \in \mathbb{R}$. Then the following holds:
\begin{enumerate}
\item[\textup{(i)}] Assume $|\eta|\ne |\tau|$. Let  $\Theta$ be the linear operator in $L^2(\Sigma;\CC^2)$ obtained as the maximal realization of the periodic pseudodifferential operator $\theta\in \Psi^1_\Sigma$ given by
\begin{equation} \label{def_Theta}
\theta=-\Lambda\bigg[\frac1{\eta^2 - \tau^2}(\eta\sigma_0 - \tau\sigma_3) + \frac12\,(\mathcal{C}_\zeta + \mathcal{C}_{\bar\zeta})\bigg]\Lambda,
\end{equation}
i.e. $\dom \Theta=\big\{ \varphi \in L^2(\Sigma;\CC^2):\, \theta \varphi\in L^2(\Sigma;\CC^2)\big\}$ and $\Theta \varphi=\theta \varphi$. Then
\begin{equation}
    \label{domb1}
\dom A_{\eta, \tau} = \big\{ f \in \dom S^* : \Gamma_0 f \in \dom \Theta,\  \Gamma_1 f = \Theta \Gamma_0 f
\big\}.
\end{equation}
\item[\textup{(ii)}] Assume $\eta=\tau\neq 0$, let 
$$\Pi_+:L^2(\Sigma; \mathbb{C}^2)\rightarrow L^2(\Sigma),\quad \begin{pmatrix} \varphi_1\\ \varphi_2\end{pmatrix}\mapsto \varphi_1,$$ 
and let $\Theta_+$ be the linear operator in $L^2(\Sigma)$ obtained as the maximal realization of the periodic pseudodifferential 
operator $\theta_+\in \Psi^1_\Sigma$ given by
\begin{equation} \label{def_Theta_plus}
\theta_+=-\Lambda\Big(\dfrac{1}{2\eta} + \Pi_+\, \dfrac{1}{2}(\mathcal{C}_\zeta+\mathcal{C}_{\Bar\zeta})\Pi_+^*\Big) \Lambda,
\end{equation}
i.e. $\dom \Theta_+ = \big\{ \varphi\in L^2(\Sigma): \theta_+ \varphi \in L^2(\Sigma)\big\}$ and $\Theta_+ \varphi = \theta_+ \varphi$.
Then
\begin{equation} \label{domb2}
\dom A_{\eta, \tau}=\Big\{
f \in \dom S^* : \Pi_+\Gamma_1 f = \Theta_{+} \Pi_+\Gamma_0 f,\, (\sigma_0-\Pi_+^* \Pi_+)\Gamma_0 f=0
\Big\}.
\end{equation}
\item[\textup{(iii)}] Assume $\eta=-\tau\neq 0$, let 
$$\Pi_-:L^2(\Sigma; \mathbb{C}^2)\rightarrow L^2(\Sigma),\quad \begin{pmatrix} \varphi_1\\ \varphi_2\end{pmatrix}\mapsto \varphi_2,$$ 
and let $\Theta_-$ be the linear operator in $L^2(\Sigma)$ obtained as the maximal realization of the periodic pseudodifferential 
operator $\theta_-\in \Psi^1_\Sigma$ given by
\begin{equation} \label{def_Theta_minus}
\theta_-=-\Lambda\Big(\dfrac{1}{2\eta} + \Pi_-\, \dfrac{1}{2} (\mathcal{C}_\zeta+\mathcal{C}_{\Bar\zeta})\Pi_-^*\Big) \Lambda,
\end{equation}
i.e. $\dom \Theta_- = \big\{ \varphi\in L^2(\Sigma): \theta_- \varphi \in L^2(\Sigma)\big\}$ and $\Theta_- \varphi = \theta_- \varphi$.
Then
\begin{equation} \label{domb3}
\dom A_{\eta, \tau}=\Big\{
f \in \dom S^* : \Pi_-\Gamma_1 f = \Theta_- \Pi_-\Gamma_0 f,\, (\sigma_0-\Pi_-^* \Pi_-)\Gamma_0 f=0
\Big\}.
\end{equation}
\end{enumerate}
\end{prop}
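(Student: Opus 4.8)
The plan is to rewrite the transmission condition in \eqref{eqn:defdiracdelta} entirely in terms of the boundary maps $\Gamma_0,\Gamma_1$ and then to read off the three cases from the algebraic structure of the matrix $\eta\sigma_0+\tau\sigma_3$. Fix $f=f_+\oplus f_-\in\dom S^*$. By \eqref{eqn:transm1} one has $\rmi(\sigma\cdot\nu)(\cT_+^Df_+-\cT_-^Df_-)=\Lambda\Gamma_0 f$, hence $-\rmi(\sigma\cdot\nu)(\cT_+^Df_+-\cT_-^Df_-)=-\Lambda\Gamma_0 f$, while \eqref{eqn:transm2} gives $\cT_+^Df_++\cT_-^Df_-=2\Lambda^{-1}\Gamma_1 f+(\mathcal C_\zeta+\mathcal C_{\bar\zeta})\Lambda\Gamma_0 f$; both are identities in $H^{-1/2}(\Sigma;\CC^2)$. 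Substituting them into the transmission condition $-\rmi(\sigma\cdot\nu)(\cT_+^Df_+-\cT_-^Df_-)=\tfrac12(\eta\sigma_0+\tau\sigma_3)(\cT_+^Df_++\cT_-^Df_-)$ shows that $f$ satisfies the transmission condition if and only if
\begin{equation*}
-\Lambda\Gamma_0 f=(\eta\sigma_0+\tau\sigma_3)\Lambda^{-1}\Gamma_1 f+\frac12(\eta\sigma_0+\tau\sigma_3)(\mathcal C_\zeta+\mathcal C_{\bar\zeta})\Lambda\Gamma_0 f.\tag{$\star$}
\end{equation*}
All three assertions will be extracted from $(\star)$ by elementary manipulations. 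Throughout one uses that $\Lambda$ acts componentwise on $\CC^2$-valued functions, so it commutes with every constant matrix, intertwines its scalar- and vector-valued versions with $\Pi_\pm$ and $\Pi_\pm^*$, and is an isomorphism $H^s(\Sigma;\CC^2)\to H^{s-1/2}(\Sigma;\CC^2)$ for every $s$. It is also useful to note at the outset that $\theta,\theta_\pm\in\Psi^1_\Sigma$, which is immediate from $\Lambda\in\Psi^{1/2}_\Sigma$, $\mathcal C_\zeta,\mathcal C_{\bar\zeta}\in\Psi^0_\Sigma$ (Proposition~\ref{proposition_C_z}) and the composition rules of Proposition~\ref{proposition_properties_PPDO}, so that the maximal realizations $\Theta,\Theta_\pm$ are well defined.

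For part (i), $|\eta|\ne|\tau|$ means $\eta\sigma_0+\tau\sigma_3$ is invertible with $(\eta\sigma_0+\tau\sigma_3)^{-1}=\tfrac1{\eta^2-\tau^2}(\eta\sigma_0-\tau\sigma_3)$, and multiplication by this constant invertible matrix is a bijection of $H^{-1/2}(\Sigma;\CC^2)$. Multiplying $(\star)$ on the left by it, using $(\eta\sigma_0+\tau\sigma_3)(\eta\sigma_0-\tau\sigma_3)=(\eta^2-\tau^2)\sigma_0$ and rearranging, gives $\Lambda^{-1}\Gamma_1 f=-\bigl[\tfrac1{\eta^2-\tau^2}(\eta\sigma_0-\tau\sigma_3)+\tfrac12(\mathcal C_\zeta+\mathcal C_{\bar\zeta})\bigr]\Lambda\Gamma_0 f$, and applying $\Lambda$ yields $\Gamma_1 f=\theta\Gamma_0 f$ with $\theta$ as in \eqref{def_Theta}. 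A priori this is an identity in $H^{-1}(\Sigma;\CC^2)$, since $\theta\in\Psi^1_\Sigma$ and $\Gamma_0 f\in L^2(\Sigma;\CC^2)$; but as $\Gamma_1 f\in L^2(\Sigma;\CC^2)$ as well, the identity forces $\theta\Gamma_0 f\in L^2(\Sigma;\CC^2)$, i.e.\ $\Gamma_0 f\in\dom\Theta$ and $\Gamma_1 f=\Theta\Gamma_0 f$. Conversely, if $\Gamma_0 f\in\dom\Theta$ and $\Gamma_1 f=\Theta\Gamma_0 f$, applying $\Lambda^{-1}$ and then multiplying on the left by $\eta\sigma_0+\tau\sigma_3$ reverses the above steps and recovers $(\star)$, hence the transmission condition. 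This proves \eqref{domb1}.

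For parts (ii) and (iii) one uses $\eta\sigma_0+\tau\sigma_3=2\eta\,\Pi_+^*\Pi_+$ when $\eta=\tau\ne0$ and $\eta\sigma_0+\tau\sigma_3=2\eta\,\Pi_-^*\Pi_-$ when $\eta=-\tau\ne0$ (recall that $\Pi_+^*\Pi_+$ and $\Pi_-^*\Pi_-$ are the projections of $\CC^2$ onto the first and second component). The two cases are identical after interchanging $\Pi_+$ and $\Pi_-$, so consider (ii). The right-hand side of $(\star)$ then lies in $\ran\Pi_+^*$, so applying the complementary projection $\sigma_0-\Pi_+^*\Pi_+$ (which annihilates $\ran\Pi_+^*$) and using that $\Lambda$ commutes with it and is injective gives $(\sigma_0-\Pi_+^*\Pi_+)\Gamma_0 f=0$; in particular $\Gamma_0 f=\Pi_+^*\Pi_+\Gamma_0 f$. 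Feeding this back into $(\star)$ and applying $\Pi_+$, using $\Pi_+\Pi_+^*=\one$ and that $\Lambda$ intertwines with $\Pi_+,\Pi_+^*$, one solves for $\Pi_+\Gamma_1 f$ and obtains $\Pi_+\Gamma_1 f=\theta_+\Pi_+\Gamma_0 f$ with $\theta_+$ as in \eqref{def_Theta_plus}. Exactly as in part (i), the a priori $H^{-1}(\Sigma)$-valued identity $\Pi_+\Gamma_1 f=\theta_+\Pi_+\Gamma_0 f$ together with $\Pi_+\Gamma_1 f\in L^2(\Sigma)$ forces $\Pi_+\Gamma_0 f\in\dom\Theta_+$ and $\Pi_+\Gamma_1 f=\Theta_+\Pi_+\Gamma_0 f$; the converse follows by reversing the manipulations. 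This gives \eqref{domb2}, and \eqref{domb3} follows by the same argument with $\Pi_-$ in place of $\Pi_+$.

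The computations are routine matrix algebra; the only real care needed, and the nearest thing to an obstacle, is the regularity bookkeeping: checking that each step (multiplication by a constant invertible matrix, application of the isomorphism $\Lambda$, application of the projections $\Pi_\pm$ and $\sigma_0-\Pi_\pm^*\Pi_\pm$) is reversible and preserves the equivalence of the identities in the relevant space $H^s(\Sigma;\CC^2)$, and observing that the passage from the $H^{-1}$-valued identity $\Gamma_1 f=\theta\Gamma_0 f$ to the domain membership $\Gamma_0 f\in\dom\Theta$ (respectively $\Pi_\pm\Gamma_0 f\in\dom\Theta_\pm$) is automatic precisely because $\Gamma_1 f$ (and $\Pi_\pm\Gamma_1 f$) always lies in $L^2$.
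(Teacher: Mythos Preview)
Your proof is correct and follows essentially the same route as the paper: both derive the key identity $(\star)$ (the paper's equation~\eqref{trangamma}) from \eqref{eqn:transm1}--\eqref{eqn:transm2}, then invert $\eta\sigma_0+\tau\sigma_3$ in case~(i) and use $\eta\sigma_0+\tau\sigma_3=2\eta\,\Pi_\pm^*\Pi_\pm$ in cases~(ii)--(iii). Your explicit attention to the regularity bookkeeping (that $\Gamma_1 f\in L^2$ forces $\theta\Gamma_0 f\in L^2$, hence $\Gamma_0 f\in\dom\Theta$) is a welcome addition that the paper leaves implicit.
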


Note that the case $\eta = \tau = 0$ is not discussed in the previous statement because $A_{\eta,\tau}$ simply 
becomes the free Dirac operator $A_0$ introduced in \eqref{def_free_op}.

\begin{rem}\label{rem37}
\begin{itemize}
\item[$\textup{(i)}$] The operators $\Theta$ and $\Theta_\pm$ in Proposition~\ref{prop-bgamma} are well-defined due to the fact that $\theta$ and $\theta_\pm$ are periodic pseudodifferential operators of order $1$.
For example $\theta \varphi$ makes sense as an element of $H^{-1}(\Sigma; \CC^2)$ for any $\varphi\in L^2(\Sigma; \CC^2)$, and $H^1(\Sigma; \CC^2)\subset \dom \Theta$.
\item[$\textup{(ii)}$] In assertions {\rm (ii)} and {\rm (iii)} of Proposition \ref{prop-bgamma}
we decomposed $\mathcal{G} = L^2(\Sigma; \mathbb{C}^2) = \mathcal{G}_{\Pi_+} \oplus \mathcal{G}_{\Pi_-}$, where
\begin{align*}
  \mathcal{G}_{\Pi_+} &:= \left\{ \varphi = (\varphi_1, \varphi_2) \in L^2(\Sigma; \mathbb{C}^2): \varphi_2=0 \right\} \simeq L^2(\Sigma),\\
  \mathcal{G}_{\Pi_-} &:= \left\{ \varphi = ( \varphi_1, \varphi_2) \in L^2(\Sigma; \mathbb{C}^2): \varphi_1=0 \right\} \simeq L^2(\Sigma).
\end{align*}
\end{itemize}
\end{rem}

\begin{proof} 
With the help of \eqref{eqn:transm1} and \eqref{eqn:transm2} the transmission condition in \eqref{eqn:defdiracdelta} can be rewritten as
\begin{equation}
   \label{trangamma}
-\Lambda\Gamma_0 f = (\eta\sigma_0 + \tau \sigma_3)\Big(\Lambda^{-1}\Gamma_1 f
+\dfrac{1}{2}\,(\mathcal{C}_\zeta+\mathcal{C}_{\Bar\zeta}) \Lambda \Gamma_0 f\Big).
\end{equation}
Now let us distinguish between several cases.

(i) For $|\eta|\ne |\tau|$ the matrix $\eta\sigma_0 + \tau \sigma_3$ is invertible with
\[
(\eta\sigma_0 + \tau \sigma_3)^{-1}= \dfrac{1}{\eta^2-\tau^2}(\eta\sigma_0 - \tau \sigma_3).
\]
Hence, we can rewrite the equality \eqref{trangamma} as
\begin{equation*}
\Gamma_1 f=-\Lambda\bigg[\frac1{\eta^2 - \tau^2}(\eta\sigma_0 - \tau\sigma_3) + \frac12\,(\mathcal{C}_\zeta + \mathcal{C}_{\bar\zeta})\bigg]\Lambda \Gamma_0 f = \Theta \Gamma_0 f,
\end{equation*}
which gives the claimed representation in~\eqref{domb1}

The cases (ii) are and (iii) are almost identical, so we only give a proof for~(ii). By \eqref{trangamma} we have that $f \in \dom A_{\eta, \tau}$ if and only if
\begin{equation*}
  \begin{split}
-\Lambda\Gamma_0 f &= (\eta\sigma_0 + \tau \sigma_3)\Big(\Lambda^{-1}\Gamma_1 f
+\dfrac{1}{2}\,(\mathcal{C}_\zeta+\mathcal{C}_{\Bar\zeta}) \Lambda \Gamma_0 f\Big)\\
&= \begin{pmatrix} 2\eta  & 0 \\ 0 & 0 \end{pmatrix} \Big(\Lambda^{-1}\Gamma_1 f
+\dfrac{1}{2}\,(\mathcal{C}_\zeta+\mathcal{C}_{\Bar\zeta}) \Lambda \Gamma_0 f\Big) \\
&= 2 \eta \Pi_+^* \Pi_+ \Big(\Lambda^{-1}\Gamma_1 f
+\dfrac{1}{2}\,(\mathcal{C}_\zeta+\mathcal{C}_{\Bar\zeta}) \Lambda \Gamma_0 f\Big).
\end{split}
\end{equation*}
Writing this equation in components it follows
that this boundary condition is equivalent to the conditions
\begin{equation*}
(\sigma_0-\Pi_+^*\Pi_+)\Gamma_0 f =0
\end{equation*}
and 
\begin{equation*}
\begin{split}
\Pi_+ \Gamma_1 f&=-\Lambda\Big(\dfrac{1}{2\eta} +\Pi_+\, \frac{1}{2}(\mathcal{C}_\zeta+\mathcal{C}_{\Bar\zeta})\Big) \Lambda \Gamma_0 f \\
&=-\Lambda\Big(\dfrac{1}{2\eta} +\Pi_+\, \frac{1}{2} (\mathcal{C}_\zeta+\mathcal{C}_{\Bar\zeta})\Pi_+^*\Big) \Lambda \Pi_+\Gamma_0 f\\
&
= \Theta_+ \Pi_+ \Gamma_0 f.
\end{split}
\end{equation*}
Hence, we find that \eqref{domb2} is true.
\end{proof}

In view of the general theory of boundary triples, see Subsection~\ref{ssec-bt}, many properties of $A_{\eta, \tau}$ can be deduced 
from the respective properties of the operators $\Theta$
and $\Theta_\pm$ from Proposition~\ref{prop-bgamma}.
We prefer to consider separately the non-critical case $\eta^2-\tau^2\ne 4$ and the critical case $\eta^2-\tau^2=4$, where the latter one is more involved.

\subsection{Non-critical case}\label{sec-noncrit}

Throughout this subsection we assume that
\[
\eta^2-\tau^2\ne 4.
\]
In order to show the self-adjointness of $A_{\eta, \tau}$ we use Theorem~\ref{theorem_boundary_triple_abstract}. For that it is necessary to 
investigate the operators $\Theta$ and $\Theta_\pm$ in Proposition~\ref{prop-bgamma}.

\begin{lem}\label{lem3132}
Let $\eta, \tau \in \mathbb{R}$ with $\eta^2 - \tau^2 \neq 4$. Then the following holds:
\begin{enumerate}
\item[\textup{(i)}]
If $\eta^2-\tau^2\ne 0$, then $\dom \Theta=H^1(\Sigma;\CC^2)$ and $\Theta$ is self-adjoint in $L^2(\Sigma;\CC^2)$.
\item[\textup{(ii)}] If $\eta=\pm\tau$, then $\dom \Theta_\pm = H^1(\Sigma)$ and
$\Theta_\pm$ is self-adjoint in $L^2(\Sigma)$.
\end{enumerate}
\end{lem}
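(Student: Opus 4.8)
The plan is to analyze $\theta$ (resp.\ $\theta_\pm$) as a periodic pseudodifferential operator and extract its principal symbol, then invoke Lemma~\ref{sympsi} together with elliptic regularity for periodic pseudodifferential operators. First I would observe that by Proposition~\ref{proposition_properties_PPDO}(i) and the mapping properties of $\Lambda\in\Psi^{1/2}_\Sigma$, the operator $\theta$ in~\eqref{def_Theta} is bounded $H^s(\Sigma;\CC^2)\to H^{s-1}(\Sigma;\CC^2)$, so it suffices to show that $\theta$ is elliptic of order~$1$ and symmetric. Symmetry on $C^\infty(\Sigma;\CC^2)$ is the easy part: the matrix $\eta\sigma_0-\tau\sigma_3$ is real symmetric, $\Lambda$ is self-adjoint, and $(\mathcal{C}_\zeta+\mathcal{C}_{\bar\zeta})^* = \mathcal{C}_{\bar\zeta}+\mathcal{C}_\zeta$ by Proposition~\ref{proposition_C_z} (recall $\mathcal{C}_z^*=\mathcal{C}_{\bar z}$); hence $\theta$ with domain $C^\infty(\Sigma;\CC^2)$ is symmetric in $L^2(\Sigma;\CC^2)$. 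Then Lemma~\ref{sympsi} gives $\Theta = (\text{that symmetric operator})^*$ acting as $\theta$ on $\dom\Theta=\{\varphi\in L^2:\theta\varphi\in L^2\}$, so to conclude self-adjointness it is enough to prove $\Theta$ is symmetric, equivalently that $\theta$ is \emph{formally self-adjoint as a pseudodifferential operator}, i.e.\ $\theta-\theta^*\in\Psi^{-\infty}_\Sigma$ is not quite what we want — rather the cleanest route is: show $\theta$ is \emph{elliptic} of order $1$, construct a parametrix $\theta^{-1}$ up to smoothing, deduce $\dom\Theta=H^1(\Sigma;\CC^2)$, and on $H^1$ the operator is manifestly symmetric and bounded, hence self-adjoint.

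For the ellipticity computation I would use Proposition~\ref{proposition_C_z}: modulo $\Psi^{-1}_\Sigma$,
\[
\Lambda(\mathcal{C}_\zeta+\mathcal{C}_{\bar\zeta})\Lambda
= \begin{pmatrix} 0 & \Lambda C_\Sigma\overline T\Lambda \\ \Lambda TC_\Sigma'\Lambda & 0\end{pmatrix}
+ \frac{\ell}{4\pi}\begin{pmatrix}(\zeta+\bar\zeta+2m)\one & 0\\ 0 & (\zeta+\bar\zeta-2m)\one\end{pmatrix},
\]
and since $C_\Sigma,C_\Sigma'\in\Psi^0_\Sigma$ with principal symbols $\pm\sign n$ (via Proposition~\ref{proposition_Cauchy_transform}), the off-diagonal blocks $\Lambda C_\Sigma\overline T\Lambda$ and $\Lambda TC_\Sigma'\Lambda$ lie in $\Psi^1_\Sigma$ with principal symbol behaving like $\underline n$ times a nonvanishing bounded factor ($\overline T$, $T$ are multiplications by unimodular functions). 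Therefore the principal symbol of $\theta$ at frequency $n$ is, up to conjugation by $U$, the matrix $-\frac12\bigl(\begin{smallmatrix}0 & \underline n\,\overline T(\cdot)\\ \underline n\,T(\cdot) & 0\end{smallmatrix}\bigr)$ (the $\frac1{\eta^2-\tau^2}$-term is order $1$ from the two $\Lambda$'s but with \emph{constant} matrix coefficient $\eta\sigma_0-\tau\sigma_3$, so it contributes $-\frac{\underline n}{\eta^2-\tau^2}(\eta\sigma_0-\tau\sigma_3)$ to the symbol). The full principal symbol is thus $-\underline n$ times
\[
\frac{1}{\eta^2-\tau^2}\begin{pmatrix}\eta & 0\\ 0 & -\tau\end{pmatrix}\!\!\big/\!\!\!\quad\text{hmm}\;+\;\tfrac12\begin{pmatrix}0 & \overline T\\ T & 0\end{pmatrix},
\]
whose determinant is $\dfrac{-\eta^2+\tau^2}{(\eta^2-\tau^2)^2}-\dfrac14 = \dfrac{1}{\tau^2-\eta^2}-\dfrac14 = \dfrac{4-(\eta^2-\tau^2)}{4(\tau^2-\eta^2)}$; this is nonzero precisely when $\eta^2-\tau^2\neq 4$ (and we are in case~(i) so also $\eta^2-\tau^2\neq0$, keeping the $\frac1{\eta^2-\tau^2}$ term finite). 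Hence $\theta$ is elliptic of order $1$, so there is $P\in\Psi^{-1}_\Sigma$ with $P\theta-\one, \theta P-\one\in\Psi^{-\infty}_\Sigma$; consequently $\theta\varphi\in L^2 \Rightarrow \varphi = P\theta\varphi + (\one-P\theta)\varphi\in H^1(\Sigma;\CC^2)$, giving $\dom\Theta=H^1(\Sigma;\CC^2)$, and on $H^1$ the operator $\theta$ is bounded into $L^2$ and symmetric (by the computation above), hence self-adjoint. This proves~(i).

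For~(ii), the argument is identical after compressing to one component: $\theta_+ = -\Lambda\bigl(\frac1{2\eta}+\Pi_+\frac12(\mathcal{C}_\zeta+\mathcal{C}_{\bar\zeta})\Pi_+^*\bigr)\Lambda$; the scalar $\frac1{2\eta}$ is a nonzero constant, $\Pi_+\frac12(\mathcal{C}_\zeta+\mathcal{C}_{\bar\zeta})\Pi_+^*$ is the $(1,1)$-entry which modulo $\Psi^{-1}_\Sigma$ equals the scalar $\frac{\ell}{4\pi}(\zeta+\bar\zeta+2m)$ plus a $\Psi^{-\infty}_\Sigma$ piece (the off-diagonal blocks drop out under the compression, so this piece is actually order $\le 0$, in fact smoothing modulo lower order), so $\theta_+ = -\frac1{2\eta}\Lambda^2 + (\text{lower order})$, which is elliptic of order $1$ with nonvanishing principal symbol $-\frac1{2\eta}\underline n\neq0$; one reads off symmetry from $\Lambda=\Lambda^*$ and $(\mathcal{C}_\zeta+\mathcal{C}_{\bar\zeta})^*=\mathcal{C}_{\bar\zeta}+\mathcal{C}_\zeta$, and the same parametrix argument yields $\dom\Theta_\pm=H^1(\Sigma)$ and self-adjointness. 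The case $\eta=-\tau$ is verbatim with $\Pi_-$ in place of $\Pi_+$.

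\medskip

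\noindent\textbf{Main obstacle.} The delicate point is the precise determinant computation of the principal symbol in case~(i): one must correctly track that the two $\Lambda$'s sandwiching the \emph{constant} matrix $\frac1{\eta^2-\tau^2}(\eta\sigma_0-\tau\sigma_3)$ promote it to an \emph{order-one} symbol (not a lower-order perturbation), so that it genuinely competes with the Cauchy-transform part at top order; only then does the condition $\eta^2-\tau^2\neq4$ emerge as exactly the non-degeneracy condition. One also has to be slightly careful that the off-diagonal entries $\Lambda C_\Sigma\overline T\Lambda$ and $\Lambda TC_\Sigma'\Lambda$ have principal symbols that are genuinely order $1$ and, after the $U$-conjugation, are multiplication by $\underline n$ times the \emph{unimodular} functions $\overline T,T$ — so the relevant $2\times2$ symbol matrix has entries of comparable size and its determinant is a genuine scalar multiple of $\underline n^2$ whose vanishing locus is $\eta^2-\tau^2=4$. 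Everything else (symmetry, the parametrix/elliptic-regularity step, and Lemma~\ref{sympsi}) is routine given the machinery assembled in Section~\ref{section_preliminaries} and Section~\ref{section_boundary_triple}.
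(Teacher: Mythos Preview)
Your approach is essentially the same as the paper's: both establish symmetry via $(\cC_\zeta+\cC_{\bar\zeta})^*=\cC_{\bar\zeta}+\cC_\zeta$, invoke Lemma~\ref{sympsi}, and prove $\dom\Theta=H^1(\Sigma;\CC^2)$ by an ellipticity argument hinging on $\eta^2-\tau^2\neq4$; the only difference is that the paper carries out your parametrix step by hand---writing $\theta=-\tfrac12\Lambda P\Lambda+\widehat\Psi$ with $P=\bigl(\begin{smallmatrix}2/(\eta+\tau)&C_\Sigma\overline T\\ TC_\Sigma'&2/(\eta-\tau)\end{smallmatrix}\bigr)$, multiplying explicitly by the cofactor matrix, and using $C_\Sigma'C_\Sigma-\one,\ C_\Sigma C_\Sigma'-\one\in\Psi^{-\infty}_\Sigma$ from Proposition~\ref{proposition_Cauchy_transform} to get $\tfrac{4-(\eta^2-\tau^2)}{\eta^2-\tau^2}\Lambda\varphi\in H^{1/2}$ modulo smoothing---rather than invoking an abstract parametrix, which has the advantage of staying within the toolkit actually assembled in Section~\ref{section_PPDO}. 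Two minor slips in your symbol calculation: $\eta\sigma_0-\tau\sigma_3=\mathrm{diag}(\eta-\tau,\,\eta+\tau)$, not $\mathrm{diag}(\eta,-\tau)$, so the correct determinant is $\tfrac{1}{\eta^2-\tau^2}-\tfrac14$ (your sign is off, though the zero locus $\eta^2-\tau^2=4$ is unaffected); and the off-diagonal top symbols each carry a factor $\sign n$ from $C_\Sigma,C_\Sigma'$, which squares away in the product.
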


\begin{proof}
  (i) Let us consider the restriction $\Theta_1 := \Theta \upharpoonright H^1(\Sigma; \mathbb{C}^2)$. Since $\theta \in \Psi^1_\Sigma$, the operator $\Theta_1$ is well-defined
  as an operator in $L^2(\Sigma;\CC^2)$.
  We show $\Theta = \Theta_1$ and that $\Theta_1$ is self-adjoint in $L^2(\Sigma;\CC^2)$.
  
  First, it follows from Proposition~\ref{proposition_C_z} that $(\mathcal C_\zeta+\mathcal C_{\bar\zeta})^*=\mathcal C_{\bar\zeta}+\mathcal C_\zeta$  and hence
  $\Theta_1$ is a symmetric operator in $L^2(\Sigma;\CC^2)$. Moreover, since $\Theta_1$ is a symmetric extension of the symmetric operator 
  $\Theta_\infty:= \Theta \upharpoonright C^\infty(\Sigma; \mathbb{C}^2)$,
   Lemma~\ref{sympsi} implies $\Theta_1^* \subset \Theta_\infty^*=\Theta$. Hence, $\Theta = \Theta_1$ and $\Theta_1=\Theta_1^*$ follows if we show 
   $\Theta \subset \Theta_1$, for which it suffices to check the inclusion
   \begin{equation}\label{showthis}
    \dom \Theta\subset \dom \Theta_1=H^1(\Sigma; \mathbb{C}^2).
   \end{equation}
   To see \eqref{showthis} fix some $\varphi \in \dom \Theta$. Then $\theta \varphi \in L^2(\Sigma; \mathbb{C}^2)$. 
   Using Proposition~\ref{proposition_C_z} we find that 
  \begin{equation*}
    \theta \varphi = -\dfrac{1}{2}\Lambda P \Lambda \varphi + \widehat{\Psi} \varphi,\quad\text{where}\,\, P=\begin{pmatrix}
     \dfrac{2}{\eta + \tau} &  C_\Sigma \overline{T} \\
      T C_\Sigma' & \dfrac{2}{\eta - \tau}
    \end{pmatrix}\,\,\,\text{and }\,\,\,\widehat{\Psi} \in \Psi_\Sigma^0.
  \end{equation*}
  Hence, $\Lambda P \Lambda \varphi \in L^2(\Sigma; \mathbb{C}^2)$ 
  and as $\Lambda :H^{\frac{1}{2}}(\Sigma;\CC^2)\rightarrow L^2(\Sigma;\CC^2)$ is bijective, this amounts to $P \Lambda \varphi \in H^{\frac{1}{2}}(\Sigma; \mathbb{C}^2)$. 
  Since $C_\Sigma, C_\Sigma' \in \Psi_\Sigma^0$ by Proposition~\ref{proposition_Cauchy_transform}, 
  these operators give rise to bounded operators in $H^{\frac{1}{2}}(\Sigma; \mathbb{C}^2)$, which implies that
  \begin{equation*}
   \begin{split}
    &\begin{pmatrix}
     \dfrac{2}{\eta - \tau} &  -C_\Sigma \overline{T} \\
      -T C_\Sigma' & \dfrac{2}{\eta + \tau}
    \end{pmatrix}
    \begin{pmatrix}
     \dfrac{2}{\eta + \tau} &  C_\Sigma \overline{T} \\
      T C_\Sigma' & \dfrac{2}{\eta - \tau}
    \end{pmatrix} \Lambda \varphi  \\
    &\qquad \qquad =
    \begin{pmatrix}
     \dfrac{4}{\eta^2 - \tau^2} - C_\Sigma \overline{T} T C_\Sigma' & 0 \\
      0 & \dfrac{4}{\eta^2 - \tau^2} - T C_\Sigma' C_\Sigma \overline{T}
    \end{pmatrix} \Lambda \varphi 
    \in H^{\frac{1}{2}}(\Sigma; \mathbb{C}^2).
   \end{split}
  \end{equation*}
  Now we use that $\overline TT=T\overline T$ is the multiplication operator with the constant function $\one$ and that 
  $C_\Sigma C_\Sigma' - \one, C_\Sigma' C_\Sigma - \one \in \Psi_\Sigma^{-\infty}$ by Proposition~\ref{proposition_Cauchy_transform}. We then obtain from the last line that
  \begin{equation*}
    \frac{4 - \eta^2 + \tau^2}{\eta^2 - \tau^2} \Lambda \varphi + \widetilde{\Psi} \varphi \in H^{\frac{1}{2}}(\Sigma; \mathbb{C}^2)
  \end{equation*}
  with some $\widetilde{\Psi} \in \Psi_\Sigma^{-\infty}$ and hence $\frac{4 - \eta^2 + \tau^2}{\eta^2 - \tau^2} \Lambda \varphi \in H^{\frac{1}{2}}(\Sigma; \mathbb{C}^2)$. 
  Since $\eta^2 - \tau^2 \neq 4$ by assumption, this implies $\Lambda \varphi \in H^{\frac{1}{2}}(\Sigma; \mathbb{C}^2)$ and thus, $\varphi \in H^1(\Sigma; \mathbb{C}^2)$. 
  We have shown \eqref{showthis}. This completes the proof of (i)
  
  (ii)
We consider the case $\eta=\tau$, the other one being similar.
Recall that $\Theta_+$ is the maximal operator in $L^2(\Sigma)$ associated to the periodic pseudodifferential operator
\begin{equation*}
\theta_{+}=-\dfrac{1}{2}\,\Lambda\Big(\dfrac{1}{\eta} + \Pi_+ (\mathcal{C}_\zeta+\mathcal{C}_{\Bar\zeta})\Pi_+^*\Big) \Lambda.
\end{equation*}
Using Proposition~\ref{proposition_C_z} we find for $\varphi  \in \dom \Theta_+ $ that
\begin{equation*}
  \Theta_+ \varphi = -\frac{1}{2 \eta} \Lambda^2 \varphi - \dfrac{1}{2} \Pi_+ \begin{pmatrix}
     0 & \Lambda C_\Sigma \overline{T} \Lambda \\
     \Lambda T C_\Sigma' \Lambda & 0
    \end{pmatrix}  \Pi_+^* \varphi  + \widehat{\Psi} \varphi = -\frac{1}{2 \eta} \Lambda^2 \varphi + \widehat{\Psi} \varphi
\end{equation*}
with some symmetric operator $\widehat{\Psi} \in \Psi_\Sigma^0$. This implies $\dom \Theta_+ = \dom \Lambda^2  = H^1(\Sigma; \mathbb{C})$
and since $ \Lambda^2$ is self-adjoint we conclude that also $\Theta_+$ is self-adjoint in $L^2(\Sigma)$.
\end{proof}

After the preparatory considerations in Lemma~\ref{lem3132} we are now ready to show the self-adjointness 
of $A_{\eta, \tau}$ for non-critical interaction strengths. To formulate the result we recall the definitions of the 
free Dirac operator $A_0$ from~\eqref{def_free_op}, of $\Phi_z$ and $\Phi_z'$ from~\eqref{def_Phi_z} and~\eqref{def_Phi_z_prime}, 
and of $\mathcal{C}_z$ in~\eqref{def_C_z}, respectively.

\begin{thm} \label{theorem_self_adjoint_noncritical}
  Assume that $\eta, \tau \in \mathbb{R}$ with $\eta^2 - \tau^2 \neq 4$ and $(\eta,\tau)\neq(0,0)$. Then the operator $A_{\eta, \tau}$ is self-adjoint in
  $L^2(\RR^2;\CC^2)$
  with $\dom A_{\eta, \tau} \subset H^1(\RR^2\setminus\Sigma; \CC^2)$. Moreover, for all $z \in \res A_{\eta, \tau} \cap \res A_0$ 
  the operator $\sigma_0 + (\eta \sigma_0 + \tau \sigma_3) \mathcal{C}_z$ is bounded and boundedly invertible in $H^{\frac{1}{2}}(\Sigma; \mathbb{C}^2)$ and
  \begin{equation} \label{krein_noncritical}
    (A_{\eta, \tau} - z)^{-1} = (A_0 - z)^{-1} - \Phi_z \big( \sigma_0 + (\eta \sigma_0 + \tau \sigma_3) \mathcal{C}_z \big)^{-1} (\eta \sigma_0 + \tau \sigma_3) \Phi_{\Bar{z}}'
  \end{equation}
  holds. 
\end{thm}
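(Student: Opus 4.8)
The plan is to deduce everything from the abstract boundary triple machinery of Theorem~\ref{theorem_boundary_triple_abstract} together with the explicit representation of $A_{\eta,\tau}$ from Proposition~\ref{prop-bgamma} and the structural properties of $\Theta$ and $\Theta_\pm$ established in Lemma~\ref{lem3132}. First I would split into the two cases $|\eta|\neq|\tau|$ and $|\eta|=|\tau|$ (recalling that the case $\eta=\tau=0$ is excluded since then $A_{\eta,\tau}=A_0$ is trivially self-adjoint). In the case $|\eta|\neq|\tau|$, Proposition~\ref{prop-bgamma}(i) gives $\dom A_{\eta,\tau}=\{f\in\dom S^*:\Gamma_0 f\in\dom\Theta,\ \Gamma_1 f=\Theta\Gamma_0 f\}$, i.e.\ $A_{\eta,\tau}=B_{\Pi,\Theta}$ with $\Pi=\one$, and Lemma~\ref{lem3132}(i) tells us $\Theta$ is self-adjoint in $L^2(\Sigma;\CC^2)$ with $\dom\Theta=H^1(\Sigma;\CC^2)$. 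By the first part of Theorem~\ref{theorem_boundary_triple_abstract}, $A_{\eta,\tau}$ is self-adjoint. In the case $\eta=\pm\tau\neq 0$, one uses Proposition~\ref{prop-bgamma}(ii)--(iii), which exhibits $A_{\eta,\tau}=B_{\Pi_\pm,\Theta_\pm}$ with the projection $\Pi_\pm$ onto $\cG_{\Pi_\pm}\simeq L^2(\Sigma)$, and Lemma~\ref{lem3132}(ii) gives self-adjointness of $\Theta_\pm$ in $L^2(\Sigma)$; again Theorem~\ref{theorem_boundary_triple_abstract} yields self-adjointness of $A_{\eta,\tau}$.

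For the $H^1$-regularity of the domain, I would argue as follows. Let $f\in\dom A_{\eta,\tau}$. In the case $|\eta|\neq|\tau|$ one has $\Gamma_0 f\in\dom\Theta=H^1(\Sigma;\CC^2)$ by Lemma~\ref{lem3132}(i), and then Lemma~\ref{lem23} gives immediately $f\in H^1(\RR^2\setminus\Sigma;\CC^2)$. In the case $\eta=\pm\tau\neq 0$, the boundary condition forces $(\sigma_0-\Pi_\pm^*\Pi_\pm)\Gamma_0 f=0$ and $\Pi_\pm\Gamma_0 f\in\dom\Theta_\pm=H^1(\Sigma)$ by Lemma~\ref{lem3132}(ii), so $\Gamma_0 f\in H^1(\Sigma;\CC^2)$ and again Lemma~\ref{lem23} applies. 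Hence $\dom A_{\eta,\tau}\subset H^1(\RR^2\setminus\Sigma;\CC^2)$ in all cases.

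It remains to derive the Krein-type resolvent formula \eqref{krein_noncritical}. The natural route is to apply Theorem~\ref{theorem_boundary_triple_abstract}(iii) (the resolvent identity) with the $\gamma$-field $G_z=\Phi_z\Lambda$ and Weyl function $M_z=\Lambda(\mathcal{C}_z-\tfrac12(\mathcal{C}_\zeta+\mathcal{C}_{\bar\zeta}))\Lambda$ from Proposition~\ref{prop23}, together with $G_{\bar z}^*=\Lambda\Phi_{\bar z}'$ (the adjoint computation being exactly the one made in the proof of Proposition~\ref{prop23}). In the case $|\eta|\neq|\tau|$ one has, for $z\in\res A_{\eta,\tau}\cap\res A_0$,
\begin{equation*}
(A_{\eta,\tau}-z)^{-1}=(A_0-z)^{-1}+G_z(\Theta-M_z)^{-1}G_{\bar z}^*,
\end{equation*}
and the main remaining task is to rewrite the middle factor: from \eqref{def_Theta} and the formula for $M_z$ one computes
\begin{equation*}
\Theta-M_z=-\Lambda\Big(\tfrac{1}{\eta^2-\tau^2}(\eta\sigma_0-\tau\sigma_3)+\mathcal{C}_z\Big)\Lambda
=-\Lambda(\eta\sigma_0+\tau\sigma_3)^{-1}\big(\sigma_0+(\eta\sigma_0+\tau\sigma_3)\mathcal{C}_z\big)\Lambda,
\end{equation*}
using $(\eta\sigma_0+\tau\sigma_3)^{-1}=\tfrac{1}{\eta^2-\tau^2}(\eta\sigma_0-\tau\sigma_3)$. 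Inverting and substituting $G_z=\Phi_z\Lambda$, $G_{\bar z}^*=\Lambda\Phi_{\bar z}'$ the two copies of $\Lambda$ and $\Lambda^{-1}$ cancel and one arrives at \eqref{krein_noncritical}; along the way one reads off that $\sigma_0+(\eta\sigma_0+\tau\sigma_3)\mathcal{C}_z$ is bounded and boundedly invertible on $H^{\frac12}(\Sigma;\CC^2)$ because $\mathcal{C}_z\in\Psi_\Sigma^0$ acts boundedly there (Proposition~\ref{proposition_C_z}), because $\Theta-M_z$ is boundedly invertible from $H^{1/2}$ onto $H^{-1/2}$ (equivalently $L^2\to L^2$ after conjugation by $\Lambda$, which holds since $z\in\res A_{\eta,\tau}$ by Theorem~\ref{theorem_boundary_triple_abstract}(i)), and $\Lambda$ intertwines these scales isomorphically. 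The cases $\eta=\pm\tau\neq 0$ are handled identically starting from Theorem~\ref{theorem_boundary_triple_abstract}(iii) with $\Pi=\Pi_\pm$, using that on the range of $\Pi_\pm$ the matrix $\eta\sigma_0+\tau\sigma_3$ reduces to multiplication by $2\eta$, so that $\Theta_\pm-\Pi_\pm M_z\Pi_\pm^*=-\tfrac{1}{2\eta}\Lambda(\sigma_0+2\eta\,\Pi_\pm\mathcal{C}_z\Pi_\pm^*)\Lambda$, and one checks that $\Pi_\pm^*(\Theta_\pm-\Pi_\pm M_z\Pi_\pm^*)^{-1}\Pi_\pm$ expands back to $(\eta\sigma_0+\tau\sigma_3)(\sigma_0+(\eta\sigma_0+\tau\sigma_3)\mathcal{C}_z)^{-1}$ conjugated appropriately — a short linear-algebra verification using $(\sigma_0-\Pi_\pm^*\Pi_\pm)=\Pi_\mp^*\Pi_\mp$. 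The main obstacle is purely bookkeeping: keeping track of which Sobolev scale each factor ($\Lambda$, $\mathcal{C}_z$, $\Phi_z$, $\Phi_{\bar z}'$) lives on so that the compositions make sense and the $\Lambda$'s cancel cleanly, rather than any conceptual difficulty; the self-adjointness and regularity are immediate consequences of the already-established Lemma~\ref{lem3132} and Lemma~\ref{lem23}.
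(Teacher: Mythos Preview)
Your proposal is correct and follows essentially the same approach as the paper: self-adjointness via Theorem~\ref{theorem_boundary_triple_abstract} combined with Lemma~\ref{lem3132}, domain regularity via Lemma~\ref{lem23}, and the Krein formula by computing $\Theta-M_z$ (resp.\ $\Theta_\pm-\Pi_\pm M_z\Pi_\pm^*$) explicitly and cancelling the $\Lambda$'s against $G_z=\Phi_z\Lambda$ and $G_{\bar z}^*=\Lambda\Phi_{\bar z}'$. The only minor slip is the order of factors in your $|\eta|=|\tau|$ sketch: one actually obtains $\Pi_\pm^*(\Theta_\pm-\Pi_\pm M_z\Pi_\pm^*)^{-1}\Pi_\pm=\Lambda^{-1}(\sigma_0+(\eta\sigma_0+\tau\sigma_3)\mathcal{C}_z)^{-1}(\eta\sigma_0+\tau\sigma_3)\Lambda^{-1}$, with the matrix factor on the right, but this is exactly the bookkeeping you anticipated.
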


\begin{proof}
First, according to Theorem~\ref{theorem_boundary_triple_abstract} the self-adjointness of $\Theta$ and $\Theta_\pm$ in $L^2(\Sigma; \mathbb{C}^2)$ and 
$L^2(\Sigma)$, respectively,
implies the self-adjointness of $A_{\eta, \tau}$ in $L^2(\RR^2;\CC^2)$. In addition, since $\dom \Theta = H^1(\Sigma;\CC^2)$ and 
$\dom \Theta_\pm = H^1(\Sigma)$, Lemma~\ref{lem23}
yields $\dom A_{\eta, \tau} \subset H^1(\RR^2\setminus\Sigma;\CC^2)$.

It remains to show the Krein type resolvent formula in~\eqref{krein_noncritical}. First, for $|\eta| \neq |\tau|$ we have by 
Theorem~\ref{theorem_boundary_triple_abstract} that $\Theta - M_z$, $z \in \res A_{\eta, \tau} \cap \res A_0$,  is boundedly invertible in $L^2(\Sigma; \mathbb{C}^2)$ and
\begin{equation*}
  (A_{\eta, \tau} - z)^{-1} = (A_0 - z)^{-1} + G_z \big( \Theta - M_z \big)^{-1} G_{\Bar{z}}^*.
\end{equation*}
Taking the special form of $\Theta$ and $M_z = \Lambda \big(\mathcal{C}_z -\frac{1}{2} \big(\mathcal{C}_\zeta+\mathcal{C}_{\Bar\zeta} \big) \big)\Lambda$ 
into account and using $\frac1{\eta^2 - \tau^2}(\eta\sigma_0 - \tau\sigma_3) = (\eta \sigma_0 + \tau \sigma_3)^{-1}$, we find
\begin{equation} \label{Birman_Schwinger_kernel}
  \begin{split}
    \Theta - M_z &
    = -\Lambda\bigg[\frac1{\eta^2 - \tau^2}(\eta\sigma_0 - \tau\sigma_3) + \frac12\,(\mathcal{C}_\zeta + \mathcal{C}_{\bar\zeta})\bigg]\Lambda \\
      &\quad - \Lambda \Big(\mathcal{C}_z -\dfrac{1}{2} \big(\mathcal{C}_\zeta+\mathcal{C}_{\Bar\zeta} \big) \Big)\Lambda \\
   &   = -\Lambda\bigg[\frac1{\eta^2 - \tau^2}(\eta\sigma_0 - \tau\sigma_3) + \mathcal{C}_z \bigg]\Lambda \\
    &= -\Lambda (\eta \sigma_0 + \tau \sigma_3)^{-1} \big( \sigma_0 + (\eta \sigma_0 + \tau \sigma_3) \mathcal{C}_z \big) \Lambda.
  \end{split}
\end{equation}
As $\Theta - M_z$ is a bijective operator in $L^2(\Sigma; \mathbb{C}^2)$ defined on
$\dom \Theta = H^1(\Sigma; \mathbb{C}^2)$ this implies that $\sigma_0 + (\eta \sigma_0 + \tau \sigma_3) \mathcal{C}_z$ is bijective in 
$H^{\frac{1}{2}}(\Sigma; \mathbb{C}^2)$. In particular, the inverse 
$(\sigma_0 + (\eta \sigma_0 + \tau \sigma_3) \mathcal{C}_z)^{-1}$ is well-defined and bounded in $H^{\frac{1}{2}}(\Sigma; \mathbb{C}^2)$.
Using $G_z = \Phi_z \Lambda$ and $G_{\Bar{z}}^* = \Lambda \Phi_{\Bar{z}}'$ we get
\begin{equation} \label{Birman_Schwinger_kernel1}
  \begin{split}
    G_z \big( \Theta - M_z \big)^{-1} G_{\Bar{z}}^* &= - \Phi_z \Lambda \Lambda^{-1} \big( \sigma_0 + (\eta \sigma_0 + \tau \sigma_3) \mathcal{C}_z \big)^{-1} (\eta \sigma_0 + \tau \sigma_3) \Lambda^{-1} \Lambda \Phi_{\Bar{z}}' \\
    &= - \Phi_z \big( \sigma_0 + (\eta \sigma_0 + \tau \sigma_3) \mathcal{C}_z \big)^{-1} (\eta \sigma_0 + \tau \sigma_3)  \Phi_{\Bar{z}}',
  \end{split}
\end{equation}
which leads to~\eqref{krein_noncritical}.

The proof of~\eqref{krein_noncritical} for $|\eta| = |\tau| \neq 0$ is similar as above. First, one notes in the same way as in~\eqref{Birman_Schwinger_kernel} that
\begin{equation} \label{Birman_Schwinger_kernel_pm}
 \Theta_\pm-\Pi_\pm M_z \Pi_\pm^*=-\Lambda \bigg(\dfrac{1}{2\eta} + \Pi_\pm \mathcal{C}_z\Pi_\pm^*  \bigg)\Lambda
 = -\frac{1}{2 \eta} \Pi_\pm \Lambda \big(\sigma_0 + 2 \eta \Pi_\pm^* \Pi_\pm \mathcal{C}_z  \big) \Lambda \Pi_\pm^*,
\end{equation}
which implies with $2 \eta \Pi_\pm^* \Pi_\pm = \eta \sigma_0 + \tau \sigma_3$
\begin{equation*}
  \begin{split}
    \Pi_\pm^* (\Theta_{\pm} - \Pi_\pm M_z \Pi_\pm^*)^{-1} \Pi_\pm &= \Lambda^{-1} \Pi_\pm^* \big(\Pi_\pm (\sigma_0 + 2 \eta \Pi_\pm^* \Pi_+ \mathcal{C}_z) \Pi_\pm^*  \big)^{-1} 2 \eta \Pi_\pm \Lambda^{-1}\\
    &= \Lambda^{-1}  \big(\Pi_\pm^* \Pi_\pm (\sigma_0 + 2 \eta \Pi_\pm^* \Pi_+ \mathcal{C}_z)  \big)^{-1} 2 \eta \Pi_\pm^* \Pi_\pm \Lambda^{-1}\\
    &= \Lambda^{-1} \big( \sigma_0 + (\eta \sigma_0 + \tau \sigma_3) \mathcal{C}_z \big)^{-1} (\eta \sigma_0 + \tau \sigma_3) \Lambda^{-1}.
  \end{split}
\end{equation*}
With this observation and the same ideas as above one shows~\eqref{krein_noncritical} also in the case $|\eta|=|\tau|$. This finishes the proof of this theorem.
\end{proof}

In the following proposition we discuss the basic spectral properties of $A_{\eta, \tau}$:

\begin{thm} \label{proposition_spectral_properties_noncritical}
  Let $\eta, \tau \in \mathbb{R}$ be such that $\eta^2 - \tau^2 \neq 4$. Then the following holds:
  \begin{itemize}
    \item[\textup{(i)}] We have $\spec_{\textup{ess}} A_{\eta,\tau}=\big(-\infty, -|m|\big] \,\cup\,\big[|m|,+\infty\big)$.
    In particular, for $m=0$ we have $\spec A_{\eta, \tau} = \spec_\ess A_{\eta,\tau}= \RR$.
    \item[\textup{(ii)}] Assume $m\neq 0$. Then $z \in (-|m|, |m|)$ is a discrete eigenvalue of $A_{\eta, \tau}$ if and only if there exists $\varphi \in H^{\frac{1}{2}}(\Sigma; \mathbb{C}^2)$ such that $\big( \sigma_0 + (\eta \sigma_0 + \tau \sigma_3) \mathcal{C}_z \big) \varphi = 0$.
    \item[\textup{(iii)}] If $m\ne 0$, then $A_{\eta, \tau}$ has at most finitely many eigenvalues in $\big(-|m|,|m|\big)$.
  \end{itemize}
\end{thm}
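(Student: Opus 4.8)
The plan is to treat the three assertions in turn, keeping separate the trivial case $(\eta,\tau)=(0,0)$ (where $A_{\eta,\tau}=A_0$ and all statements are immediate from the known spectrum of the free Dirac operator) and otherwise relying on Theorem~\ref{theorem_self_adjoint_noncritical}, Proposition~\ref{prop-ess}, Proposition~\ref{prop-discr}, and the abstract results of Subsection~\ref{ssec-bt}.

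For~(i), assume $(\eta,\tau)\neq(0,0)$. Proposition~\ref{prop-ess}, applied to the self-adjoint extension $A_{\eta,\tau}$ of $S$, already gives $(-\infty,-|m|]\cup[|m|,+\infty)\subset\spec_\ess A_{\eta,\tau}$. For the reverse inclusion I would invoke that $\dom A_{\eta,\tau}\subset H^1(\mathbb{R}^2\setminus\Sigma;\mathbb{C}^2)$ by Theorem~\ref{theorem_self_adjoint_noncritical}, so that Proposition~\ref{prop-discr} (with $s=1$) shows the spectrum of $A_{\eta,\tau}$ in $(-|m|,|m|)$ to be purely discrete; in particular $\spec_\ess A_{\eta,\tau}\cap(-|m|,|m|)=\emptyset$. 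Combining the two inclusions yields the claimed identity, and for $m=0$ the gap is empty, so $\spec A_{\eta,\tau}=\spec_\ess A_{\eta,\tau}=\mathbb{R}$. Assertion~(iii) follows by the same token: the inclusion $\dom A_{\eta,\tau}\subset H^1(\mathbb{R}^2\setminus\Sigma;\mathbb{C}^2)$ together with Proposition~\ref{prop-discr} gives directly that there are only finitely many eigenvalues in $(-|m|,|m|)$ when $m\neq0$ (and none in the trivial case).

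For~(ii), observe first that $(-|m|,|m|)\subset\res A_0$, so such a $z$ lies in the resolvent set of the reference operator $B_0=A_0$ of the boundary triple from Proposition~\ref{prop-bt3}, and by~(i) every point of $\spec A_{\eta,\tau}$ in $(-|m|,|m|)$ is automatically a discrete eigenvalue; hence it suffices to describe $\spec_\textup{p}A_{\eta,\tau}\cap(-|m|,|m|)$. I would split into the cases of Proposition~\ref{prop-bgamma}. If $|\eta|\neq|\tau|$, then Theorem~\ref{theorem_boundary_triple_abstract}(ii) says $z\in\spec_\textup{p}A_{\eta,\tau}$ if and only if $0\in\spec_\textup{p}(\Theta-M_z)$, and the factorization $\Theta-M_z=-\Lambda(\eta\sigma_0+\tau\sigma_3)^{-1}\bigl(\sigma_0+(\eta\sigma_0+\tau\sigma_3)\mathcal{C}_z\bigr)\Lambda$ from~\eqref{Birman_Schwinger_kernel}, combined with the bijectivity of $\Lambda\colon H^1(\Sigma;\mathbb{C}^2)\to H^{\frac12}(\Sigma;\mathbb{C}^2)$ and of the matrix $\eta\sigma_0+\tau\sigma_3$, translates this into the existence of a nonzero $\varphi\in H^{\frac12}(\Sigma;\mathbb{C}^2)$ with $\bigl(\sigma_0+(\eta\sigma_0+\tau\sigma_3)\mathcal{C}_z\bigr)\varphi=0$, namely $\varphi=\Lambda\psi$ with $\psi\in\ker(\Theta-M_z)$; conversely each such $\varphi$ produces the eigenfunction $G_z\varphi$. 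If $|\eta|=|\tau|\neq0$, the same reasoning is carried out with the projections $\Pi_\pm$ and the identity~\eqref{Birman_Schwinger_kernel_pm}, using $2\eta\Pi_\pm^*\Pi_\pm=\eta\sigma_0+\tau\sigma_3$ and noting that any solution of $\bigl(\sigma_0+(\eta\sigma_0+\tau\sigma_3)\mathcal{C}_z\bigr)\varphi=0$ automatically has vanishing component in $\ran(\sigma_0-\Pi_\pm^*\Pi_\pm)$, which matches up the two eigenspaces.

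The only genuinely delicate point, which I expect to be the main obstacle, is the bookkeeping with Sobolev orders in~(ii): $\Theta$ and $\Theta_\pm$ act in $L^2(\Sigma)$ on the domain $H^1(\Sigma;\mathbb{C}^2)$, while $\mathcal{C}_z$ is most naturally viewed as a bounded operator on $H^{\frac12}(\Sigma;\mathbb{C}^2)$, so one has to push the factors $\Lambda^{\pm1}$ and the constant invertible matrix through carefully (exactly as in the proof of Theorem~\ref{theorem_self_adjoint_noncritical}) in order to land the Birman--Schwinger condition in precisely the space $H^{\frac12}(\Sigma;\mathbb{C}^2)$ asserted in the statement.
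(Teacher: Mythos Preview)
Your proposal is correct and follows essentially the same route as the paper's proof: for (i) and (iii) you combine Proposition~\ref{prop-ess} with the domain inclusion $\dom A_{\eta,\tau}\subset H^1(\mathbb{R}^2\setminus\Sigma;\mathbb{C}^2)$ from Theorem~\ref{theorem_self_adjoint_noncritical} and then apply Proposition~\ref{prop-discr}, and for (ii) you invoke Theorem~\ref{theorem_boundary_triple_abstract}(ii) together with the factorizations~\eqref{Birman_Schwinger_kernel} and~\eqref{Birman_Schwinger_kernel_pm}, pushing $\Lambda$ through to land in $H^{\frac12}(\Sigma;\mathbb{C}^2)$. The only cosmetic slip is that the eigenfunction corresponding to $\varphi=\Lambda\psi$ is $G_z\psi=\Phi_z\varphi$, not $G_z\varphi$; this does not affect the argument since only the existence of a nontrivial $\varphi$ is at stake.
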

\begin{proof}
  By Proposition~\ref{prop-ess}, the set $\big(-\infty, -|m|\big] \,\cup\,\big[|m|,+\infty\big)$ is contained in the essential spectrum of $A_{\eta, \tau}$.
	Moreover, we have shown in Theorem~\ref{theorem_self_adjoint_noncritical} that the inclusion $\dom A_{\eta,\tau} \subset H^1(\mathbb{R}^2 \setminus \Sigma; \mathbb{C}^2)$ holds, which implies by Proposition~\ref{prop-discr}
	that the spectrum of $ A_{\eta,\tau}$ in $\big(-|m|,|m|\big)$ is discrete and finite. This proves the items (i) and (iii).
	%
	%
  
  It remain to prove~(ii). Assume first that $|\eta| \neq |\tau|$. By Theorem~\ref{theorem_boundary_triple_abstract} a number 
  $z \in \res A_0$ is an eigenvalue of $A_{\eta, \tau}$ if and only if zero is an eigenvalue of $\Theta - M_z$. 
  Using~\eqref{Birman_Schwinger_kernel} this means that $z \in \res A_0$ is an eigenvalue of $A_{\eta, \tau}$ if and only 
  if there exists $\psi \in \dom \Theta = H^1(\Sigma; \mathbb{C}^2)$ such that
  \begin{equation*}
    -\Lambda (\eta \sigma_0 + \tau \sigma_3)^{-1} \big( \sigma_0 + (\eta \sigma_0 + \tau \sigma_3) \mathcal{C}_z \big) \Lambda \psi = 0,
  \end{equation*}
  i.e. if and only if $\varphi := \Lambda \psi \in H^{\frac{1}{2}}(\Sigma; \mathbb{C}^2)$ satisfies 
  $$\big( \sigma_0 + (\eta \sigma_0 + \tau \sigma_3) \mathcal{C}_z \big) \varphi = 0.$$ 
  The proof of~(ii) for $|\eta|=|\tau|$ is similar, one just has to use~\eqref{Birman_Schwinger_kernel_pm} instead of~\eqref{Birman_Schwinger_kernel}.
\end{proof}  

Finally, we provide some 
symmetry relations for the point spectrum of $A_{\eta, \tau}$, which can be seen as consequences of 
commutator relations of $A_{\eta, \tau}$.
The following results are the two-dimensional analogues of \cite[Proposition~4.2]{BEHL19}.

\begin{prop} \label{proposition_symmetry_relations_noncritical}
  Let $\eta, \tau \in \mathbb{R}$ and assume that $\eta^2 - \tau^2 \neq 4$. Then the following holds:
  \begin{itemize}
    \item[\textup{(i)}] If $|\eta| \neq |\tau|$, then $z \in \spec_\textup{p} A_{-\frac{4 \eta}{\eta^2 - \tau^2}, -\frac{4 \tau}{\eta^2 - \tau^2}}$ if and only if $z \in \spec_\textup{p} A_{\eta, \tau}$.
    \item[\textup{(ii)}] $z \in \spec_\textup{p} A_{\eta, \tau}$ if and only if $(-z) \in \spec_\textup{p} A_{-\eta, \tau}$.
  \end{itemize}
\end{prop}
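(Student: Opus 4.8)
The plan is to derive both symmetry relations from unitary (or anti-unitary) transformations that conjugate one operator $A_{\eta,\tau}$ onto another, and then to read off the point-spectrum correspondence. The technical heart is to track what such a transformation does to the transmission condition in~\eqref{eqn:defdiracdelta}, which is governed by the matrix $R = \frac{\rmi}{2}(\sigma\cdot\nu)(\eta\sigma_0+\tau\sigma_3)$ appearing in Lemma~\ref{lemma_confinement}.

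For part~(ii), I would conjugate with the matrix $\sigma_3$, which is unitary and self-adjoint and anticommutes with $\sigma_1,\sigma_2$ while commuting with $\sigma_3$ itself. Writing $V := \sigma_3$ (acting pointwise on $\CC^2$-valued functions on $\RR^2$), one checks $V(-\rmi\sigma\cdot\nabla + m\sigma_3)V = \rmi\sigma\cdot\nabla + m\sigma_3 = -(-\rmi\sigma\cdot\nabla - m\sigma_3)$, so on the level of the free differential expression $V$ intertwines the mass-$m$ expression with $-1$ times the mass-$(-m)$ one; but since $A_{\eta,\tau}$ and $A_{-\eta,\tau}$ are defined with the \emph{same} $m$, the cleaner route is to combine $V=\sigma_3$ with complex conjugation. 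Concretely, set $Cf := \overline{f}$ and note $C(-\rmi\sigma\cdot\nabla)C = -\rmi\sigma_1\partial_1 + \rmi\sigma_2\partial_2$ because $\sigma_2$ is purely imaginary; then $W := \sigma_1 C$ (or a suitable product of a constant matrix and $C$) is anti-unitary and satisfies $W(-\rmi\sigma\cdot\nabla + m\sigma_3)W = -(-\rmi\sigma\cdot\nabla + m\sigma_3)$, hence $W A_{0} W = -A_0$. I would verify that $W$ maps $\dom A_{\eta,\tau}$ bijectively onto $\dom A_{-\eta,\tau}$ by checking that the boundary condition transforms correctly: since $W$ anticommutes with $\sigma_1,\sigma_3$ and with $\rmi$, and $\sigma\cdot\nu$ is real-linear in $\nu$, one gets that the condition $-\rmi(\sigma\cdot\nu)(\cT_+^D f_+ - \cT_-^D f_-) = \frac12(\eta\sigma_0+\tau\sigma_3)(\cT_+^D f_+ + \cT_-^D f_-)$ becomes the same condition with $\eta$ replaced by $-\eta$ (the $\tau\sigma_3$ term survives because $\sigma_3$ is real and the overall sign flips cancel with those from $\rmi$). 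Consequently $W A_{\eta,\tau} W = -A_{-\eta,\tau}$, and hence $z\in\spec_\textup{p}A_{\eta,\tau}$ iff $-z\in\spec_\textup{p}A_{-\eta,\tau}$, with eigenvectors related by $W$.

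For part~(i), the idea is that the two transmission conditions
\[
(\sigma_0 - R)\,\cT_+^D f_+ = (\sigma_0 + R)\,\cT_-^D f_-,
\qquad
(\sigma_0 + R)\,\cT_+^D f_+ = (\sigma_0 - R)\,\cT_-^D f_-
\]
differ only by swapping the roles of $\Omega_+$ and $\Omega_-$, i.e. by $R \mapsto -R$, and from the identity $R^2 = -\frac{\eta^2-\tau^2}{4}\sigma_0$ one computes (using $|\eta|\ne|\tau|$, so $R$ is invertible) that $-R = \frac{4}{\eta^2-\tau^2}\,R^{-1}\cdot(\text{something})$; more directly, $-R$ equals the matrix $R'$ built from the pair $(\eta',\tau') = \big(-\tfrac{4\eta}{\eta^2-\tau^2}, -\tfrac{4\tau}{\eta^2-\tau^2}\big)$ up to the sign bookkeeping, because $R' = \frac{\rmi}{2}(\sigma\cdot\nu)(\eta'\sigma_0+\tau'\sigma_3) = -\frac{2\rmi}{\eta^2-\tau^2}(\sigma\cdot\nu)(\eta\sigma_0+\tau\sigma_3) = -\frac{4}{\eta^2-\tau^2}\,R$, and one verifies $(R')^2 = -\frac{(\eta')^2-(\tau')^2}{4}\sigma_0 = -\frac{4}{\eta^2-\tau^2}\sigma_0 = R^{-2}\cdot(\text{const})$; in any case the crucial point is that the operator $A_{\eta',\tau'}$ is exactly the operator whose transmission condition is obtained from that of $A_{\eta,\tau}$ by interchanging $\cT_+^D f_+$ and $\cT_-^D f_-$. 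Since this swap corresponds to a geometric reflection exchanging $\Omega_+$ and $\Omega_-$ combined with a relabeling of the normal $\nu\mapsto-\nu$, which is a unitary on $L^2(\RR^2;\CC^2)$ commuting with $-\rmi\sigma\cdot\nabla + m\sigma_3$ after the corresponding conjugation, one concludes that $A_{\eta',\tau'}$ is unitarily equivalent to $A_{\eta,\tau}$ (alternatively, one argues purely at the level of the Birman--Schwinger condition of Theorem~\ref{proposition_spectral_properties_noncritical}(ii), showing that $\sigma_0 + (\eta\sigma_0+\tau\sigma_3)\mathcal C_z$ has nontrivial kernel iff $\sigma_0 + (\eta'\sigma_0+\tau'\sigma_3)\mathcal C_z$ does, using $\mathcal C_z^* = \mathcal C_{\bar z}$ and the algebraic relation between the coupling matrices). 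Either way this yields $\spec_\textup{p}A_{\eta',\tau'} = \spec_\textup{p}A_{\eta,\tau}$.

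The main obstacle I anticipate is the careful sign and trace bookkeeping: one must be precise about how an anti-unitary conjugation interacts with the factor $-\rmi$ in front of $\sigma\cdot\nu$, how $\cT_\pm^D$ behaves under the map (it is continuous on $H(\sigma,\Omega_\pm)$ by Lemma~\ref{lemma_trace}, and a constant-matrix conjugation commutes with taking traces), and — in part~(i) — whether one really wants an honest geometric reflection or whether it is cleaner to bypass geometry entirely and argue via the Birman--Schwinger kernel. I would favor the Birman--Schwinger route for~(i) since it avoids constructing an explicit diffeomorphism and reduces everything to the finite-dimensional identity relating $(\eta,\tau)$ and $(\eta',\tau')$ together with the already-established fact that eigenvalues in $(-|m|,|m|)$ are detected by $\ker\big(\sigma_0 + (\eta\sigma_0+\tau\sigma_3)\mathcal C_z\big)$; for $z\in\res A_0\setminus(-|m|,|m|)$ one uses that $\spec_\textup{p}A_{\eta,\tau}\subset(-|m|,|m|)$ by Theorem~\ref{proposition_spectral_properties_noncritical}(i), so nothing more is needed outside the gap.
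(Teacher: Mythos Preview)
Your approach to part~(ii) is correct and coincides with the paper's: the antilinear charge conjugation $Cf=\sigma_1\overline f$ satisfies $C A_{\eta,\tau}=-A_{-\eta,\tau}C$, and the verification goes exactly through the sign bookkeeping you outline.

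Part~(i), however, has a genuine gap. The ``geometric reflection exchanging $\Omega_+$ and $\Omega_-$'' does not exist: $\Omega_+$ is bounded and $\Omega_-$ is not, so no diffeomorphism of $\RR^2$ can swap them. The Birman--Schwinger alternative you sketch is also not straightforward: if $(\eta\sigma_0+\tau\sigma_3)\mathcal C_z\varphi=-\varphi$ then $(\eta'\sigma_0+\tau'\sigma_3)\mathcal C_z\varphi=\tfrac{4}{\eta^2-\tau^2}\varphi$, which is not $-\varphi$ unless $\eta^2-\tau^2=-4$, so the kernels are not obviously related without further algebraic identities for $\mathcal C_z$ that the paper does not supply. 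More importantly, your claim that the $A_{\eta',\tau'}$ transmission condition arises from that of $A_{\eta,\tau}$ by swapping $\cT_+^Df_+\leftrightarrow\cT_-^Df_-$ (equivalently $R\mapsto-R$) is incorrect: that swap yields the parameters $(-\eta,-\tau)$, not $(\eta',\tau')$.

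What you are missing is much simpler than a reflection. Your own computation gives $R'=-\tfrac{4}{\eta^2-\tau^2}R$, and since $R^2=-\tfrac{\eta^2-\tau^2}{4}\sigma_0$ this says precisely $R'=R^{-1}$. Left-multiply the transmission condition $(\sigma_0-R)\cT_+^Df_+=(\sigma_0+R)\cT_-^Df_-$ by $-R^{-1}$ to obtain $(\sigma_0-R')\cT_+^Df_+=(\sigma_0+R')(-\cT_-^Df_-)$. Hence the unitary self-adjoint map $U(f_+\oplus f_-)=f_+\oplus(-f_-)$ sends $\dom A_{\eta,\tau}$ bijectively onto $\dom A_{\eta',\tau'}$, and since $U$ commutes with the differential expression on each side one gets $A_{\eta,\tau}=U A_{\eta',\tau'}U$. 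This is exactly the paper's argument: the right ``swap'' is not of the two traces but of the \emph{jump} and the \emph{mean}, and it is implemented by a sign flip on $\Omega_-$, with no geometry needed.
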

\begin{proof}
  (i) Consider the unitary and self-adjoint operator
  \begin{equation*}
    U: L^2(\Omega_+; \mathbb{C}^2) \oplus L^2(\Omega_-; \mathbb{C}^2) \rightarrow L^2(\Omega_+; \mathbb{C}^2) \oplus L^2(\Omega_-; \mathbb{C}^2), \quad U (f_+ \oplus f_-) = f_+ \oplus(-f_-).
  \end{equation*}
  We claim that 
  \begin{equation} \label{unitary_equivalence}
    A_{\eta, \tau} = U A_{-\frac{4 \eta}{\eta^2 - \tau^2}, -\frac{4 \tau}{\eta^2 - \tau^2}} U.
  \end{equation}
  For this purpose we note first that $f = f_+ \oplus f_- \in H^1(\Omega_+; \mathbb{C}^2) \oplus H^1(\Omega_-; \mathbb{C}^2)$ belongs to $\dom A_{\eta, \tau}$, if and only if 
  \begin{equation} \label{transmission_condition_temp}
    -\rmi (\sigma\cdot \nu)\big( \cT_+^D f_+ - \cT_-^D f_-\big) = \frac12(\eta \sigma_0 + \tau \sigma_3)\big(\cT_+^D f_+ + \cT_-^D f_-\big),
  \end{equation}
  which is equivalent to
  \begin{equation*}
    -\rmi (\sigma\cdot \nu)\big( \cT_+^D (U f)_+ + \cT_-^D (U f)_-\big) = \frac12(\eta \sigma_0 + \tau \sigma_3)\big(\cT_+^D (U f)_+ - \cT_-^D (U f)_-\big).
  \end{equation*}
  By multiplying the last equation with $(\eta \sigma_0 + \tau \sigma_3)^{-1} = \frac{1}{\eta^2 - \tau^2} (\eta \sigma_0 - \tau \sigma_3)$ and using \eqref{anti_commutation} we find that $f \in \dom A_{\eta, \tau}$ if and only if
  \begin{equation*}
    -\rmi (\sigma\cdot \nu) \frac{1}{\eta^2 - \tau^2} (\eta \sigma_0 + \tau \sigma_3) \big( \cT_+^D (U f)_+ + \cT_-^D (U f)_-\big) = \frac12 \big(\cT_+^D (U f)_+ - \cT_-^D (U f)_-\big),
  \end{equation*}
  which is equivalent to 
  \begin{equation*}
    - \frac{4}{\eta^2 - \tau^2} (\eta \sigma_0 + \tau \sigma_3) \frac{1}{2} \big( \cT_+^D (U f)_+ + \cT_-^D (U f)_-\big) = - \rmi (\sigma\cdot \nu) \big(\cT_+^D (U f)_+ - \cT_-^D (U f)_-\big)
  \end{equation*}
  i.e. $U f \in \dom A_{-4 \eta/(\eta^2 - \tau^2), -4 \tau/(\eta^2 - \tau^2)}$. Hence, we have shown the equality $\dom A_{\eta, \tau} = \dom A_{-4 \eta/(\eta^2 - \tau^2), -4 \tau/(\eta^2 - \tau^2)} U$. Moreover, a straightforward calculation  shows $U A_{\eta, \tau} f = A_{-4 \eta/(\eta^2 - \tau^2), -4 \tau/(\eta^2 - \tau^2)} U f$ for any $f \in \dom A_{\eta, \tau}$. This gives~\eqref{unitary_equivalence}, which yields (i).
  
  (ii) Define the antilinear charge conjugation operator
  \begin{equation*}
    C f = \sigma_1 \overline{f}, \qquad f \in L^2(\mathbb{R}^2; \mathbb{C}^2).
  \end{equation*}
  Then we see immediately $C^2 f = f$ for all $f \in L^2(\mathbb{R}^2; \mathbb{C}^2)$. We claim that 
  \begin{equation} \label{charge_conjugation}
    C A_{\eta, \tau} = -A_{- \eta, \tau} C,
  \end{equation}
  which yields then the claim of statement~(ii). To prove~\eqref{charge_conjugation}, we note first by taking the complex 
  conjugate of equation~\eqref{transmission_condition_temp} that $f \in \dom A_{\eta, \tau}$ if and only if
  \begin{equation}\label{haarpracht}
    \rmi (\overline{\sigma}\cdot \nu)\big( \cT_+^D \overline{f_+} - \cT_-^D \overline{f_-}\big) 
    = \frac12(\eta \sigma_0 + \tau \sigma_3)\big(\cT_+^D \overline{f_+} + \cT_-^D \overline{f_-}\big),
  \end{equation}
  where $\overline\sigma =(\overline{\sigma_1},\overline{\sigma_2})$ and $\overline{\sigma_j}$ is the matrix  with the complex conjugate entries of $\sigma_j$.
  By multiplying this equation with $\sigma_1$ and using~\eqref{anti_commutation}, $\overline{\sigma_1} = \sigma_1$, and $\overline{\sigma_2} = -\sigma_2$ 
  we find that \eqref{haarpracht} is equivalent to
  \begin{equation*}
    \rmi (\sigma \cdot \nu)\big( \cT_+^D (\sigma_1 \overline{f_+}) - \cT_-^D (\sigma_1 \overline{f_-})\big) = \frac12(\eta \sigma_0 - \tau \sigma_3)\big(\cT_+^D (\sigma_1 \overline{f_+}) + \cT_-^D (\sigma_1\overline{f_-})\big),
  \end{equation*}
  i.e. $Cf \in \dom A_{-\eta, \tau}$. Moreover, using again~\eqref{anti_commutation} and $\overline{\sigma_2} = -\sigma_2$
  we get
  \begin{equation*}
    \begin{split}
      (-\rmi \sigma \cdot \nabla + m \sigma_3) C f &= (-\rmi \sigma \cdot \nabla + m \sigma_3) \sigma_1 \overline{f}\\
      &= \sigma_1 (-\rmi \overline{\sigma} \cdot \nabla - m \sigma_3) \overline{f} \\
      &= -\sigma_1 \overline{(-\rmi \sigma \cdot \nabla + m \sigma_3) f} \\
      &= - C \big( -\rmi \sigma \cdot \nabla + m \sigma_3) f \big),
    \end{split}
  \end{equation*}
  which implies \eqref{charge_conjugation}.
\end{proof}

\subsection{Critical case}\label{sec-crit}

In this subsection we study the self-adjointness and the spectral properties of $A_{\eta, \tau}$ 
for the critical interaction strengths, i.e. when $\eta^2 - \tau^2  = 4$. 
To show the self-adjointness of $A_{\eta, \tau}$ we prove that the corresponding operator 
$\Theta$ in Proposition~\ref{prop-bgamma} is self-adjoint in $L^2(\Sigma; \mathbb{C}^2)$.

\begin{lem}\label{self-theta}
  Let $\eta, \tau \in \mathbb{R}$ be such that $\eta^2 - \tau^2 = 4$. Then the operator 
  $\Theta$ is self-adjoint in $L^2(\Sigma; \mathbb{C}^2)$ and the restriction of $\Theta$ onto $H^1(\Sigma;\CC^2)$ is essentially self-adjoint 
  in $L^2(\Sigma; \mathbb{C}^2)$.
\end{lem}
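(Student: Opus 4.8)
The plan is to identify $\Theta$ with the closure of its restriction to smooth functions, the decisive point being that $\theta$ is, up to conjugation by $\Lambda$, a \emph{bounded} self-adjoint operator, so that elements of $\dom\Theta$ can be approximated in the graph norm by a mollification that is compatible with $\Lambda$. First I would record that in the critical case $\eta^2-\tau^2=4$ one has $\frac{1}{\eta^2-\tau^2}=\frac14$, so by~\eqref{def_Theta}
\begin{equation*}
  \theta=-\Lambda W\Lambda,\qquad W:=\tfrac14(\eta\sigma_0-\tau\sigma_3)+\tfrac12\bigl(\mathcal{C}_\zeta+\mathcal{C}_{\bar\zeta}\bigr).
\end{equation*}
By Proposition~\ref{proposition_C_z} we have $\mathcal{C}_\zeta,\mathcal{C}_{\bar\zeta}\in\Psi^0_\Sigma$ with $\mathcal{C}_z^*=\mathcal{C}_{\bar z}$ in $L^2(\Sigma;\CC^2)$, hence $W\in\Psi^0_\Sigma$ is a bounded self-adjoint operator in $L^2(\Sigma;\CC^2)$. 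In particular $\Theta_\infty:=\theta\upharpoonright C^\infty(\Sigma;\CC^2)$ is symmetric, so Lemma~\ref{sympsi} gives $\Theta_\infty^*=\Theta$; thus $\Theta$ is closed, $\Theta_\infty\subset\Theta$, and (since $\theta\in\Psi^1_\Sigma$ maps $H^1(\Sigma;\CC^2)$ into $L^2(\Sigma;\CC^2)$, cf. Remark~\ref{rem37}\,(i)) also $\Theta_\infty\subset\Theta\upharpoonright H^1(\Sigma;\CC^2)\subset\Theta$. It therefore suffices to prove $\overline{\Theta_\infty}=\Theta$: this forces $\Theta=(\overline{\Theta_\infty})^*=\overline{\Theta_\infty}$ to be self-adjoint, and since $\Theta\upharpoonright H^1(\Sigma;\CC^2)$ is squeezed between $\Theta_\infty$ and $\Theta$ it is then essentially self-adjoint with the same closure.

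For the approximation I would use Fourier mollifiers adapted to $\Lambda$. Fix $\chi\in C_0^\infty(\RR)$ with $\chi(0)=1$ and set $J_\varepsilon:=U^{-1}\widetilde J_\varepsilon U$, where $\widetilde J_\varepsilon$ is the Fourier multiplier $\widehat{\widetilde J_\varepsilon u}(n)=\chi(\varepsilon n)\,\widehat u(n)$ on $\TT$. Then: (a) the symbols $n\mapsto\chi(\varepsilon n)$ satisfy the estimates of Definition~\ref{definition_PPDO} uniformly in $\varepsilon\in(0,1]$, so that $J_\varepsilon\in\Psi^0_\Sigma$ with bounds uniform in $\varepsilon$ and $J_\varepsilon\to\one$ strongly on every $H^s(\Sigma;\CC^2)$; (b) $J_\varepsilon$ commutes with $\Lambda$, both being Fourier multipliers on $\TT$ under the identification by $U$; (c) $J_\varepsilon$ maps $L^2(\Sigma;\CC^2)$ into $C^\infty(\Sigma;\CC^2)$, because $\chi$ has compact support. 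The only genuine ingredient is the commutator estimate
\begin{equation*}
  [W,J_\varepsilon]\longrightarrow 0\quad\text{strongly from }H^{-1/2}(\Sigma;\CC^2)\text{ to }H^{1/2}(\Sigma;\CC^2)\text{ as }\varepsilon\to0,
\end{equation*}
which I would obtain from the periodic pseudodifferential calculus: by Proposition~\ref{proposition_properties_PPDO}\,(ii) one has $[W,J_\varepsilon]\in\Psi^{-1}_\Sigma$ with bounds uniform in $\varepsilon$, hence the family is uniformly bounded from $H^{-1/2}(\Sigma;\CC^2)$ to $H^{1/2}(\Sigma;\CC^2)$; and for $q\in C^\infty(\Sigma;\CC^2)$ one has $[W,J_\varepsilon]q=WJ_\varepsilon q-J_\varepsilon Wq\to Wq-Wq=0$ in $C^\infty(\Sigma;\CC^2)$, so by density the convergence extends to all of $H^{-1/2}(\Sigma;\CC^2)$. (This is the periodic analogue of the Friedrichs mollifier lemma.)

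With these preparations the conclusion is bookkeeping. Let $\varphi\in\dom\Theta$ and put $p:=\Lambda\varphi\in H^{-1/2}(\Sigma;\CC^2)$. The condition $\Theta\varphi=-\Lambda Wp\in L^2(\Sigma;\CC^2)$ means precisely that $Wp\in H^{1/2}(\Sigma;\CC^2)$, and then $\Lambda Wp=-\Theta\varphi$. Setting $\varphi_\varepsilon:=J_\varepsilon\varphi\in C^\infty(\Sigma;\CC^2)=\dom\Theta_\infty$ and using (b) together with $WJ_\varepsilon p=J_\varepsilon Wp+[W,J_\varepsilon]p$, we get
\begin{equation*}
  \Theta\varphi_\varepsilon=-\Lambda W\Lambda J_\varepsilon\varphi=-\Lambda J_\varepsilon Wp-\Lambda[W,J_\varepsilon]p=J_\varepsilon\Theta\varphi-\Lambda[W,J_\varepsilon]p.
\end{equation*}
Letting $\varepsilon\to0$ yields $\varphi_\varepsilon\to\varphi$ in $L^2(\Sigma;\CC^2)$ and $\Theta\varphi_\varepsilon\to\Theta\varphi$ in $L^2(\Sigma;\CC^2)$, since $J_\varepsilon\to\one$ strongly on $L^2(\Sigma;\CC^2)$, $[W,J_\varepsilon]p\to0$ in $H^{1/2}(\Sigma;\CC^2)$, and $\Lambda\colon H^{1/2}(\Sigma;\CC^2)\to L^2(\Sigma;\CC^2)$ is bounded. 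Hence $\varphi$ lies in the graph closure of $\Theta_\infty$, i.e. $\Theta\subset\overline{\Theta_\infty}$; the reverse inclusion $\overline{\Theta_\infty}\subset\overline{\Theta}=\Theta$ is immediate because $\Theta$ is closed, so $\Theta=\overline{\Theta_\infty}$ and, as explained above, the two claims of the lemma follow. The main (indeed only) obstacle in carrying this out is the uniform-in-$\varepsilon$ commutator bound of the second paragraph, which is exactly the place where the pseudodifferential machinery of Section~\ref{section_PPDO} is used in an essential way.
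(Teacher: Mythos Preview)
Your argument is correct and takes a genuinely different route from the paper. The paper decomposes $\theta=-\tfrac12\upsilon+\Psi$ with $\Psi\in\Psi^0_\Sigma$ and applies the Schur complement factorization~\eqref{schur-fact} to the $2\times2$ block operator $\upsilon$; the critical hypothesis $\eta^2-\tau^2=4$ makes the Schur complement $\mathcal S(\upsilon)$ land in $\Psi^{-\infty}_\Sigma$, so Proposition~\ref{prop-block} yields an \emph{explicit} description of $\dom\overline{\Upsilon_1}$ which is then matched with $\dom\Theta$. Your approach instead writes $\theta=-\Lambda W\Lambda$ with $W\in\Psi^0_\Sigma$ and runs a Friedrichs-type mollifier argument using Fourier cut-offs $J_\varepsilon$ that commute with $\Lambda$; the whole proof rests on the uniform $\Psi^{-1}_\Sigma$ bound for $[W,J_\varepsilon]$, which is indeed available from the periodic calculus since the symbols $n\mapsto\chi(\varepsilon n)$ obey the estimates of Definition~\ref{definition_PPDO} uniformly in $\varepsilon\in(0,1]$. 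What your approach buys is generality and economy: it never uses the block structure, it never invokes the Schur complement machinery of Section~\ref{ssec-schur}, and in fact it does not use the critical hypothesis at all---your argument shows that the maximal realization of $-\Lambda W\Lambda$ is self-adjoint for \emph{any} bounded self-adjoint $W\in\Psi^0_\Sigma$, so it simultaneously covers Lemma~\ref{lem3132}\,(i). What the paper's approach buys is an explicit formula for $\dom\Theta$ in terms of a coupling condition on the two components, which your method does not provide.
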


\begin{remark}
  According to Lemma~\ref{self-theta} the operator
  $\Theta$ is essentially self-adjoint on $H^1(\Sigma;\CC^2)$. It will turn out later in 
  the proof of Proposition~\ref{prop54} that $\spec_\ess \Theta$ is non-empty. Hence, one has $\dom \Theta \not\subset H^s(\Sigma; \mathbb{C}^2)$ for all $s>0$.
\end{remark}

\begin{proof}[Proof of Lemma~\ref{self-theta}]
  As in the proof of Lemma~\ref{lem3132} we consider the restriction $\Theta_1 := \Theta \upharpoonright H^1(\Sigma; \mathbb{C}^2)$. 
  It follows in the same way as in the proof of Lemma~\ref{lem3132} that $\Theta_1$ is a symmetric operator in $L^2(\Sigma; \mathbb{C}^2)$ and together with
  Lemma~\ref{sympsi} we see $\overline\Theta_1\subset\Theta_1^* \subset \Theta$. 
  To see $\Theta \subset \overline{\Theta_1}$, which then implies the claims, we will show (the slightly stronger fact) that
  \begin{equation}\label{doms}
   \dom\Theta=\dom\overline{\Theta_1}.
  \end{equation}
For this we consider the associated periodic pseudodifferential operator $\theta$ defined in~\eqref{def_Theta} and recall that with the aid of Proposition~\ref{proposition_C_z} we have
  \begin{equation}\label{upsi}
    \theta=-\dfrac{1}{2}\,\upsilon + \Psi, \quad\text{where}\,\,\,
    \upsilon= \begin{pmatrix} \dfrac{2}{\eta+\tau} \Lambda^2 & \Lambda C_\Sigma \overline{T} \Lambda\\
         \Lambda T C_\Sigma' \Lambda & \dfrac{2}{\eta-\tau}\, \Lambda^2
    \end{pmatrix},
  \end{equation}
  with some operator $\Psi \in \Psi^0_\Sigma$, which is symmetric and hence self-adjoint in $L^2(\Sigma; \mathbb{C}^2)$. 
  In the following we denote by $\Upsilon$ the maximal realization of $\upsilon$ in $L^2(\Sigma; \mathbb{C}^2)$, that is
  \begin{equation*}
    \Upsilon \varphi = \upsilon \varphi, \quad \dom \Upsilon = \big\{ \varphi \in L^2(\Sigma; \mathbb{C}^2): \upsilon \varphi \in L^2(\Sigma; \mathbb{C}^2) \big\} = \dom \Theta,
  \end{equation*}
  and $\Upsilon_1 = \Upsilon \upharpoonright H^1(\Sigma; \mathbb{C}^2)$. Note that $\dom \overline{\Upsilon_1} = \dom \overline{\Theta_1}$.  
  In the same way as in Subsection~\ref{ssec-schur} we use the Schur complement to decompose $\upsilon$ (on a formal level in the sense of periodic pseudodifferential operators
  without specification of the operator domains) as
  \begin{equation} \label{factorization_xi}
    \upsilon=\begin{pmatrix}
\one & 0\\
\dfrac{\eta+\tau}{2} \, \Lambda T C_\Sigma' \Lambda^{-1}& \one
\end{pmatrix}
\begin{pmatrix}
\dfrac{2}{\eta+\tau}\,\Lambda^2 & 0 \\
0 & \mathcal{S}(\upsilon)
\end{pmatrix}
\begin{pmatrix}
\one & \dfrac{\eta+\tau}{2}\Lambda^{-1} C_\Sigma \overline{T} \Lambda\\
0 & \one
\end{pmatrix},
\end{equation}
where the Schur complement has the form
\begin{equation*}
  \mathcal{S}(\upsilon)=\dfrac{2}{\eta-\tau}\, \Lambda^2-\dfrac{\eta+\tau}{2} \Lambda T C_\Sigma' \Lambda 
(\Lambda^2)^{-1} \Lambda  C_\Sigma \overline{T} \Lambda=\dfrac{2}{\eta-\tau}\, \Lambda^2-\dfrac{\eta+\tau}{2} \Lambda T C_\Sigma'  C_\Sigma \overline{T} \Lambda.
\end{equation*}
Using that $C_\Sigma' C_\Sigma=\one+R$ with $R\in\Psi^{-\infty}_\Sigma$, see Proposition~\ref{proposition_Cauchy_transform},
we can rewrite this expression as
\begin{equation*}
    \mathcal{S}(\upsilon)= \frac{2}{\eta-\tau} \Lambda^2-\frac{\eta+\tau}{2} \Lambda T \overline{T} \Lambda - \frac{\eta+\tau}{2} \Lambda T R \overline{T} \Lambda 
      =- \frac{\eta+\tau}{2} \Lambda T R \overline{T} \Lambda \in \Psi^{-\infty}_\Sigma,
\end{equation*}
where we used in the last step that $T \overline{T}$ is the multiplication operator with the constant function $\one$ and $\eta^2 - \tau^2 = 4$. 
From this, \eqref{factorization_xi}, and $\dom\Lambda^2= H^1(\Sigma)$ we obtain now
\begin{equation*}
  \dom \Theta = \dom \Upsilon =\Big\{
	( \varphi_1, \varphi_2) \in L^2(\Sigma;\CC^2):
	\varphi_1+ \dfrac{\eta+\tau}{2}\Lambda^{-1} C_\Sigma \overline{T} \Lambda \varphi_2 \in H^1(\Sigma)	\Big\}.
\end{equation*}
Let us now consider the operator realizations $\Theta_1,\Upsilon_1$ of $\theta,\upsilon$ and their closures $\overline{\Theta_1},\overline{\Upsilon_1}$ in $L^2(\Sigma;\CC^2)$.
We leave it to the reader to check that the assumptions in Proposition~\ref{prop-block} are satisfied when each entry of the pseudodifferential operators in the matrix representation
of $\upsilon$ in \eqref{upsi} is defined on $H^1(\Sigma)$; in particular, note that the upper left corner is a boundedly invertible self-adjoint operator 
in $L^2(\Sigma)$ with domain $H^1(\Sigma)$. Then it follows 
from Proposition~\ref{prop-block} that $\dom \overline{\mathcal{S}(\Xi_1)} = L^2(\Sigma)$ and
\begin{equation*}
  \begin{split}
    \dom \overline{\Theta_1} &= \dom \overline{\Xi_1} \\
    &=\big\{
	( \varphi_1, \varphi_2) \in L^2(\Sigma;\CC^2):
	\varphi_1 + \dfrac{\eta+\tau}{2}\Lambda^{-1} C_\Sigma \overline{T} \Lambda \varphi_2 \in H^1(\Sigma)	\big\}
	= \dom \Theta
  \end{split}
\end{equation*}
hold. 
Hence, we have shown \eqref{doms}, which finishes the proof of this proposition.
\end{proof}

With Lemma~\ref{self-theta} we are now ready to show the self-adjointness of $A_{\eta, \tau}$ for critical interaction strengths. 
To formulate the result we recall the definitions of the free Dirac operator $A_0$ from~\eqref{def_free_op}, of $\Phi_z$ and $\Phi_z'$ 
from~\eqref{def_Phi_z} and~\eqref{def_Phi_z_prime}, and of $\mathcal{C}_z$ in~\eqref{def_C_z}, respectively.

\begin{thm} \label{theorem_self_adjoint_critical}
Let $\eta, \tau \in \mathbb{R}$ with $\eta^2 - \tau^2 = 4$. Then the operator $A_{\eta, \tau}$ is self-adjoint
and its restriction to $\dom A_{\eta, \tau} \cap H^1(\RR^2\setminus\Sigma; \CC^2)$ is essentially self-adjoint in
  $L^2(\RR^2;\CC^2)$. Moreover, for all $z \in \res A_{\eta, \tau} \cap \res A_0$ 
  the operator $\sigma_0 + (\eta \sigma_0 + \tau \sigma_3) \mathcal{C}_z$ 
  admits a bounded inverse from $H^{\frac{1}{2}}(\Sigma; \mathbb{C}^2)$ to $H^{-\frac{1}{2}}(\Sigma; \mathbb{C}^2)$, and
  \begin{equation} \label{krein_critical}
    (A_{\eta, \tau} - z)^{-1} = (A_0 - z)^{-1} - \Phi_z \big( \sigma_0 + (\eta \sigma_0 + \tau \sigma_3) \mathcal{C}_z \big)^{-1} (\eta \sigma_0 + \tau \sigma_3) \Phi_{\Bar{z}}'.
  \end{equation}
\end{thm}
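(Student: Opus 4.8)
The plan is to derive everything from the abstract boundary-triple machinery of Theorem~\ref{theorem_boundary_triple_abstract} applied to the operator $\Theta$ of Proposition~\ref{prop-bgamma}(i). First I would note that for $\eta^2-\tau^2=4$ one automatically has $|\eta|\ne|\tau|$ (otherwise $\eta^2-\tau^2=0\ne4$), so Proposition~\ref{prop-bgamma}(i) applies and $A_{\eta,\tau}=B_{\Theta}$ in the notation of Subsection~\ref{ssec-bt} with $\Pi=\one$. By Lemma~\ref{self-theta}, $\Theta$ is self-adjoint in $L^2(\Sigma;\CC^2)$ and essentially self-adjoint on $H^1(\Sigma;\CC^2)$; hence by Theorem~\ref{theorem_boundary_triple_abstract} (the (essentially) self-adjointness equivalence) $A_{\eta,\tau}$ is self-adjoint and its restriction to $\{f\in\dom S^*:\Gamma_0f\in H^1(\Sigma;\CC^2)\}$ is essentially self-adjoint. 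Finally, by Lemma~\ref{lem23}, the condition $\Gamma_0f\in H^1(\Sigma;\CC^2)$ is equivalent to $f\in H^1(\RR^2\setminus\Sigma;\CC^2)$, and one checks from~\eqref{domb1} that $\dom A_{\eta,\tau}\cap H^1(\RR^2\setminus\Sigma;\CC^2)=\Theta_1\text{-graph}$, which is exactly the set on which $A_{\eta,\tau}$ restricts to an essentially self-adjoint operator. This gives the first two assertions.

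For the resolvent formula I would follow the computation already carried out in the non-critical case (Theorem~\ref{theorem_self_adjoint_noncritical}), tracking carefully in which Sobolev spaces the operators act. By Theorem~\ref{theorem_boundary_triple_abstract}(iii$'$), for $z\in\res A_{\eta,\tau}\cap\res A_0$ the operator $\Theta-M_z$ is a bijection from $\dom\Theta$ onto $L^2(\Sigma;\CC^2)$, and
\begin{equation*}
(A_{\eta,\tau}-z)^{-1}=(A_0-z)^{-1}+G_z(\Theta-M_z)^{-1}G_{\bar z}^*.
\end{equation*}
Using $M_z=\Lambda(\mathcal C_z-\tfrac12(\mathcal C_\zeta+\mathcal C_{\bar\zeta}))\Lambda$ together with the explicit form~\eqref{def_Theta} of $\Theta$ and $\tfrac1{\eta^2-\tau^2}(\eta\sigma_0-\tau\sigma_3)=(\eta\sigma_0+\tau\sigma_3)^{-1}$, the intermediate terms $\tfrac12(\mathcal C_\zeta+\mathcal C_{\bar\zeta})$ cancel exactly as in~\eqref{Birman_Schwinger_kernel}, leaving
\begin{equation*}
\Theta-M_z=-\Lambda(\eta\sigma_0+\tau\sigma_3)^{-1}\big(\sigma_0+(\eta\sigma_0+\tau\sigma_3)\mathcal C_z\big)\Lambda.
\end{equation*}
Here, however, the crucial difference from the non-critical case is that $\dom\Theta$ is no longer $H^1(\Sigma;\CC^2)$; since $\Lambda:H^{1/2}(\Sigma;\CC^2)\to H^{-1/2}(\Sigma;\CC^2)$ is an isomorphism and $\Theta-M_z:\dom\Theta\to L^2(\Sigma;\CC^2)$ is bijective, the bijectivity of $\Theta-M_z$ only transfers to the statement that $\sigma_0+(\eta\sigma_0+\tau\sigma_3)\mathcal C_z$ is a bijection from $H^{1/2}(\Sigma;\CC^2)$ onto $H^{-1/2}(\Sigma;\CC^2)$ (not a bijection within $H^{1/2}$), with bounded inverse between these spaces by the open mapping theorem.

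With this inverse in hand I would substitute $G_z=\Phi_z\Lambda$, $G_{\bar z}^*=\Lambda\Phi'_{\bar z}$, where now $\Phi_z:H^{-1/2}(\Sigma;\CC^2)\to L^2(\RR^2;\CC^2)$ and $\Phi'_{\bar z}:L^2(\RR^2;\CC^2)\to H^{1/2}(\Sigma;\CC^2)$, and read off exactly as in~\eqref{Birman_Schwinger_kernel1}
\begin{equation*}
G_z(\Theta-M_z)^{-1}G_{\bar z}^*=-\Phi_z\big(\sigma_0+(\eta\sigma_0+\tau\sigma_3)\mathcal C_z\big)^{-1}(\eta\sigma_0+\tau\sigma_3)\Phi'_{\bar z},
\end{equation*}
the compositions making sense because $(\eta\sigma_0+\tau\sigma_3)\Phi'_{\bar z}$ lands in $H^{1/2}(\Sigma;\CC^2)$ and the inverse maps $H^{1/2}$ back to $H^{-1/2}$ on which $\Phi_z$ acts. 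This yields~\eqref{krein_critical}. The main obstacle, and the only genuinely new point compared with Theorem~\ref{theorem_self_adjoint_noncritical}, is the bookkeeping of the loss of regularity: one must be careful that $\dom\Theta\subsetneq H^1(\Sigma;\CC^2)$ forces the Birman--Schwinger operator to be read as a map $H^{1/2}\to H^{-1/2}$ rather than as an operator on $H^{1/2}$, and correspondingly that the existence of a \emph{bounded} inverse $H^{1/2}(\Sigma;\CC^2)\to H^{-1/2}(\Sigma;\CC^2)$ is what is actually being asserted. Everything else is a direct transcription of the non-critical argument, invoking Lemma~\ref{self-theta}, Lemma~\ref{lem23}, Proposition~\ref{prop-bgamma}(i), and Theorem~\ref{theorem_boundary_triple_abstract}.
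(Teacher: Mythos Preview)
Your approach is essentially identical to the paper's: invoke Lemma~\ref{self-theta} and Theorem~\ref{theorem_boundary_triple_abstract} for (essential) self-adjointness, use Lemma~\ref{lem23} to identify the $H^1$-restriction with $\ker(\Gamma_1-\Theta_1\Gamma_0)$, and repeat the computation~\eqref{Birman_Schwinger_kernel}--\eqref{Birman_Schwinger_kernel1} for the Krein formula while tracking the loss of regularity. One slip to fix: $\Lambda\in\Psi_\Sigma^{1/2}$ is an isomorphism $H^s\to H^{s-1/2}$, not $H^{1/2}\to H^{-1/2}$; correspondingly the forward operator $\sigma_0+(\eta\sigma_0+\tau\sigma_3)\mathcal C_z$ is a bijection from $\Lambda(\dom\Theta)\subset H^{-1/2}$ onto $H^{1/2}$, so that its \emph{inverse} is bounded $H^{1/2}\to H^{-1/2}$---exactly as you state correctly two sentences later and as the theorem asserts.
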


\begin{proof}
First, according to Theorem~\ref{theorem_boundary_triple_abstract} the self-adjointness of $\Theta$ in $L^2(\Sigma; \mathbb{C}^2)$
implies the self-adjointness of $A_{\eta, \tau}$ in $L^2(\RR^2;\CC^2)$, and the essential self-adjointness of $\Theta_1=\Theta\upharpoonright H^1(\Sigma;\CC^2)$ 
in $L^2(\Sigma; \mathbb{C}^2)$ implies the essential self-adjointness of the restriction of $A_{\eta, \tau}$ to $\dom A_{\eta, \tau} \cap H^1(\RR^2\setminus\Sigma; \CC^2)$ 
in $L^2(\RR^2;\CC^2)$. For the latter observation we have also used that
by Lemma~\ref{lem23}
\begin{equation*}
  S^* \upharpoonright \ker (\Gamma_1 - \Theta_1 \Gamma_0) = A_{\eta, \tau} \upharpoonright \big( \dom A_{\eta, \tau} \cap H^1(\RR^2\setminus\Sigma; \CC^2) \big).
\end{equation*}

It remains to verify the Krein type resolvent formula in~\eqref{krein_critical}. By Theorem~\ref{theorem_boundary_triple_abstract}  we have 
that $\Theta - M_z$ is boundedly invertible in $L^2(\Sigma; \mathbb{C}^2)$ and
\begin{equation*}
  (A_{\eta, \tau} - z)^{-1} = (A_0 - z)^{-1} + G_z \big( \Theta - M_z \big)^{-1} G_{\Bar{z}}^*.
\end{equation*}
Taking the special form of $\Theta$ and $M_z = \Lambda \big(\mathcal{C}_z -\frac{1}{2} \big(\mathcal{C}_\zeta+\mathcal{C}_{\Bar\zeta} \big) \big)\Lambda$ 
into account we find with a similar calculation as in~\eqref{Birman_Schwinger_kernel}-\eqref{Birman_Schwinger_kernel1} that
\begin{equation*} 
  \begin{split}
    (\Theta - M_z)^{-1}
    &= -\Lambda^{-1}  \big( \sigma_0 + (\eta \sigma_0 + \tau \sigma_3) \mathcal{C}_z \big)^{-1} (\eta \sigma_0 + \tau \sigma_3) \Lambda^{-1}.
  \end{split}
\end{equation*}
As $(\Theta - M_z)^{-1}$ is bounded in $L^2(\Sigma; \mathbb{C}^2)$ we deduce that $( \sigma_0 + (\eta \sigma_0 + \tau \sigma_3) \mathcal{C}_z )^{-1}$ is bounded from $H^{\frac{1}{2}}(\Sigma; \mathbb{C}^2)$ to $H^{-\frac{1}{2}}(\Sigma; \mathbb{C}^2)$.
Using $G_z = \Phi_z \Lambda$ and $G_{\Bar{z}}^* = \Lambda \Phi_{\Bar{z}}'$ we get
\begin{equation*} 
  \begin{split}
    G_z \big( \Theta - M_z \big)^{-1} G_{\Bar{z}}^* &= - \Phi_z \Lambda \Lambda^{-1} \big( \sigma_0 + (\eta \sigma_0 + \tau \sigma_3) \mathcal{C}_z \big)^{-1} (\eta \sigma_0 + \tau \sigma_3) \Lambda^{-1} \Lambda \Phi_{\Bar{z}}' \\
    &= - \Phi_z \big( \sigma_0 + (\eta \sigma_0 + \tau \sigma_3) \mathcal{C}_z \big)^{-1} (\eta \sigma_0 + \tau \sigma_3)  \Phi_{\Bar{z}}',
  \end{split}
\end{equation*}
and thus~\eqref{krein_critical}.
\end{proof}

In the next proposition we analyze the essential spectrum of the self-adjoint operator $\Theta$.
Note that our assumption $\eta^2-\tau^2=4$ implies $|\tau|<|\eta|$, and hence 
$-\frac{\tau}{\eta}\,m\in (-|m|,|m|)$.

\begin{prop}\label{prop54}
Let $\eta, \tau \in \mathbb{R}$ be such that $\eta^2 - \tau^2 = 4$ and let $m \neq 0$. Then for $z\in (-|m|,|m|)$ one has
$0\in \spec_\ess (M_z-\Theta)$  if and only if $z=-\frac{\tau}{\eta}\,m$.
\end{prop}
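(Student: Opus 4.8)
The plan is to reduce the question to the invertibility of a single scalar periodic pseudodifferential operator on $\Sigma$ obtained via a Schur complement, exactly in the spirit of the computation carried out in the proof of Lemma~\ref{self-theta}. First I would recall that, by Proposition~\ref{prop23}, $M_z-\Theta = \Lambda\bigl(\frac1{\eta^2-\tau^2}(\eta\sigma_0-\tau\sigma_3)+\mathcal{C}_z\bigr)\Lambda$, and since $\Lambda$ is an isomorphism between Sobolev spaces of appropriate orders, $0\in\spec_\ess(M_z-\Theta)$ in $L^2(\Sigma;\CC^2)$ is equivalent to $0\in\spec_\ess(N_z)$ where $N_z$ is the maximal realization in $L^2(\Sigma;\CC^2)$ of the periodic pseudodifferential operator
\[
n_z = \Lambda\Bigl(\tfrac1{\eta^2-\tau^2}(\eta\sigma_0-\tau\sigma_3)+\mathcal{C}_z\Bigr)\Lambda.
\]
Using the representation \eqref{Lambda_C_z_Lambda} of $\Lambda\mathcal{C}_z\Lambda$ from Proposition~\ref{proposition_C_z} and $\eta^2-\tau^2=4$ (so $\frac1{\eta^2-\tau^2}(\eta\mp\tau)=\frac14(\eta\pm\tau)^{-1}\cdot(\eta\mp\tau)(\eta\pm\tau)=\frac{\eta\pm\tau}{4}$ on the diagonal after simplification), the operator $n_z$ becomes, modulo $\Psi^{-1}_\Sigma$, a $2\times2$ block operator of the form
\[
n_z \equiv \frac12\begin{pmatrix} \bigl(\tfrac{2}{\eta+\tau}+\tfrac{\ell}{2\pi}(z+m)\bigr)\Lambda^2 & \Lambda C_\Sigma\overline{T}\Lambda \\ \Lambda T C_\Sigma'\Lambda & \bigl(\tfrac{2}{\eta-\tau}+\tfrac{\ell}{2\pi}(z-m)\bigr)\Lambda^2\end{pmatrix}\pmod{\Psi^{-1}_\Sigma}.
\]

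Next I would form the Schur complement with respect to the $(1,1)$-entry, just as in \eqref{factorization_xi}. The upper-left corner is invertible as long as its ``symbol coefficient'' $a_+ := \tfrac{2}{\eta+\tau}+\tfrac{\ell}{2\pi}(z+m)$ is nonzero; one then gets, using $C_\Sigma' C_\Sigma - \one\in\Psi^{-\infty}_\Sigma$ and $T\overline T=\one$, that the Schur complement equals, modulo $\Psi^{-1}_\Sigma$,
\[
\Bigl(a_- - \frac{1}{a_+}\Bigr)\Lambda^2,\qquad a_- := \tfrac{2}{\eta-\tau}+\tfrac{\ell}{2\pi}(z-m),
\]
which is an operator of order $1$ and hence has empty essential spectrum whenever its coefficient $a_- - 1/a_+$ is nonzero. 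By Proposition~\ref{prop-block} applied to the factorization, $0\in\spec_\ess(N_z)$ is then equivalent to $0\in\spec_\ess$ of either the $(1,1)$-block or the Schur complement, i.e.\ to $a_+=0$ or $a_+a_- - 1 = 0$ (the latter covering the case where the Schur complement degenerates, and one checks $a_+=0$ separately forces $0$ into the essential spectrum directly). A short algebraic computation — expanding $a_+ a_- - 1$ and using $\eta^2-\tau^2=4$, i.e.\ $\frac{2}{\eta+\tau}\cdot\frac{2}{\eta-\tau}=\frac{4}{\eta^2-\tau^2}=1$ — shows that $a_+a_- - 1 = \frac{\ell}{2\pi}\bigl(\frac{2}{\eta-\tau}(z+m)+\frac{2}{\eta+\tau}(z-m)\bigr) + \bigl(\frac{\ell}{2\pi}\bigr)^2(z^2-m^2)$, and the linear-in-$z$ part simplifies to $\frac{\ell}{2\pi}\cdot\frac{2}{\eta^2-\tau^2}\bigl((\eta+\tau)(z+m)+(\eta-\tau)(z-m)\bigr)=\frac{\ell}{2\pi}(\eta z+\tau m)\cdot\frac12$. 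Pulling out the common factor, one finds that for $z\in(-|m|,|m|)$ the condition $a_+a_--1=0$ — together with the separate check of $a_+=0$ — is equivalent to $\eta z + \tau m = 0$, that is $z = -\frac{\tau}{\eta}m$; one verifies the $z^2-m^2$ term does not produce a spurious second solution inside $(-|m|,|m|)$ because $z^2-m^2<0$ there forces the remaining factor to have a definite sign.

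The main obstacle I anticipate is bookkeeping the lower-order $\Psi^{-1}_\Sigma$ (and $\Psi^{-\infty}_\Sigma$) remainders carefully enough through the Schur factorization so that one may legitimately conclude that they do not affect the essential spectrum: operators in $\Psi^{-1}_\Sigma$ are compact from $H^s$ to $H^s$, hence an operator of order $1$ perturbed by an operator of order $\le 0$ relative to it still has empty essential spectrum, and the Schur-complement identity \eqref{schur-fact}–\eqref{def_Schur_complement} together with Proposition~\ref{prop-block} transfers ``$0\in\spec_\ess$'' faithfully — but one must make sure the domains match (the $(1,1)$-block must be invertible as an operator with domain $H^1(\Sigma)$, which holds precisely when $a_+\neq0$, and one should handle the exceptional locus $a_+=0$, $a_-=0$ by hand). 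Once these technical points are in place, the essential-spectrum condition collapses to the single scalar equation $\eta z+\tau m=0$, giving $z=-\frac{\tau}{\eta}m$ as claimed.
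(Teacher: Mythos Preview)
Your overall strategy (Schur complement plus pseudodifferential bookkeeping) is exactly right, and it is the paper's strategy as well. However, there is a concrete error in your matrix representation that makes the computation go wrong.

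From \eqref{Lambda_C_z_Lambda} the diagonal blocks of $\Lambda\mathcal{C}_z\Lambda$ are $\dfrac{\ell}{4\pi}(z\pm m)\,\one$, i.e.\ \emph{scalar multiples of the identity}, hence operators in $\Psi^{0}_\Sigma$. They are \emph{not} multiples of $\Lambda^2$. Consequently the $(1,1)$-entry of $n_z$ is, modulo $\Psi^{-1}_\Sigma$,
\[
\frac{1}{\eta+\tau}\,\Lambda^2+\frac{\ell}{4\pi}(z+m)\,\one,
\]
and this is \emph{not} equal to $\bigl(\tfrac{1}{\eta+\tau}+\tfrac{\ell}{4\pi}(z+m)\bigr)\Lambda^2$ modulo $\Psi^{-1}_\Sigma$: the difference $\tfrac{\ell}{4\pi}(z+m)(\Lambda^2-\one)$ is of order~$1$. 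This single misreading contaminates the whole Schur computation.

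Why it matters: with the correct diagonal entries, the \emph{leading} ($\Psi^{1}$) part of the Schur complement is
\[
\frac{1}{\eta-\tau}\,\Lambda^2-\frac{\eta+\tau}{4}\,\Lambda T C_\Sigma' C_\Sigma\overline{T}\Lambda
=\Big(\frac{1}{\eta-\tau}-\frac{\eta+\tau}{4}\Big)\Lambda^2+\Psi^{-\infty}_\Sigma=0+\Psi^{-\infty}_\Sigma
\]
because $\eta^2-\tau^2=4$. So the Schur complement is a \emph{bounded} operator, and whether $0$ lies in its essential spectrum is decided by its $\Psi^{0}$ part --- a scalar multiple of the identity plus a compact remainder. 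To extract that scalar you must expand the inverse of the $(1,1)$-block via the resolvent identity
\[
\Big(\Lambda^2+a\,\one\Big)^{-1}=\Lambda^{-2}-a\,\Lambda^{-2}\big(\Lambda^2+a\,\one\big)^{-1},
\]
feed this into the Schur complement, and collect the $\Psi^{0}$ contributions; this yields a single linear equation in $z$ (the $\ell$ enters only as an overall nonzero factor), whose unique solution is $z=-\frac{\tau}{\eta}m$.

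A quick sanity check on your version: your condition $a_+a_- -1=0$ with $a_\pm=\tfrac{2}{\eta\pm\tau}+\tfrac{\ell}{2\pi}(z\pm m)$ is a \emph{quadratic} in $z$ whose roots depend on the length $\ell$ of $\Sigma$. For $\eta=2$, $\tau=0$ it reads $\tfrac{\ell}{4\pi}z^2+z-\tfrac{\ell m^2}{4\pi}=0$, whose roots are generically nonzero --- whereas the correct answer is $z=0$ independently of $\ell$. This already shows the step $\text{const}\cdot\one\mapsto\text{const}\cdot\Lambda^2$ cannot be right.

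So: keep your Schur-complement plan, but write the diagonal entries as $\tfrac{1}{\eta\pm\tau}\Lambda^2+\tfrac{\ell}{4\pi}(z\pm m)\one$, choose $c_0$ large to make the $(1,1)$-block invertible for all $z\in(-|m|,|m|)$ (so you never need the separate ``$a_+=0$'' case), and track the order-$0$ terms through the resolvent expansion. The compact remainders in $\Psi^{-1}_\Sigma$ then drop out as you anticipated.
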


\begin{proof}
Throughout the proof we assume that $z\in (-|m|,|m|)$. In particular, $M_z$ is a bounded self-adjoint operator in $L^2(\Sigma;\CC^2)$. 
Recall that 
\[
M_z-\Theta = \Lambda\frac1{\eta^2 - \tau^2}(\eta\sigma_0 - \tau\sigma_3)\Lambda
+\Lambda \mathcal{C}_z \Lambda,
\]
and using Proposition~\ref{proposition_C_z} we decompose this self-adjoint operator in $M_z-\Theta=\Xi_1+\Xi_2$, where
\[
\Xi_1:=
\begin{pmatrix}
\dfrac{1}{\eta+\tau}\,\Lambda^2+\dfrac{\ell}{4\pi} (z+m)\one
& \dfrac{1}{2}\Lambda C_\Sigma \overline{T} \Lambda\\
\dfrac{1}{2}  \Lambda T C_\Sigma' \Lambda & 
\dfrac{1}{\eta-\tau}\,\Lambda^2+\dfrac{\ell}{4\pi} (z-m)\one
\end{pmatrix}
\]
and $\Xi_2\in \Psi^{-1}_\Sigma$ is a compact self-adjoint operator in $L^2(\Sigma; \mathbb{C}^2)$. We note that $\Xi_1$ defined on $\dom(M_z-\Theta)=\dom\Theta$
is a self-adjoint operator in $L^2(\Sigma;\CC^2)$. 
It follows that
$\spec_\ess(M_z-\Theta)=\spec_\ess \Xi_1$ and, in particular, 
\begin{equation*}
0\in\spec_\ess (M_z-\Theta)
\text{ if and only if } 0 \in \spec_\ess \Xi_1.
\end{equation*}

In the following we will show that $0\in\spec_\ess \Xi_1$ if and only if $z=-\frac{\tau}{\eta}\,m$. For this, the Schur complement of $\Xi_1$ and Proposition~\ref{prop-block}
will be used.
To proceed, we shall use the operator $\Lambda \in \Psi^{\frac{1}{2}}_\Sigma$ from \eqref{def_Lambda} (see also \eqref{def_L}). Recall also that 
$\Lambda^2 \geq c_0^2$ for $c_0>0$. Now we choose $c_0$ such that $c_0^2 > \frac{|m| \ell}{2 \pi} |\eta+\tau|$. Then the upper left corner of $\Xi_1$, 
\begin{equation*}
  \dfrac{1}{\eta+\tau}\,\Lambda^2+\dfrac{\ell}{4\pi} (z+m)\one,
\end{equation*}
is boundedly invertible in $L^2(\Sigma)$. We leave it to the reader to check that the other assumptions in Proposition~\ref{prop-block}
are also satisfied for the block operator matrix $\Xi_1$. Therefore,
we have $0\in \spec_\ess \Xi_1$ if and only if $0\in\spec_\ess \mathcal{S}$,
where $\mathcal{S} := \mathcal{S}(\Xi_1)$ is the Schur complement
\begin{align*}
\mathcal{S}=&\dfrac{1}{\eta-\tau}\,\Lambda^2+\dfrac{\ell(z-m)}{4\pi}\one
 -\dfrac{\eta+\tau}{4}\,\Lambda T C_\Sigma' \Lambda
\left( \Lambda^2+\dfrac{\ell (z+m)(\eta+\tau)}{4\pi}\one\right)^{-1}
\Lambda C_\Sigma \overline{T} \Lambda.
\end{align*}
To simplify the last summand in the above expression of $\mathcal{S}$ we use the identity
\begin{equation}
  \label{reseq1}
( \Lambda^2+a \one)^{-1} =\Lambda^{-2}
-a \Lambda^{-1}(\Lambda^2+a\one)^{-1}\Lambda^{-1}=\Lambda^{-2}
-a \Lambda^{-2}(\Lambda^2+a\one)^{-1}
\end{equation}
and rewrite $\mathcal{S}=\mathcal{S}_1+\mathcal{S}_2$ with
\begin{equation*}
\mathcal{S}_1=\dfrac{1}{\eta-\tau}\,\Lambda^2+\dfrac{\ell (z-m)}{4\pi}\one-\dfrac{\eta+\tau}{4}\,\Lambda T C_\Sigma' C_\Sigma \overline{T} \Lambda
\end{equation*}
and
\begin{equation*}
\mathcal{S}_2=\dfrac{(\eta+\tau)^2}{4}\cdot\dfrac{\ell (z+m)}{4\pi}
\Lambda  T C_\Sigma'
\left( \Lambda^2+\dfrac{\ell (z+m)(\eta+\tau)}{4\pi}\one\right)^{-1}
C_\Sigma \overline{T} \Lambda.
\end{equation*}
By Proposition~\ref{proposition_Cauchy_transform} one has
$C_\Sigma' C_\Sigma=\one+K_1$ with $K_1\in\Psi^{-\infty}_\Sigma$, so 
\[
\dfrac{\eta+\tau}{4}\,\Lambda T C_\Sigma' C_\Sigma \overline{T} \Lambda=
\dfrac{\eta+\tau}{4}\,\Lambda^2 + K_2
\]
with $K_2 \in \Psi^{-\infty}_\Sigma$. This gives because of $\eta^2 - \tau^2 = 4$
\begin{align*}
\mathcal{S}_1&=
\dfrac{1}{\eta-\tau}\,\Lambda^2+\dfrac{\ell (z-m)}{4\pi}\one
-\dfrac{\eta+\tau}{4}\,\Lambda^2-K_2=\dfrac{\ell (z-m)}{4\pi}\one-K_2.
\end{align*}
In order to deal with $\mathcal{S}_2$ we use again the identity \eqref{reseq1}, which gives
\begin{align*}
\dfrac{4}{(\eta+\tau)^2}\cdot\dfrac{4\pi}{\ell (z+m)}\mathcal{S}_2&=
\Lambda T C_\Sigma' \left( \Lambda^2+\dfrac{\ell (z+m)(\eta+\tau)}{4\pi}\one\right)^{-1}
C_\Sigma \overline{T} \Lambda=K_3+K_4,\\
\end{align*}
where
\begin{align*}
K_3&=\Lambda T C_\Sigma' \Lambda^{-2} C_\Sigma \overline{T} \Lambda,\\
K_4&=-\dfrac{\ell (z+m)(\eta+\tau)}{4\pi}
\Lambda T C_\Sigma' \Lambda^{-2}\left( \Lambda^2+\dfrac{\ell (z+m)(\eta+\tau)}{4\pi}\one\right)^{-1}
C_\Sigma \overline{T} \Lambda.
\end{align*}
Using Proposition~\ref{proposition_properties_PPDO} one finds that $K_4 \in \Psi^{-1}_\Sigma$ 
and hence this operator is compact in $L^2(\Sigma; \mathbb{C}^2)$. In order to simplify $K_3$ we note first that 
\[
K_5:=T C_\Sigma' \Lambda^{-2}-\Lambda^{-2} T C_\Sigma' \in \Psi^{-2}_\Sigma
\]
by
Proposition~\ref{proposition_properties_PPDO}~(ii). Hence,
\begin{equation*}
  \begin{split}
    K_3=\Lambda \Lambda^{-2} T C_\Sigma' C_\Sigma \overline{T} \Lambda + \Lambda K_5 C_\Sigma\overline{T} \Lambda
    =: \Lambda \Lambda^{-2} T C_\Sigma' C_\Sigma \overline{T} \Lambda + K_6 
  \end{split}
\end{equation*}
with $K_6 \in \Psi_\Sigma^{-1}$.
Using again $C_\Sigma' C_\Sigma-\one\in\Psi^{-\infty}$, see Proposition~\ref{proposition_Cauchy_transform}, we arrive 
at $K_3=\one + K_7$ with $K_7 \in \Psi^{-1}_\Sigma$. With this we find
\[
\mathcal{S}_2=\dfrac{(\eta+\tau)^2}{4}\cdot\dfrac{\ell (z+m)}{4\pi} (K_3+K_4)=\dfrac{(\eta+\tau)^2}{4}\cdot\dfrac{\ell (z+m)}{4\pi}\one + K_8
\]
with $K_8 \in \Psi^{-1}_\Sigma$.
Using this in the expression of the Schur complement $\mathcal{S}$ we conclude, with some $K_9\in \Psi^{-1}_\Sigma$, that
\begin{align*}
\mathcal{S}&=\mathcal{S}_1+\mathcal{S}_2\\
&=
\Big(
\dfrac{\ell (z-m)}{4\pi}
+\dfrac{(\eta+\tau)^2}{4}\cdot\dfrac{\ell (z+m)}{4\pi}
\Big)\one + K_9\\
&=\dfrac{\ell}{4\pi}\Big[
\Big( \dfrac{(\eta+\tau)^2}{4}+1\Big)\, z + \Big( \dfrac{(\eta+\tau)^2}{4}-1\Big)\,m
\,\Big]\one + K_9.
\end{align*}
As $K_9$ is compact and symmetric, it does not influence the essential spectrum, and we have
\[
0\in \spec_\ess \mathcal{S} \text{ if and only if } z=-\dfrac{(\eta+\tau)^2-4}{(\eta+\tau)^2+4}\,m.
\]
With $\eta^2 - \tau^2 = 4$ we can simplify the last expression to
\[
\dfrac{(\eta+\tau)^2-4}{(\eta+\tau)^2+4}=\dfrac{\eta^2+\tau^2+2\eta\tau-\eta^2 +\tau^2}{\eta^2+\tau^2+2\eta\tau+\eta^2 -\tau^2}
=\dfrac{2\tau^2+2\eta\tau}{2\eta^2+2\eta\tau}=\dfrac{2\tau(\eta+\tau)}{2\eta(\eta+\tau)}=\dfrac{\tau}{\eta}.
\]
Hence, $0\in \spec_\ess \mathcal{S}$  if and only if $z=- \frac{\tau}{\eta}\,m$.
This finishes the proof.
\end{proof}


We are now ready to describe the spectral properties of $A_{\eta, \tau}$ for critical interaction strengths. Compared to 
Proposition~\ref{proposition_spectral_properties_noncritical}, the following theorem shows that the spectral properties of $A_{\eta, \tau}$ differ 
significantly from the non-critical case.

\begin{thm} \label{theorem_spectrum_critical}
  Let $\eta, \tau \in \mathbb{R}$ with $\eta^2 - \tau^2 = 4$. Then the following holds:
  \begin{itemize}
    \item[\textup{(i)}] The essential spectrum of $A_{\eta,\tau}$ is
    \[
		\spec_\ess  A_{\eta, \tau}=\big(-\infty,-|m|\big]\,\cup\,
\big\{ -\tfrac{\tau}{\eta}\,m\big\}\,\cup\,\big[|m|,+\infty\big).
\] 
In particular, for $m=0$ we have $\spec A_{\eta, \tau} =\spec_\ess A_{\eta, \tau}= \RR$.
    \item[\textup{(ii)}] Assume $m\neq 0$. Then $z \notin \spec_\textup{ess} A_{\eta, \tau}$ is a discrete eigenvalue of $A_{\eta, \tau}$ if and only if there exists 
    $\varphi \in H^{-\frac{1}{2}}(\Sigma; \mathbb{C}^2)$ such that $\big( \sigma_0 + (\eta \sigma_0 + \tau \sigma_3) \mathcal{C}_z \big) \varphi = 0$.
    \item[\textup{(iii)}] For all $s > 0$ we have $\dom A_{\eta, \tau} \not\subset H^s(\mathbb{R}^2 \setminus \Sigma; \mathbb{C}^2)$.
  \end{itemize}
\end{thm}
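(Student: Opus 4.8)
The plan is to read off all three assertions from the boundary triple picture of Proposition~\ref{prop-bgamma}, applied to the operator $\Theta$ of~\eqref{def_Theta} and its Birman--Schwinger operator $\Theta-M_z$, using the results already established in Section~\ref{sec-crit}.

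\emph{Part (i).} Proposition~\ref{prop-ess} already gives $(-\infty,-|m|]\cup[|m|,+\infty)\subset\spec_\ess A_{\eta,\tau}$, so only the gap $(-|m|,|m|)$ has to be analysed. Every such $z$ lies in $\res A_0$, and Theorem~\ref{theorem_boundary_triple_abstract}(iii) (applied with $\Pi=\mathrm{id}$, since here $\mathcal{G}_\Pi=L^2(\Sigma;\mathbb{C}^2)$) says $z\in\spec_\ess A_{\eta,\tau}$ if and only if $0\in\spec_\ess(\Theta-M_z)$. By Proposition~\ref{prop54} the latter holds exactly when $z=-\tfrac{\tau}{\eta}m$ (and $\eta^2-\tau^2=4$ forces $|\tau/\eta|<1$, so this point really lies in the gap). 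Hence $\spec_\ess A_{\eta,\tau}\cap(-|m|,|m|)=\{-\tfrac{\tau}{\eta}m\}$, which combined with Proposition~\ref{prop-ess} yields the stated formula. For $m=0$ the extra point is $0$ and the formula collapses to $\mathbb{R}$; since always $\spec A_{\eta,\tau}\supseteq\spec_\ess A_{\eta,\tau}$, this forces $\spec A_{\eta,\tau}=\mathbb{R}$.

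\emph{Part (ii).} Fix $z\notin\spec_\ess A_{\eta,\tau}$; by part (i) this means $z\in(-|m|,|m|)\setminus\{-\tfrac{\tau}{\eta}m\}\subset\res A_0$, and $z$ is a discrete eigenvalue precisely when $z\in\spec_\textup{p}A_{\eta,\tau}$. By Theorem~\ref{theorem_boundary_triple_abstract}(ii) this happens if and only if $0\in\spec_\textup{p}(\Theta-M_z)$. In the proof of Theorem~\ref{theorem_self_adjoint_critical} one has the identity $\Theta-M_z=-\Lambda(\eta\sigma_0+\tau\sigma_3)^{-1}\bigl(\sigma_0+(\eta\sigma_0+\tau\sigma_3)\mathcal{C}_z\bigr)\Lambda$ (note $\det(\eta\sigma_0+\tau\sigma_3)=\eta^2-\tau^2=4\neq0$). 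Since $\Lambda$ is an isomorphism from $L^2(\Sigma;\mathbb{C}^2)$ onto $H^{-1/2}(\Sigma;\mathbb{C}^2)$, the substitution $\varphi:=\Lambda\psi$ identifies $\ker(\Theta-M_z)\subset\dom\Theta$ with $\{\varphi\in H^{-1/2}(\Sigma;\mathbb{C}^2):\bigl(\sigma_0+(\eta\sigma_0+\tau\sigma_3)\mathcal{C}_z\bigr)\varphi=0\}$; here one uses that $\mathcal{C}_z\in\Psi_\Sigma^0$ so that the last equation is meaningful in $H^{-1/2}$, and conversely that $\bigl(\sigma_0+(\eta\sigma_0+\tau\sigma_3)\mathcal{C}_z\bigr)\varphi=0$ forces $\theta\psi=M_z\psi\in L^2(\Sigma;\mathbb{C}^2)$, i.e.\ $\psi\in\dom\Theta$. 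This gives the claimed characterization.

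\emph{Part (iii).} I would first record that $\spec_\ess\Theta\neq\emptyset$: by Lemma~\ref{self-theta} $\Theta$ is self-adjoint, and the Schur-complement computation from the proof of Proposition~\ref{prop54} applies verbatim to $-\Theta$ in place of $M_z-\Theta$ (using $\tfrac1{\eta\pm\tau}=\tfrac{\eta\mp\tau}{4}$, again a consequence of $\eta^2-\tau^2=4$, so that modulo a compact self-adjoint correction $-\Theta$ has exactly the block form of $\Xi_1$ with $z$ replaced by $\mathrm{Re}\,\zeta$), showing that $\spec_\ess\Theta$ is a single real point and in particular nonempty. Hence $\Theta$ has no compact resolvent, so $\dom\Theta$ with its graph norm is not compactly embedded into $L^2(\Sigma;\mathbb{C}^2)$; if one had $\dom\Theta\subset H^s(\Sigma;\mathbb{C}^2)$ for some $s>0$ then, by the closed graph theorem, the inclusion $\dom\Theta\hookrightarrow H^s(\Sigma;\mathbb{C}^2)$ would be bounded and hence $\dom\Theta\hookrightarrow L^2(\Sigma;\mathbb{C}^2)$ compact, a contradiction. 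Thus for each $s>0$ there is $\xi_s\in\dom\Theta\setminus H^s(\Sigma;\mathbb{C}^2)$. By the surjectivity of $f\mapsto(\Gamma_0 f,\Gamma_1 f)$ together with the boundary condition in~\eqref{domb1}, one has $\{\Gamma_0 f:f\in\dom A_{\eta,\tau}\}=\dom\Theta$, so we may pick $f_s\in\dom A_{\eta,\tau}$ with $\Gamma_0 f_s=\xi_s$. A fractional-order version of Lemma~\ref{lem23} — if $f\in H^s(\mathbb{R}^2\setminus\Sigma;\mathbb{C}^2)$ with $0<s\le1$ then $\Gamma_0 f\in H^s(\Sigma;\mathbb{C}^2)$, which follows from the mapping properties of the trace on $H(\sigma,\Omega_\pm)$ (cf.\ Lemma~\ref{lemma_trace} and \cite{BFSV}) together with $\Lambda^{-1}\colon H^{s-1/2}(\Sigma;\mathbb{C}^2)\to H^s(\Sigma;\mathbb{C}^2)$ — then forces $f_s\notin H^s(\mathbb{R}^2\setminus\Sigma;\mathbb{C}^2)$ for $s\in(0,1]$; the case $s>1$ is immediate since $H^s\subset H^1$.

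\emph{Main obstacle.} Parts (i) and (ii) are essentially a transcription of the abstract boundary-triple results once Proposition~\ref{prop54} and the resolvent factorization from Theorem~\ref{theorem_self_adjoint_critical} are available. The genuinely delicate step is part (iii): besides pinning down $\spec_\ess\Theta\neq\emptyset$ (where the algebraic coincidence $\eta^2-\tau^2=4$ is again essential), one has to transfer the loss of regularity from $\Sigma$ to $\mathbb{R}^2\setminus\Sigma$, and for $s\in(0,\tfrac12]$ this needs trace estimates for the first-order operator $\sigma\cdot\nabla$ beyond the classical trace theorem, which must be argued through the $H(\sigma,\Omega_\pm)$-trace theory.
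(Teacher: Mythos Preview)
Your arguments for (i) and (ii) are essentially identical to the paper's own proof: invoke Proposition~\ref{prop-ess} for the inclusion of $(-\infty,-|m|]\cup[|m|,+\infty)$, then Theorem~\ref{theorem_boundary_triple_abstract} together with Proposition~\ref{prop54} for the isolated point in the gap, and for (ii) rewrite $\Theta-M_z$ via the factorization~\eqref{Birman_Schwinger_kernel} and substitute $\varphi=\Lambda\psi$. Nothing to add there.

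For (iii), however, you take a substantially harder route than the paper, and your proof has a real gap. The paper's argument is a two-line contrapositive: the transmission condition in $\dom A_{\eta,\tau}$ does not involve $m$, so $\dom A_{\eta,\tau}$ is the same for all masses; hence one may assume $m\neq 0$, and then part~(i) gives $-\tfrac{\tau}{\eta}m\in\spec_\ess A_{\eta,\tau}\cap(-|m|,|m|)$, which is incompatible with Proposition~\ref{prop-discr} if $\dom A_{\eta,\tau}\subset H^s(\mathbb{R}^2\setminus\Sigma;\mathbb{C}^2)$ for some $s>0$. No boundary analysis, no fractional traces.

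Your approach instead tries to push the loss of regularity from $\dom\Theta$ on $\Sigma$ back to $\dom A_{\eta,\tau}$ on $\mathbb{R}^2\setminus\Sigma$. The step you flag as the ``main obstacle'' is indeed one: you need that $f\in\dom S^*\cap H^s(\mathbb{R}^2\setminus\Sigma;\mathbb{C}^2)$ for $0<s\le\tfrac12$ forces $\mathcal{T}^D_\pm f_\pm\in H^{s-\frac12}(\Sigma;\mathbb{C}^2)$, hence $\Gamma_0 f\in H^s(\Sigma;\mathbb{C}^2)$. For $s\le\tfrac12$ there is no classical trace on $H^s(\Omega_\pm)$ at all, so this cannot come from the ordinary trace theorem; one would have to prove an interpolation result for the $H(\sigma,\Omega_\pm)$-trace of Lemma~\ref{lemma_trace} between the endpoints $L^2\to H^{-\frac12}$ and $H^1\to H^{\frac12}$. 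That is plausible but is neither in the paper nor in the references you cite, and it is precisely the work the paper avoids by observing that $\dom A_{\eta,\tau}$ is $m$-independent and invoking Proposition~\ref{prop-discr}.
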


\begin{rem}
  Item~(ii) in the above theorem is slightly weaker as Proposition~\ref{proposition_spectral_properties_noncritical}~(ii), 
  since one has to search for eigenfunctions $\varphi$ of the Birman-Schwinger operator $\sigma_0 + (\eta \sigma_0 + \tau \sigma_3) \mathcal{C}_z$ 
  in the larger space $H^{-\frac{1}{2}}(\Sigma; \mathbb{C}^2)$. However, as there is no Sobolev regularity in $\dom A_{\eta, \tau}$ 
  the smoothness of the eigenfunctions of $\sigma_0 + (\eta \sigma_0 + \tau \sigma_3) \mathcal{C}_z$ can not be improved.
\end{rem}

\begin{proof}[Proof of Theorem~\ref{theorem_spectrum_critical}]
  (i) Proposition~\ref{prop-ess} implies the inclusion 
	\[
	\big(-\infty,-|m|\big]\,\cup\,\big[|m|,+\infty\big)\subset \spec_\ess  A_{\eta, \tau}.
	\]
	In addition, due to Theorem~\ref{theorem_boundary_triple_abstract} and Proposition~\ref{prop54}
	one has $\spec_\ess  A_{\eta, \tau} \,\cap \,\big(-|m|,|m|\big)=\big\{-\dfrac{\tau m}{\eta}\big\}$, which gives the claim.
		
  To prove item~(ii) we note first that by Theorem~\ref{theorem_boundary_triple_abstract} a point $z \in \res A_0$ is an eigenvalue of 
  $A_{\eta, \tau}$ if and only if zero is an eigenvalue of $\Theta - M_z$. Using a similar calculation as in~\eqref{Birman_Schwinger_kernel} 
  this shows that $z \in \res A_0$ is an eigenvalue of $A_{\eta, \tau}$ if and only if there exists $\psi \in \dom \Theta \subset L^2(\Sigma; \mathbb{C}^2)$ such that
  \begin{equation*}
    -\Lambda (\eta \sigma_0 + \tau \sigma_3)^{-1} \big( \sigma_0 + (\eta \sigma_0 + \tau \sigma_3) \mathcal{C}_z \big) \Lambda \psi = 0,
  \end{equation*}
  i.e. if and only if $\varphi := \Lambda \psi \in H^{-\frac{1}{2}}(\Sigma; \mathbb{C}^2)$ satisfies $\big( \sigma_0 + (\eta \sigma_0 + \tau \sigma_3) \mathcal{C}_z \big) \varphi = 0$. 
  
  Eventually, since $\dom A_{\eta, \tau}$ is independent of $m$, it suffices to prove statement~(iii) for $m \neq 0$. In this case the claim is a consequence of Proposition~\ref{prop-discr},
	as $\spec_\ess(A_{\eta, \tau}) \cap (-|m|, |m|) \neq \emptyset$. 
\end{proof}

Finally, we state several symmetry relations in the spectrum of $A_{\eta, \tau}$. The following proposition is the counterpart of Proposition~\ref{proposition_symmetry_relations_noncritical} for critical interaction strengths.

\begin{prop} \label{proposition_symmetry_relations_critical}
  Let $\eta, \tau \in \mathbb{R}$ with $\eta^2 - \tau^2 = 4$. Then the following holds:
  \begin{itemize}
    \item[\textup{(i)}] $z \in \spec_\textup{p} A_{\eta, \tau}$ if and only if $z \in \spec_\textup{p} A_{-\eta, -\tau}$.
    \item[\textup{(ii)}] $z \in \spec_\textup{p} A_{\eta, \tau}$ if and only if $(-z) \in \spec_\textup{p}A_{-\eta, \tau}$.
  \end{itemize}
\end{prop}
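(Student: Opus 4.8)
The plan is to establish both relations exactly as in the proof of Proposition~\ref{proposition_symmetry_relations_noncritical}, the only adjustment being that in the critical case $\dom A_{\eta,\tau}\not\subset H^1(\RR^2\setminus\Sigma;\CC^2)$, so the manipulations of the transmission condition have to be carried out for Dirichlet traces in $H^{-\frac12}(\Sigma;\CC^2)$ instead of $H^{\frac12}(\Sigma;\CC^2)$. This causes no difficulty, since every step only uses left multiplication by constant $2\times 2$ matrices (the Pauli matrices and $\eta\sigma_0\pm\tau\sigma_3$) and entrywise complex conjugation, and all of these act as bounded operators on $H^{-\frac12}(\Sigma;\CC^2)$.

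For part~(i) I would keep the unitary involution $U f = f_+\oplus(-f_-)$ on $L^2(\RR^2;\CC^2)$, which maps $\dom S^*$ onto itself, preserves $H(\sigma,\Omega_\pm)$, and commutes with the componentwise action of $-\rmi\sigma\cdot\nabla+m\sigma_3$. Rewriting the transmission condition in \eqref{eqn:defdiracdelta} for $f$ in terms of the traces of $Uf$ gives
\[
-\rmi(\sigma\cdot\nu)\bigl(\cT_+^D(Uf)_+ + \cT_-^D(Uf)_-\bigr)=\tfrac12(\eta\sigma_0+\tau\sigma_3)\bigl(\cT_+^D(Uf)_+ - \cT_-^D(Uf)_-\bigr),
\]
and multiplying by $(\eta\sigma_0+\tau\sigma_3)^{-1}=\tfrac14(\eta\sigma_0-\tau\sigma_3)$ (the invertibility and this explicit form use $\eta^2-\tau^2=4$) together with \eqref{anti_commutation} turns this into the transmission condition of $A_{-4\eta/(\eta^2-\tau^2),\,-4\tau/(\eta^2-\tau^2)}=A_{-\eta,-\tau}$. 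Hence $A_{\eta,\tau}=UA_{-\eta,-\tau}U$, so $A_{\eta,\tau}$ and $A_{-\eta,-\tau}$ are unitarily equivalent, which yields~(i) (and, in fact, the equality of the full spectra).

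For part~(ii) I would use the antiunitary charge conjugation $Cf=\sigma_1\overline f$, which satisfies $C^2=\one$. Taking complex conjugates in the transmission condition of \eqref{eqn:defdiracdelta}, multiplying by $\sigma_1$, and using \eqref{anti_commutation} with $\overline{\sigma_1}=\sigma_1$, $\overline{\sigma_2}=-\sigma_2$, $\overline{\sigma_3}=\sigma_3$, shows that $f\in\dom A_{\eta,\tau}$ if and only if $Cf\in\dom A_{-\eta,\tau}$; a direct computation with $-\rmi\sigma\cdot\nabla+m\sigma_3$ then gives the intertwining relation $CA_{\eta,\tau}=-A_{-\eta,\tau}C$. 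If $A_{\eta,\tau}f=zf$ with $f\neq0$, then $z\in\RR$ by self-adjointness (Theorem~\ref{theorem_self_adjoint_critical}), and applying $C$ gives $A_{-\eta,\tau}(Cf)=-z(Cf)$ with $Cf\neq0$; the converse is obtained by symmetry, proving~(ii). I do not expect any real obstacle here: the argument is a routine transcription of the non-critical case, and the only point requiring a moment's care is that these algebraic identities are now read off at the level of the $H^{-\frac12}$-traces rather than in $H^{\frac12}$.
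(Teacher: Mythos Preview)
Your argument is correct and leads to the same identities $A_{\eta,\tau}=UA_{-\eta,-\tau}U$ and $CA_{\eta,\tau}=-A_{-\eta,\tau}C$ as in the paper, but the route differs. The paper does not work directly with the full operators; instead it restricts to $A_{\eta,\tau}^1:=A_{\eta,\tau}\upharpoonright\bigl(\dom A_{\eta,\tau}\cap H^1(\RR^2\setminus\Sigma;\CC^2)\bigr)$, reuses verbatim the $H^1$-level computations from Proposition~\ref{proposition_symmetry_relations_noncritical} to obtain $A_{\eta,\tau}^1=UA_{-\eta,-\tau}^1 U$ and $CA_{\eta,\tau}^1=-A_{-\eta,\tau}^1 C$, and then passes to closures using the essential self-adjointness of $A_{\eta,\tau}^1$ established in Theorem~\ref{theorem_self_adjoint_critical}. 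This has the advantage that nothing needs to be re-checked at the $H^{-\frac12}$ level. Your direct approach avoids the closure step but requires the (routine) observation that multiplication by smooth matrix functions and entrywise complex conjugation are bounded on $H^{-\frac12}(\Sigma;\CC^2)$, and that $C$ and $U$ preserve $H(\sigma,\Omega_\pm)$ and commute with the extended trace maps $\cT_\pm^D$ of Lemma~\ref{lemma_trace}. One small correction: $\sigma\cdot\nu$ is not a constant matrix but a smooth matrix function on $\Sigma$; this does not affect your argument, since smooth multipliers lie in $\Psi_\Sigma^0$ and hence act boundedly on every $H^s(\Sigma;\CC^2)$.
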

\begin{proof}
  In the following set $A_{\eta, \tau}^{1} := A_{\eta, \tau} \upharpoonright (\dom A_{\eta, \tau} \cap H^1(\mathbb{R}^2 \setminus \Sigma; \mathbb{C}^2))$. Then by Theorem~\ref{theorem_self_adjoint_critical} the operator $A_{\eta, \tau}^{1}$ is essentially self-adjoint in $L^2(\mathbb{R}^2; \mathbb{C}^2)$ and, in particular, $\overline{A_{\eta, \tau}^{1}} = A_{\eta, \tau}$.

  (i) Consider the unitary and self-adjoint mapping
  \begin{equation*}
    U: L^2(\Omega_+; \mathbb{C}^2) \oplus L^2(\Omega_-; \mathbb{C}^2) \rightarrow L^2(\Omega_+; \mathbb{C}^2) \oplus L^2(\Omega_-; \mathbb{C}^2), \quad U (f_+ \oplus f_-) = f_+ \oplus(-f_-).
  \end{equation*}
  As in the proof of Proposition~\ref{proposition_symmetry_relations_noncritical}~(i) one verifies $A_{\eta, \tau}^{1} = U A_{-\eta, -\tau}^1 U$.
  By taking closures we find $A_{\eta, \tau} = U A_{-\eta, -\tau} U$ and hence the claim follows.
  
  (ii) Consider the nonlinear charge conjugation operator
  \begin{equation*}
    C f = \sigma_1 \overline{f}, \qquad f \in L^2(\mathbb{R}^2; \mathbb{C}^2).
  \end{equation*}
  Then $C^2 f = f$ for $f \in L^2(\mathbb{R}^2; \mathbb{C}^2)$ and in the same way as in the proof of 
  Proposition~\ref{proposition_symmetry_relations_noncritical}~(ii) one obtains $C A_{\eta, \tau}^{1} = -A_{- \eta, \tau}^1 C$.
  Taking closures leads to $C A_{\eta, \tau} = -A_{- \eta, \tau} C$, which implies (ii).
\end{proof}


\subsection{Case of several loops}\label{secsevloops}

To prove Theorem \ref{sevloops} we use similar constructions as in the case of one loop. We give some comments on necessary modifications in this subsection. Let $N\geq1$ and let $\Sigma_j, j\in\{1,\dots,N\},$ be non-intersecting $C^\infty$-smooth loops  with normals $\nu_j$.
We set $\Sigma:=\bigcup_{j=1}^N \Sigma_j$, and for $f \in H(\sigma,\RR^2\setminus\Sigma)$ we denote its Dirichlet traces from Lemma~\ref{lemma_trace}
on the two sides of $\Sigma_j$ by $\mathcal{T}_{\pm, j}^D f$,
where $-$ corresponds to the side to which  $\nu_j$ is directed.
The Sobolev spaces on $\Sigma$ are defined by $H^s(\Sigma):=\bigoplus_{j=1}^N H^s(\Sigma_j)$, and for $\varphi\in H^s(\Sigma)$
we denote by $\varphi_j$ its restriction on $\Sigma_j$. Furthermore, if $\Lambda_j$ denotes the isomorphism defined in~\eqref{def_Lambda} on $\Sigma_j$,
then we set $\Lambda:=\bigoplus_{j=1}^N\Lambda_j$. As in the case of one loop one starts with the symmetric operator $S := A_0 \upharpoonright H^1_0(\mathbb{R}^2 \setminus \Sigma; \mathbb{C}^2)$.
For $z \in \res A_0$ and $\varphi \in L^2(\Sigma; \mathbb{C}^2)$ we introduce
  \begin{equation*}
    \Phi_z \varphi (x) = \int_\Sigma \phi_z(x-y) \varphi(y) \dd s(y), \quad x \in \mathbb{R}^2\setminus\Sigma.
  \end{equation*}
	As for the single loop in Proposition~\ref{proposition_Phi_z} one shows that $\Phi_z$ extends to a bounded map $\Phi_z: H^{-\frac{1}{2}}(\Sigma;\CC^2)\to L^2(\RR^2;\CC^2)$
	with $\ran \Phi_z = \ker (S^* - z)$.	The associated principal value operator ${\mathcal C}_z$,
\[
  \big(\mathcal{C}_z \varphi\big)(x) := \pv \int_\Sigma \phi_z(x-y) \varphi(y) \dd s(y), \quad \varphi \in C^\infty(\Sigma; \mathbb{C}^2), ~x \in \Sigma,
\]
has a block structure of the form
\begin{align}
   \label{czj}
(\mathcal{C}_z \varphi)_j(x)&=\cC^j_z \varphi_j (x) + \sum_{k\ne j} ({\mathcal K}^{j,k}_z \varphi_k) (x),
 \quad \varphi \in C^\infty(\Sigma; \mathbb{C}^2), ~x \in \Sigma_j,\\
(\cC^j_z \varphi_j)(x)&= \pv \int_{\Sigma_j} \phi_z(x-y) \varphi_j (y) \dd s(y), \quad x\in\Sigma_j,\\
({\mathcal K}^{j,k}_z \varphi_k) (x)&= \int_{\Sigma_k} \phi_z(x-y) \varphi_k(y) \dd s(y), \quad x\in\Sigma_j.
\end{align}
The operators $\cC^j_z$ are the same as in the one loop case, while the operators ${\mathcal K}^{j,k}_z$ have smooth integral kernels; hence, they define
bounded operators from $H^s(\Sigma_k,\CC^2)$ to $H^{t}(\Sigma_j,\CC^2)$ for any $s,t\in\RR$. With the help of Proposition~\ref{proposition_Plemelj_Sokhotskii} one can show now the trace equality
\[
  \cT_{\pm,j}^D \Phi_z \varphi = \mp\,\dfrac{\rmi}{2}\, (\sigma\cdot \nu_j)\,\varphi_j + \big(\mathcal{C}_z \varphi\big)_j.
\]
The construction of the boundary triple takes then literally the same form as for a single loop. Let $\zeta\in \res A_0$ be fixed and set $(\cT_\pm^D f):=(\cT_{\pm,j}^D f)_{j=1}^N$. Then $\{ L^2(\Sigma; \mathbb{C}^2), \Gamma_0, \Gamma_1 \}$  with
\begin{align*}
    \Gamma_0 f&=\rmi\Lambda^{-1} (\sigma\cdot \nu)\big(\cT_+^D f -\cT_-^D f),\\
    \Gamma_1 f&=\dfrac{1}{2}\,\Lambda \Big((\cT_+^D f_+ + \cT_-^D f_-) -(\mathcal{C}_\zeta+\mathcal{C}_{\Bar\zeta}) \Lambda \Gamma_0 f \Big),
\end{align*}
is a boundary triple for $S^*$.
The corresponding $\gamma$-field $G$ and  Weyl function $M$ are $z \mapsto G_z = \Phi_z \Lambda$
and
  \begin{equation*}
    z\mapsto M_z = \Lambda \Big(\mathcal{C}_z -\dfrac{1}{2} \big(\mathcal{C}_\zeta+\mathcal{C}_{\Bar\zeta} \big) \Big)\Lambda.
  \end{equation*}

Assume first that $|\eta_j|\ne|\tau_j|$ for all $j\in\{1,\dots,N\}$. Define the linear operator $\Theta$ in $L^2(\Sigma;\CC^2)$ by
\[
\Theta=-\Lambda\bigg[\Xi + \frac12\,(\mathcal{C}_\zeta + \mathcal{C}_{\bar\zeta})\bigg]\Lambda,\quad
(\Xi \varphi)_j :=\frac1{\eta^2_j - \tau^2_j}(\eta_j\sigma_0 - \tau_j\sigma_3)\,\varphi_j,
\]
on its maximal domain in $L^2(\Sigma; \mathbb{C}^2)$. Then the operator $A_{\Sigma,\cP}$ defined in~\eqref{dirdeltamult} corresponds to the boundary condition $\Gamma_1 f= \Theta \Gamma_0 f$.
Using \eqref{czj} one sees that $\Theta$ can be written as $\Theta=\bigoplus_{j=1}^N \Theta_j + \widetilde \Theta$, where
$\Theta_j$ is the operator in $L^2(\Sigma_j;\CC^2)$ acting as
\[
\Theta_j= -\Lambda_j\bigg[\frac1{\eta^2_j - \tau^2_j}(\eta_j\sigma_0 - \tau_j\sigma_3) + \frac12\,(\mathcal{C}^j_\zeta + \mathcal{C}^j_{\bar\zeta})\bigg]\Lambda_j,
\]
with maximal domain, while
$\widetilde\Theta$ is a bounded operator from $H^s(\Sigma,\CC^2)$ to $H^{t}(\Sigma,\CC^2)$ for any $s,t\in\RR$ which is self-adjoint in $L^2(\Sigma; \mathbb{C}^2)$.
Hence, the self-adjointness of $\Theta$ is determined by the self-adjointness of $\bigoplus_{j=1}^N \Theta_j$,
and each $\Theta_j$ is exactly of the form as in the single-loop case. Hence, $\Theta_j$ is self-adjoint by Lemma~\ref{lem3132} and Lemma~\ref{self-theta} and thus, also $\Theta$ is self-adjoint in $L^2(\Sigma; \mathbb{C}^2)$. This implies also the statements concerning the domain regularity.

In order to study the essential spectrum we decompose $M_z$ to blocks as in~\eqref{czj}
and remark that the terms $\cK^{j,k}_z$ produce compact remainders, which do not influence the essential spectrum. Hence, the condition $0\in \spec_\ess (M_z-\Theta)$ is equivalent to 
\[
0\in\spec_\ess \left(\bigoplus_{j=1}^N \bigg( \Lambda_j\frac1{\eta^2_j - \tau^2_j}(\eta_j\sigma_0 - \tau_j\sigma_3)\Lambda_j
+\Lambda_j \mathcal{C}^j_z \Lambda_j\bigg)\right).
\]
As each of the terms on the right-hand side is covered by the analysis of the single-loop case, the statement on the essential spectrum of $M_z - \Theta$ and thus, with the help of Theorem~\ref{theorem_boundary_triple_abstract}, also of $A_{\Sigma, \mathcal{P}}$, follows.

\kp{
If for some $j$ one has $|\eta_j|=|\tau_j|$, then one follows the same technical strategy as the one in Section~\ref{sec-noncrit} for $|\eta|=|\tau|$,
i.e. one has to deal with additional orthogonal projectors, and all other constructions are easily adapted.
}

\end{document}